 \newtheorem*{corollary*}{Corollary}
 \newtheorem*{construction*}{Construction}
 \newtheorem*{definition*}{Definition}
 \newtheorem*{notation*}{Notation}
 \newtheorem*{lemma*}{Lemma}
 \newtheorem*{theorem*}{Theorem}
 \newtheorem*{remark*}{Remark}
 \newtheorem*{example*}{Example}
 \newtheorem*{conjecture*}{Conjecture}
 \newtheorem*{condition*}{Condition}
 \newtheorem*{result*}{Result}
 \newtheorem*{property*}{Property}
 \newtheorem*{cor*}{Corollary}
 \newtheorem*{const*}{Construction}
 \newtheorem*{defn*}{Definition}
 \newtheorem*{notn*}{Notation}
 \newtheorem*{lem*}{Lemma}
 \newtheorem*{thm*}{Theorem}
 \newtheorem*{rem*}{Remark}
 \newtheorem*{exm*}{Example}
 \newtheorem*{conj*}{Conjecture}
 \newtheorem{lemma}{Lemma}[section]
 \newtheorem{remark}[lemma]{Remark}
 \newtheorem{theorem}[lemma]{Theorem}
 \newtheorem{notation}[lemma]{Notation}
 \newtheorem{thm}[lemma]{Theorem}
 \newtheorem{prop}[lemma]{Proposition}
 \newtheorem{lem}[lemma]{Lemma}
 \newtheorem{defn}[lemma]{Definition}
 \newtheorem{notn}[lemma]{Notation}
 \newtheorem{cor}[lemma]{Corollary}
 \newtheorem{conj}[lemma]{Conjecture}
 \newtheorem{rem}[lemma]{Remark}
 \newtheorem{introtheorem}{Theorem}
 \crefname{introtheorem}{theorem}{theorems}
 \Crefname{introtheorem}{Theorem}{Theorems}
  \newtheorem{introthm}[introtheorem]{Theorem}
   \crefname{introthm}{theorem}{theorems}
 \Crefname{introthm}{Theorem}{Theorems}
  \crefname{introcorollary}{corollary}{corollaries}
 \Crefname{introcorollary}{Corollary}{Corollaries}
 \newtheorem{introcor}[introtheorem]{Corollary}
   \crefname{introcor}{corollary}{corollaries}
 \Crefname{introcor}{Corollary}{Corollaries}
   \crefname{introconjecture}{conjectures}{conjectures}
 \Crefname{introconjecture}{Conjecture}{Conjectures}
    \crefname{introconj}{conjectures}{conjectures}
 \Crefname{introconj}{Conjecture}{Conjectures}
     \crefname{introlem}{lemma}{lemmas}
 \Crefname{introlem}{Lemma}{Lemmas}
 \crefname{introremark}{remark}{remarks}
 \Crefname{introremark}{Remark}{Remarks}
  \crefname{introrem}{remark}{remarks}
 \Crefname{introrem}{Remark}{Remarks}
 \newtheorem{introprop}[introtheorem]{Proposition}
   \crefname{introprop}{Proposition}{Propositions}
 \Crefname{introprop}{Proposition}{Propositions}
   \crefname{introdefn}{definition}{definitions}
 \Crefname{introdefn}{Definition}{Definitions}
   \crefname{intronotn}{notation}{notations}
 \Crefname{intronotn}{Notation}{Notations}
   \crefname{introtask}{task}{tasks}
 \Crefname{introtask}{Task}{Tasks}
  \crefname{introprob}{problem}{problems}
 \Crefname{introprob}{Problem}{Problems}
   \crefname{introquestion}{question}{questions}
 \Crefname{introquestion}{Question}{Questions}
 \crefname{theorem}{theorem}{theorems}
 \Crefname{theorem}{Theorem}{Theorems}
  \crefname{thm}{theorem}{theorems}
 \Crefname{thm}{Theorem}{Theorems}
  \crefname{corollary}{Corollary}{Corollaries}
 \Crefname{corollary}{Corollary}{Corollaries}
   \crefname{cor}{Corollary}{Corollaries}
 \Crefname{cor}{Corollary}{Corollaries}
   \crefname{conjecture}{conjectures}{conjectures}
 \Crefname{conjecture}{Conjecture}{Conjectures}
    \crefname{conj}{conjectures}{conjectures}
 \Crefname{conj}{Conjecture}{Conjectures}
     \crefname{lem}{lemma}{lemmas}
 \Crefname{lem}{Lemma}{Lemmas}
      \crefname{lemma}{Lemma}{Lemmas}
 \Crefname{lemma}{Lemma}{Lemmas}
 \crefname{remark}{remark}{remarks}
 \Crefname{remark}{Remark}{Remarks}
  \crefname{rem}{remark}{remarks}
 \Crefname{rem}{Remark}{Remarks}
   \crefname{rem}{remark}{remarks}
 \Crefname{rem}{Remark}{Remarks}
   \crefname{proposition}{Proposition}{Proposition}
 \Crefname{proposition}{Proposition}{Proposition}
    \crefname{prop}{Proposition}{Propositions}
 \Crefname{prop}{Proposition}{Propositions}
   \crefname{defn}{definition}{definitions}
 \Crefname{defn}{Definition}{Definitions}
   \crefname{notn}{notation}{notations}
 \Crefname{notn}{Notation}{Notations}
   \crefname{task}{task}{tasks}
 \Crefname{task}{Task}{Tasks}
  \crefname{prob}{problem}{problems}
 \Crefname{prob}{Problem}{Problems}
   \crefname{question}{question}{questions}
 \Crefname{question}{Question}{Questions}
\newcommand{\lam}{\lambda}
\newcommand{\Oc}{\mathcal{O}}
\newcommand{\gr}{\operatorname{gr}}
\newcommand{\Ind}{\operatorname{Ind}}
\newcommand{\id}{\operatorname{Id}}
\renewcommand{\Im}{\operatorname{Im}}
\newcommand{\Span}{{\operatorname{Span}}}
\newcommand{\rk}{{\operatorname{rk}}}
\newcommand{\GL}{\operatorname{GL}}
\newcommand{\SL}{\operatorname{SL}}
\newcommand{\Sp}{\operatorname{Sp}}
\newcommand{\gl}{{\mathfrak{gl}}}
\newcommand{\Sym}{\operatorname{Sym}}
\newcommand{\ad}{\operatorname{ad}}
\newcommand{\WF}{\operatorname{WF}}
\newcommand{\Irr}{\operatorname{Irr}}
\renewcommand{\H}{\operatorname{H}}
\newcommand{\Tr}{\operatorname{Tr}}
\newcommand{\Hom}{\operatorname{Hom}}
\newcommand{\An}{\operatorname{An}}
\newcommand{\Ann}{\operatorname{Ann}}
\newcommand{\AnV}{\mathrm{An}\cV}
\newcommand{\Anv}{\AnV}
\newcommand{\As}{\operatorname{As}}
\newcommand{\Asv}{\As\cV}
\newcommand{\bC}{\mathbb{C}}
\newcommand{\C}{\mathbb{C}}
\newcommand{\bP}{\mathbb{P}}
\newcommand{\bZ}{\mathbb{Z}}
\newcommand{\bfG}{\mathbf{G}}
\newcommand{\bfH}{\mathbf{H}}
\newcommand{\Mat}{\operatorname{Mat}}
\newcommand{\R}{\mathbb{R}}
\newcommand{\ctp}{\widehat{\otimes}}
\newcommand{\Sc}{\cS}
\newcommand{\Fre}{Fr\'echet}
\newcommand{\FreSp}{Fr\'echet space}
\newcommand{\FreSpr}{Fr\'echet space }
\newcommand{\onto}{\twoheadrightarrow}
\newcommand{\into}{\hookrightarrow}
\providecommand{\fg}{\mathfrak{g}}
\providecommand{\fh}{\mathfrak{h}}
\providecommand{\fk}{\mathfrak{k}}
\providecommand{\fl}{\mathfrak{l}}
\providecommand{\fn}{\mathfrak{n}}
\providecommand{\fp}{\mathfrak{p}}
\providecommand{\fr}{\mathfrak{r}}
\providecommand{\fs}{\mathfrak{s}}
\providecommand{\fy}{\mathfrak{y}}
\providecommand{\cE}{\mathcal{E}}
\providecommand{\cM}{\mathcal{M}}
\providecommand{\cN}{\mathcal{N}}
\providecommand{\cO}{\mathcal{O}}
\providecommand{\cS}{\mathcal{S}}
\providecommand{\cU}{\mathcal{U}}
\providecommand{\cV}{\mathcal{V}}
\providecommand{\Kaz}{\mathrm{Kaz}}
\providecommand{\fin}{\sigma}
\providecommand{\g}{\mathfrak{g}}
\newcommand{\simpAr}[2][r]{%
\ar@{}[#1]|-*[@]_{#2}%
}
\newcommand{\Dima}[1]{{{#1}}}
\newcommand{\DimaA}[1]{{{#1}}}
\newcommand{\DimaB}[1]{{{#1}}}
\newcommand{\DimaC}[1]{{{#1}}}
\newcommand{\DimaD}[1]{{{#1}}}
\newcommand{\DimaE}[1]{{{#1}}}
\newcommand{\DimaF}[1]{{{#1}}}
\newcommand{\nextversion}[1]{} 
\newcommand{\proofend}{\hfill$\Box$\smallskip}
\newcommand{\hot}{\,\widehat{\otimes}\,}
\begin{document}

\title{Annihilator varieties of distinguished modules of reductive Lie algebras}
\author{Dmitry Gourevitch}
\address{Dmitry Gourevitch, Faculty of Mathematics and Computer Science, Weizmann
Institute of Science, POB 26, Rehovot 76100, Israel }
\email{dimagur@weizmann.ac.il}
\urladdr{http://www.wisdom.weizmann.ac.il/~dimagur}

\author{Eitan Sayag}
\address{Eitan Sayag,
 Department of Mathematics,
Ben Gurion University of the Negev,
P.O.B. 653,
Be'er Sheva 84105,
ISRAEL}
 \email{eitan.sayag@gmail.com}
\address{Ido Karshon, Faculty of Mathematics and Computer Science, Weizmann
Institute of Science, POB 26, Rehovot 76100, Israel }
\email{ido.karshon@gmail.com}

\keywords{Distinguished representation, nilpotent orbit, wave-front set, mixed model, Bessel model, Gan-Gross-Prasad conjecture, W-algebra, non-commutative harmonic analysis, spherical space, associated variety}
\subjclass[2010]{20G05, 22E46, 22E47, 22E45}
%
%
%
%
%
%
%
%
\date{\today}

\maketitle
\begin{abstract}
We provide a micro-local necessary condition for distinction of admissible representations of real reductive groups in the context of spherical pairs.

Let $\bfG$ be a complex algebraic reductive group, and $\bfH\subset \bfG$ be a spherical algebraic subgroup. Let $\fg,\fh$ denote the Lie algebras of $\bfG$ and $\bfH$, and let $\fh^{\bot}$ denote the orthogonal complement to $\fh$ in $\fg^*$.
A $\fg$-module is called $\fh$-distinguished if it admits a non-zero $\fh$-invariant functional. We show that the maximal $\bfG$-orbit in the annihilator variety of any irreducible $\fh$-distinguished $\fg$-module intersects $\fh^{\bot}$. This generalizes a result of Vogan \cite{Vog}.

We apply this to Casselman-Wallach representations of real reductive groups to obtain information on branching problems, translation functors and Jacquet modules. Further, we prove in \Dima{many} cases that  as suggested by \cite[Question 1]{PraSak}, when $H$ is a symmetric subgroup of a real reductive group $G$, the existence of a tempered $H$-distinguished representation of $G$ implies the existence of a generic $H$-distinguished representation of $G$.

Many of the models studied in the theory of automorphic forms involve an additive
character on the unipotent radical of the subgroup $\bf H$, and we devised a twisted version of our theorem that yields necessary conditions for the existence of those mixed models.
Our method of proof here is \DimaA{inspired by} the theory of modules over $W$-algebras. As an application of  our theorem we derive necessary conditions for the existence of Rankin-Selberg, Bessel, Klyachko and Shalika models.
Our results are compatible with the recent Gan-Gross-Prasad conjectures for non-generic representations \cite{GGP}.

\DimaE{Finally, we provide more general results that ease the sphericity assumption on the subgroups, and apply them to local theta correspondence in type II and to degenerate Whittaker models.}

%
%
%
%
%
\end{abstract}
%
%


\section{Introduction}\label{sec:intro}


In recent years, the study of periods of automorphic forms and of distinguished representations received a lot of attention. We mention here the work of Jacquet connecting distinguished representations via the {\it relative trace formula} to the image of Langlands functoriality \cite{Jac}, and the conjectures of Gan, Gross and Prasad \cite{GGPW} describing branching laws (for classical groups) in terms of \DimaF{Arthur} parameters.

More recently, the work of Sakellaridis and Venkatesh \cite{SV} regarding harmonic analysis on spherical varieties lead to a conjectural parameterization of classes of distinguished representations and \cite{Wan} pushed the conjectures of Gan, Gross and Prasad beyond classical groups to a  more general set up of certain spherical pairs.
In these works, the question whether a representation has an invariant functional is tied with the Langlands program.

In the present paper we establish a simple geometric necessary condition for an irreducible representation $\pi$ of a reductive group $G$ to admit a non-zero invariant functional with respect to a spherical subgroup $H.$

Our methods are Lie theoretic and based on the relationship between representations and nilpotent orbits in the spirit of the orbit method.
This relationship is familiar in the study of $U(\fg)$ modules and extends to modules over $W$-algebras, allowing us to consider certain twisted models.
We believe this brings a new aspect to the study of distinguished representations.

To formulate our result we require some notation.
Let $\bfG$ be a connected complex algebraic reductive group, and $\bfH\subset \bfG$ be a spherical algebraic subgroup. Let $\fg,\fh$ denote the Lie algebras of $\bfG$ and $\bfH$.
Denote by $\Irr(\fg)_\fh$ the collection of simple $\fg$-modules $V$ that have a non-zero $\fh$-invariant functional. We will call these modules \emph{$\fh$-distinguished}. Equivalently, these are modules with $\H_0(\fh,V)\neq 0$. Denote by $\cM_{f.d.}(\fh)$ the category of finite-dimensional $\fh$-modules.
For any $\fin\in \cM_{f.d.}(\fh)$, we denote by $\Irr(\fg)_{(\fh,\fin)}$ the collection of simple $\fg$-modules $V$ with $\H_{0}(\fh,V\otimes \fin)\neq 0$.

For any $\fg$-module $V$, denote by $\An\cV(V)$ the associated variety of the annihilator of $V$, that we will call shortly \emph{annihilator variety}. To  shorten our formulations we will use the following theorem.

\begin{thm}[{\cite{BB3,Jos}, cf. \cite[Corollary 4.7]{Vog}}]\label{thm:Irr}
For any $V\in \Irr(\fg)$, there exists a unique coadjoint nilpotent orbit $\cO$ such that $\Anv(V)=\overline{\cO}$.
\end{thm}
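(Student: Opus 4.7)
The plan is to exploit the fact that, because $V$ is irreducible, the annihilator $I := \Ann_{U(\fg)}(V)$ is a \emph{primitive} (in particular prime) two-sided ideal of $U(\fg)$. Equipping $U(\fg)$ with its PBW filtration and passing to the associated graded produces a homogeneous ideal $\gr I \subset \Sym(\fg) = \cO(\fg^*)$; since $I$ is two-sided, $\gr I$ is $\Ad(\bfG)$-stable. By definition $\Anv(V) = V(\gr I)$, so $\Anv(V)$ is a closed conical $\bfG$-invariant subvariety of $\fg^*$.

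Next I would show that $\Anv(V)$ lies in the nilpotent cone $\cN \subset \fg^*$. By Dixmier's version of Schur's lemma, the simple module $V$ carries an infinitesimal character $\chi: Z(U(\fg)) \to \C$, so $z - \chi(z) \in I$ for every $z \in Z(U(\fg))$. Via the Chevalley--Harish-Chandra isomorphism and Kostant's theorem, $\gr Z(U(\fg)) = \Sym(\fg)^{\bfG}$ cuts out exactly $\cN$ in $\fg^*$; hence $\gr I$ contains the augmentation ideal of $\Sym(\fg)^{\bfG}$, so $\Anv(V) \subset \cN$. Since the coadjoint action has only finitely many nilpotent orbits (Dynkin--Kostant), we can write $\Anv(V) = \overline{\cO_1} \cup \cdots \cup \overline{\cO_k}$ for a finite collection of distinct coadjoint nilpotent orbits $\cO_i$.

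The principal remaining difficulty is to reduce $k$ to $1$, i.e.\ to prove that $\Anv(V)$ is irreducible. This is exactly the content of the Borho--Brylinski \cite{BB3} and Joseph \cite{Jos} theorem. The starting point is that for a primitive ideal $I$ one has $\dim \Anv(V) = \operatorname{GKdim} U(\fg)/I$, and by Joseph's analysis of Goldie ranks of primitive quotients together with the study of characteristic cycles of simple highest weight modules---or, equivalently, the $\cD$-module arguments of Borho--Brylinski via Beilinson--Bernstein localization---this dimension forces a single coadjoint nilpotent orbit of maximal dimension to appear. I would invoke this statement directly rather than reprove it; it is the main obstacle, and it requires substantially deeper representation-theoretic input than the filtration and central-character arguments used in the first two steps.
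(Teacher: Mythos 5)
Your proposal is correct and takes essentially the same route as the paper: Theorem \ref{thm:Irr} is stated there purely as a quotation of \cite{BB3,Jos} (cf.\ \cite[Corollary 4.7]{Vog}), and your preliminary reductions (primitivity of $\Ann(V)$, $\bfG$-invariance and conicality of $\Anv(V)$, containment in $\cN(\fg^*)$ via the infinitesimal character and Kostant's theorem, finiteness of nilpotent orbits) are the standard facts the paper also records or takes for granted. Invoking Borho--Brylinski and Joseph for the irreducibility of the associated variety of a primitive ideal is exactly what the paper does, so nothing further is needed.
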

We will then denote this orbit by $\cO(V)$.
Denote by $\fh^{\bot}$ the orthogonal complement to $\fh$ in $\fg^*$, and by $\cO(V)\cap\fh^\bot$ the  intersection of $\fh^{\bot}$ with $\cO(V)$.

\begin{introthm}[\S \ref{sec:Pfmain}]\label{thm:main}
For all $\fin\in \cM_{f.d.}(\fh)$ and all
$V\in \Irr(\fg)_{(\fh,\fin)},$
 we have
 \begin{equation*}
2 \dim (\cO(V)\cap \fh^\bot)=\dim \cO(V)
 \end{equation*}
 In particular, the intersection is not empty.
\end{introthm}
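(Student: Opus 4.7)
The strategy is to establish two opposing dimension bounds on $\cO(V)\cap\fh^\bot$, after reducing the twisted statement to an untwisted one. First I would reduce to $\fin=\CC$. Pick a good filtration $V_0\subset V_1\subset\cdots$ on the simple $\fg$-module $V$ and filter $W:=V\otimes\fin$ by $W_n:=V_n\otimes\fin$. Then $\fh\cdot W_n\subset W_{n+1}$, and the principal symbol of any $h\in\fh$ acting on $W$ is multiplication by $h\in\fg\subset S^1(\fg)$ on $\gr V$ (the contribution from the $\fh$-action on $\fin$ is of strictly lower filtration order). Passing to associated graded, the non-vanishing of $\H_0(\fh,W)$ forces $(\gr V/\fh\gr V)\otimes\fin\neq 0$, and since $\fin$ is finite-dimensional and nonzero, this yields $\gr V/\fh\gr V\neq 0$.

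For the upper bound $2\dim(\cO(V)\cap\fh^\bot)\leq\dim\cO(V)$, I would observe that any $H$-orbit inside $\cO(V)\cap\fh^\bot$ is isotropic for the Kirillov--Kostant--Souriau symplectic form on $\cO(V)$: if $\xi\in\cO(V)\cap\fh^\bot$ and $x,y\in\fh$, then $\omega_\xi(\ad^*(x)\xi,\ad^*(y)\xi)=\xi([x,y])=0$ since $[x,y]\in\fh$. Sphericity of $\bfH\subset\bfG$, which by Knop's theorem is equivalent to the coisotropy of the $\bfH$-action on $T^*(\bfG/\bfH)\cong\bfG\times_{\bfH}\fh^\bot$, ensures $\bfH$ has finitely many orbits on $\fh^\bot$ intersected with any nilpotent orbit. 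Consequently $\dim(\cO(V)\cap\fh^\bot)$ equals the dimension of a single isotropic $H$-orbit, which is at most $\tfrac{1}{2}\dim\cO(V)$.

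For the lower bound $2\dim(\cO(V)\cap\fh^\bot)\geq\dim\cO(V)$, the reduction supplies a nonzero $S(\fg)$-module $\gr V/\fh\gr V$ whose support lies in $\overline{\cO(V)}\cap\fh^\bot$ (using \Cref{thm:Irr} together with the standard containment $\Supp(\gr V)\subset\Anv(V)=\overline{\cO(V)}$). The crux is to show this support meets the open orbit $\cO(V)$ in a subvariety of dimension at least $\tfrac{1}{2}\dim\cO(V)$, rather than sitting entirely in the boundary $\overline{\cO(V)}\setminus\cO(V)$. This is where the $W$-algebra input mentioned in the abstract enters: I would localize along a Slodowy slice at a chosen point $e\in\cO(V)$ to transfer $\gr V$ to a module over the associated graded of the finite $W$-algebra $U(\fg,e)$, transport the distinguished condition to a nonvanishing coinvariant of the resulting finite-dimensional slice module with respect to the Slodowy-slice analogue of $\fh$, and then execute a dimension count on the slice.

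The principal obstacle is the lower bound: Gabber's involutivity theorem alone tells us only that $\Supp(\gr V/\fh\gr V)$ is coisotropic in $\fg^*$, and does not by itself guarantee that any irreducible component actually meets the open orbit $\cO(V)$, nor that the intersection attains the precise half-dimension. Bridging this gap is exactly the role played by the Slodowy-slice / $W$-algebra localization, which converts an abstract Poisson-geometric inequality into a concrete finite-dimensional dimension count near $e\in\cO(V)$.
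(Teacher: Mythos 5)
There is a genuine gap, and it sits exactly where you place the whole weight of the argument: the lower bound. Your reduction via a good filtration of $V$ itself extracts no information from the distinction hypothesis: the quotient $\gr V/\fh\,\gr V$ is nonzero for trivial reasons (its degree-zero piece is all of $(\gr V)_0$, since the symbols of $\fh$ raise the degree), and its support is $\Asv(V)\cap \fh^{\bot}$, where $\Asv(V)$ is the associated variety of the good filtration you chose. Nothing forces this set to meet $\cO(V)$, nor to have dimension at least $\tfrac{1}{2}\dim\cO(V)$: the filtration was built from an arbitrary generator of $V$ and has no compatibility with $\fh$ beyond $\fh\cdot W_n\subset W_{n+1}$, which holds for every element of $\fg$. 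You then defer the entire lower bound to a Slodowy-slice/$W$-algebra localization that is not carried out; that unexecuted step is precisely the hard part, and it is not how the paper argues (the Kazhdan filtration enters only in the proof of the twisted Theorem \ref{thm:maintwist}). The paper's proof of Theorem \ref{thm:main} instead changes the module: from a nonzero $\xi\in\Hom_{\fh}(V,\fin^*)$ it takes the $\fg$-submodule $M\subset \Hom_{\C}(V,\fin^*)$ generated by $\xi$. Irreducibility of $V$ makes the $\fin^*$-valued pairing $V\times M\to \fin^*$ nondegenerate, so $\Anv(M)=\Anv(V)=\overline{\cO(V)}$; since the generator $\xi$ spans a finite-dimensional $\fh$-submodule, Lemma \ref{lem:AsH} gives $\Asv(M)\subset \fh^{\bot}$; and the Gabber--Joseph inequality (Theorem \ref{thm:ineq}) applied to $M$ yields $2\dim\bigl(\overline{\cO(V)}\cap\fh^{\bot}\bigr)\geq \dim\cO(V)$. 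Applying Theorem \ref{thm:Geo} to the boundary orbits $\cO_i\subset\overline{\cO(V)}\setminus\cO(V)$, for which $2\dim(\cO_i\cap\fh^{\bot})\leq\dim\cO_i<\dim\cO(V)$, then forces the open orbit itself to meet $\fh^{\bot}$ in dimension exactly $\tfrac{1}{2}\dim\cO(V)$. The missing idea in your proposal is thus to choose the module (equivalently, the filtration) so that the generator is $\fh$-finite: that is what places the associated variety inside $\fh^{\bot}$ while retaining the full annihilator variety, and it removes any need to argue directly that the support meets the open orbit.

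Your upper bound also contains an unjustified step. The Kirillov--Kostant--Souriau computation shows each $H$-orbit inside $\cO(V)\cap\fh^{\bot}$ is isotropic, but to bound the dimension of the whole intersection you invoke finiteness of the set of $H$-orbits on $\cO(V)\cap\fh^{\bot}$, asserted to follow from Knop's coisotropy characterization of sphericity (which, incidentally, concerns the $\bfG$-action on $T^*(\bfG/\bfH)$, not the $\bfH$-action). That implication is not proved and is not a standard consequence; without it, an infinite family of isotropic orbits could a priori sweep out a set of more than half the dimension. The paper avoids this by quoting Li's theorem (Theorem \ref{thm:Geo}) that $\cO\cap\fh^{\bot}$ is isotropic as a variety for every nilpotent orbit $\cO$ --- a statement that is in any case needed for the boundary orbits in the final step described above.
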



\DimaA{Let us now present a twisted version of Theorem \ref{thm:main}.}
%
%

\begin{introthm}[\S\ref{sec:twist}]\label{thm:maintwist}
Let $\fp$ be a parabolic subalgebra of $\fg$ with Levi decomposition $\fp=\fl\oplus \fn$. Let $\chi$ be \DimaA{a} character of $\fn$.
Let $\fs$   be a spherical Lie subalgebra of $\fl$, and let $\fh:=\fs\oplus \fn$. Let $\fin\in \cM_{f.d.}(\fh)$ such that $\fn$ acts on $\fin$ via $\chi$.
Extend $\chi$ to an element of $\fg^*$ that vanishes on $\fs$.

Then for any $V\in \Irr(\fg)_{(\fh,\fin)}$ we have
$$2\dim \left( \cO(V)\cap (\chi+\fh^{\bot})\right)=\dim \cO(V).$$ In particular, the intersection is non-empty.
\end{introthm}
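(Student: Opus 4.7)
The plan is to reduce \Cref{thm:maintwist} to \Cref{thm:main} via a twisted Jacquet functor. For $V\in\Irr(\fg)_{(\fh,\fin)}$, I would form the twisted coinvariants $V_{\fn,\chi}:=V\big/\langle (X-\chi(X))v : X\in\fn,\,v\in V\rangle$. Because $\fn$ acts on $\fin$ via the character $\chi$ and $\fin$ is an $\fh$-module with $\fh=\fs\oplus\fn$, the associativity identity $[\fs,\fn]\cdot w=\fs\cdot(\fn\cdot w)-\fn\cdot(\fs\cdot w)$ on $\fin$ forces $\chi([\fs,\fn])=0$. Consequently $V_{\fn,\chi}$ is naturally a module over the stabiliser $\fl_\chi:=\{X\in\fl:\chi([X,\fn])=0\}$, which contains $\fs$. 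A direct computation shows that $\H_0(\fn,V\otimes\fin)\cong V_{\fn,\chi}\otimes\fin$ as $\fs$-modules, and hence the hypothesis $\H_0(\fh,V\otimes\fin)\neq 0$ descends to $\H_0(\fs,V_{\fn,\chi}\otimes\fin)\neq 0$. In particular, some irreducible $\fl_\chi$-subquotient $V'$ of $V_{\fn,\chi}$ is $(\fs,\fin')$-distinguished for a suitable finite-dimensional $\fs$-module $\fin'$.

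The next step is to apply \Cref{thm:main} in the pair $(\fl_\chi,\fs)$---or, if $\fl_\chi$ is not reductive, in the pair obtained by descending to its Levi quotient. For this one needs $\fs$ to remain spherical in (the Levi quotient of) $\fl_\chi$, which I would establish from the hypothesis that $\fs$ is spherical in $\fl$ via the Borel-orbit characterisation of sphericity, together with the fact that $\fl_\chi$ is an intermediate subalgebra that stabilises $\chi$. Granting this, \Cref{thm:main} supplies
\begin{equation*}
2\dim\bigl(\cO_{\fl_\chi}(V')\cap\fs^{\perp}\bigr)=\dim\cO_{\fl_\chi}(V'),
\end{equation*}
where $\fs^\perp$ is computed inside $\fl_\chi^*$.

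The critical remaining step---and the main obstacle---is the geometric comparison of annihilator varieties through the twisted Jacquet functor. In the spirit of the $W$-algebra techniques cited in the introduction (cf.\ the work of Ginzburg, Losev, and Premet on Hamiltonian reduction), one expects $V_{\fn,\chi}$ to behave as a Hamiltonian reduction of $V$ at $\chi$, giving a relation
\begin{equation*}
\dim\bigl(\chi+\Anv(V_{\fn,\chi})\bigr)=\dim\bigl(\Anv(V)\cap(\chi+\fn^{\perp})\bigr),
\end{equation*}
with the right-hand side inheriting half-dimensional control from $\cO(V)$. A further intersection with $\fs^\perp$ inside $\fl_\chi^*$ corresponds, after translating by $\chi$ and pulling back to $\fg^*$, to intersecting $\cO(V)$ with $\chi+(\fs\oplus\fn)^\perp=\chi+\fh^\perp$. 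Combining this correspondence with the dimension equality from the second step then gives $2\dim(\cO(V)\cap(\chi+\fh^\perp))=\dim\cO(V)$, as required. The bulk of the effort lies in rigorously tracking associated varieties through the twisted Jacquet functor, controlling behaviour on irreducible subquotients, and handling the possible non-reductivity of $\fl_\chi$; the $W$-algebraic machinery alluded to by the authors should be what makes this final step feasible.
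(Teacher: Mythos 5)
Your strategy is genuinely different from the paper's, but as written it has a gap at exactly the point you flag as "the critical remaining step," and that gap is the whole content of the theorem. The claimed relation
\begin{equation*}
\dim\bigl(\chi+\Anv(V_{\fn,\chi})\bigr)=\dim\bigl(\Anv(V)\cap(\chi+\fn^{\perp})\bigr)
\end{equation*}
is not a standard available result in this generality: here $\fn$ is a parabolic nilradical and $\chi$ an \emph{arbitrary} (possibly degenerate) character of it, so the Hamiltonian-reduction/W-algebra results you allude to (Gan--Ginzburg, Losev, Premet), which concern good gradings with $\chi$ given by a nilpotent element and which relate associated varieties of a module and of its Whittaker reduction, do not apply; even formulating the identity requires an identification of $\Anv(V_{\fn,\chi})\subset\fl_\chi^*$ with a subset of $\fg^*$ that you never specify. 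There are further unverified claims feeding into it: $V_{\fn,\chi}$ need not be finitely generated or admissible as an $\fl_\chi$-module, so the existence of an irreducible distinguished subquotient is not automatic; $\fl_\chi$ need not be reductive and the sphericity of $\fs$ in $\fl$ does not by itself give sphericity of $\fs$ in (a Levi quotient of) $\fl_\chi$, which you need in order to invoke \Cref{thm:main}; and nowhere do you establish the upper bound $2\dim\bigl(\cO(V)\cap(\chi+\fh^{\bot})\bigr)\le\dim\cO(V)$, which is an essential half of the asserted equality.

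For comparison, the paper never passes to the twisted Jacquet module. It stays with the $\fg$-module $M\subset\Hom_{\C}(V,\fin^*)$ generated by the invariant functional, and replaces the PBW filtration by the Kazhdan filtration attached to a grading element $h$ for which $\fn$ is the negative part. Corollary \ref{cor:Kaz} shows the annihilator variety is unchanged, Lemmas \ref{lem:non0} and \ref{lem:TwiAs} show $\Asv^{\Kaz}(M)$ is a non-empty subset of $\chi+\fh^{\bot}$, Gabber's theorem (Theorem \ref{thm:TwiGab}) gives the lower bound $\dim\cO(V)\le 2\dim\Asv^{\Kaz}(M)$, and the upper bound is Lemma \ref{lem:dim}, proved by contracting along the one-parameter torus generated by $h$ and applying Li's theorem (Theorem \ref{thm:Geo}) to the zero fiber. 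If you want to salvage your reduction-to-$(\fl_\chi,\fs)$ route, you would need to prove a comparison theorem for associated varieties under the twisted Jacquet functor in this degenerate-parabolic setting, which is a substantial open-ended task; the Kazhdan-filtration argument circumvents it entirely.
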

\DimaA{ If $\chi$ is a generic character of $\fn$,} we will say that the triple $(\fg,\fh,\sigma)$ is the \emph{Whittaker induction} of the triple $(\fl,\fs,\sigma|_{\fs})$. This notion is different from a similar notion  in \cite[\S 2.6]{Wan}.

Let us now emphasize an application of Theorems \ref{thm:main} and \ref{thm:maintwist} to real reductive groups. Assume that $\bfG$ is defined over $\R$, and let $G$ be a finite cover of an open subgroup of the group of real points of $\bfG$.
In \S \ref{sec:CW} below we recall the notion of a Casselman-Wallach representation of $G$, as well as some basic properties of such representations.
Denote the collection of irreducible Casselman-Wallach representations of $G$ by $\Irr(G)$.
By Theorem \ref{thm:Irr} and the Casselman-Wallach equivalence of categories (see \S \ref{sec:CW}), $\An\cV(\pi)$ is the closure of a unique coadjoint nilpotent orbit $\cO(\pi)$ for any $\pi\in \Irr(G)$.
We deduce from Theorems \ref{thm:main} and \ref{thm:maintwist} the following corollary.

\begin{introcor}[\S \ref{sec:CW}]\label{cor:CW}
Let $\fin\in \cM_{f.d.}(\fh)$  and  let $\pi\in \Irr G$. Assume that $\pi$ has a non-zero continuous $\fh$-equivariant map into $\fin$. Then  $2\dim\cO(\pi)\cap \fh^\bot=\dim\cO(\pi)$.

Furthermore, assume that there exist a parabolic subalgebra $\fp$ of $\fg$ with Levi decomposition $\fp=\fl\oplus \fn$, and
$\chi\in \fg^*$ such that $\fh=(\fh\cap \fl)\oplus\fn$,  $\chi|_{\fl}=0$, 
\DimaA{and
$\fn$ acts on $\fin$ via} $\chi|_\fn$. Then  $$2\dim \left(\cO(\pi)\cap (\chi+\fh^{\bot})\right)=\dim \cO(\pi).$$
\end{introcor}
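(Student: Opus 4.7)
The plan is to deduce the corollary directly from Theorems~\ref{thm:main} and~\ref{thm:maintwist} applied to the underlying Harish--Chandra module of $\pi$. Fix a maximal compact subgroup $K\subset G$ and let $\pi_K\subset\pi$ denote the $(\fg,K)$-module of $K$-finite vectors. We rely on two standard inputs from Casselman--Wallach theory recalled in \S\ref{sec:CW}: the annihilator of $\pi_K$ in $U(\fg)$ equals the annihilator of $\pi$, so in particular $\cO(\pi_K)=\cO(\pi)$ by Theorem~\ref{thm:Irr}; and $\pi_K$ is a dense subspace of the Fr\'echet space $\pi$.

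Next, we transfer continuous distinguishedness to the algebraic setting. Given a non-zero continuous $\fh$-equivariant map $T\colon\pi\to\fin$, the restriction $T|_{\pi_K}$ is $\fh$-equivariant because $\pi_K$, as a $\fg$-submodule, is $\fh$-stable. It is non-zero: the target $\fin$ is finite-dimensional, hence Hausdorff in its unique vector-space topology, and $\pi_K$ is dense in $\pi$, so vanishing on $\pi_K$ would force $T\equiv 0$. The pairing $(v,f)\mapsto f(Tv)$ then provides a non-zero $\fh$-invariant linear functional on $\pi_K\otimes\fin^*$, showing $\H_0(\fh,\pi_K\otimes\fin^*)\neq 0$, i.e.\ $\pi_K\in\Irr(\fg)_{(\fh,\fin^*)}$.

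The first assertion now follows by applying Theorem~\ref{thm:main} to $V=\pi_K$ with the finite-dimensional $\fh$-module $\fin^*$ in place of $\fin$, combined with $\cO(\pi)=\cO(\pi_K)$. For the twisted assertion, note that if $\fn$ acts on $\fin$ via $\chi|_\fn$ then it acts on $\fin^*$ via $-\chi|_\fn$; Theorem~\ref{thm:maintwist}, applied with parabolic $\fp=\fl\oplus\fn$, spherical subalgebra $\fs:=\fh\cap\fl\subset\fl$, character $-\chi$ on $\fn$, and module $\fin^*$, yields $2\dim\bigl(\cO(\pi_K)\cap(-\chi+\fh^\bot)\bigr)=\dim\cO(\pi_K)$. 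To match the statement of the corollary, one uses that $-\id\colon\fg^*\to\fg^*$ preserves the linear subspace $\fh^\bot$, interchanges the affine spaces $\pm\chi+\fh^\bot$, and stabilizes every complex nilpotent coadjoint orbit (for $\xi\in\cO$, the element $\exp(i\pi h/2)\in\bfG$ attached to a Jacobson--Morozov $\mathfrak{sl}_2$-triple $(e,h,f)$ with $e$ corresponding to $\xi$ sends $e$ to $-e$), so that $\dim(\cO\cap(\chi+\fh^\bot))=\dim(\cO\cap(-\chi+\fh^\bot))$.

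No substantial obstacle is expected; the argument is essentially a dictionary between continuous and algebraic notions. The two bookkeeping points requiring minor care are (i)~the duality that turns ``continuous $\fh$-equivariant map into $\fin$'' into $(\fh,\fin^*)$-distinguishedness in the algebraic sense, and (ii)~the harmless sign swap $\chi\leftrightarrow-\chi$ in the twisted case, reconciled via the negation symmetries of the complex nilpotent orbit and of $\fh^\bot$ described above.
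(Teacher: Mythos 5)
Your overall strategy---pass to the Harish--Chandra module, use density to transfer the continuous $\fh$-equivariant map to a non-zero algebraic one, and then invoke Theorems \ref{thm:main} and \ref{thm:maintwist}---is the same as the paper's, and your handling of the sign $\chi\leftrightarrow-\chi$ (via $-\cO=\cO$ for complex nilpotent coadjoint orbits and $-\fh^{\bot}=\fh^{\bot}$) is correct, indeed more explicit than the paper. The genuine gap is at the step ``$\pi_K\in\Irr(\fg)_{(\fh,\fin^*)}$'': membership in $\Irr(\fg)_{(\fh,\fin^*)}$ requires $\pi_K$ to be a \emph{simple $\fg$-module}, and simplicity is exactly what Theorems \ref{thm:Irr}, \ref{thm:main} and \ref{thm:maintwist} need (your appeal to Theorem \ref{thm:Irr} to define $\cO(\pi_K)$ has the same problem). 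Irreducibility of $\pi$ gives irreducibility of $HC(\pi)$ only as a $(\fg,K)$-module (Theorem \ref{thm:CW}); when $K$ is disconnected this does not imply $\fg$-irreducibility, and the corollary is stated for $G$ a finite cover of an open subgroup of $\bfG(\R)$, so groups such as $\GL_n(\R)$ and $\mathrm{O}(p,q)$---precisely those appearing in the applications---are in scope. For example, the Harish--Chandra module of a discrete series representation of $\GL_2(\R)$ splits as a $\fg$-module into a holomorphic and an antiholomorphic summand, interchanged by the non-identity component of $K$; your functional $T|_{\pi_K}$ may be non-zero on only one summand, and nothing guarantees it is non-vanishing on every non-zero $\fg$-submodule, so even the weakened hypothesis of Remark \ref{rem:Irr} is not verified by your argument.

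This is exactly what the paper's Lemma \ref{lem:IrrHC} is for: it produces an irreducible $\fg$-submodule $M\subset HC(\pi)$ and a finite set $S\subset K$ such that $HC(\pi)$ is a $\fg$-quotient of $\bigoplus_{k\in S}M^k$ and $\AnV(M)=\AnV(HC(\pi))$. One then notes that the restriction of the functional to $HC(\pi)$ (non-zero by density) is non-zero on some twist $M^k$, and applies Theorems \ref{thm:main} and \ref{thm:maintwist} to the simple module $M^k$, using $\cO(M^k)=\cO(M)=\cO(\pi)$ since annihilator varieties are $\Ad$-invariant. With this extra step (or an equivalent one, e.g. via the admissible dual as in the paper's proof of Lemma \ref{lem:IrrHC}) your argument goes through; without it, applying Theorems \ref{thm:main} and \ref{thm:maintwist} to $V=\pi_K$ is not justified in the stated generality.
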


The first natural generality to apply this corollary is symmetric pairs, since for them the nilpotent orbits intersecting $\fh^{\bot}$ are classified in \cite{Djo1,Djo2,Oht}. In \S \ref{subsec:Pra} below we deduce a partial affirmative answer to \cite[Question 1]{PraSak}. Namely, we show  that for \Dima{many} Archimedean symmetric pairs, the existence of an $H$-distinguished tempered irreducible representation implies the existence of an $H$-distinguished generic irreducible representation.

In \S \ref{subsec:bra} below we  apply Corollary \ref{cor:CW} to branching problems, and to translation functors. Then, in \S \ref{subsec:Jac}, we  deduce from Corollary \ref{cor:CW} restrictions on annihilator varieties of Jacquet modules.

\DimaA{
Let us now present a twisted version of the results on branching problems and Jacquet functors. Let $P=LN$ be a Levi decomposition of a parabolic subgroup of $G$.
Let $\chi$ be a character of $\fn$. Suppose that there exists a reductive subgroup $S\subset L$ that stabilizes $\chi$ such that $\Delta S \subset S\times L$ is a spherical subgroup.
Let $\fr:=\fs\oplus \fn \subset \fp$.
Let $\tau\in \Irr(S)$, and consider it as an $\fr$-module via the  projection $\fr\onto \fs$. Extend $\chi$ to a character of $\fr$ vanishing on $\fs$. For a subset $X\subset \fg^*$ let $X|_{\fr}\subset \fr^*$ denote the restriction of $X$ to $\fr$. Consider $\fs^*$ as a subset of $\fr^*$.

\begin{introcor}\label{cor:TJ}
Let $\pi\in \Irr(G)$ and $\tau \in \Irr(S)$ be such that $\Hom_{\fr}(\pi,\tau\otimes \chi)\neq 0$.

Then
$\cO(\Dima{\tau})$ intersects $\cO(\pi)|_{\fr}+\chi$.
\end{introcor}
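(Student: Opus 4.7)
The plan is to reduce Corollary \ref{cor:TJ} to the twisted part of Corollary \ref{cor:CW}, applied to the product group $\tilde G := G \times S$ with Lie algebra $\tilde\fg := \fg \oplus \fs$ and the irreducible Casselman--Wallach representation $V := \pi \boxtimes \tau^\vee \in \Irr(\tilde G)$. I first identify
\[
\cO(V) \;=\; \cO(\pi) \times \cO(\tau^\vee) \;=\; \cO(\pi) \times \cO(\tau) \;\subset\; \fg^* \oplus \fs^* = \tilde\fg^*,
\]
using the factorization of annihilators of outer tensor products of irreducible representations, together with the fact that every nilpotent coadjoint orbit is stable under negation (via the scaling action of an $\mathfrak{sl}_2$-triple), so that $\cO(\tau^\vee) = -\cO(\tau) = \cO(\tau)$.

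Next I assemble the twisted data inside $\tilde\fg$. Set $\tilde\fp := \fp \oplus \fs$, with Levi decomposition $\tilde\fl := \fl \oplus \fs$ and nilradical $\tilde\fn := \fn \oplus 0$, and let
\[
\Delta\fs := \{(X,X) : X \in \fs\} \subset \fl \oplus \fs = \tilde\fl, \qquad \fh := \Delta\fs \oplus \tilde\fn.
\]
The hypothesis that $\Delta S \subset S \times L$ is spherical is exactly the sphericity of $\Delta\fs$ in $\tilde\fl$. Let $\sigma$ denote the one-dimensional $\fh$-module on which $\Delta\fs$ acts trivially and $\tilde\fn$ acts by $-\chi$, and let $\tilde\chi \in \tilde\fg^*$ be the extension of $-\chi$ by zero on $\tilde\fl$.

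The key step is to translate the hypothesis $\Hom_\fr(\pi, \tau \otimes \chi) \neq 0$ into the condition $V \in \Irr(\tilde\fg)_{(\fh,\sigma)}$. The embedding $\fr \hookrightarrow \tilde\fg$ sending $X + Y \mapsto (X+Y, X)$ for $X \in \fs$, $Y \in \fn$ identifies $\fr$ with $\fh$ (via the projection to the first factor), and under this identification the restriction of the $\tilde\fg$-action on $V$ to $\fh$ coincides with the $\fr$-action on $\pi \otimes \tau^\vee$ coming from $\fr \hookrightarrow \fg$ acting on $\pi$ and $\fr \twoheadrightarrow \fs$ acting on $\tau^\vee$. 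Standard tensor--hom adjunction then gives
\[
\Hom_\fh(V \otimes \sigma, \CC) \;\cong\; \Hom_\fr(\pi \otimes \tau^\vee, \CC_\chi) \;\cong\; \Hom_\fr(\pi, \tau \otimes \chi),
\]
which is nonzero by hypothesis. Applying the twisted Corollary \ref{cor:CW} produces a point $(\eta,\zeta) \in (\cO(\pi) \times \cO(\tau)) \cap (\tilde\chi + \fh^\bot)$.

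Finally I unwind this in $\tilde\fg^*$. A direct computation gives
\[
\fh^\bot \;=\; \{(\xi,\zeta') \in \fg^* \oplus \fs^* : \xi|_\fn = 0,\ \zeta' = -\xi|_\fs\},
\]
so the condition $(\eta,\zeta) - \tilde\chi \in \fh^\bot$ unpacks to $\eta|_\fn = -\chi$ and $\eta|_\fs = -\zeta$. Consequently $\eta|_\fr + \chi = -\zeta$, viewed as an element of $\fs^* \subset \fr^*$, and $-\zeta \in \cO(\tau)$ by negation-stability, yielding the required element of $\cO(\tau) \cap (\cO(\pi)|_\fr + \chi)$. The main overhead throughout is bookkeeping---keeping track of signs and of the various extensions of $\chi$ across the different Lie algebras---while the conceptual content is entirely that of the twisted Corollary \ref{cor:CW}.
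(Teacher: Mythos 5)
Your argument is correct and is essentially the paper's own proof: the paper likewise applies the twisted part of Corollary \ref{cor:CW} to the product group (there $S\times G$ acting on $\widetilde{\tau}\hot\pi$) with the subgroup $Y=\{(pN,p)\,:\,p\in SN\}$, whose Lie algebra is exactly your $\Delta\fs\oplus(\fn\oplus 0)$ up to the order of the factors, and unwinds $\mathfrak{y}^{\bot}$ in the same way. Your additional bookkeeping (the product formula $\cO(\pi\boxtimes\tau^{\vee})=\cO(\pi)\times\cO(\tau)$, negation-stability of nilpotent orbits, and the sign conventions for $\chi$) only makes explicit what the paper leaves implicit in its one-line conclusion.
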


One special case of this corollary is $G=\GL_{n+k}(\R), \, L=\GL_{n+1}(\R)\times (\R^{\times})^{k-1}, \, S=\GL_{n}(\R)$, and the character $\chi$ is generic. We call the models arising from this case \emph{Rankin-Selberg} models. Similar models for orthogonal or unitary groups are called \emph{Bessel models}. In \S\S \ref{subsec:RS},\ref{subsec:Bessel} we deduce from Corollary \ref{cor:TJ} a microlocal necessary condition on the existence of those models. We now give a brief uniform formulation, and refer to \S\S \ref{subsec:RS},\ref{subsec:Bessel} for further details.
}


\begin{introthm}[\S\S \ref{subsec:RS},\ref{subsec:Bessel}]\label{thm:strongPart}
Let $S$ be from the list
\begin{equation}\label{=Strong}
\GL_n(\R), \quad \GL_n(\C), \quad \mathrm{U}(m,n)\quad \mathrm{O}(m,n),\quad \mathrm{SO}(m,n),\quad  \mathrm{SO}_{m+n}(\C), \quad  \mathrm{O}_{m+n}(\C)
\end{equation}
and $L'$ be the corresponding group with $n$ replaced by $n+1$.
Embed $S$ in the top left corner of $L'$.
Let $G$ be the group from \eqref{=Strong} corresponding to $S$,  with
$n$ replaced by $n+k$ and $m$ replaced by $m+k\Dima{-1}$, for some $k\geq \Dima{1}$.

Let $P=LN$ be the standard parabolic subgroup of $G$ with $L=L'\times \Dima{(\R^{\times})^{k-1}}$ or $L=L'\times \Dima{(\C^{\times})^{k-1}}$.
Let $\phi$ be a generic unitary character of $N$ stabilized by $S$.

Let  $\pi\in \Irr(G)$ and $\tau\in \Irr(S)$, and let $\lambda$ and $\mu$ be the partitions corresponding to $\cO(\pi)$ and $\cO(\tau)$ respectively.
Let $\lambda^t$ and $\mu^t$ denote the transposed partitions.

Suppose that $\Hom_{SN}(\pi|_{SN},\tau\otimes \phi)\neq 0$.
Then
for any index $i\geq 1$ we have
$|\lambda^t_i-\mu^t_i|\leq 1$.
\end{introthm}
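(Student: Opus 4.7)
The plan is to reduce Theorem~\ref{thm:strongPart} to Corollary~\ref{cor:TJ} and then translate the resulting orbit-intersection condition into the partition inequality by a direct Jordan-form analysis.

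First, I would verify the hypotheses of Corollary~\ref{cor:TJ} for the data $(G, P = LN, S, \phi, \pi, \tau)$. The subalgebra $\fs \oplus \fn \subset \fp$ matches the $\fr$ of that corollary, the character $\chi := d\phi$ is $S$-stable by assumption, and the sphericity of $\Delta S \subset S \times L$ is standard in each family of \eqref{=Strong}. Differentiating the continuous model and invoking the Casselman--Wallach equivalence of \S\ref{sec:CW}, the hypothesis $\Hom_{SN}(\pi|_{SN}, \tau \otimes \phi) \neq 0$ becomes $\Hom_\fr(\pi, \tau \otimes \chi) \neq 0$. The corollary then produces a nilpotent $X \in \cO(\pi) \subset \fg^*$ with $X|_\fn = \chi|_\fn$ and $X|_\fs \in \cO(\tau)$.

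Second, I would identify $\fg^* \cong \fg$ via a trace form compatible with the classical structure on $G$ and unpack the above as matrix conditions. Using the block decomposition of $\fg$ induced by $P$, the requirement $X|_\fn = \chi|_\fn$ pins down the components of $X$ in the strictly lower-block-triangular subspace $\fn^-$ to a fixed ``generic shift'' matrix $\tilde\chi$. For the $\GL$ case, $\tilde\chi$ is a single length-$k$ Jordan chain supported on the coordinates $n+1, \ldots, n+k$, hence is nilpotent of Jordan type $(k, 1^n)$; the orthogonal, unitary, and complex cases are analogous with $\tilde\chi$ adapted to the defining form. Simultaneously, the condition $X|_\fs \in \cO(\tau)$ forces the principal $\fs$-block $Y$ of $X$ to be nilpotent of Jordan type $\mu$.

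Third, I would establish the key combinatorial lemma: any nilpotent $X \in \fg$ with the structure just described has Jordan type $\lambda$ satisfying $|\lambda^t_i - \mu^t_i| \leq 1$ for every $i \geq 1$. Using the rank identities $\lambda^t_i = \rk X^{i-1} - \rk X^i$ and $\mu^t_i = \rk Y^{i-1} - \rk Y^i$, the problem reduces to a comparison of $\rk X^j$ with $\rk Y^j$ at each degree $j$. The fixed chain in $\tilde\chi$ couples the $\fs$-coordinates to the extra $k$ coordinates through a single length-$k$ Jordan block, so each extra coordinate contributes at most one additional unit to a column of $\lambda^t$ beyond the corresponding column of $\mu^t$, with the bound in the opposite direction coming from the dual chain. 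A small example (e.g.\ $n = k = 2$, taking $X$ of type $(2,2)$ with $Y=0$ but with the Whittaker constraint dropped, giving $|\lambda^t_2 - \mu^t_2| = 2$) shows that the genericity of $\chi$ is essential: imposing $\tilde\chi$ forces column~$4$ of $X$ to be proportional to column~$3$, collapsing the rank and ruling out the bad Jordan type.

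The main obstacle will be making this third step uniform across the seven families of \eqref{=Strong}. For $\GL$ it amounts to an enhancement of the classical interlacing result for Jordan types of principal submatrices of nilpotent matrices. For the Bessel cases (orthogonal, unitary, complex), the nilpotent orbits are indexed by decorated partitions subject to parity and multiplicity constraints, and one must verify that the generic $\phi$ stabilized by $S$ really does arise from a Whittaker induction in the sense of Theorem~\ref{thm:maintwist} that is compatible with the defining form on $G$, so that $\tilde\chi$ has the correct shape relative to the classification of nilpotent orbits in $\fg$ and the rank-counting argument still delivers the sharp bound.
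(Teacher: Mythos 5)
Your reduction is the same as the paper's: you invoke Corollary \ref{cor:TJ} for the triple $(\fs\oplus\fn,\chi)$ (with sphericity of $\Delta S\subset S\times L$ from \cite{KM}), identify $\fg^*\cong\fg$ by the trace form, and translate the conclusion into the existence of a nilpotent $X\in\cO(\pi)$ whose below-parabolic part is the fixed shift matrix and whose top-left $\fs$-block lies in $\cO(\tau)$. Up to that point you match \S\ref{subsec:RS}.

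The genuine gap is your third step, which is exactly where the paper's Propositions \ref{prop:RS} and \ref{prop:Bes} live. Writing $\lambda^t_i=\rk X^{i-1}-\rk X^i$ and $\mu^t_i=\rk Y^{i-1}-\rk Y^i$ and asserting that ``each extra coordinate contributes at most one additional unit to a column of $\lambda^t$, with the opposite bound coming from the dual chain'' is a restatement of the desired conclusion, not a mechanism: a priori nothing prevents one column from gaining $2$ while another loses $1$ (the total column sum only increases by $k$), and neither the upper bound $\lambda^t_i\le\mu^t_i+1$ nor the lower bound $\lambda^t_i\ge\mu^t_i-1$ follows from a crude count of extra coordinates --- your own $n=k=2$ example shows the statement is false without the genericity constraint, so the constraint must enter the rank comparison in a precise, not heuristic, way. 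There is also no ``classical interlacing result'' to enhance here, because $Y$ is a corner compression of $X$ (not the restriction to an invariant subspace), and the passage is through $k$ coordinates. The paper's actual argument supplies the missing idea: first conjugate $X$ into the normal form $T+A+B+C$ (extra entries only in row $n+1$, columns $\le n$, and in the last column); then observe that the cyclic subspace generated by $e_{n+1}$ under $X$ and the cyclic subspace generated under $T$ by the vector $v$ built from the last column yield canonically identified quotient operators; finally apply Lemma \ref{lem:quot} (quotienting a nilpotent operator by the cyclic subspace of a single vector changes each column of the transposed partition by $0$ or $1$) twice, once to $X$ and once to $T$, to get both inequalities at once. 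That two-sided use of a common cyclic quotient is the combinatorial heart of the theorem, it is what makes the argument uniform for the Bessel cases as well, and it is absent from your plan.
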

\Dima{For example, the case $k=1$ corresponds to branching problems for the restrictions from $\GL_{n+1}(\R)$ to $\GL_{n}(\R)$, restrictions from $\mathrm{SO}(m,n+1)$ and $\mathrm{SO}(m,n)$, and analogous restrictions for complex groups and for unitary groups. }

This theorem partially confirms the non-tempered Gan-Gross-Prasad conjectures \cite{GGP} in the Archimedean case. For the case of $\GL_{n}(F)\subset \GL_{n+k}(F)$ for any $p$-adic field $F$, the conjectures were established in \cite{Chan,MGur}. In the subcategory of unitary representations (which is not a full subcategory), more restrictive conditions are proven in \cite{Ven,Hen}.
\DimaB{In the language of \cite[\S 22.2]{NV}, Corollaries \ref{cor:CW} and \ref{cor:TJ} state that distinction implies orbit-distinction near infinity in the dual Lie algebra.}

\Dima{ In} \cite{JSZ,CS} the space $\Hom_{SN}(\pi|_{SN},\tau\otimes \phi)$ is shown to \Dima{have dimension at most one}.

Over finite fields, the branching problem from $\GL_{n+1}(\mathbf{F}_q)$ to $\GL_{n+1}(\mathbf{F}_q)$ is fully analysed in \cite{Tho}, in terms of the partition-valued functions on the set of so-called simplexes, that classify the irreducible representations by \cite[Theorem 13]{Green}. In particular, \cite[Satz 2]{Tho} implies
that for representations $\pi$ and $\tau$ given by functions $f$ and $h$, $\Hom(\pi|_H,\tau)\neq 0$ if and only if for every simplex $s$ and every index $i$ we have $|f(s)_i-h(s)_i|\leq 1$.

In \cite{uni}, the authors consider the analogous branching problems for unipotent representations of unitary groups over finite fields. These representations are described in terms of partitions, and the authors call partitions $\lambda$ and $\mu$ \emph{close} if they satisfy $|\lambda_i-\mu_i|\leq 1$ for every $i$. They show that $\Hom(\pi_{\lambda}|_H,\pi_{\mu})\neq 0$
if and only if $\lambda$ and $\mu$ are close and the multi-set of their common parts has even multiplicities.

In \S\S \ref{subsec:Klya},\ref{subsec:Sha}, below we also apply Corollary
 \ref{cor:CW} to obtain necessary conditions for the existence of Klyachko models and Shalika models for irreducible representations of $\GL_n(\R)$ and $\GL_n(\C)$. By \cite{GOSS},  in the case of unitarizable representations our necessary condition for the existence of Klyachko models is also sufficient.

\DimaC{We also} provide a non-homogeneous analogue of Corollary \ref{cor:CW}.
Let $\bf X$ be a spherical smooth $\bfG$-variety defined over $\R$.
Let us recall the definition of the moment map $\mu_{\bf X}: T^*\bf X \to \fg^*$. For every $x\in {\bf X},$ the differential action map $\bf G\to X$ is a linear map $\fg\to T_x{\bf X}$. Dualizing it, and running over all points of $\bf X$ we obtain the moment map. Let $\Im(\mu_{\bf X})$ denote the image of the moment map. Note that if $\bf X=G/H$ then $\Im(\mu_{\bf X})=\bfG\cdot \fh^{\bot}$.

Let $X$ be a union of connected components of the manifold $\bf X(\R)$. Let $\cE$ be an algebraic bundle over $X$, and let $\Sc(X,\cE)$ denote the space of Schwartz sections of $\cE$ (see \S \ref{subsec:Sc} below for the definition).
\begin{introthm}[\S \ref{subsec:PfAltX}]\label{thm:AltX}
Let $\pi\in \Irr(G)$. If  $\H_0(\fg,\Sc(X,\cE)\hot \pi)\neq 0$ then $\cO(\pi)\subset \Im(\mu_{\bf X})$.
\end{introthm}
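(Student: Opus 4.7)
The plan is to deduce \Cref{thm:AltX} from \Cref{cor:CW} by devissage of $\Sc(X,\cE)$ along the $G$-orbit stratification of $X$. Two structural facts will be used repeatedly. Since $\mathbf{X}$ is spherical, it contains only finitely many $\bfG$-orbits, and each such orbit $\mathbf{Y}$ is itself a spherical homogeneous $\bfG$-space, so for any $y\in Y$ the stabilizer $\bfH_y$ is a spherical subgroup of $\bfG$. Moreover, the differential of the action map $\fg\to T_y\mathbf{X}$ has kernel $\fh_y$, so $\mu_{\mathbf{X}}(T^*_y\mathbf{X})=\fh_y^\perp$, and hence
\begin{equation*}
\Im(\mu_{\mathbf{X}})=\bigcup_{y} G\cdot\fh_y^\perp,
\end{equation*}
where the union runs over representatives of the $G$-orbits in $X$.

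For the devissage I would stratify $X$ by closures of $G$-orbits and repeatedly invoke the standard $\fg$-equivariant short exact sequence of nuclear \Fre\ spaces
\begin{equation*}
0\to \Sc(U,\cE|_U)\to \Sc(X,\cE)\to \Sc_Z(X,\cE)\to 0,
\end{equation*}
for $Z\subset X$ a closed $G$-invariant subvariety with open complement $U$, where $\Sc_Z(X,\cE)$ carries a $\fg$-equivariant decreasing filtration by conormal order with associated graded pieces $\Sc(Z,\cE|_Z\otimes\Sym^k N^*_{Z/X})$ for $k\ge 0$. Tensoring with $\pi$ and using the long exact sequence of $\fg$-homology, the hypothesis $\H_0(\fg,\Sc(X,\cE)\hot\pi)\neq 0$ propagates, after iteration over the stratification and suitable control of the jet filtration, to non-vanishing of $\H_0(\fg,\Sc(Y,\cF)\hot\pi)$ for some single $G$-orbit $Y\subset X$ and some algebraic bundle $\cF$ on $Y$ obtained from $\cE|_Y$ by tensoring with a symmetric power of the conormal bundle of $Y$ in $X$. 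On the homogeneous space $Y=G/H_y$, Frobenius reciprocity for Schwartz sections translates this into the existence of a nonzero continuous $\fh_y$-equivariant map $\pi\to\sigma$, for the finite-dimensional $\fh_y$-module $\sigma$ obtained from the fibre of $\cF$ at $y$ tensored with the appropriate modular character. \Cref{cor:CW} then yields $\cO(\pi)\cap\fh_y^\perp\neq\emptyset$, and since $\cO(\pi)$ is a single $G$-orbit this gives $\cO(\pi)\subset G\cdot\fh_y^\perp\subset \Im(\mu_{\mathbf{X}})$, as desired.

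The main obstacle will be the devissage at the level of continuous Lie algebra homology of nuclear \Fre\ $\fg$-modules: the jet space $\Sc_Z(X,\cE)$ is a filtered inverse limit of its jet truncations rather than a direct sum of its graded pieces, so one must verify that a non-vanishing $\H_0$-class survives at some finite jet order, rather than being detected only in the limit. The resolution is to observe that, because $\pi$ is an irreducible Casselman--Wallach representation with a fixed infinitesimal character, the functor $\H_0(\fg,-\hot\pi)$ applied to the Schwartz jet bundle stabilises at some finite $k$, so the contribution of $\Sc(Z,\cE|_Z\otimes\Sym^k N^*_{Z/X})$ is detected in the long exact sequence for a finite-order truncation. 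Once this completion--exactness point is secured, the orbit-stratification argument combined with \Cref{cor:CW} completes the proof.
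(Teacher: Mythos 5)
Your skeleton (devissage over the $G$-orbit stratification, the conormal filtration along a closed orbit, Frobenius reciprocity on each orbit, then \Cref{cor:CW}) is the same as the paper's proof of \Cref{thm:X}, but the point where you yourself locate the "main obstacle" is a genuine gap, and your proposed fix does not work as stated. You run the devissage on the Schwartz-function side, where the quotient $\Sc(X,\cE)/\Sc(U,\cE)$ is a \emph{projective} limit of finite-order jet spaces, and you claim that $\H_0(\fg,-\hot\pi)$ "stabilises at some finite $k$" because $\pi$ has a fixed infinitesimal character. No mechanism is given for this: the graded pieces $\Sc(Z,\cE\otimes\Sym^k(N^*))$ carry no action of $Z(\fg)$ by scalars and no grading element separating the $k$'s for a general closed orbit $Z$ in a general spherical $\bf X$ (the analogous stabilisation in the Whittaker/Bruhat-filtration setting of Casselman--Hecht--Mili\v{c}i\'c uses torus weights on conormal powers against a generic character, which is absent here), and $\H_0$ of an inverse limit is not controlled by the $\H_0$'s of the truncations. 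The paper sidesteps this entirely by dualizing: the induction in \S\ref{subsec:PfX} is run with $G$-\emph{equivariant tempered distributions}, where by \Cref{thm:Filt}, \Cref{cor:Filt} and \Cref{prop:TenLim} the space $\Sc^*_Z(X,\cE,\tau)$ carries an \emph{exhaustive increasing} $G$-invariant filtration $F^n_Z(X,\cE)\hot\tau$ with quotients $\Sc^*(Z,\cE\otimes\Sym^n(CN_Z^X),\tau)$; hence a nonzero invariant distribution supported on $Z$ lies in a finite filtration level and its leading term is a nonzero invariant distribution on a graded piece. No stabilisation statement is needed.

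The second gap is that you never address Hausdorffness. \Cref{cor:CW} (and the Frobenius reciprocity you invoke, in the form needed) requires a nonzero \emph{continuous} $\fh_y$-equivariant map out of $\pi$, whereas your hypothesis and all the intermediate non-vanishing statements are about algebraic coinvariants $\H_0(\fg,-)$; if $\fg\cdot M$ is dense but not closed, $\H_0\neq 0$ produces no continuous invariant functional. This is exactly why the paper proves \Cref{thm:fin} first: using the Casselman embedding theorem, \Cref{prop:prod}, and \cite[Theorem C]{AGKL} (this is the place where sphericity enters beyond finiteness of orbits, via finiteness of $P$-orbits on $X$), the space $\H_0(\fg,\Sc(X,\cE)\hot\pi)$ is shown to be Hausdorff and finite-dimensional, which yields a continuous surjection of $\Sc(X,\cE\otimes\sigma)$ onto $\widetilde{\pi}$ and lets the distributional devissage of \Cref{thm:X} start. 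Without an argument of this kind your passage from $\H_0(\fg,\Sc(Y,\cF)\hot\pi)\neq 0$ to a continuous map $\pi\to\sigma$ is unjustified, so both the entry point and the exit point of your induction need repair.
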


\DimaC{Finally let us formulate a proposition that holds for any Lie subalgebra $\fr\subset \fg$, possibly non-spherical.

\begin{introprop}[\S \ref{sec:Pfmain}]\label{prop:GenMain}
Let $\fr\subset \fg$ be any subalgebra. Then for all $\fin\in \cM_{f.d.}(\fr)$ and all
$V\in \Irr(\fg)_{(\fr,\fin)},$
 we have
 \begin{equation}
2 \dim \AnV(V)\cap \fr^{\bot}\geq \dim \AnV (V)
 \end{equation}
\end{introprop}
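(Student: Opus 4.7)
My plan mirrors the proof of Theorem~\ref{thm:main} but extracts only the lower-bound half. In Theorem~\ref{thm:main}, sphericity of $\fh$ supplies the matching upper bound $\dim(\cO(V)\cap\fh^{\bot})\leq\tfrac12\dim\cO(V)$; the lower bound, which here concerns the possibly larger closed variety $\AnV(V)\cap\fr^{\bot}$, follows from general symplectic considerations that do not require sphericity.

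First I would pass the hypothesis to the associated graded. Fix a good filtration $F^\bullet V$ of $V$ and give $\sigma$ the trivial filtration $F^n\sigma=\sigma$ for $n\geq 0$. Then $\gr(V\otimes\sigma)\cong\gr V\otimes\sigma$ is a finitely generated graded $S(\fg)$-module supported on the associated variety $\mathrm{AV}(V)\subset\AnV(V)$, with $\dim\mathrm{AV}(V)=\tfrac12\dim\AnV(V)$ (Bernstein--Joseph); moreover every top-dimensional irreducible component of $\mathrm{AV}(V)$ is the closure of a Lagrangian subvariety of $\cO(V)$. A non-zero class in $\H_0(\fr,V\otimes\sigma)$ is dual to a non-zero $\fr$-invariant functional $\ell$ on $V\otimes\sigma$. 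Let $\bar\ell$ be its leading symbol at the minimal filtration degree where $\ell$ does not vanish: $\bar\ell$ is a non-zero functional on some graded piece of $\gr V\otimes\sigma$. In the graded limit the $\fr$-action on $\sigma$ (living in degree zero) disappears and the $\fr$-action on $\gr V$ becomes the symbol action via $\fr\hookrightarrow\fg\hookrightarrow S(\fg)$; from the equation $\ell\circ r=0$ on the previous filtration degree, one deduces that $\bar\ell$ is $\fr$-invariant in the graded sense, hence factors through $(\gr V/\fr\cdot\gr V)\otimes\sigma$. Therefore $\gr V/\fr\cdot\gr V\ne 0$, which translates to $\mathrm{AV}(V)\cap\fr^{\bot}\ne\emptyset$.

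Next I would combine two classical geometric inputs to derive the dimension bound. The first is coisotropy: the restriction $\mu\colon\cO(V)\to\fr^*$, $\xi\mapsto\xi|_{\fr}$, is the moment map for the coadjoint $\fr$-action, and $\mu^{-1}(0)=\cO(V)\cap\fr^{\bot}$. For $\xi\in\fr^{\bot}$ and $X,Y\in\fr$, the Kirillov--Kostant form satisfies $\omega_\xi(\ad^*X\cdot\xi,\ad^*Y\cdot\xi)=\xi([X,Y])=0$, so the $\fr$-orbit through $\xi$ is isotropic in $\cO(V)$; the usual identity $\ker d\mu_\xi=(T_\xi(\fr\cdot\xi))^\omega$ then gives that $\mu^{-1}(0)$ is coisotropic of dimension at least $\tfrac12\dim\cO(V)$ whenever non-empty. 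The second input is $\CC^\times$-equivariance: both $\mathrm{AV}(V)$ (as the support of a graded $S(\fg)$-module) and $\fr^{\bot}$ (as a linear subspace) are stable under the scaling $\xi\mapsto t\xi$ of $\fg^*$, so each irreducible component of $\mathrm{AV}(V)$ is $\CC^\times$-stable, as are its intersections with $\cO(V)$ and with $\overline{\cO(V)}\setminus\cO(V)$. Combining these with the equidimensionality of $\mathrm{AV}(V)$, the non-emptiness of $\mathrm{AV}(V)\cap\fr^{\bot}$ on some top-dimensional component promotes to non-emptiness of $\cO(V)\cap\fr^{\bot}$; coisotropy then yields $\dim(\AnV(V)\cap\fr^{\bot})\geq\dim(\cO(V)\cap\fr^{\bot})\geq\tfrac12\dim\cO(V)=\tfrac12\dim\AnV(V)$.

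The main obstacle I anticipate is the final transit step, promoting $\mathrm{AV}(V)\cap\fr^{\bot}\ne\emptyset$ to $\cO(V)\cap\fr^{\bot}\ne\emptyset$ so that the coisotropy bound applies at full dimension. A priori, the intersection could be trapped entirely in the boundary $\overline{\cO(V)}\setminus\cO(V)$, where coisotropy yields only a smaller bound involving a boundary orbit. The $\CC^\times$-equivariance sketched above should resolve this, but a fully rigorous argument likely requires careful tracking of the irreducible components and their boundary strata, or else an adaptation of the Whittaker-induction construction of Theorem~\ref{thm:maintwist} applied to the pair $(\fg,\fr)$.
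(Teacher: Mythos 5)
Your argument hinges on two claims, and both are genuinely problematic. First, the dimension bound rests entirely on the assertion that $\cO(V)\cap\fr^{\bot}$, when non-empty, is coisotropic in $\cO(V)$, and your justification does not establish this. The identity $\ker d\mu_\xi=(T_\xi(\fr\cdot\xi))^{\omega}$ only yields $T_\xi\mu^{-1}(0)\subset(T_\xi(\fr\cdot\xi))^{\omega}$, i.e.\ an \emph{upper} bound on tangent spaces of the zero fiber; it gives no lower bound on the dimension of its components. Indeed zero fibers of moment maps need not be coisotropic: for $\SL_2(\C)$ acting linearly on $\C^{2}=T^{*}\C$ one has $\mu^{-1}(0)=\{0\}$, of dimension $0<1$. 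For intersections $\cO\cap\fr^{\bot}$ with $\fr$ an arbitrary subalgebra (the generality of the proposition, where no sphericity or dimension hypothesis is available), coisotropy is precisely the kind of statement that requires Gabber-type involutivity rather than elementary moment-map formalism, so it cannot be taken as a classical input. Second, the transit step you yourself flag is a real gap: $\C^{\times}$-stability cannot exclude that the non-empty set you produce lies entirely in the boundary $\overline{\cO(V)}\setminus\cO(V)$, which is also a $\C^{\times}$-stable cone; moreover the auxiliary facts you invoke, namely $\dim\Asv(V)=\tfrac12\dim\AnV(V)$ and that every top-dimensional component of $\Asv(V)$ is the closure of a Lagrangian in $\cO(V)$, are known for Harish-Chandra modules but not for arbitrary simple $\fg$-modules, and even granted they would not force your intersection point onto a top-dimensional component.

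The paper's proof avoids both issues by changing the module. From a nonzero $\fr$-equivariant map $\xi:V\to\fin^{*}$ one takes the $\fg$-submodule $M\subset\Hom_{\C}(V,\fin^{*})$ generated by $\xi$. Simplicity of $V$ makes the $\fin^{*}$-valued pairing of $V$ with $M$ non-degenerate, so $\Anv(M)=\Anv(V)$; since $\xi$ generates a finite-dimensional $\fr$-submodule of $M$, Lemma \ref{lem:AsH} places the \emph{entire} associated variety $\Asv(M)$ inside $\fr^{\bot}$; and Gabber--Joseph (Theorem \ref{thm:ineq}) applied to $M$ gives $2\dim\Asv(M)\geq\dim\Anv(M)$, which is exactly the asserted bound for $\AnV(V)\cap\fr^{\bot}$ --- no non-emptiness of $\cO(V)\cap\fr^{\bot}$ and no orbit-level coisotropy are needed. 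Your leading-symbol computation on $V\otimes\fin$ is in the right spirit, but performed with $V$'s own good filtration it only detects non-emptiness of $\Asv(V)\cap\fr^{\bot}$; the dimension information has to come from a filtration adapted to $\fr$, i.e.\ from the cyclic module generated by the functional on the dual side. So the proposal as written has a genuine gap and would need to be repaired essentially along the paper's lines.
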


This proposition allows to replace in Theorems \ref{thm:main} and \ref{thm:maintwist}, and thus also in Corollary \ref{cor:CW} the assumption that $\fh$ is spherical by the assumption that $2\dim \cO\cap \fh^{\bot}\leq \dim \cO$ for any $\cO\subset \An \cV(V)$. We formalize this in \S \ref{sec:beyond} below, and apply to theta correspondence in type II \DimaE{and to degenerate Whittaker models}. }

\subsection{Conjectures}

We  conjecture that the non-Archimedean analogue of Corollary \ref{cor:CW} holds true. In the  non-Archimedean case, instead of the annihilator variety one uses the Zariski closure of the wave-front set of the (distribution) character of $\pi$ (see Notation \ref{notn:WF} below).
In the Archimedean case, the Zariski closure of the wave-front set of any Casselman-Wallach representation coincides with the annihilator variety by \cite[Theorem D]{Ross}.

However, the non-Archimedean analogue of Theorem \ref{thm:Irr}  is not known in general, though it is conjectured. Thus, we have two options for a  non-Archimedean analogue of Corollary \ref{cor:CW}: to state that all top orbits in the Zariski closure of the wave-front set intersect $\fh^{\bot}$ (or $\fh^{\bot}+\chi$ in the twisted case), or that some top orbit intersects.
If we opt for the stronger formulation, and apply it to the diagonal pair $\Delta G \subset G\times G$, we obtain  the statement that the top stable orbit is unique.

A certain partial $p$-adic  analogue of Theorem \ref{thm:main} is proven in \cite{GS_X}. We discuss it, as well as the conjectures, their corollaries and some partial evidence in \S \ref{sec:conj}.

\subsection{Structure of the paper}

In \S \ref{sec:Prel} we give the necessary preliminaries on associated varieties, annihilator varieties, and spherical subgroups. In particular, the theorem of Gabber and Joseph stating that the dimension of the associated variety is at least half the dimension of the annihilator variety, and a theorem of Li stating that for any nilpotent orbit $\cO\subset \fg^*$, the intersection $\cO\cap \fh^{\bot}$ is isotropic in $\cO$, and thus has dimension at most $\dim\cO/2$.

In \S \ref{sec:Pfmain} we prove Theorem \ref{thm:main} in the following way.
The non-vanishing of $\H_0(\fh,V\otimes \sigma)$ implies the existence of a non-zero $\fh$-invariant map $T:V\to \sigma^*$.
 Denote by  $M\subset \Hom_{\C}(V,\sigma)$ the submodule generated by $T$. Since $V$ is irreducible, $M$ has a non-degenerate $\sigma^*$-valued pairing with $V$. Thus $M$ and $V$ have the same annihilator variety. By Theorem \ref{thm:Irr} this variety is the closure of some nilpotent orbit $\cO$. Since $M$ is generated by an  $\fh$-finite vector, its associated variety lies in $\overline{\cO}\cap \fh^{\bot}$. The theorems of Gabber-Joseph and of Li now give inequalities $$\dim\overline{\cO}\cap \fh^{\bot}\leq \dim \cO/2\leq \dim\overline{\cO}\cap \fh^{\bot},$$ implying $\dim\overline{\cO}\cap \fh^{\bot}=\dim \cO/2.$ Applying Li's theorem to orbits $\cO'\subset \overline{\cO}$, we obtain that $\cO$ intersects $\fh^{\bot}$, and the intersection has dimension $\dim\cO/2$.

In \S \ref{sec:twist} we prove a generalization of Theorem \ref{thm:maintwist}, using \DimaA{the Kazhdan filtration from the theory of $W$-algebras, and Gabber's theorem that says that the associated variety is a coisotropic subvariety of the annihilator variety.}

In \S \ref{sec:CW} we give the necessary preliminaries on Casselman-Wallach representations, and deduce Corollary \ref{cor:CW} from Theorems \ref{thm:main} and \ref{thm:maintwist} by applying them to Harish-Chandra modules.

In \S \ref{sec:appl} we give first applications of Corollary \ref{cor:CW}.
First, we apply Corollary \ref{cor:CW} to symmetric pairs and \Dima{partially} answer \cite[Question 1]{PraSak} for Archimedean pairs, using also \cite{Djo1,Djo2,Oht,PT,Harris,GS_X}. Then, we apply Corollary \ref{cor:CW} to branching problems.
Then we give restrictions on annihilator varieties of irreducible quotients of Jacquet modules. Finally, we treat irreducible quotients of twisted Jacquet modules, proving Corollary \ref{cor:TJ}.

In \S \ref{sec:TwistAppl} we apply Corollaries \ref{cor:TJ} and \ref{cor:CW} to give necessary conditions on existence of several mixed models widely used in the theory of automorphic forms: Rankin-Selberg, Bessel, Shalika,  Klyachko, \Dima{and Ginzburg-Rallis} models.

In \S \ref{sec:X} we deduce Theorem \ref{thm:AltX} from Corollary \ref{cor:CW} in the following way. The Casselman embedding theorem and \cite[Theorem C]{AGKL} imply that $\H_0(\fg,\Sc(X,\cE)\hot \pi)$ is Hausdorff and finite-dimensional. Then, the case of transitive action of $\bf G$ on $\bf X$ follows from Corollary \ref{cor:CW} by a version of Frobenius reciprocity for small induction. The general case follows by induction on the number of $\bf G$-orbits on $\bf X$ using the theory of Schwartz functions and tempered distributions.

\DimaC{
In \S \ref{sec:beyond} we formulate generalizations of the main results that replace the sphericity assumption by a dimension assumption. We apply these generalizations to theta correspondence in type II, using a geometric lemma which is postponed to Appendix \ref{app:Ido} by Ido Karshon. \DimaE{As an additional application, we give a new proof of the result of \cite{Mat} that provides a necessary condition for the existence of Whittaker models. }
}

In \S \ref{sec:conj} we discuss proven and conjectural analogues of Corollary \ref{cor:CW} and their applications.

\subsection{Acknowledgements}

We thank Joseph Bernstein, Shachar Carmeli, Maxim Gurevich,  Erez Lapid, \Dima{Ivan Losev, Paul Nelson, Dipendra Prasad, Alexander Shamov,} and Aviv Taller for fruitful discussions.  We also thank Alisa Qui  for proving Lemma \ref{lem:quot}\DimaC{, Ido Karshon for writing Appendix \ref{app:Ido}, and the anonymous referees for useful remarks}.

 D. G. was partially supported by ERC StG 637912, BSF grant 2019724, and  ISF grant 249/17.

\section{Preliminaries on associated and annihilator varieties}\label{sec:Prel}
Let $\cU(\fg)$ denote the universal enveloping algebra of $\fg$. While $\cU(  \mathfrak{g})  $ is
not commutative, it admits a natural Poincare-Birkhoff-Witt filtration $\cU_i(\fg)$, such that the associated graded algebra $\gr\left(\cU
\mathfrak{g})  \right)  $ is the symmetric algebra $S(
\mathfrak{g})  ,$ and one has a ``symbol" map $\sigma$ from $\cU(
\mathfrak{g})  $ to $S\left(  \mathfrak{g}\right)  $.
Note that $S(\fg)$ is the algebra of polynomials on $\fg^*$.

\begin{defn}
For a $\fg$-module $V$ let its \emph{annihilator} $\Ann(V)\subset \cU(\fg)$ denote the two-sided ideal consisting of elements that act by zero on $V$.

The \emph{annihilator variety} of $V$ is the set of common zeros in $\fg^*$ of the set of symbols of all the elements of $\Ann(V)$:
\[
\AnV(V)  =\text{Zeroes}\left(  \sigma (\left(  \Ann(V)\right)
\right)  \subset\mathfrak{g}^{\ast}%
\]
\end{defn}

\begin{notn}
Denote by $\cN(\fg^*)$ the set of nilpotent elements of $\fg^*$.
\end{notn}

By \cite[Theorem 9.1]{KosNil}, $\cN(\fg^*)$ is the set of common zeros of all the homogeneous $ad(\fg)$-invariant elements of $S(\fg)$ of positive degrees. Such elements are precisely the symbols of non-constant elements of the center of $\cU(\fg)$. Thus the annihilator variety of any $V\in \Irr(\fg)$ lies in  $\cN(\fg^*)$.

Denote by $\cM_{f}(\fg)$ the category of finitely-generated $\fg$-modules.

\begin{defn}

For any $V\in \cM_{f}(\fg)$, define its \emph{associated variety} $\Asv(V)$ in the following way. Let $S:=\{m_1,...,m_k\}$ be a set of generators of $V$, and define a filtration $V_i$ of $V$ by $V_i:=\cU_i(\fg)S$. Let $\gr V$ be the associated graded $S(\fg)$-module, and let $\Ann(\gr V)$ be the annihilator ideal of $\gr V$ in $S(\fg)$. Then $\Asv(V)$ is the set of zeros of $\Ann(\gr V)$ in $\fg^*$.
\end{defn}
It follows from the definitions that $\Asv(V)\subset \Anv(V)$.
For more information on $\Asv(V)$, and in particular the proof that it does not depend on the choice of generators, we refer the reader to \cite{Vog}.

\begin{thm}[Gabber-Joseph, see {\cite[Theorem
9.11]{KrLe}}]\label{thm:ineq}
For any $V\in \cM_{f}(\fg)$, $$2\dim \Asv(V)\geq \dim \Anv(V).$$
\end{thm}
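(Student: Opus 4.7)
The plan is to give the classical Gabber--Joseph argument, whose two main ingredients are the Poisson structure on $S(\fg)$ coming from $\cU(\fg)$ and Gabber's involutivity theorem. First I would fix a good PBW filtration on $V$ and pass to the associated graded $\gr V$, a finitely generated $S(\fg)$-module whose support is $\Asv(V)$. Setting $I:=\Ann_{\cU(\fg)}(V)$, the symbol identity $\sigma([a,b])=\{\sigma(a),\sigma(b)\}$ (for $a\in I,\ b\in \cU(\fg)$, since $[a,b]\in I$) shows that $\gr I\subset S(\fg)$ is automatically a Poisson ideal. Consequently $\Anv(V)=V(\gr I)$ is $G$-invariant and saturated for the Kirillov--Kostant Poisson structure on $\fg^*$, i.e.\ a finite union of closures of coadjoint orbits; each such orbit is a symplectic leaf. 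The inclusion $\Asv(V)\subset\Anv(V)$ also follows, since $\gr I$ annihilates $\gr V$.

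The main geometric input is Gabber's integrability theorem: for any finitely generated filtered $\cU(\fg)$-module, the radical of $\Ann_{S(\fg)}(\gr V)$ is closed under the Poisson bracket. Translated geometrically, $\Asv(V)$ is a coisotropic subvariety of $\fg^*$. Now pick a coadjoint orbit $\cO$ whose closure is an irreducible component of $\Anv(V)$ of top dimension $2d=\dim\Anv(V)$. At any smooth point of $\Asv(V)$ lying in $\cO$, the coisotropy statement says exactly that $\Asv(V)\cap \cO$ is coisotropic in the symplectic manifold $\cO$, and hence a standard linear-symplectic count gives $\dim(\Asv(V)\cap \cO)\geq d$, which yields the desired bound $2\dim \Asv(V)\geq \dim\Anv(V)$.

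The delicate point, and the step I would take most care with, is to guarantee that some component of $\Asv(V)$ of top dimension actually meets a top-dimensional orbit $\cO$ of $\Anv(V)$, rather than being trapped in the boundary $\overline{\cO}\setminus \cO$. For $V\in \Irr(\fg)$ this is immediate from \Cref{thm:Irr}, since then $\Anv(V)=\overline{\cO}$ and any closed subvariety $\Asv(V)\subset\overline{\cO}$ of dimension $\geq\dim \Asv(V)$ is forced to meet the dense orbit $\cO$ on a component of top dimension. For a general $V\in \cM_f(\fg)$ I would reduce to the irreducible case by filtering $V$ by finitely generated submodules with irreducible subquotients, using additivity $\Asv(V)=\bigcup \Asv(V_i/V_{i-1})$ and $\Anv(V)=\bigcup \Anv(V_i/V_{i-1})$, and applying the irreducible bound to whichever subquotient realises a top component of $\Anv(V)$.

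The main obstacle is the appeal to Gabber's involutivity theorem, whose proof uses micro-local/commutative-algebra techniques and a specialisation to the tangent cone; I would simply quote it from \cite{KrLe}. The remaining work is then a careful dimension bookkeeping on singular unions of coadjoint orbit closures, together with the reduction to irreducible subquotients described above, both of which are routine once the involutivity input is in hand.
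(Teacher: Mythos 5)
Your overall strategy (Gabber involutivity plus a symplectic-leaf count) cannot be closed the way you close it, and the key step is exactly the one you flag as ``delicate''. Gabber's theorem, at a generic smooth point $z$ of $\Asv(V)$, only gives $\dim \Asv(V)\geq \tfrac12\dim (\bfG\cdot z)$, i.e. $2\dim\Asv(V)\geq \dim\overline{\bfG\cdot \Asv(V)}$; to upgrade this to $\dim\Anv(V)$ you must know that $\Asv(V)$ meets an orbit of dimension $\dim\Anv(V)$, equivalently that $\overline{\bfG\cdot\Asv(V)}=\Anv(V)$. Your justification of this in the irreducible case is a non sequitur: it is simply false that a closed subvariety of $\overline{\cO}$ ``of dimension $\geq\dim\Asv(V)$'' must meet the dense orbit $\cO$ --- a closed subvariety of an orbit closure can perfectly well sit inside the boundary $\overline{\cO}\setminus\cO$ (for instance inside the closure of a smaller orbit), and \Cref{thm:Irr} says nothing to exclude this. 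The statement that the associated variety of a simple module is not trapped in the boundary of its annihilator variety is a substantive theorem, not a topological triviality, so as written your argument only proves an inequality against $\dim\overline{\bfG\cdot\Asv(V)}$, which a priori may be strictly smaller than $\dim\Anv(V)$.

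The reduction of the general finitely generated case to the irreducible case is also broken: a finitely generated $\cU(\fg)$-module need not have finite length (take $V=\cU(\fg)$ itself), so there is no finite filtration with irreducible subquotients and the additivity of $\Asv$ and $\Anv$ you invoke is unavailable. For such general $V$ the annihilator variety also need not be a finite union of coadjoint orbit closures, and need not contain any orbit of dimension $\dim\Anv(V)$ (again $V=\cU(\fg)$, where $\Anv(V)=\fg^*$ but all orbits have dimension at most $\dim\fg-\operatorname{rk}\fg$), so even your opening step ``pick an orbit $\cO$ with $\dim\cO=\dim\Anv(V)$'' fails outside the nilpotent setting. Note finally that the paper does not prove this theorem at all: it quotes \cite[Theorem 9.11]{KrLe}, whose proof is a purely noncommutative Gelfand--Kirillov dimension argument with bimodules and makes no use of involutivity; your symplectic route resembles the way the paper later invokes \cite{Gab} for the Kazhdan filtration, but to make it into an actual proof you would have to supply the missing saturation statement and treat modules of infinite length, neither of which is routine.
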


\begin{lem}\label{lem:AsH}
If $V\in \cM(\fg)$ is generated by a finite-dimensional $\fh$-invariant subspace $V_0$ then $\Asv(V)\subset \fh^{\bot}$.
\end{lem}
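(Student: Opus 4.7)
The plan is to work directly from the definition of the associated variety, using the natural filtration $V_i := \cU_i(\fg) V_0$. Since $V_0$ is finite dimensional this is an honest good filtration compatible with the PBW filtration on $\cU(\fg)$. The key observation is that the $\fh$-invariance of $V_0$ is the statement that $\fh \cdot V_0 \subset V_0$ (inside $V$), whereas a priori the product $\cU_1(\fg)\cdot V_0$ sits only in $V_1$. So when we pass to $\gr V$, the class of $\fh \cdot V_0$ in $V_1/V_0 = \gr_1 V$ vanishes. In other words, regarding $\fh$ as sitting in $S^1(\fg) = \gr_1\cU(\fg)$, the action of $\fh$ on $V_0 \subset \gr_0 V$ is zero.

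The next step is to upgrade this to: $\fh$ annihilates all of $\gr V$. Here I would use that $V_0$ generates $\gr V$ as an $S(\fg)$-module (which follows from $V_0$ generating $V$ as a $\cU(\fg)$-module together with the definition of the filtration), together with the crucial fact that $S(\fg)$ is \emph{commutative}. Concretely, for any $v \in V_0$, any $x \in S(\fg)$, and any $h \in \fh$, one has $h \cdot (x \cdot v) = x \cdot (h \cdot v) = 0$ in $\gr V$. Therefore $\fh \subset \Ann(\gr V)$.

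Finally, viewing elements of $\fh \subset \fg = S^1(\fg)$ as linear polynomial functions on $\fg^*$, the common zero locus of $\fh$ in $\fg^*$ is by definition $\fh^{\bot}$. Since $\Asv(V)$ is cut out by (a subset of generators of) $\Ann(\gr V) \supset \fh$, we conclude $\Asv(V) \subset \fh^{\bot}$, as desired.

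There is no real obstacle here; the only subtle point is making sure that the filtration $V_i = \cU_i(\fg) V_0$ is genuinely a good filtration (so that $\gr V$ is finitely generated over $S(\fg)$ by the image of $V_0$), which is automatic from the finite-dimensionality of $V_0$ and the definition $V_i = \cU_i(\fg) V_0$ via the PBW filtration.
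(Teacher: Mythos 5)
Your proof is correct and takes essentially the same route as the paper: both work with the filtration $V_i=\cU_i(\fg)V_0$ and show that the symbols of $\fh$ act by zero on $\gr V$, hence $\Asv(V)\subset\fh^{\bot}$. The paper packages the verification as $\fh V_i\subset[\fh,\cU_i(\fg)]V_0+\cU_i(\fg)\fh V_0\subset V_i$, which is just the ungraded form of your ``annihilate the generators $\gr_0 V=V_0$ and propagate by commutativity of $S(\fg)$'' step.
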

\begin{proof}
Define a filtration on $V$ by $V_i:=\cU_i(\fg)V_0$. Then we have $$\fh V_i\subset [\fh,\cU_i(\fg)]V_0+\cU_i(\fg)\fh V_0\subset V_i.$$
Thus, the symbols of $\fh$ act by zero on $\gr V$, and thus $\Asv(V)\subset \fh^{\bot}$.
\end{proof}
\Dima{The converse also holds (if $V$ is finitely-generated). Another equivalent condition is that $\fh$ acts on $V$ locally finitely.}

\begin{defn}
An algebraic subgroup $\bfH\subset \bfG$ is called \emph{spherical} if its action on the flag variety of $\bfG$ has an open orbit.
\end{defn}
The spherical subgroups have been extensively studied and eventually classified, see {\it e.g.} \cite[Chapter 5]{Tim} for an exposition. In particular, a classical theorem of Brion and Vinberg says that any spherical subgroup has  finitely many orbits on the flag variety. However, the only fact on spherical subgroups that we will use is the following theorem.

\begin{thm}[{\cite[Theorem 3.8]{Li}}]\label{thm:Geo}
For any nilpotent coadjoint orbit $\cO\subset \cN(\fg^*)$ and any spherical subgroup $\bfH \subset \bfG$, the intersection $\cO\cap \fh^{\bot}$ is an isotropic subvariety of $\cO$ with respect to the
Kirillov-Kostant-Souriau symplectic structure.
\end{thm}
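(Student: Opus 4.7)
The plan is to reduce the isotropy assertion to an open $\bfH$-orbit property via symplectic/Hamiltonian reduction, and then to extract that open-orbit property from the sphericity of $\bfH$.

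First I would realize $\cO \cap \fh^{\bot}$ as the zero fiber of the moment map for the restricted $\bfH$-action on $\cO$. The coadjoint $\bfG$-action on $\cO$ is Hamiltonian with moment map the inclusion $\cO \hookrightarrow \fg^{*}$, and restricting to $\bfH$ yields $\mu_{H} : \cO \to \fh^{*}$, $\mu_{H}(\xi) = \xi|_{\fh}$, so that $\mu_{H}^{-1}(0) = \cO \cap \fh^{\bot}$. For $v = \ad^{*}(X)\xi \in T_{\xi}\cO$, the computation $d\mu_{H}|_{\xi}(v)(Z) = -\xi([X,Z])$ for $Z \in \fh$ identifies $\ker d\mu_{H}|_{\xi}$ with $T_{\xi}\cO \cap \fh^{\bot}$; the KKS identity $\omega_{\xi}(\ad^{*}(X)\xi, \ad^{*}(Z)\xi) = \xi([X,Z])$ identifies the same kernel with the symplectic orthogonal $(T_{\xi}(\bfH \cdot \xi))^{\omega_{\xi}}$ inside $T_{\xi}\cO$. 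Combined with the inclusion $T_{\xi}(\bfH \cdot \xi) \subset T_{\xi}\mu_{H}^{-1}(0)$ (valid since $\bfH$ preserves the level set), this shows that $\mu_{H}^{-1}(0)$ is automatically coisotropic, and is isotropic precisely when $T_{\xi}(\bfH \cdot \xi) = T_{\xi}\mu_{H}^{-1}(0)$ at smooth points, i.e.\ when every irreducible component of $\cO \cap \fh^{\bot}$ carries an open $\bfH$-orbit.

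For the open-orbit conclusion I would invoke Knop's characterization of sphericity: $\bfH$ is spherical in $\bfG$ iff the $\bfG$-action on $T^{*}(\bfG/\bfH) = \bfG \times^{\bfH} \fh^{\bot}$ is coisotropic, i.e.\ generic $\bfG$-orbits on $T^{*}(\bfG/\bfH)$ are coisotropic submanifolds. Under the $\bfG$-equivariant moment map $\tilde{\mu} : T^{*}(\bfG/\bfH) \to \fg^{*}$, $[g,\xi] \mapsto \Ad^{*}(g)\xi$, one has $\tilde{\mu}^{-1}(\cO) = \bfG \times^{\bfH}(\cO \cap \fh^{\bot})$, so the $\bfG$-orbits on $\tilde{\mu}^{-1}(\cO)$ are in bijection with the $\bfH$-orbits on $\cO \cap \fh^{\bot}$. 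A dimension count then translates the coisotropy of a generic $\bfG$-orbit on $\tilde{\mu}^{-1}(\cO)$ into the openness of the corresponding $\bfH$-orbit in its irreducible component of $\cO \cap \fh^{\bot}$.

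The main obstacle I anticipate is propagating the open-orbit property from the generic component to \emph{every} irreducible component of $\cO \cap \fh^{\bot}$, since $\mu_{H}$ need not have maximal rank on all components. I would handle this by stratifying $\cO \cap \fh^{\bot}$ into locally closed $\bfH$-invariant smooth pieces and applying the same principle to each stratum, using that coisotropy of the ambient $\bfG$-action on $T^{*}(\bfG/\bfH)$ is inherited by $\bfG$-invariant subvarieties of $\tilde{\mu}^{-1}(\cO)$. An alternative is to exploit the $\mathbb{C}^{\times}$-contracting action on $\fg^{*}$, under which $\cO$ and $\fh^{\bot}$ are stable, together with upper semicontinuity of $\bfH$-orbit dimensions to deduce the open-orbit property on every component from its generic validity.
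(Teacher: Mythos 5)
You should first note that the paper offers no proof of this statement at all: it is quoted from \cite[Theorem 3.8]{Li}, and only its consequence $\dim(\cO\cap\fh^{\bot})\le\tfrac12\dim\cO$ is used later, so your argument has to stand on its own. Your first reduction is sound in the direction you need: if a dense subset of each irreducible component of $Z:=\cO\cap\fh^{\bot}$ lies in a single $\bfH$-orbit, then at such points $T_{\xi}Z=T_{\xi}(\bfH\cdot\xi)\subseteq\ker d\mu_{H}|_{\xi}=(T_{\xi}(\bfH\cdot\xi))^{\omega_{\xi}}$, and isotropy follows by continuity on the smooth locus. But the aside that $\mu_{H}^{-1}(0)$ is ``automatically coisotropic'' is wrong: it conflates $T_{\xi}\mu_{H}^{-1}(0)$ with $\ker d\mu_{H}|_{\xi}$; from $T_{\xi}Z\subseteq\ker d\mu_{H}|_{\xi}$ one only gets $(T_{\xi}Z)^{\omega}\supseteq T_{\xi}(\bfH\cdot\xi)$, not $(T_{\xi}Z)^{\omega}\subseteq T_{\xi}Z$. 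Were it true, together with the theorem it would make $\cO\cap\fh^{\bot}$ Lagrangian, which the paper records in Remark \ref{rem:Lag}(ii) as not known in general.

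The genuine gap is the second step. Knop's criterion gives coisotropy only for \emph{generic} $\bfG$-orbits in $T^{*}(\bfG/\bfH)$, whereas $\tilde\mu^{-1}(\cO)=\bfG\times^{\bfH}(\cO\cap\fh^{\bot})$ over a nilpotent orbit is a proper, highly non-generic $\bfG$-stable subvariety, so none of its orbits are covered by that statement. Neither of your patches bridges this: the claimed ``inheritance of coisotropy by $\bfG$-invariant subvarieties of $\tilde\mu^{-1}(\cO)$'' is not a theorem (such subvarieties are not symplectic, and their $\bfG$-orbits are typically far too small to be coisotropic in $T^{*}(\bfG/\bfH)$), and invoking it is essentially assuming what must be proved; the $\C^{\times}$-contraction merely rescales each component of $\cO\cap\fh^{\bot}$ into itself, and upper semicontinuity of orbit dimension cannot manufacture an open $\bfH$-orbit on a component where none has been exhibited. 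Note also that the statement you reduce to --- an open $\bfH$-orbit on (every component of) $\cO\cap\fh^{\bot}$ --- is strictly stronger than isotropy and, in its strongest form, is equivalent by Remark \ref{rem:Lag}(iii) to the half-dimensionality $2\dim(\cO\cap\fh^{\bot})=\dim\cO$, i.e.\ to the hard content of the result itself; so as implemented the reduction is close to circular. Passing from generic coisotropy of $T^{*}(\bfG/\bfH)$ to these special nilpotent fibres is exactly the non-trivial work done in Li's proof, and it requires a genuine specialization argument rather than the generic-orbit statement alone.
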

For the definition of the Kirillov-Kostant-Souriau symplectic form see e.g. \cite[Proposition 1.1.5]{CG}. \Dima{We will only need the corollary that if $\cO\cap \fh^{\bot}$ is non-empty then its dimension is at most half the dimension of $\cO$. }
%
%

\begin{rem}\label{rem:Lag}
\begin{enumerate}[(i)]
\item The intersection $\cO\cap \fh^{\bot}$ is frequently empty.
\item It is possible that $\cO\cap \fh^{\bot}$ is always Lagrangian in $\cO$. This is the case when $\bfH$ is a solvable (spherical) subgroup of $\bfG$ (see \cite[Theorem 1.5.7]{CG}), and when $\bfH$ is a symmetric subgroup of $\bfG$ (see \cite[Proposition 3.1.1]{Gin}).
\item \label{it:LagEq} \Dima{By Theorem \ref{thm:Geo} and \cite[Corollary 5.18]{Vog}} the
 following are equivalent:
 \begin{enumerate}
 \item $\cO\cap \fh^{\bot}$ includes a non-empty Lagrangian subvariety of $\cO$.
 \item $\dim \cO=2\dim \cO\cap \fh^{\bot}$.
 \item $\bf H$ acts on $\cO\cap \fh^{\bot}$ with an open orbit.
 \item The scheme-theoretic intersection of $\cO$ with $\fh^{\bot}$ \Dima{has a reduced non-empty open subscheme}.
 \end{enumerate}
 \end{enumerate}
\end{rem}

\section{Proof of Theorem \ref{thm:main} and \Cref{prop:GenMain}}\label{sec:Pfmain}

\DimaC{

\begin{proof}[Proof of \Cref{prop:GenMain}]
Since $V\in \Irr(\fg)_{\fr,\sigma}$,  there exists a non-zero $\fr$-invariant map $\xi:V\to \fin^*$. Consider $\Hom_{\C}(V,\fin^*)$ as a $\fg$-module via the action on the argument, and let $M\subset \Hom_{\C}(V,\fin^*)$ be the $\fg$-submodule generated by $\xi$. Then we have a natural  $\fin^*$-valued $\fg$-invariant bilinear form $V\times M\to \fin^*$. This form has no right kernel by the definition of $M$, and
no left kernel since $V$ is a simple module. Thus $M$ and $V$ have the same annihilator ideal in $\cU(\fg)$ and thus
$$\Anv(M)=\Anv(V)=\overline{\cO}.$$

Since $\xi$ is $\fr$-equivariant and $\fin^*$ is finite-dimensional, we get that $\xi$ generates a finite-dimensional $\fr$-submodule of $M$ under the action of $\fr$ on the argument. By Lemma \ref{lem:AsH}, this implies $\Asv(M)\subset \fr^{\bot}$. Thus
\begin{equation}\label{=Assh}
\Asv(M)\subset \Anv(M)\cap \fr^{\bot}
\end{equation}

By Theorem \ref{thm:ineq} we have
\begin{equation}\label{=dimAss}
2\dim \Asv(M)\geq \dim \Anv(M).
\end{equation}
Altogether we get
\begin{equation*}
2 \dim \AnV(V)\cap \fr^{\bot}=2 \dim \AnV(M)\cap \fr^{\bot}\geq 2\dim \Asv(M)\geq\dim \AnV(M)=\dim \AnV(V)
\end{equation*}
\end{proof}

\begin{proof}[Proof of \Cref{thm:main}]
Denote $\cO:=\cO(V)$.
Since the number of nilpotent orbits is finite, $ \AnV(V)=\overline{\cO}$ is a finite union of nilpotent orbits: $\overline{\cO}=\cO\cup\bigcup_{i=1}^n \cO_i.$
Since $\fh\subset \fg$\ is spherical, Theorem \ref {thm:Geo} implies that for any $i$   we have
\begin{equation}\label{=Oi}
2\dim \cO_i\cap \fh^{\bot}\leq \dim\cO_i<\dim \cO\,\,.
\end{equation}
From (\ref{=Oi}) and \Cref{prop:GenMain} applied to $\fr=\fh$
 we obtain that $\cO$ intersects $\fh^{\bot}$
 and $$2\dim \cO\cap \fh^{\bot}\geq \dim \cO.$$
 By Theorem \ref {thm:Geo} again, we have  $2\dim \cO\cap \fh^{\bot}\leq \dim \cO$ and thus  $2\dim \cO\cap \fh^{\bot}=\dim \cO$.
\end{proof}

This proof was inspired by the proof of \cite[Theorem 8.4]{Vog}, which follows from Theorem \ref{thm:main} applied to a symmetric subgroup.
}

\begin{remark}\label{rem:Irr}
\DimaD{Since the irreducibility of $V$ was only used in the proof to show that the natural pairing is non-degenerate, we can weaken the irreducibility assumption and assume only that there exists $\xi\in \Hom_{\fh}(V,\fin^*)$ that does not vanish on any (non-trivial) $\fg$-submodule of $V$.}
\DimaE{In the conclusion of this more general theorem one will have to replace $\cO(V)$ by some orbit $\cO\subset \AnV(V)$ of maximal dimension.}
\end{remark}

\section{Proof of Theorem \ref{thm:maintwist}}\label{sec:twist}
\DimaA{
To prove Theorem \ref{thm:maintwist} we use the same ingredients as in the proof of Theorem \ref{thm:main}. However, we will need a different filtration on the universal enveloping algebra, that will be sensitive to the character $\chi$. In other words, we need $\fn$ to lie in non-positive filtras, and the derived algebra $[\fn,\fn]$ to lie in negative filtras. We will use the following Kazhdan filtration from the theory of $W$-algebras (\cite{GG}).

Fix a semi-simple element $h\in \fg$ such that $\ad(h)$ has integer eigenvalues.
Let $\cU_j$ denote the standard filtration on $\cU:=\cU(\fg)$, and let $\cU_j=\bigoplus_i\cU_j(i)$ denote the grading on $\cU_j$ given by the adjoint action of $h$.
Define the Kazhdan filtration on $\cU$ by
\begin{equation}
F^{\Kaz}_k\cU=\sum_{i+2j\leq k} \cU_j(i).
\end{equation}
Note that $k$ can be any integer - positive or negative, and that the filtras $F^{\Kaz}_k\cU$ are infinite-dimensional.
We have $\bigcap_kF^{\Kaz}_k\cU=\{0\}$ and $\bigcup_kF^{\Kaz}_k\cU=\cU$.
The associated graded algebra $S^{\Kaz}(\fg)$ is the symmetric algebra of $\fg$, where the grading is given on $\g(i)$ by  $i+2$.

For any finitely-generated $\fg$-module $V$, let
$\Anv^{\Kaz}(V)$ and $\Asv^{\Kaz}(V)$ denote the annihilator variety and the associated variety with respect to the Kazhdan filtration.
Let us show that $\Anv^{\Kaz}(V)$  coincides with the usual annihilator variety of $V$.
\begin{lem}
Let $I\subset \cU$ be a two-sided ideal, let $\gr(I)\subset S(\fg)$ denote the ideal spanned by the symbols of $I$ under the standard filtration, and $\gr^{\Kaz}(I)\subset S(\fg)$ denote the ideal spanned by the symbols of $I$ under the Kazhdan filtration. Then $\gr(I)=\gr^{\Kaz}(I)$.
\end{lem}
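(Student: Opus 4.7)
The plan is to reduce everything to $\ad(h)$-weight vectors and then check that the standard symbol and the Kazhdan symbol coincide on such vectors. First, because $h \in \fg \subset \cU$, the adjoint action of $h$ on $\cU$ is the commutator $[h,\cdot]$, so for any two-sided ideal $I$ and any $u \in I$ we have $[h,u]\in I$. Thus $I$ is $\ad(h)$-stable, and decomposes as $I = \bigoplus_i I(i)$, where $I(i) = I \cap \cU(i)$. Both the standard filtration $\cU_j$ and the Kazhdan filtration $F^{\Kaz}_k\cU$ are also $\ad(h)$-stable (the former because $\cU_j$ is characterized intrinsically, the latter by its very definition), so each filtra decomposes compatibly with the $\ad(h)$-grading. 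Since both $\gr(I)$ and $\gr^{\Kaz}(I)$ are thereby $\ad(h)$-graded ideals of $S(\fg)$, it suffices to prove the equality in each $\ad(h)$-weight.

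Fix a weight $i$ and pick a nonzero $\ad(h)$-weight element $u \in I(i)$. Let $j$ be minimal with $u \in \cU_j$, so that $u \in \cU_j(i) \setminus \cU_{j-1}(i)$. The standard symbol $\sigma(u)$ is the image of $u$ in $\cU_j/\cU_{j-1} = S^j(\fg)$, and lies in the $\ad(h)$-weight-$i$ piece $S^j(\fg)(i)$. For the Kazhdan filtration, one checks directly from the definition that
\begin{equation*}
F^{\Kaz}_k\cU \cap \cU(i) = \cU_{\lfloor(k-i)/2\rfloor}(i),
\end{equation*}
so that $u \in F^{\Kaz}_{i+2j}\cU \setminus F^{\Kaz}_{i+2j-1}\cU$, and the Kazhdan symbol $\sigma^{\Kaz}(u)$ is the image of $u$ in $(F^{\Kaz}_{i+2j}\cU / F^{\Kaz}_{i+2j-1}\cU)(i) = \cU_j(i)/\cU_{j-1}(i) = S^j(\fg)(i)$, which is the very same element as $\sigma(u)$.

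Finally, combining the two preceding paragraphs, the $\ad(h)$-weight-$i$ part of $\gr(I)$ is spanned by the symbols $\sigma(u)$ as $u$ ranges over $\ad(h)$-weight-$i$ elements of $I$ that are nonzero modulo $\cU_{j-1}$ for some $j$; and the Kazhdan analogue is spanned by $\sigma^{\Kaz}(u)$ of the same elements, indexed by $k = i+2j$. Since $\sigma(u) = \sigma^{\Kaz}(u)$ as elements of $S(\fg)$, these two spanning sets coincide, giving $\gr(I)(i) = \gr^{\Kaz}(I)(i)$, and summing over $i$ yields $\gr(I) = \gr^{\Kaz}(I)$. The main point to track carefully, and the only mildly delicate step, is the bookkeeping identifying $(F^{\Kaz}_{i+2j}/F^{\Kaz}_{i+2j-1})(i)$ with $\cU_j(i)/\cU_{j-1}(i)$ under both grading conventions.
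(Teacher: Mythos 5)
Your proof is correct and follows essentially the same route as the paper: both arguments use that $I$ and the two filtrations are $\ad(h)$-stable to reduce to $\ad(h)$-weight vectors, on which the standard and Kazhdan symbol maps coincide. Your explicit identity $F^{\Kaz}_k\cU \cap \cU(i) = \cU_{\lfloor(k-i)/2\rfloor}(i)$ is just a crisper packaging of the bookkeeping the paper does in its two separate inclusions.
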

\begin{proof}
For any $s$, $\ad(h)$ acts semi-simply on $\cU_s$, and preserves the subspace $I\cap \cU_s$.  Thus, any element of $I\cap \cU_s$ is a sum of $\ad(h)$-homogeneous elements of $I\cap \cU_s$.
Since on $\ad(h)$-homogeneous elements the two symbol maps coincide, the images of $I$ in $S(\fg)$ under the two symbol maps span the same ideal of $S(\fg)$.

In more details: for any $s$, let $\sigma_s:\cU_s(\fg)\to \mathrm{S}_s(\fg)$ and $\kappa_s:F_s^{\Kaz}\cU\to \mathrm{S}_s^{\Kaz}(\fg)$ denote the symbol maps with respect to the standard and the Kazhdan filtration respectively.

Let us show first that $\gr(I)\subset \gr^{\Kaz}(I)$. By definition, $\gr(I)$ is spanned by elements of the form $\sigma_s(a)$ for some $a\in \cU_s\cap I$.
Since $I\cap \cU_s$ is an  $\ad(h)$-invariant subspace of $\cU_s$, we have
$a=\sum b_j$, where $b_j\in \cU_s(i_j)\cap I$ and $i_1<\dots<i_n\in \bZ$.
Let $k_j:=2s+i_{j}$. Then $\kappa_{k_j}(b_j)=\sigma_s(b_j)$. Thus
$\sigma_s(a)=\sum_{j}\kappa_{k_j}(b_j)\in \gr^{\Kaz}(I)$.

To show the opposite inclusion, let $a\in F_k^{\Kaz}(\cU)$. Let $s$ be minimal such that $a\in \cU_s$. Since $F_k^{\Kaz}(\cU)\cap  \cU_s\cap I$ is an $\ad(h)$-invariant subspace of $\cU_s$, we have $a=\sum b_j$, where
$$b_j\in \cU_s(i_{j})\cap F_k^{\Kaz}(\cU)\cap I=\cU_s(i_{j})\cap \cU_{l_{j}}\cap I,$$
where $l_j$ is the integer part of $(k-i_j)/2$.

Then $\kappa_k(b_j)=\sigma_{l_j}(b_j)$ for any $j$. Thus
$\kappa_k(a)=\sum \kappa_k(b_j)=\sum\sigma_{l_j}(b_j)\in \gr(I)$.
\end{proof}

\begin{cor}\label{cor:Kaz}
$\Anv^{\Kaz}(V)=\Anv(V)$.
\end{cor}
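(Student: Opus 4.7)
The plan is to apply the preceding lemma directly with $I = \Ann(V)$. Recall that the usual annihilator variety $\Anv(V)$ is by definition the set of common zeros in $\fg^*$ of the ideal $\gr(\Ann(V)) \subset S(\fg)$ generated by the symbols of elements of $\Ann(V)$ under the standard PBW filtration. The Kazhdan annihilator variety $\Anv^{\Kaz}(V)$ is defined analogously, as the zero locus of the ideal $\gr^{\Kaz}(\Ann(V)) \subset S(\fg)$ spanned by the symbols of $\Ann(V)$ under the Kazhdan filtration $F^{\Kaz}_\bullet \cU$.

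The preceding lemma, applied to the two-sided ideal $I = \Ann(V)$, gives the equality of ideals
\[
\gr(\Ann(V)) = \gr^{\Kaz}(\Ann(V))
\]
in $S(\fg)$. Since the underlying ring $S(\fg)$ is the same in both cases (only the grading differs), two equal ideals have the same vanishing locus in $\fg^*$. Hence
\[
\Anv(V) = \mathrm{Zeroes}(\gr(\Ann(V))) = \mathrm{Zeroes}(\gr^{\Kaz}(\Ann(V))) = \Anv^{\Kaz}(V),
\]
which is the claim.

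There is no real obstacle here; the corollary is an immediate formal consequence of the lemma once one unwinds the definitions. The only thing worth noting is that although the Kazhdan filtration is two-sided-indexed and has infinite-dimensional filtras, its associated graded is still the symmetric algebra $S(\fg)$ (with a regraded grading by $i+2$ on $\fg(i)$), so the notion of ``zero set in $\fg^*$'' makes sense and is independent of the chosen grading. Thus the zero-set interpretation of the annihilator variety depends only on the underlying ideal of $S(\fg)$, which is what the lemma controls.
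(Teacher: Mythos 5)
Your proof is correct and is exactly the paper's intended argument: the corollary is obtained by applying the preceding lemma to the two-sided ideal $I=\Ann(V)$ and observing that equal ideals in $S(\fg)$ have the same zero locus in $\fg^*$. Nothing is missing.
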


Now we can prove Theorem \ref{thm:maintwist} in a way similar to the proof of Theorem \ref{thm:main}. We first adapt the main steps.
By Gabber's theorem {\cite[Theorem I]{Gab}} we have

\begin{thm}[{cf. \cite[Theorem I]{Gab}}]\label{thm:TwiGab}
\DimaF{If $\Asv^{\Kaz}(V)$ is non-empty then}
$$\dim\Anv^{\Kaz}(V)\leq 2\dim\Asv^{\Kaz}(V).$$
\end{thm}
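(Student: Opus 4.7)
The plan is to reduce to Gabber's original theorem by verifying that the Kazhdan filtration is a Poisson filtration in essentially the same sense as the PBW filtration, but with the Poisson bracket lowering degree by $2$ rather than $1$. Once this Poisson compatibility is in place, Gabber's proof goes through with only cosmetic changes, yielding that $\Asv^{\Kaz}(V)$ is coisotropic in $\Anv^{\Kaz}(V)$, from which the dimension inequality follows.

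First I would establish the Poisson property
\begin{equation*}
[F^{\Kaz}_k\cU, F^{\Kaz}_l\cU]\subset F^{\Kaz}_{k+l-2}\cU.
\end{equation*}
By definition each filtra is a sum of the $\ad(h)$-weight subspaces $\cU_j(i)$ with $i+2j$ bounded by the appropriate integer. For $a\in \cU_j(i)$ and $b\in \cU_{j'}(i')$, the commutator $[a,b]$ lies in $\cU_{j+j'-1}$ by the standard PBW filtration property, and has $\ad(h)$-weight $i+i'$ since $\ad(h)$ is a derivation of the commutator. Thus
\begin{equation*}
[a,b]\in \cU_{j+j'-1}(i+i')\subset F^{\Kaz}_{(i+i')+2(j+j'-1)}\cU=F^{\Kaz}_{k+l-2}\cU,
\end{equation*}
and the general case follows by decomposing arbitrary elements of $F^{\Kaz}_k\cU$ into $\ad(h)$-weight components, exactly as in the proof of \Cref{cor:Kaz}. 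Consequently $\gr^{\Kaz}\cU(\fg)$, which as a graded algebra is $S(\fg)$ with the grading where $\fg(i)$ sits in degree $i+2$, carries a Poisson bracket that matches, up to this rescaling, the Kirillov-Kostant-Souriau bracket on $S(\fg)$.

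Next I would invoke Gabber's theorem in its filtration-independent formulation: for any good filtration of a finitely generated $\cU(\fg)$-module whose associated graded inherits a Poisson structure, every irreducible component of the associated variety is coisotropic in some irreducible component of the annihilator variety. By \Cref{cor:Kaz}, $\Anv^{\Kaz}(V) = \Anv(V)$, whose irreducible components are closures of nilpotent coadjoint orbits, each a symplectic leaf of the KKS structure on $\fg^*$. A coisotropic subvariety of a symplectic variety has dimension at least half that of the ambient variety, so taking a top-dimensional component of $\Anv^{\Kaz}(V)$ that meets $\Asv^{\Kaz}(V)$ yields $2\dim\Asv^{\Kaz}(V)\geq \dim\Anv^{\Kaz}(V)$.

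The main obstacle is purely bookkeeping: one must verify that Gabber's original argument (which operates on the associated graded via microlocal/Koszul-type techniques) is insensitive to whether the Poisson bracket lowers the grading degree by $1$ or by $2$. Since the argument is homogeneous in the grading and the shift enters only as an overall rescaling, this adaptation is essentially formal.
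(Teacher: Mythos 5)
Your opening computation is correct and matches what the paper leaves implicit: for $a\in\cU_j(i)$, $b\in\cU_{j'}(i')$ one has $[a,b]\in\cU_{j+j'-1}(i+i')$, so indeed $[F^{\Kaz}_k\cU,F^{\Kaz}_l\cU]\subset F^{\Kaz}_{k+l-2}\cU$ and $\gr^{\Kaz}\cU(\fg)\cong S(\fg)$ carries the Kirillov--Kostant--Souriau bracket, now of degree $-2$. Up to that point you are following the same route as the paper, whose proof of Theorem~\ref{thm:TwiGab} consists of citing \cite[Theorem I]{Gab} for the Kazhdan-filtered algebra and offers no further argument.

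The gap is in your final paragraph. The irreducible components of $\Anv^{\Kaz}(V)=\Anv(V)$ are \emph{closures} of nilpotent coadjoint orbits; they are not symplectic leaves (only the open orbit is a leaf), so the statement ``a coisotropic subvariety of a symplectic variety has dimension at least half that of the ambient variety'' does not apply to them directly. What Gabber's Theorem I actually provides is involutivity of $\sqrt{\Ann(\gr^{\Kaz}V)}$, i.e.\ each irreducible component of $\Asv^{\Kaz}(V)$ is coisotropic with respect to the symplectic leaf through its \emph{generic} point -- and that leaf may be a smaller orbit contained in the boundary of the top orbit of $\Anv(V)$. Involutivity alone cannot give $2\dim\Asv^{\Kaz}(V)\geq\dim\Anv^{\Kaz}(V)$: for instance the origin is an involutive subvariety of every orbit closure, and nothing in the involutivity statement forbids $\Asv^{\Kaz}(V)$ from lying entirely in small boundary leaves. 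Your phrase ``taking a top-dimensional component of $\Anv^{\Kaz}(V)$ that meets $\Asv^{\Kaz}(V)$'' silently assumes that $\Asv^{\Kaz}(V)$ meets the \emph{open} orbit of such a component, which is essentially the assertion being proved. The paper sidesteps this by quoting the dimension inequality itself as (an adaptation of) \cite[Theorem I]{Gab}; if you insist on deriving it from the bare coisotropicity statement, you need an additional input controlling the leaf through the generic point -- for example a Gabber--Joseph-type argument in the spirit of Theorem~\ref{thm:ineq} adapted to the Kazhdan filtration, as in the $W$-algebra literature -- and not only the half-dimension property of coisotropic subvarieties of symplectic varieties.
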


Now assume that all the eigenvalues of $\ad(h)$ are even, and let $\fn:=\bigoplus_{i<0}\fg(i)$.
Let $\chi \in \fg^*(2)$.
Let $\fs\subset \fg(0)$ be a spherical subgroup that stabilizes $\chi$, and let $\fh:=\fs\oplus \fn$. 
\DimaF{Denote also $\fn':=\bigoplus_{i<-2}\fg(i)$, and let $\cU(\fg)\fn'$ denote the left ideal generated by $\fn'$.

\begin{lem}\label{lem:non0}
$F^{\Kaz}_{-1}(\cU(\fg))\subset \cU(\fg)\fn'.$\end{lem}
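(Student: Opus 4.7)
My plan is to argue via a specific PBW basis. The assumption that $\ad(h)$ has only even eigenvalues gives a direct-sum decomposition $\fg = \fg_{\geq -2} \oplus \fn'$ with $\fg_{\geq -2} := \bigoplus_{i \geq -2}\fg(i)$. I would fix an ordered basis of $\fg$ consisting of $\ad(h)$-eigenvectors in which all basis elements of $\fg_{\geq -2}$ precede those of $\fn'$. The resulting PBW basis $\{m_\gamma\}$ of $\cU(\fg)$ has each monomial uniquely of the form $m_\gamma = u_\gamma v_\gamma$ with $u_\gamma \in \cU(\fg_{\geq -2})$ and $v_\gamma \in \cU(\fn')$ each a PBW monomial in the induced bases. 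Whenever $v_\gamma \neq 1$, $m_\gamma$ lies in $\cU(\fg)\fn'$, so the lemma reduces to showing that PBW monomials with $v_\gamma = 1$ cannot appear in the PBW expansion of any element of $F^{\Kaz}_{-1}\cU$.

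Each PBW monomial $m_\gamma$ with factors $X_1, \ldots, X_{j_\gamma}$ of $\ad(h)$-weights $i_1, \ldots, i_{j_\gamma}$ has total weight $i_\gamma = \sum_\ell i_\ell$ and standard-filtration degree $j_\gamma$, so $m_\gamma \in \cU_{j_\gamma}(i_\gamma)$. The step I expect to require the most care is verifying the precise compatibility of the Kazhdan filtration with this PBW basis, namely that for $u = \sum c_\gamma m_\gamma$, membership $u \in F^{\Kaz}_k\cU$ is equivalent to $i_\gamma + 2j_\gamma \leq k$ for every $\gamma$ with $c_\gamma \neq 0$. This should follow from the facts that $\ad(h)$ acts semisimply on each $\cU_j$ and preserves the standard filtration, which together give $\cU_j(i) = \Span\{m_\gamma : j_\gamma \leq j,\; i_\gamma = i\}$; one can then read off the stated equivalence directly from the definition $F^{\Kaz}_k\cU = \sum_{i+2j\leq k}\cU_j(i)$.

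Granting this compatibility, the remaining computation is immediate: every factor of $u_\gamma$ has $\ad(h)$-weight $\geq -2$ and contributes a nonnegative amount to $i_\gamma + 2j_\gamma$, whereas every factor of $v_\gamma$ has weight $\leq -4$ (using again that $\ad(h)$-eigenvalues are even) and contributes at most $-2$. Hence if $v_\gamma = 1$ then $i_\gamma + 2j_\gamma \geq 0$, forcing $m_\gamma \notin F^{\Kaz}_{-1}\cU$. Therefore every PBW monomial in the expansion of any $u \in F^{\Kaz}_{-1}\cU$ satisfies $v_\gamma \neq 1$, hence lies in $\cU(\fg)\fn'$, and summing yields $u \in \cU(\fg)\fn'$, completing the proof.
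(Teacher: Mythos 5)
Your proof is correct and is essentially the paper's argument: both rest on PBW monomials in a basis of $\ad(h)$-eigenvectors ordered so that the $\fn'$-factors come last (hence any monomial containing one lies in $\cU(\fg)\fn'$), together with the count $i_\gamma+2j_\gamma=\sum_\ell(i_\ell+2)\ge 0$ for monomials all of whose factors have weight $\ge -2$. The only organizational difference is that the paper reduces to $\ad(h)$-homogeneous elements and inducts on the standard filtration degree $\cU_n(\fg)$, whereas you verify directly — and correctly, via the semisimple action of $\ad(h)$ on each $\cU_j$ — that $F^{\Kaz}_k\cU$ is exactly the span of the PBW monomials with $i_\gamma+2j_\gamma\le k$, which makes the lemma immediate.
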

\begin{proof}
 It is enough to prove  that for every $n$, $F^{\Kaz}_{-1}(\cU(\fg))\cap \cU_n(\fg)\subset \cU(\fg)\fn'$.
We prove this by induction on $n$. The base case $n=0$ is obvious, since $F^{\Kaz}_{-1}(\cU(\fg))\cap \cU_0(\fg)=0$. For the induction step let $a \in F^{\Kaz}_{-1}(\cU(\fg))\cap \cU_n(\fg)$. We can assume that $a$ is homogeneous with respect to the grading given by $ad(h)$, and let $d$ denote the degree. If $d\geq -2n$ then the condition $a \in F^{\Kaz}_{-1}(\cU(\fg))\cap \cU_n(\fg)$ implies $a\in \cU_{n-1}(\fg)$ and thus $a\in \cU(\fg)\fn'$ by the induction hypothesis. Thus from now on we assume that $d<-2n$.

For every $i$, fix a basis $\{X_i^j\}$ for $\fg(i)$. Compose from them a basis $B$ for $\fg$, ordered in the decreasing order of $i$. Write $a$ as a linear combination of PBW monomials $b_n^l$ corresponding to the  basis $B$.
Since $B$ consists of $ad(h)$-homogeneous elements, each monomial $b_n^l$ is also  $ad(h)$-homogeneous, of  degree $d$.
Each monomial $b_n^l$ is a product of  at most $n$  elements of $B=\{X_i^j\}$. Thus, in order to have degree $d<-2n$ with respect to $ad(h)$, it has to include an $X_i^j$ for  some $i<-2$. Because of the way we ordered $B$, the rightmost element of each $b_n^l$ must be an  $X_i^j$ for  some $i<-2$, and thus will lie in $\fn'$. Thus $b_n^l\in \cU(\fg)\fn'$ for each $l$, and thus $a\in \cU(\fg)\fn'$.
\end{proof}}


\begin{lemma}\label{lem:TwiAs}
Let $V$ be a $\fg$-module generated by a finite-dimensional
\DimaD{$\fh$-invariant vector subspace $W$ on which $\fn$ acts} via $\chi$.
 Then \DimaF{$\Asv^{\Kaz}(V)$ is a non-empty subset of} $\chi+\fh^{\bot}$.
\end{lemma}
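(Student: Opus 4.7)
The plan is to equip $V$ with the Kazhdan filtration $V_k := F^{\Kaz}_k(\cU(\fg)) \cdot W$, which is exhaustive and increasing, and which makes $\gr^{\Kaz} V$ a finitely generated $S(\fg)$-module generated by the image of $W$. I will verify two facts: first, that $V_{-1} = 0$, from which $W$ injects into $\gr^{\Kaz}_0 V$ and the Nullstellensatz yields $\Asv^{\Kaz}(V) \neq \emptyset$; and second, that for every $X \in \fh$ the element $X - \chi(X) \in S(\fg)$ lies in $\Ann(\gr^{\Kaz} V)$. The second fact gives $\Asv^{\Kaz}(V) \subset \chi + \fh^{\bot}$, because the vanishing ideal of the affine subspace $\chi + \fh^{\bot}$ in $S(\fg)$ is generated by $\{X - \chi(X) : X \in \fh\}$.

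The first fact is immediate. Since $\chi \in \fg^*(2)$ vanishes on every $\fg(i)$ with $i \neq -2$, it vanishes on $\fn' = \bigoplus_{i < -2} \fg(i)$; since $\fn$ acts on $W$ via $\chi$, we get $\fn' W = 0$ and hence $\cU(\fg) \fn' \cdot W = 0$. Lemma \ref{lem:non0} then yields
\[
V_{-1} = F^{\Kaz}_{-1}(\cU(\fg)) \cdot W \subset \cU(\fg) \fn' \cdot W = 0.
\]

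For the second, decompose $X \in \fh$ as $X = X_{\fs} + \sum_{i \leq -2} X_i$ with $X_\fs \in \fs \subset \fg(0)$ and $X_i \in \fg(i) \cap \fn$. Since $\chi$ vanishes on $\fs$ and on $\fg(i)$ for $i \neq -2$, we have $\chi(X) = \chi(X_{-2})$, and
\[
X - \chi(X) = X_{\fs} + \bigl(X_{-2} - \chi(X_{-2})\bigr) + \sum_{i \leq -4} X_i
\]
is the Kazhdan-homogeneous decomposition into pieces of degrees $2$, $0$, and $i+2 \leq -2$ respectively. To check that each piece annihilates $\gr^{\Kaz} V$ I apply it to $uw$ with $u \in F^{\Kaz}_k(\cU(\fg))$ and $w \in W$, using $Y \cdot uw = [Y,u]w + u(Yw)$. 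For $Y = X_\fs$, both $Yw \in W$ and $[Y,u] \in F^{\Kaz}_k(\cU(\fg))$ (since $\ad(X_\fs)$ preserves each $\cU_j(i')$), giving $Y V_k \subset V_k \subset V_{k+1}$. For $Y = X_{-2} - \chi(X_{-2})$, the identity $X_{-2} w = \chi(X_{-2}) w$ collapses $Y \cdot uw$ to $[X_{-2}, u] w$, and $\ad(X_{-2})$ shifts the Kazhdan filtration by $-2$, so $Y V_k \subset V_{k-2} \subset V_{k-1}$. For $Y = X_i$ with $i \leq -4$, we have $X_i w \in \fn' W = 0$ and $[X_i, u] \in F^{\Kaz}_{k+i}(\cU(\fg))$, so $Y V_k \subset V_{k+i} \subset V_{k+i+1}$. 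In each case the corresponding Kazhdan symbol acts by zero on $\gr^{\Kaz} V$.

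The main subtlety is the $i = -2$ piece: the element $X_{-2}$ alone does not strictly lower the Kazhdan filtration, and it is precisely the prescription that $\fn$ act on $W$ via $\chi$ that forces the subtraction of the scalar $\chi(X_{-2})$ to obtain an element annihilating $\gr^{\Kaz} V$. Elsewhere the argument is routine bookkeeping: it uses only that $\ad(X)$ for $X \in \fg(i)$ shifts the Kazhdan filtration by $i$, together with the $\fh$-invariance of $W$ and the vanishing of $\chi$ on $\fn'$.
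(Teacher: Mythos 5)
Your proof is correct and follows essentially the same route as the paper: the same Kazhdan filtration $V_k=F^{\Kaz}_k(\cU)W$, Lemma \ref{lem:non0} plus $\fn'W=0$ for non-emptiness, and the commutator bookkeeping showing the symbols of $X-\chi(X)$, $X\in\fh$, kill $\gr^{\Kaz}V$. The only cosmetic difference is that you split the negative-degree pieces into $i=-2$ and $i\le -4$, while the paper treats all $i<0$ uniformly via $Xw=\chi(X)w$.
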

\begin{proof}
\DimaD{Define $V_k:=F_k^{\Kaz}(\cU)W$ for any $k\in \bZ$. 
\DimaF{By Lemma \ref{lem:non0}, $V_{-1}=0$, thus $\gr_0(V)\neq 0$,  thus $\gr(V)\neq 0$ and $\Asv^{\Kaz}(V)$ is non-empty.
Now,} for any $i<0$ and any $X\in \fg(i)$} we have
$$(X-\chi(X))V_k=(X-\chi(X))F_k^{\Kaz}(\cU)W\subset [X,F_k^{\Kaz}(\cU)]W\subset F_{k+i}^{\Kaz}(\cU)W=V_{k+i}$$
Since $X\in F_{i+2}^{\Kaz}(\cU)$, and $\chi(X)=0$ unless $i=-2$, we obtain that $X-\chi(X)$ act by zero on $\gr^{\Kaz}(V)$.
\DimaD{Similarly, for any $X\in \fs$ we have
$$(X-\chi(X))V_k=XV_k=XF_k^{\Kaz}(\cU)W\subset [X,F_k^{\Kaz}(\cU)]W+F_k^{\Kaz}(\cU)XW\subset F_{k}^{\Kaz}(\cU)W=V_{k},$$
}
and thus again $X-\chi(X)$ acts by zero on $\gr^{\Kaz}(V)$.
 Thus $\Asv^{\Kaz}(V)\subset \chi+\fh^{\bot}$.
\end{proof}

\begin{lemma}\label{lem:dim}
For every nilpotent orbit $\cO\subset \fg^*$, we have $\dim (\overline{\cO}\cap (\chi+\fh^{\bot}))\leq \dim \cO/2$
\end{lemma}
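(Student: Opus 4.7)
The plan is to deform the affine subspace $\chi+\fh^{\bot}$ into the linear subspace $\fh^{\bot}$ through the one-parameter family $s\chi+\fh^{\bot}$, $s\in\bC$, and use upper semi-continuity of fiber dimension to reduce the inequality to its untwisted counterpart supplied by Theorem~\ref{thm:Geo}. Set $X_{s}:=\overline{\cO}\cap(s\chi+\fh^{\bot})$ for $s\in\bC$.

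First I would observe that for every $s\in\bC^{\times}$, scalar multiplication by $s$ on $\fg^{*}$ sends $\chi+\fh^{\bot}$ to $s\chi+\fh^{\bot}$ (because $\fh^{\bot}$ is a linear subspace) and preserves $\overline{\cO}$ (because the closure of any coadjoint nilpotent orbit is a $\bC^{\times}$-invariant cone, as one sees by picking an $\sll_{2}$-triple $(e',h',f')$ with $e'$ corresponding to a point of $\cO$ and computing $\Ad^{*}(\exp(t h'))$). Consequently $X_{s}=sX_{1}$, so $\dim X_{s}=\dim X_{1}$ for every $s\in\bC^{\times}$.

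Next I would introduce the closed incidence subvariety
\begin{equation*}
\mathcal{Y}:=\{(s,\xi)\in\bC\times\overline{\cO} \mid \xi-s\chi\in\fh^{\bot}\}
\end{equation*}
and apply upper semi-continuity of fiber dimension to the first projection $\pi:\mathcal{Y}\to\bC$, whose fiber over $s$ is naturally identified with $X_{s}$. Since $\dim\pi^{-1}(s)=\dim X_{1}$ on the dense open set $\bC^{\times}$, it follows that $\dim X_{0}\geq\dim X_{1}$. It then suffices to bound $\dim X_{0}=\dim(\overline{\cO}\cap\fh^{\bot})$ by $\tfrac{1}{2}\dim\cO$. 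Writing $\overline{\cO}$ as the disjoint union of its finitely many orbits $\cO'$ and applying Theorem~\ref{thm:Geo} to each, we obtain $\dim(\cO'\cap\fh^{\bot})\leq\tfrac{1}{2}\dim\cO'\leq\tfrac{1}{2}\dim\cO$, and taking the maximum gives the desired bound.

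The main obstacle is that Theorem~\ref{thm:Geo} requires $\fh=\fs\oplus\fn$ to be spherical in $\fg$, whereas only the sphericity of $\fs$ inside the Levi $\fl$ is hypothesized. I would bridge this gap by invoking the theorem on \emph{spherical parabolic induction} (due to Panyushev): for any parabolic $\fp=\fl\oplus\fn$ in $\fg$ and any spherical subalgebra $\fs\subset\fl$, the subalgebra $\fs\oplus\fn$ is automatically spherical in $\fg$. With this input the reduction to the untwisted case is complete.
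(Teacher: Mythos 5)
Your proof is correct and takes essentially the same route as the paper: the paper likewise degenerates the affine slice $\chi+\fh^{\bot}$ to $\fh^{\bot}$ along the line through $\chi|_{\fh}$ (viewing your $X_s$ as the fibers of the restriction map $\overline{\cO}\to\fh^*$), identifies the nonzero fibers with one another using the torus $\exp(th)$ instead of scalar multiplication, invokes the same "special fiber has dimension at least the generic fiber" semicontinuity, and bounds the zero fiber by Theorem \ref{thm:Geo} applied to the orbits in $\overline{\cO}$. The only real difference is that you make explicit the needed fact that $\fs\oplus\fn$ is spherical in $\fg$ (parabolic induction of sphericity), a point the paper leaves implicit when it cites Theorem \ref{thm:Geo}.
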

\begin{proof}
Consider the restriction map $\overline{\cO}\to \fh^*$. Consider the fibers on the line through $\chi|_{\fh}\in \fh^*$. The dimension of the fiber of zero is at most $\dim \cO/2$ by Theorem \ref{thm:Geo}. All the other fibers are isomorphic using the action of the torus defined by $h$. Since the dimension of a special fiber is at least the dimension of the generic fiber, the lemma follows.
\end{proof}

We can now prove the following reformulation of Theorem \ref{thm:maintwist}.

\begin{thm}\label{thm:GenTwist}
Let $\fin\in \cM_{f.d.}(\fh)$ such that $\fn$ acts on $\fin$ via $\chi$.
Then for any $V\in \Irr(\fg)_{(\fh,\fin)}$ we have
\begin{equation}\label{=GT}
2\dim \left( \cO(V)\cap (\chi+\fh^{\bot})\right)=\dim \cO(V)
\end{equation}
In particular, the intersection is non-empty.
\end{thm}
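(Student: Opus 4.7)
The plan is to repeat the architecture of the proof of \Cref{thm:main} (and of \Cref{prop:GenMain}), but to work throughout with the Kazhdan filtration rather than the standard PBW filtration, so as to make the construction sensitive to the character $\chi$. The four tools already set up in \S\ref{sec:twist} — \Cref{cor:Kaz} (the Kazhdan annihilator variety agrees with the usual one), \Cref{thm:TwiGab} (Gabber's inequality in the Kazhdan setting), \Cref{lem:TwiAs} (the Kazhdan associated variety of a module generated by a $\chi$-eigenvector for $\fn$ lies in $\chi+\fh^{\bot}$), and \Cref{lem:dim} (the analogue of Li's isotropy bound for $\chi+\fh^{\bot}$) — are exactly the four ingredients that entered the untwisted proof.

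Concretely, the first step is to fix, from $V\in\Irr(\fg)_{(\fh,\sigma)}$, a non-zero $\fh$-invariant map $\xi\colon V\to \sigma^{*}$, and to form the $\fg$-submodule $M\subset \Hom_{\C}(V,\sigma^{*})$ generated by $\xi$ under the action on the argument $(X\cdot f)(v)=-f(Xv)$. A direct computation gives $X\cdot \xi=\chi(X)\xi$ for $X\in\fn$, since $\fn$ acts on $\sigma^{*}$ by $-\chi$. Using that $\fs$ stabilizes $\chi$ (equivalently $\chi([\fs,\fn])=0$, which is forced by the very consistency of $\sigma$ as an $\fh$-module), one checks that the $\fh$-submodule $W\subset M$ generated by $\xi$ is finite-dimensional, $\fh$-invariant, and $\fn$ still acts on $W$ via $\chi$. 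Thus $M$ is generated by the finite-dimensional $\fh$-invariant subspace $W$ on which $\fn$ acts by $\chi$, and \Cref{lem:TwiAs} applies, giving
\[
\Asv^{\Kaz}(M)\subset \chi+\fh^{\bot}.
\]

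The second step mirrors the irreducibility argument of \S\ref{sec:Pfmain}: the pairing $V\times M\to \sigma^{*}$ has no right kernel by construction of $M$ and no left kernel since $V$ is simple, so $M$ and $V$ share the same annihilator ideal in $\cU(\fg)$. Hence $\Anv(M)=\overline{\cO(V)}$, and by \Cref{cor:Kaz} also $\Anv^{\Kaz}(M)=\overline{\cO(V)}$. Applying \Cref{thm:TwiGab} now yields
\[
\dim \cO(V)\;=\;\dim \Anv^{\Kaz}(M)\;\leq\;2\dim \Asv^{\Kaz}(M)\;\leq\;2\dim\bigl(\overline{\cO(V)}\cap(\chi+\fh^{\bot})\bigr).
\]
Decomposing $\overline{\cO(V)}=\cO(V)\sqcup\bigcup_{i}\cO_{i}$ with $\dim\cO_{i}<\dim\cO(V)$, \Cref{lem:dim} applied to each $\overline{\cO_{i}}$ gives $\dim(\overline{\cO_{i}}\cap(\chi+\fh^{\bot}))\leq \dim\cO_{i}/2<\dim\cO(V)/2$, so the contribution of the smaller orbits cannot saturate the lower bound above. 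Therefore $\Asv^{\Kaz}(M)$ meets the open orbit $\cO(V)$, and $2\dim(\cO(V)\cap(\chi+\fh^{\bot}))\geq \dim\cO(V)$; the matching upper bound is precisely \Cref{lem:dim} applied to $\cO(V)$ itself, which completes \eqref{=GT}.

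The step I expect to require the most care is the construction of $W$: one must verify that although $\xi$ is a priori only an $\fh$-eigenvector for $\fn$, the full $\fh$-span of $\xi$ inside $M$ remains a $\chi$-eigenspace for $\fn$. This is where the compatibility $\chi([\fs,\fn])=0$ is essential — without it, $\fs$-translates of $\xi$ would acquire spurious $\fn$-weights and \Cref{lem:TwiAs} would not apply. Every other step is either a formal transcription of the argument of \S\ref{sec:Pfmain} or a direct appeal to the Kazhdan-filtration lemmas already proved in this section.
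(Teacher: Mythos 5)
Your proposal is correct and follows essentially the same route as the paper: form the module $M$ generated by $\xi$ inside $\Hom_{\C}(V,\sigma^*)$, identify $\Anv(M)$ with $\overline{\cO(V)}$ via simplicity of $V$, and combine \Cref{cor:Kaz}, \Cref{thm:TwiGab}, \Cref{lem:TwiAs} and \Cref{lem:dim} exactly as the paper does. Your extra care in checking that the $\fh$-span of $\xi$ is a finite-dimensional $\chi$-eigenspace for $\fn$ is a detail the paper leaves implicit (and, as you note parenthetically, $\chi([\fs,\fn])=0$ is automatic since $\fn$ acts on $\sigma$ by a character), so there is no substantive difference.
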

\begin{proof}
Let $\xi:V\to \sigma^*$ be a non-zero $\fh$-equivariant linear map. Let $M\subset \Hom_{\C}(V,\sigma^*)$ be the $\fg$-module generated by $\xi$. As before, we have $\Ann(M)=\Ann(V)$, and thus $\Anv(M)=\overline{\cO}$, where $\cO:=\cO(V)$.
By Corollary \ref{cor:Kaz} and Theorem \ref{thm:TwiGab}, we have
\begin{equation}\label{=GTBer}
\dim \cO\leq 2\dim \Asv^{\Kaz}(M)
\end{equation}
On the other hand, by Lemma \ref{lem:TwiAs} we have
\begin{equation}\label{=GTInt}
\Asv^{\Kaz}(M)\subset \Anv(M) \cap (\chi+\fh^{\bot})=\overline{\cO}\cap (\chi+\fh^{\bot}).
\end{equation}
By Lemma \ref{lem:dim}, for every nilpotent orbit $\cO'$ we have
\begin{equation}\label{=GTIntDim}
2\dim (\cO'\cap (\chi+\fh^{\bot}))\leq \dim \cO'.
\end{equation}

From (\ref{=GTBer},\ref{=GTInt}, \ref{=GTIntDim}) we obtain \eqref{=GT}.
\end{proof}

\begin{remark}
A natural question to ask is whether $ \cO\cap (\chi+\fh^{\bot})$ is a Lagrangian subvariety of $\cO=\cO(V)$. While we do not know the answer to this question, \cite[Theorem I]{Gab} implies that at least one irreducible component of $\cO\cap (\chi+\fh^{\bot})$ of maximal dimension is a Lagrangian subvariety of $\cO$.
\end{remark}
}

\section{Casselman - Wallach representations and the proof of Corollary \ref{cor:CW}}\label{sec:CW}


Fix a real reductive group $G$, and a maximal compact subgroup $K \subset G$. For the definition of real reductive group see {\it e.g.} \cite[\S 2.1]{Wal}.
Let $\fg$ and $\mathfrak{k}$ denote the complexified Lie algebras of $G$ and $K$.

\begin{defn}
A $({\mathfrak{g}},K)$-module is a ${\mathfrak{g}}$-module $V$ with a
locally finite action of $K$ such the two induced actions of ${\mathfrak{k}}$
coincide and
$$\pi(Ad(k)(X))=\pi(k)\pi(X)\pi(k^{-1}) \text{ for any }k\in K \text{ and } X
\in {\mathfrak{g}}.$$

A finitely-generated $({\mathfrak{g}},K)$-module is called admissible if any
representation of $K$ appears in it with finite (or zero) multiplicity. In
this case we also call it a Harish-Chandra module.
\end{defn}

%
%
%
%
%

\begin{notation}
 Denote by $%
\mathcal{M}_{\infty}(G)$ the category of smooth admissible {Fr\'{e}chet \,}
representations of $G$ of moderate growth (see \cite[\S 11.5]{Wal} or \cite%
{CasGlob}), where admissible means that the space of $K$-finite vectors is
an admissible $({\mathfrak{g}},K)$-module. Denote by $\mathcal{M}_{HC}(G)$
the category of admissible $({\mathfrak{g}},K)$-modules. Note that both $%
\mathcal{M}_{\infty}(G)$ and $\mathcal{M}_{HC}(G)$ are naturally
subcategories of $\mathcal{M}({\mathfrak{g}})$. We denote by $HC:\mathcal{M}%
_\infty(G) \to \mathcal{M}_{HC}(G)$ the functor of $K$-finite vectors.
Denote the collection of irreducible objects in $\cM(G)$ by $\Irr(G)$.
\end{notation}

 It is well known that for any $\pi\in \cM_{\infty}(G),$ the $(\fg,K)$-submodule  $HC(\pi)$ is dense in $\pi$.

\begin{thm}[{Casselman-Wallach, see \cite[\S 11.6.8]{Wal},  or \cite{CasGlob}}]\label{thm:CW}
$\,$\\ The functor $HC:\mathcal{M}_{\infty}(G) \to \mathcal{M}%
_{HC}(G)$ is an equivalence of categories.
\end{thm}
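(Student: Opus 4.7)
The plan is to construct a quasi-inverse functor $V \mapsto V^{\infty}$ from $\cM_{HC}(G)$ back to $\cM_{\infty}(G)$, thereby establishing both essential surjectivity and fullness; faithfulness is almost immediate. For the latter, morphisms in $\cM_{\infty}(G)$ are by definition continuous and $G$-equivariant, while $HC(\pi)$ is dense in $\pi$ for any $\pi \in \cM_{\infty}(G)$, so any morphism vanishing on $K$-finite vectors is identically zero.

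For essential surjectivity, given $V \in \cM_{HC}(G)$, I would invoke Casselman's subrepresentation theorem to embed $V$ into the Harish-Chandra module of $K$-finite vectors of a smooth principal series $I := I_P^G(\sigma)^{\infty}$, where $P = MAN$ is a minimal parabolic and $\sigma$ is a finite-dimensional representation. Since $I$ is itself admissible smooth Fr\'echet of moderate growth, so is the closure $V^{\infty}$ of $V$ inside $I$. The crucial point is to verify $HC(V^{\infty}) = V$: one controls $K$-finite matrix coefficients of $V^{\infty}$ through Wallach's theory of asymptotic expansions (leading exponents), showing that any $K$-finite vector in the closure is already pinned down by data on $V$ and therefore lies in $V$.

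For fullness, given $T_0 : HC(\pi_1) \to HC(\pi_2)$, I would consider its graph inside $HC(\pi_1) \oplus HC(\pi_2) = HC(\pi_1 \oplus \pi_2)$. Applying the globalization construction to this graph produces a closed $G$-invariant subspace of $\pi_1 \oplus \pi_2$ whose $K$-finite part is again the graph of $T_0$; this closed subspace is the graph of the desired continuous extension $T : \pi_1 \to \pi_2$, invoking once more the uniqueness from essential surjectivity.

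The main obstacle is the analytic identification $HC(V^{\infty}) = V$. It requires the full strength of moderate growth to control decay at infinity, together with admissibility to reduce each $K$-isotypic component to a finite-dimensional problem; the combinatorics of the asymptotic expansions along $A$ then characterize $K$-finite vectors exactly, ruling out spurious limit vectors in the closure that would not already be present in $V$. All other steps (continuity, density, graph construction) are formal once this uniqueness of the moderate-growth smooth globalization is established.
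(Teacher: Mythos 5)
The paper does not prove this statement at all: it is quoted as a black-box citation of Wallach \cite[\S 11.6.8]{Wal} and Casselman \cite{CasGlob}, so there is no in-paper argument to compare with; your sketch has to be judged against the actual Casselman--Wallach proof, and there it has a genuine gap, located exactly where you claim the difficulty is \emph{not}. The identification $HC(V^{\infty})=V$ that you call ``the main obstacle'' is in fact formal and needs no asymptotic expansions: if $W$ is the closure of a $(\fg,K)$-submodule $V$ inside an admissible smooth representation $I$, then for each $K$-type $\tau$ the isotypic projector $E_\tau$ is continuous, so $E_\tau(W)\subset \overline{E_\tau(V)}=E_\tau(V)$ because $E_\tau(I)$ is finite-dimensional; hence the $K$-finite vectors of $W$ are exactly $V$. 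The hard analytic content of the theorem lies instead in your fullness step. Taking the closure $W$ of the graph of $T_0$ in $\pi_1\oplus\pi_2$ does give a closed invariant subspace with $HC(W)=\Graph(T_0)$, and injectivity of the projection $W\to\pi_1$ is again formal; but to obtain a continuous operator defined on all of $\pi_1$ you must show that this projection is \emph{surjective} (so that the open mapping theorem applies), i.e.\ that the image, which a priori only contains the dense subspace $HC(\pi_1)$, is all of $\pi_1$. That surjectivity is precisely the automatic continuity theorem of Casselman--Wallach, equivalently the uniqueness of the smooth moderate-growth globalization, and it cannot be obtained by ``invoking the uniqueness from essential surjectivity'': your essential surjectivity step only constructs one globalization $V^{\infty}$, and proving that an arbitrary $\pi\in\cM_\infty(G)$ with $HC(\pi)\cong V$ is topologically isomorphic to $V^{\infty}$ is the same problem of extending a $(\fg,K)$-map continuously --- the argument is circular at its crux.

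What is actually needed at that point is the deep input you only gesture at: quantitative control of moderate-growth globalizations, e.g.\ Wallach's asymptotic estimates comparing minimal and maximal globalizations, or the Casselman--Hecht--Mili\v{c}i\'c approach via Bruhat filtrations and comparison theorems for $\fn$-homology (Jacquet modules), or the Bernstein--Kr\"otz lower-bound method. Your outline of faithfulness and of the construction of $V^{\infty}$ via the subrepresentation theorem is fine, but as written the proposal does not contain the idea that makes fullness (and uniqueness of the globalization) true, so it does not constitute a proof of the theorem.
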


For $M\in \cM(\fg)$ and $g\in G$, denote by $M^g\in \cM(\fg)$ the module that coincides with $M$ as a vector space, but the action of $\fg$ on it is twisted by the adjoint action of $g$ on $\fg$.

\begin{lem}\label{lem:IrrHC}
For any irreducible Harish-Chandra module $V$, there exist an irreducible $\fg$-submodule $M\subset V$ and a finite set $S\subset K$ such that $V$ is a quotient of $\bigoplus_{k\in S}M^k$ as a $\fg$-module, and $\AnV(V)=\ \AnV(M)=\overline{\cO(M)}$.
\end{lem}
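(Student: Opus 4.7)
The plan is to produce an irreducible $\fg$-submodule $M\subset V$, realize $V$ as a finite sum of $K$-translates of $M$, and then compare annihilator varieties on both sides.

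I would first verify that $V$ is finitely generated as a $\fg$-module. Pick any nonzero $K$-isotypic component $V_\tau\subset V$. The subspace $\cU(\fg)V_\tau$ is automatically $K$-invariant, because the compatibility $\pi(\Ad(k)X)=\pi(k)\pi(X)\pi(k^{-1})$ gives $k(Xv)=(\Ad(k)X)(kv)\in \cU(\fg)V_\tau$. Thus $\cU(\fg)V_\tau$ is a nonzero $(\fg,K)$-submodule of $V$, which must equal $V$ by the irreducibility of $V$ as a $(\fg,K)$-module. Since $V_\tau$ is finite-dimensional by admissibility, $V$ is finitely generated over $\cU(\fg)$. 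The next input is the classical fact that any irreducible Harish-Chandra module has finite length as a $\fg$-module (the centre $Z(\fg)$ acts by a character, and finitely generated $\fg$-modules with fixed infinitesimal character are of finite length). In particular $V$ contains a nonzero simple $\fg$-submodule $M$.

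For each $k\in K$, the subset $kM\subset V$ is again a $\fg$-submodule, and the map $m\mapsto km$ is a $\fg$-isomorphism $M^{k^{-1}}\xrightarrow{\sim} kM$ (again using the identity above). The sum $\sum_{k\in K}kM$ is nonzero and is both $\fg$- and $K$-stable, so it is a nonzero $(\fg,K)$-submodule of $V$, and hence equals $V$ by irreducibility. Because $V$ is finitely generated over $\cU(\fg)$, a finite subset $S'\subset K$ already suffices: $V=\sum_{k\in S'}kM$. Renaming $S=(S')^{-1}$ gives a surjection $\bigoplus_{k\in S}M^{k}\twoheadrightarrow V$.

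Finally, the claim $\Anv(V)=\Anv(M)=\overline{\cO(M)}$ follows from two inclusions. From $M\hookrightarrow V$ we have $\Ann(V)\subset \Ann(M)$ and hence $\Anv(V)\supset \Anv(M)$. Conversely, $\Ann(M^{k})=\Ad(k)^{-1}\Ann(M)$, and by \Cref{thm:Irr} applied to the simple $\fg$-module $M$ the set $\Anv(M)=\overline{\cO(M)}$ is a union of coadjoint $G$-orbits, hence $\Ad^*(k)$-invariant, so $\Anv(M^{k})=\Anv(M)$ for all $k$. Using the standard fact that $\Anv$ of a finite direct sum is the union of the $\Anv$'s of the summands, the surjection yields $\Anv(V)\subset \Anv(\bigoplus_{k\in S}M^{k})=\Anv(M)$, and the lemma follows.

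The one step that is not purely formal is the existence of a simple $\fg$-submodule $M\subset V$; I would settle it by citing the finite-length theorem for irreducible Harish-Chandra modules and taking $M$ to be a simple submodule at the bottom of a composition series.
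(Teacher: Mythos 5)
You take a genuinely different route from the paper, and most of it is fine: the finite generation of $V$ over $\cU(\fg)$, the $\fg$-stability of each translate $kM$ and the identification $kM\cong M^{k^{-1}}$, the reduction to finitely many translates, and the comparison of annihilator varieties via $\Ad$-invariance and Theorem~\ref{thm:Irr} are all correct. The paper argues differently: it passes to the admissible dual $\widetilde V$, uses only finite generation and Noetherianity to produce an irreducible $\fg$-\emph{quotient} $Y$ of $\widetilde V$, checks that the kernel of the natural map $\widetilde V\to\bigoplus_{k\in S}Y^k$ (with $S$ a set of representatives of the components of $K$) is a proper $(\fg,K)$-submodule, hence zero, and then dualizes back, taking $M=\widetilde Y\subset V$. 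The point of that dualization is precisely to avoid what your argument needs: irreducible quotients of nonzero finitely generated modules always exist (Zorn's lemma), whereas irreducible submodules require a finite-length type statement.

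That is where your proposal has a gap: the parenthetical justification you offer for the existence of a simple $\fg$-submodule --- that finitely generated $\fg$-modules with a fixed infinitesimal character have finite length --- is false. Already for $\fg=\sll_2$ the cyclic module $\cU(\fg)/\cU(\fg)(\Omega-c)$, with $\Omega$ the Casimir, is finitely generated and has an infinitesimal character, yet it has infinite length: it surjects onto the pairwise non-isomorphic simple Whittaker modules attached to the continuum of nondegenerate characters of the nilradical (all with that infinitesimal character), while a finite-length module has only finitely many isomorphism classes of simple quotients. The statement you actually want --- an irreducible Harish-Chandra module has finite length, hence a simple submodule, already as a $\fg$-module --- is true, but its proof uses admissibility, not just the infinitesimal character: every $\fg$-submodule of an admissible $(\fg,K)$-module is automatically $K^0$-stable (the locally finite $\fk$-action integrates to the identity component), so the $\fg$-length of $V$ is bounded by its length as a finitely generated admissible $(\fg,K^0)$-module, which is finite by Harish-Chandra's finiteness theorem. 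With the parenthetical replaced by this argument (or by a correct citation of that finite-length theorem), the rest of your proof goes through and yields the lemma.
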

\begin{proof}
In the proof we will use the functor of admissible dual - {\it i.e.} the space of $K$-finite vectors in the dual space. For every Harish-Chandra module $V$, natural map $V\to \widetilde{\widetilde{V}}$ is an isomorphism.

Let $\widetilde{V}$ be the admissible dual of $V$.
Since it is irreducible as a $(\fg,K)$-module, it is finitely-generated, and thus Noetherian, as a $\fg$-module. Thus, as a $\fg$-module, it has an irreducible quotient $Y$. Let $S$ be a set of representatives of the connected components of $K$. Then we have a natural map $\widetilde{V}\to \bigoplus_{k\in S} Y^k$. The kernel of this map is a proper $(\fg,K)$-submodule of $\widetilde{V}$ and thus is zero. Let $M:=\widetilde{Y}$. Then $M$ is a $\fg$-submodule of $V$, and the embedding $\widetilde{V}\to \bigoplus_{k\in S} Y^k$ defines an epimorphism $\bigoplus_{k\in S} M^k\onto V$.

Since annihilator varieties are invariant with respect to the adjoint action, we also have $\AnV(M^k)=\AnV(M)$ for any $k$, and thus $\AnV(M)=\AnV(V)$.
\end{proof}

\begin{cor}
For any $\pi\in \Irr(G)$, there exists a (unique) nilpotent orbit $\cO$ such that $\AnV(\pi)=\overline{\cO}$. We will denote $\cO(\pi):=\cO$.
\end{cor}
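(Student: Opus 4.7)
The plan is to reduce the statement about $\pi\in \Irr(G)$ to the purely Lie-algebraic statement \Cref{thm:Irr}, by going through the Harish-Chandra module $HC(\pi)$ and invoking \Cref{lem:IrrHC}.

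First I would observe that the annihilator ideal of $\pi$ in $\cU(\fg)$ coincides with the annihilator ideal of $HC(\pi)$, and hence $\AnV(\pi)=\AnV(HC(\pi))$. Indeed, the $K$-finite vectors are dense in $\pi$, and the action of any fixed element of $\cU(\fg)$ on $\pi$ is continuous; therefore any element of $\cU(\fg)$ that kills $HC(\pi)$ also kills $\pi$, and the reverse inclusion is obvious. By \Cref{thm:CW}, the Harish-Chandra module $V := HC(\pi)$ is irreducible in $\cM_{HC}(G)$.

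Next I would apply \Cref{lem:IrrHC} to $V$: this produces an irreducible $\fg$-submodule $M\subset V$ such that $\AnV(V)=\AnV(M)=\overline{\cO(M)}$, where $\cO(M)$ is the nilpotent orbit provided by \Cref{thm:Irr}. Setting $\cO := \cO(M)$, we then obtain
\begin{equation*}
\AnV(\pi)=\AnV(V)=\overline{\cO}.
\end{equation*}
Uniqueness of $\cO$ is automatic: since $\AnV(\pi)$ determines its open dense subset, and a nilpotent orbit is determined by its closure (distinct orbits have distinct closures in $\fg^*$), the orbit $\cO$ with $\overline{\cO}=\AnV(\pi)$ is unique. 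There is no real obstacle here; everything is already in place in the preceding material, and the only subtlety worth spelling out is the passage from $\pi$ to $HC(\pi)$ at the level of annihilators, which is why I would mention the density/continuity argument explicitly.
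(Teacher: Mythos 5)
Your proposal is correct and follows essentially the same route as the paper: pass to $V=HC(\pi)$ via \Cref{thm:CW}, use density of the $K$-finite vectors to identify $\AnV(\pi)$ with $\AnV(V)$, and obtain the orbit closure from \Cref{lem:IrrHC} together with \Cref{thm:Irr}. Your explicit invocation of \Cref{lem:IrrHC} (to bridge irreducibility as a $(\fg,K)$-module versus as a $\fg$-module) and the spelled-out density/continuity step are exactly what the paper's terse proof leaves implicit.
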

\begin{proof}
Let $V$ be the Harish-Chandra module of $\pi$. Then $V$ is irreducible and thus $\AnV(V)=\overline{\cO}$ for some (unique) $\cO$. Since $V$ is dense in $\pi$, $\AnV(V)=\AnV(\pi)$.
\end{proof}

\subsection{Proof of Corollary \ref{cor:CW}}\label{subsec:PfB}

Let
$\pi\in \Irr(G)$ and let $\xi:\pi\to \sigma$ be a non-zero continuous $\fh$-equivariant linear map. By Theorem \ref{thm:CW}, the Harish-Chandra module $V:=HC(\pi)$ is also irreducible.  Since $V$ is dense in $\pi$, the restriction $\xi|_V$ does not vanish. Let $M\subset V$ be as in Lemma \ref{lem:IrrHC} above. Then $\xi|_{M^k}\neq 0$ for some $k\in K$, and thus $M^k\in \Irr(\fg)_{(\fh,\sigma^*)}$.
Finally, we have $\cO(\pi)=\cO(V)=\cO(M^k)$. Thus Corollary \ref{cor:CW} follows from Theorems \ref{thm:main} and \ref{thm:maintwist}.
\proofend


\section{First applications }\label{sec:appl}

\subsection{Symmetric pairs}\label{subsec:Pra}
\DimaB{
One can apply Corollary \ref{cor:CW} to symmetric pairs $H\subset G$, since for them $\cN(\fg^*)\cap {\fh}^{\bot}$ is classified, see
\cite{Oht,Djo1,Djo2}. \Dima{The motivation for this classification is the Cartan classification, {\it i.e.} a correspondence between conjugacy classes of involutions of $\bf G$, and isomorphism classes of real structures on $\bf G$, cf. \cite[Theorem 6 of Chapter III, \S 4]{SerreGal} or \cite[Theorem 3.2]{AdC}. The correspondence is defined in the following way.

Let $\theta$ be an involution of $\bf G$ defined over $\R$.
Then the real reductive group $G_H$ corresponding to $\theta$
 is the group of fixed points in the complex group ${\bfG}$ of the anti-holomorphic involution $\sigma$ obtained by composing $\theta$ with complex conjugation.
Denote by  $\bf H=G^{\theta}$ be the group of fixed points of $\theta$.
Then $\bfH \cap G_H $ is the maximal compact subgroup of $G_H$.

Now let $\fg_H\subset \fg$ denote the \emph{real} Lie algebra of $G_H$.
Let $\cO\subset \cN(\fg^*)$  be a nilpotent orbit. Then there exists the Kostant-Sekiguchi correspondence \cite{Sek} between $\bf H$-orbits in $\cO\cap {\fh}^{\bot}$ and $G_H$-orbits in $\cO\cap (\fg_H)^*$. In particular, $\cO$ intersects ${\fh}^{\bot}$ if and only if  $\cO$ intersects the space of ``real points" $(\fg_H)^*$.
Thus, from Corollary \ref{cor:CW} we obtain the following statement.

\begin{cor}\label{cor:Sym}
For any $\sigma \in \cM_{f.d.}(\fh)$ and any $\pi\in \Irr(G)_{\fh,\sigma},$ the orbit $\cO(\pi)$ intersects $(\fg_H)^*$.
\end{cor}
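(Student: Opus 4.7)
The plan is to deduce Corollary \ref{cor:Sym} as a short combination of Corollary \ref{cor:CW} with the Kostant--Sekiguchi correspondence recalled in the discussion preceding the statement. Essentially no new ingredients are needed beyond these two inputs; the proof amounts to checking that the hypotheses of Corollary \ref{cor:CW} are met and then transporting the resulting non-emptiness across the correspondence.

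First I would verify that Corollary \ref{cor:CW} applies. Since $\bfH = \bfG^\theta$ is a symmetric subgroup, it is in particular spherical; this is classical (see e.g.\ \cite{Tim}). Therefore, for any $\sigma \in \cM_{f.d.}(\fh)$ and any $\pi \in \Irr(G)_{\fh,\sigma}$, Corollary \ref{cor:CW} yields
$$2\dim\bigl(\cO(\pi) \cap \fh^\bot\bigr) = \dim \cO(\pi),$$
and in particular $\cO(\pi) \cap \fh^\bot$ is non-empty.

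Next I would invoke the Kostant--Sekiguchi correspondence \cite{Sek}, as already stated in the paragraph before the corollary: for any nilpotent orbit $\cO \subset \cN(\fg^*)$, there is a natural bijection between $\bfH$-orbits on $\cO \cap \fh^\bot$ and $G_H$-orbits on $\cO \cap (\fg_H)^*$, and in particular the former set is non-empty if and only if the latter is. Applying this to $\cO = \cO(\pi)$ together with the previous step gives $\cO(\pi) \cap (\fg_H)^* \neq \emptyset$, which is the desired conclusion.

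Because the content of Corollary \ref{cor:Sym} is precisely this two-step composition, I do not expect any substantive obstacle; the only point to track is the implicit identification of $\fh^\bot \subset \fg^*$ with the $(-1)$-eigenspace of $\theta$ on $\fg$ via an invariant non-degenerate bilinear form, which is how Sekiguchi's theorem is customarily phrased. This identification is canonical once such a form is fixed and does not affect the non-emptiness statement at issue.
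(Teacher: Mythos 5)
Your proposal is correct and is essentially identical to the paper's own argument: the paper deduces Corollary \ref{cor:Sym} exactly by combining the non-emptiness of $\cO(\pi)\cap\fh^{\bot}$ from Corollary \ref{cor:CW} (symmetric subgroups being spherical) with the Kostant--Sekiguchi correspondence between $\bfH$-orbits in $\cO\cap\fh^{\bot}$ and $G_H$-orbits in $\cO\cap(\fg_H)^*$. Your remark about identifying $\fh^{\bot}$ with the $(-1)$-eigenspace of $\theta$ via an invariant form is the correct way to match Sekiguchi's formulation and poses no issue.
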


This statement was inspired by \cite{PraSak}, which shows that over non-Archimedean fields, there exists an $[H,H]$-distinguished} generic irreducible representation if and only if $G_H$ is quasi-split. For real reductive groups this statement is \cite[Corollary F]{GS_X}.

Prasad also asks whether  if $G$ is quasi-split, $H$ is split and $\Irr(G)_H$ includes a tempered representation then necessarily $\Irr(G)_H$ includes a generic representation - see  \cite[Question 1]{PraSak}\footnote{\Dima{\cite[Question 1]{PraSak} asks further whether for any $H$-distinguished tempered representation, there exists an $H$-distinguished generic representation in the same $L$-packet. We can say nothing about that.}}.
Using Corollary \ref{cor:Sym},  we can answer this question in the affirmative in several cases in the following way.

}

By \cite{Harris}, if $\pi$ is tempered then its wave-front set $\WF(\pi)$ consists of closures of $\R$-distinguished $G$-orbits in the real points of the nilpotent \Dima{cone} of $\fg^*$. The $\R$-distinguished orbits are orbits for which the reductive Levi
factor of the centralizer of any element  is a compact Lie algebra.
They are classified in \cite[Theorems 8-14]{PT} under the name compact orbits.
By \cite[Theorem D]{Ross}, the maximal $G$-orbits in $\WF(\pi)$ lie in $\cO(\pi)$, and thus we obtain that for tempered $\pi$, $\cO(\pi)$ includes an $\R$-distinguished real orbit.
For exceptional pairs, \Dima{Corollary \ref{cor:Sym}} implies that the pairs corresponding to the real forms $F_{4(-20)}, E_{6(-26)},$ and $E_{7(-25)}$ have no distinguished tempered representations. This answers the question affirmatively for $G=F_4$, leaving only the exceptional pairs corresponding to the real forms $E_{6(-14)}$, $E_{7(-25)}$, $E_{8(-24)}$.

For classical symmetric pairs, we obtain an affirmative answer to  Prasad's question for all cases except when $G$ is a unitary group, or when the symmetric pair $(G,H)$ corresponds to one of the real reductive groups $O^*(2n),\, \Sp(n,n),$ or $O(m,n)$ with $|m-n|>1$.

To summarize, Corollary \ref{cor:Sym} and the papers mentioned above 
imply the next  proposition.

\begin{prop}
Suppose that $G$ is quasi-split, and that $\Irr(G)_{(\fh,\sigma)}$ includes a tempered representation, for some \Dima{finite-dimensional representation $\sigma$} of $\fh$. Then,  every simple component \Dima{$ (G',H')$} of the symmetric pair $(G,H)$ satisfies at least one of the following:
\begin{enumerate}[(a)]
\item The real reductive group \Dima{$G'_{H'}$}  is quasi-split.
\item The real reductive group \Dima{$G'_{H'}$}  is one of the following
\begin{equation}
O^*(2n), \quad \Sp(n,n), \quad O(m,n)\text { with }|m-n|>1, \quad E_{6(-14)},\quad  E_{7(-25)}, \quad E_{8(-24)}
\end{equation}
\item $\Dima{G'}\cong SU(p,p)$ or  $\Dima{G'}\cong SU(p,p+1)$ for some $p$.
\end{enumerate}
\end{prop}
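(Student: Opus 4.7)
The plan is to combine Corollary \ref{cor:Sym} with the known asymptotics of wave-front sets of tempered representations, and then invoke existing classifications of the relevant nilpotent orbits. First I would reduce to the case of a simple symmetric pair: the decomposition of a semisimple symmetric pair into simple components is standard, and an $(\fh,\sigma)$-distinguished $\pi \in \Irr(G)$ decomposes as an external tensor product whose factors are each tempered and $(\fh_j,\sigma_j)$-distinguished for suitable $\sigma_j$ extracted from $\sigma$. Hence it suffices to show that if a simple symmetric pair $(G, H)$ fails all three conditions (a), (b), (c), then $\Irr(G)_{(\fh, \sigma)}$ contains no tempered representation for any $\sigma \in \cM_{f.d.}(\fh)$.

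So assume $(G, H)$ is simple, fails (a)--(c), and that $\pi$ is a tempered member of $\Irr(G)_{(\fh, \sigma)}$. Corollary \ref{cor:Sym} forces $\cO(\pi)$ to meet $\fh^{\bot}$, equivalently (via Kostant--Sekiguchi) $(\fg_H)^*$. Temperedness, by \cite{Harris} combined with \cite[Theorem D]{Ross}, forces $\cO(\pi)$ to contain an $\R$-distinguished (``compact'') real $G$-orbit in $\fg^*$. Thus $\cO(\pi)$ must be a complex nilpotent orbit that simultaneously (i) contains a compact real $G$-orbit for the given real form $G$ and (ii) meets the other real form $(\fg_H)^*$. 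These are the two pieces of information on which the argument hinges, and they involve two different real structures on $\bfG$, so neither subsumes the other.

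The final step, and the main obstacle, is a case-by-case verification. Using the classification of compact real orbits in \cite[Theorems 8-14]{PT} together with the classifications of nilpotent orbits meeting $\fh^{\bot}$ in \cite{Djo1, Djo2, Oht}, one checks that for every simple symmetric pair $(G,H)$ not in the list (a)-(c), the only orbit satisfying both (i) and (ii) is the zero orbit. Since $G$ is non-compact and quasi-split, no irreducible tempered representation has $\cO(\pi) = \{0\}$, yielding the desired contradiction. For the classical pairs this reduces to a combinatorial manipulation of partitions (the ``compact'' partitions of \cite{PT} against the $ab$-diagrams of Ohta and Djokovi\'c); for the exceptional pairs it requires direct inspection of the tables. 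The residual list (b), together with the unitary exceptions in (c), emerges precisely as those simple pairs where (i) and (ii) admit a non-zero common orbit even though $G_H$ is not quasi-split, explaining why these families must be allowed in the conclusion.
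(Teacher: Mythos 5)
Your proposal follows essentially the same route as the paper: it combines Corollary \ref{cor:Sym} (via Kostant--Sekiguchi) with the fact, from \cite{Harris} and \cite[Theorem D]{Ross}, that $\cO(\pi)$ of a tempered $\pi$ contains an $\R$-distinguished (compact) real orbit, and then settles the matter case-by-case against the classifications in \cite{PT,Djo1,Djo2,Oht}, which is exactly the argument sketched in \S\ref{subsec:Pra}. The only difference is cosmetic: you make the reduction to simple components and the exclusion of $\cO(\pi)=\{0\}$ explicit, steps the paper leaves implicit.
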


\subsection{Branching problems and translation functors}\label{subsec:bra}

Let $H\subset G$ be a reductive subgroup such that $\Delta H \subset H\times G$ is a spherical subgroup. Let $\pi\in \Irr(G)$ and $\tau\in \Irr(H)$. We are interested in the non-vanishing of the multiplicity space $\Hom_H(\pi|_H,\tau)$. We have $$\Hom_H(\pi|_H,\tau)\cong ((\pi\hot \widetilde{\tau})^*)^{\Delta H},$$
where $\widetilde{\tau}$ denotes the contragredient representation, and $\hot$ denotes the completed projective tensor product.
 Let  $p_{\fh}:\fg^*\onto \fh^*$ denote the natural projection.
Since $\cO(\tau)=\cO(\widetilde{\tau})$, Corollary \ref{cor:CW} applied to the pair $\Delta H \subset H\times G$ implies the following statement.

\begin{cor}\label{cor:strong}
Let $\pi\in \Irr(G)$, $\tau\in \Irr(H)$, and $\fin\in \cM_{f.d.}(H)$. Consider the diagonal action of $H$ on $\tau\otimes_{\C} \fin$.
If $\Hom_{H}(\pi|_H,\tau\otimes_{\C} \fin)\neq 0$ then $\cO(\tau)\subset p_{\fh}(\cO(\pi))$.
\end{cor}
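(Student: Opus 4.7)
The plan is to apply Corollary \ref{cor:CW} to the reductive group $H\times G$, its irreducible Casselman--Wallach representation $\widetilde{\tau}\hot\pi$, and the diagonal subalgebra $\Delta\fh\subset \fh\oplus\fg$, which is spherical by the standing hypothesis on the pair $\Delta H\subset H\times G$. First I would reinterpret the vanishing hypothesis: a non-zero continuous $H$-equivariant map $T:\pi\to \tau\otimes \fin$ gives rise to a non-zero continuous $\Delta\fh$-equivariant map $S:\widetilde{\tau}\hot\pi\to \fin$, defined on factorizable tensors by $S(\lambda\otimes v)=(\lambda\otimes \mathrm{id})(T(v))$, so that $\widetilde{\tau}\hot\pi$ lies in $\Irr(H\times G)_{(\Delta\fh,\fin)}$.

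Second, I would identify the annihilator variety of the exterior tensor product as $\cO(\widetilde{\tau}\hot\pi)=\cO(\widetilde{\tau})\times \cO(\pi)=\cO(\tau)\times \cO(\pi)\subset \fh^*\oplus\fg^*$, invoking the product formula for annihilator varieties of exterior tensor products together with the fact that passing to the contragredient sends a nilpotent coadjoint orbit to its negative, which for complex reductive Lie algebras coincides with the orbit itself. A direct computation then gives $(\Delta\fh)^{\bot}=\{(-p_\fh(\xi),\xi):\xi\in \fg^*\}$.

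Applying Corollary \ref{cor:CW} now produces a point $(\eta,\xi)\in \cO(\tau)\times \cO(\pi)$ with $\eta=-p_\fh(\xi)$; in other words $p_\fh(\cO(\pi))\cap \cO(\tau)\neq \emptyset$. Since $p_\fh$ is $H$-equivariant and $\cO(\pi)$ is $G$-invariant (hence in particular $H$-invariant), the set $p_\fh(\cO(\pi))$ is $H$-invariant; as $\cO(\tau)$ is a single $H$-orbit, this forces the desired inclusion $\cO(\tau)\subset p_\fh(\cO(\pi))$.

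The main obstacle I anticipate is verifying cleanly that $\widetilde{\tau}\hot\pi$ is Casselman--Wallach and irreducible as a representation of $H\times G$, and that its annihilator variety really is the product $\cO(\tau)\times \cO(\pi)$; these are standard facts (Künneth-type for annihilators, plus the contragredient/negation identity) but must be invoked carefully so that Corollary \ref{cor:CW} genuinely applies in the product setting. Once they are in hand the geometric conclusion is immediate from the non-emptiness part of Corollary \ref{cor:CW} combined with the $H$-invariance of $p_\fh(\cO(\pi))$.
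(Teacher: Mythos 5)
Your proposal is correct and takes essentially the same route as the paper: the paper also rewrites $\Hom_H(\pi|_H,\tau\otimes\fin)$ as $\Delta\fh$-equivariant data on $\pi\hot\widetilde{\tau}$, notes $\cO(\widetilde{\tau})=\cO(\tau)$, and applies Corollary \ref{cor:CW} to the spherical pair $\Delta H\subset H\times G$, with $(\Delta\fh)^{\bot}=\{(-p_{\fh}(\xi),\xi)\}$ giving the intersection statement. One cosmetic point: in the last step the invariance should be phrased for the complex group $\bfH$ (the set $p_{\fh}(\cO(\pi))$ is $\bfH$-stable and $\cO(\tau)$ is a single $\bfH$-orbit, not a single $H$-orbit), which is clearly what you intend and changes nothing.
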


Such pairs (under the additional assumption that $H$ is a symmetric subgroup of $G$) are classified in \cite{KM}. They include the multiplicity one pairs corresponding to the case $k=1$ of Theorem \ref{thm:strongPart}. One can deduce this case of the theorem from Corollary \ref{cor:strong}, but instead we will treat the general case in \S \ref{sec:TwistAppl}.

Setting $G=H$ in Corollary \ref{cor:strong} we obtain the following corollary on annihilator varieties of translation functors (see \cite[Definition 4.5.7]{VogBook}).

\begin{cor}\label{cor:trans}
Let $\tau\in \Irr(G)$, and let $\psi_{\lambda}^{\lambda'}(\tau)$ be a translation of $\tau$.  Then for any irreducible submodule $\pi\subset \psi_{\lambda}^{\lambda'}(\tau)$, we have $\cO(\pi)=\cO(\tau)$.
\end{cor}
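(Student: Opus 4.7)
The plan is to derive \Cref{cor:trans} as a direct specialization of \Cref{cor:strong} applied to the diagonal pair $\Delta G \subset G \times G$, which is symmetric and hence spherical. By construction the translation functor $\psi_{\lambda}^{\lambda'}$ sends $\tau$ to the projection of $\tau \otimes F$ onto the generalized $\lambda'$-infinitesimal character eigenspace, where $F$ is a suitable finite-dimensional $G$-representation. In particular $\psi_{\lambda}^{\lambda'}(\tau)$ is a direct summand of $\tau \otimes F$, so any irreducible submodule $\pi \subset \psi_{\lambda}^{\lambda'}(\tau)$ also embeds $G$-equivariantly into $\tau \otimes F$. This supplies a non-zero element of $\Hom_G(\pi|_G, \tau \otimes F)$.

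With this non-vanishing in hand, I would apply \Cref{cor:strong} taking $H = G$ and $\sigma = F$. The conclusion $\cO(\tau) \subset p_{\fh}(\cO(\pi))$ specializes, since $p_{\fg}$ is then the identity on $\fg^*$, to $\cO(\tau) \subset \cO(\pi)$. By \Cref{thm:Irr} both $\cO(\tau)$ and $\cO(\pi)$ are single coadjoint $G$-orbits, and two coadjoint orbits are either equal or disjoint, so this containment forces the desired equality $\cO(\pi) = \cO(\tau)$.

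There is no substantive obstacle here beyond \Cref{cor:strong} itself: the translation-functor step is the standard one-line observation that tensoring with a finite-dimensional representation and then projecting onto an infinitesimal-character block produces a direct summand, and the remaining remarks (that $\Delta G$ is spherical in $G \times G$ and that $p_{\fh}$ is the identity when $H = G$) are immediate. If one preferred to argue directly from \Cref{cor:CW}, one would instead examine the $G \times G$-module $\pi \hot \widetilde{\tau}$, whose annihilator orbit is $\cO(\pi) \times \cO(\widetilde{\tau})$, and intersect it with $(\Delta \fg)^{\perp} = \{(x,-x): x \in \fg^*\}$, using that $\cO(\widetilde{\tau}) = \cO(\tau)$ since nilpotent coadjoint orbits in a reductive Lie algebra are stable under $x \mapsto -x$.
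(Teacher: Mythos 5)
Your proof is correct and matches the paper's intended argument: the paper derives this corollary precisely by setting $H=G$ in Corollary \ref{cor:strong}, and your added details (that $\psi_{\lambda}^{\lambda'}(\tau)$ is a summand of $\tau\otimes F$, so $\Hom_G(\pi,\tau\otimes F)\neq 0$, that $p_{\fh}$ is the identity when $H=G$, and that distinct coadjoint orbits are disjoint) are exactly the steps the paper leaves implicit. Your alternative route via Corollary \ref{cor:CW} and $(\Delta\fg)^{\bot}$ is likewise just the paper's own derivation of Corollary \ref{cor:strong} unwound, so nothing genuinely different is involved.
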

Dualizing, we obtain the same equality for any irreducible quotient of $\psi_{\lambda}^{\lambda'}(\tau)$.

\begin{remark}
Corollary \ref{cor:strong} complies with the philosophy of the Kirillov-Kostant orbit method. Indeed, an analogous statement for nilpotent groups was proven in \cite{CorGr}. For holomorphic discrete series representations of reductive groups, an analogous statement is proven in \cite{Par}.
\end{remark}

%

\subsection{Annihilator varieties of Jacquet quotients}\label{subsec:Jac}

Let $P\subset G$ be a parabolic subgroup, $N$ be its unipotent radical and $L:=P/N$. Let $\pi\in \Irr(G)$, and let $r_N(\pi)\in \cM_{\infty}(L)$ be its Jacquet reduction.
Corollary \ref{cor:CW} gives a restriction on quotients of $r_N(\pi)$ by considering the subgroup
\begin{equation}\label{=Jac}
H:=\{(pN,p)\in L\times G \, : \, p\in P\}\subset L\times G
\end{equation}
In this case, we have $\fh^{\bot}=\{(-\mu|_{\fp},\mu)\, \vert \mu\in \fn^{\bot}\}$
and $p_{\fp}(\fn^{\bot})\cong \fl^*$, where
$p_{\fp}:\fg^*\onto \fp^*$ the restriction  to $\fp$.  In addition, for any $\tau\in \Irr(L)$ we have

$$((\widetilde{\tau}\hot \pi)^*)^H\cong ((\widetilde{\tau}\hot r_N(\pi))^*)^{\Delta (L)}=\Hom_{L}(r_N(\pi),\tau).$$
Thus, Corollary \ref{cor:CW} implies the following corollary.
\begin{cor}\label{cor:J}
For any $\pi\in \Irr(G)$ and any irreducible quotient \Dima{$\tau$} of $r_P(\pi)$ we have
$$\cO(\Dima{\tau})\subset p_{\fp}(\cO(\pi)\cap \fn^{\bot})$$
\end{cor}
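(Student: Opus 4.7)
The plan is to apply Corollary \ref{cor:CW} to the reductive group $L\times G$, the subgroup $H:=\{(pN,p):p\in P\}$ introduced above, and the irreducible Casselman-Wallach representation $\widetilde{\tau}\hot\pi$ of $L\times G$, taking the target $\sigma=\C$.

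Three preliminary facts need to be recorded. First, $H$ is spherical in $L\times G$: its image under the first projection is all of $L$, so the stabilizer in $H$ of the base point of $L/B_L$ is the preimage of $B_L$ in $P$, which is a Borel subgroup of $G$ and hence has an open orbit on $G/B$. Second, the external tensor product $\widetilde{\tau}\hot\pi$ is an irreducible Casselman-Wallach representation of $L\times G$. Third, if $\tau$ is an irreducible quotient of $r_N(\pi)$ then $\Hom_L(r_N(\pi),\tau)\neq 0$, which through the identification already stated in the excerpt furnishes a non-zero $H$-invariant functional on $\widetilde{\tau}\hot\pi$. Corollary \ref{cor:CW} then applies and yields $\cO(\widetilde{\tau}\hot\pi)\cap \fh^\bot\neq \emptyset$.

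I would next translate this intersection condition. Since the annihilator of an external tensor product of irreducible modules is $\Ann(\widetilde{\tau})\otimes\cU(\fg)+\cU(\fl)\otimes\Ann(\pi)$, one has $\cO(\widetilde{\tau}\hot\pi)=\cO(\widetilde{\tau})\times\cO(\pi)$ inside $\fl^*\times\fg^*$. Moreover, $\cO(\widetilde{\tau})=\cO(\tau)$: the annihilator of a contragredient is the image of $\Ann(\tau)$ under the principal anti-automorphism, so $\AnV(\widetilde{\tau})=-\AnV(\tau)$, and every nilpotent coadjoint orbit in a complex reductive Lie algebra is stable under negation (by Jacobson-Morozov). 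The explicit form $\fh^\bot=\{(-\nu|_\fp,\nu):\nu\in\fn^\bot\}$ then shows that a point in the intersection corresponds to $\nu\in \cO(\pi)\cap\fn^\bot$ with $-\nu|_\fp\in\cO(\tau)$, and applying negation stability once more gives $\nu|_\fp\in \cO(\tau)$. Hence $\cO(\tau)\cap p_\fp(\cO(\pi)\cap\fn^\bot)\neq\emptyset$.

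Finally, to upgrade non-emptiness to containment, observe that $\cO(\pi)\cap\fn^\bot$ is $P$-stable and that the restriction $p_\fp:\fn^\bot\to\fl^*$ is $P$-equivariant with $N$ acting trivially on $\fl^*$, so $p_\fp(\cO(\pi)\cap\fn^\bot)$ is $L$-invariant. Since $\cO(\tau)$ is a single $L$-orbit, nonempty intersection with an $L$-invariant set forces $\cO(\tau)\subset p_\fp(\cO(\pi)\cap\fn^\bot)$. The main points requiring care are the sphericity of $H$ and the passage from orbit-level intersection to containment via $L$-invariance; the identities $\cO(\widetilde{\tau}\hot\pi)=\cO(\widetilde{\tau})\times\cO(\pi)$ and $\cO(\widetilde{\tau})=\cO(\tau)$ are standard but should be invoked explicitly.
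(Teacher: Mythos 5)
Your proof is correct and follows essentially the same route as the paper, which likewise applies Corollary \ref{cor:CW} to the pair $H=\{(pN,p)\}\subset L\times G$ acting on $\widetilde{\tau}\hot\pi$ through the Hom-identity quoted before the corollary, and leaves the remaining bookkeeping implicit. The details you make explicit -- sphericity of $H$ in $L\times G$, the identity $\cO(\widetilde{\tau}\hot\pi)=\cO(\tau)\times\cO(\pi)$, negation-stability of nilpotent orbits, and the upgrade from nonempty intersection to containment via invariance of $p_{\fp}(\cO(\pi)\cap\fn^{\bot})$ under the (complexified) Levi -- are exactly the steps the paper omits, and they are all sound.
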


We remark that the computation of the set $p_{\fp}(\cO(\pi)\cap \fn^{\bot})$ is an interesting new geometric question. One can reformulate it in a dual way, by identifying
$\fg\cong \fg^*, \, \fn^{\bot} \cong \fp$ and $\fl\cong \fl^*$ using the Killing form, and embedding $\fl$ into $\fp$.
Then the question becomes:
given a nilpotent orbit $\cO\subset \fl$, what nilpotent orbits in $\fg$ can we hit by adding elements of $\fn$. The maximal among these orbits is the induced orbit, and the minimal is the saturation of $\cO$, but already in the case of $\fg=\gl_n$ we see from Lemma \ref{lem:quot} below that not every orbit in between is possible.

\Dima{
Using this reformulation and the Frobenius reciprocity, we obtain the following corollary.

\begin{cor}
Let $\tau \in \Irr(L)$. Then, for any irreducible subrepresentation $\pi\subset \Ind_P^G(\tau)$, the orbit $\cO(\pi)$ intersects $\cO(\Dima{\tau})+\fn$. In particular, for any  subrepresentation $\pi'\subset \Ind_P^G(\tau)$, we have $\cO(\tau)\subset \AnV(\pi')$.
\end{cor}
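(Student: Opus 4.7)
The plan is to combine smooth Frobenius reciprocity with Corollary~\ref{cor:J}. An embedding $\pi\hookrightarrow \Ind_P^G(\tau)$ produces, via Frobenius reciprocity, a non-zero $L$-equivariant map $r_N(\pi)\to \tau'$, where $\tau'$ is $\tau$ possibly twisted by a character of $L$ (arising from the normalization of parabolic induction); since a character twist preserves irreducibility and does not affect the annihilator variety, we have $\cO(\tau')=\cO(\tau)$. Thus $\tau'$ is an irreducible quotient of $r_N(\pi)$, and Corollary~\ref{cor:J} yields $\cO(\tau)\subset p_{\fp}(\cO(\pi)\cap \fn^{\bot})$. Translating through the Killing form exactly as in the paragraph following Corollary~\ref{cor:J}, the right-hand side consists of the Levi projections to $\fl$ of elements of $\cO(\pi)\cap \fp$, whose fibers are affine translates of $\fn$; so this says precisely that $\cO(\pi)$ meets $\cO(\tau)+\fn$.

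For the ``In particular'' part, let $\pi'\subset \Ind_P^G(\tau)$ be an arbitrary subrepresentation. Since $\Ind_P^G(\tau)$ has finite length, $\pi'$ contains an irreducible subrepresentation $\pi$. The first part furnishes $Y\in \cO(\tau)$ and $N\in \fn$ with $X:=Y+N\in \cO(\pi)$. To deduce that $Y$ lies in $\overline{\cO(\pi)}$, I would apply the standard contracting one-parameter subgroup: let $h\in \fl$ be the semisimple element given by the cocharacter defining $P$, so that $\ad(h)$ acts by zero on $\fl$ and by strictly positive eigenvalues on $\fn$. Then
\begin{equation*}
\Ad(\exp(-th))X=Y+e^{-t\ad(h)}N \longrightarrow Y \quad \text{as }t\to\infty,
\end{equation*}
exhibiting $Y$ as a limit of points in $\cO(\pi)$, hence $Y\in \overline{\cO(\pi)}$. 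Since $\overline{\cO(\pi)}$ is $G$-invariant, and in particular $L$-invariant, it contains $L\cdot Y = \cO(\tau)$. Finally, the inclusion $\pi\subset \pi'$ gives $\Ann(\pi')\subset \Ann(\pi)$, whence $\cO(\tau)\subset \overline{\cO(\pi)} = \AnV(\pi)\subset \AnV(\pi')$.

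The only point that demands a little care is the precise form of smooth Frobenius reciprocity and the Casselman--Wallach admissibility of $\Ind_P^G(\tau)$. Both are classical, and crucially the character twist appearing in Frobenius reciprocity does not perturb the annihilator variety of $\tau$, so the first paragraph goes through as stated. I do not anticipate any real obstacle beyond this routine bookkeeping.
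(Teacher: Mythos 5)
Your argument is correct and is exactly the route the paper intends: Frobenius reciprocity makes (a character twist of) $\tau$ an irreducible quotient of $r_N(\pi)$, Corollary \ref{cor:J} together with the Killing-form reformulation gives that $\cO(\pi)$ meets $\cO(\tau)+\fn$, and the contracting one-parameter subgroup plus $\Ann(\pi')\subset\Ann(\pi)$ yields the ``in particular'' clause. The paper leaves these steps implicit, and your write-up supplies them correctly, including the (true) observation that the normalization character twist does not change the annihilator variety.
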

Passing to admissible dual, we get the same statement about quotients of parabolic inductions. This corollary is compatible with Ginzburg's conjectures on orbits corresponding to residues of Eisenstein series \cite{GinzConj}.
}

\DimaB{In the case $\fg=\gl_n$ and $\fp\subset \gl_n$ a maximal parabolic subgroup, the saturation of $\cO+\fn$ is computed in \cite{Zha} as follows.
In this case $\fl\cong\gl_k\times \gl_{n-k}$, and thus every orbit $\cO\subset \fl$ is given by two partitions: $\mu$ of $k$  and $\nu$ of $n-k$. Let $\lambda$ be a partition of $n$, and $\cO'_{\lambda}\subset \gl_n$ be the corresponding orbit. By \cite{Zha}, $\cO'_{\lambda}$ intersects $\cO+\fn$ if and only if the Littlewood-Richardson coefficient $c^{\lambda}_{\mu,\nu}$ does not vanish.
 }

\DimaA{
Let us now prove Corollary \ref{cor:TJ}, which is a twisted version of Corollary \ref{cor:J}. In this corollary we fix a Levi decomposition $P=LN$, a reductive subgroup $S\subset L$ such that $\Delta S \subset S\times L$ is a spherical subgroup, and a character $\chi$ of $\fr=\fs\oplus \fn$ that is trivial on $\fs$.
We fix $\tau\in \Irr(S)$ and $\pi\in \Irr(G)$ such that $\Hom_{\fr}(\pi,\tau\otimes \chi)\neq 0$ and have to show that
$\cO(\Dima{\tau})$ intersects $\cO(\pi)|_{\fr}+\chi$.
\begin{proof}[Proof of Corollary \ref{cor:TJ}]
Let $R:=SN$. Let
\begin{equation*}
Y:=\{(pN,p)\in S\times R\}\subset S\times G,
\end{equation*}
and $\mathfrak{y}$ be the Lie algebra of $Y$. The restriction to $\fr$ defines a map $\fn^{\bot}\onto \fr^*$. We have $$\mathfrak{y}^{\bot}=\{-\mu|_{\fr},\mu)\, \vert \mu\in \fn^{\bot}\}.$$ We also have
$$\Hom_{\fr}(\pi,\tau\otimes \chi)\cong \Hom_{\fy}(\widetilde{\tau}\hot \pi,\chi)$$
Thus, the statement follows from Corollary \ref{cor:CW}.
\end{proof}

}

\section{Applications to mixed models}\label{sec:TwistAppl}
In this section we fix $F$ to denote either $\R$ or $\C$.

\subsection{Preliminaries on nilpotent orbits and partitions}

By \cite[\S 5.1]{CM}, if $G$ is a classical real reductive group then any nilpotent orbit  $\cO\subset \fg^*$ is given by a partition $\lambda(\cO)$.
 For orthogonal groups, the restriction on $\lambda$ is that even parts have even multiplicities. To very even partitions, {\it i.e.} partitions consisting of even parts only, there correspond two orbits of $SO_n(\C)$, but their union is a single orbit of $O_n(\C)$.

 For a partition $\lambda$ we denote by $\lambda^t$ the transposed partition. We denote by $\lambda^t_i$ the $i$-th part of $\lambda^t$ if $i\leq \lambda_1$ (which is the length of $\lambda^t$) and set $\lambda^t_i$ to zero for $i>\lambda_1$.

For $G=U(m,n)$, the real nilpotent orbits are described in \cite[\S 9.3]{CM}. From this description we see that the number of odd parts in the partition of the corresponding complex nilpotent orbit  is  $|m-n|+2l$ for some $k\geq 0\in \bZ$. This is relevant since for any $\pi\in \Irr(G)$, $\cO(\pi)$ includes a real nilpotent orbit for $G$ by \cite[Theorem D]{Ross}.

If $G$ is a complex reductive group, regarded as a real group
then the real Lie algebra ${\mathfrak{g}}_{0}$ of $G$ is already a complex Lie
algebra, and we have ${\mathfrak{g}}\cong{\mathfrak{g}}_{0}\times
{\mathfrak{g}}_{0}$, and ${\mathfrak{g}}_{0}$ is diagonally embedded into
${\mathfrak{g}}$. The Lie algebra ${\mathfrak{k}}$ is also isomorphic to
${\mathfrak{g}}_{0}$, and is embedded into ${\mathfrak{g}}$ by $X\mapsto
(X,\theta\left(  X\right)  )$, where $\theta$ is the Cartan involution of $G$, {\it i.e.} the involution such that $K=G^{\theta}$. For a nilpotent orbit ${\mathcal{O}}%
\subset\mathcal{N}({\mathfrak{g}})$ we have ${\mathcal{O}}={\mathcal{O}}%
_{1}\times{\mathcal{O}}_{2}$ where ${\mathcal{O}}_{i}\subset\mathcal{N}%
({\mathfrak{g}}_{0})$. However, if ${\mathcal{O}}$ intersects  ${\mathfrak{k}}^{\bot}\subset
{\mathfrak{g}^{*}}$ then ${\mathcal{O}}_{1}={\mathcal{O}}_{2}$, and thus
${\mathcal{O}}$ is defined by a single nilpotent orbit in ${\mathfrak{g}}_{0}%
$. By \cite[Theorem 8.4]{Vog}, $\cO(V)$ intersects $\fk^{\bot}$
for any $V\in \cM_{HC}(G)$, and thus we will be only interested in
such orbits.
Therefore, if $G$ is a  classical complex reductive group then any nilpotent orbit  $\cO\subset \fg^*$ is still described by a single partition.

Thus, if $G$ is a real or complex classical group and $\pi\in \Irr(G)$ then to the orbit $\cO(\pi)$ there corresponds a single partition, that we will denote by  $\lambda(\pi)$.

 \subsection{Rankin-Selberg models}\label{subsec:RS}
\DimaA{
Let \Dima{$S:=\GL_{n}(F)$, $G:=\GL_{n+k}(F)$ for some $k\geq 1$,} and consider the embedding of $S$ into the upper left corner of $G$. Let
$P\subset G$ be the standard block upper-triangular parabolic subgroup with blocks $n\Dima{+}1,1,\dots,1$.
We identify $\fg$ with $\fg^*$ using the trace form pairing.
Let
$\phi:N\to F$ be the unitary character with differential $\chi=d\phi$  given by the matrix $A=\sum_{i=n+1}^{n+k-1}E_{i+1,i}$, where $E_{ij}$ denote elementary matrices. \Dima{If $k=1$ we take $A=0$. }Let $R:=SN$ and extend $\phi$ to a character of $R$ trivial on $S$.


The standard Levi subgroup of $P$ is $L=\GL_{n+1}(F)\times (F^{\times})^{k-1}$, and  $\Delta S$ is a spherical subgroup of $L\times S$ by \cite{KM}. Thus this case of Theorem \ref{thm:strongPart} follows from Corollary \ref{cor:TJ} and the following proposition.

\begin{prop}\label{prop:RS}
Let $T\in \cN(\gl_{n})$ and $X\in A+T+\fr^{\bot}\in \cN(\gl_{n+k})$, and let $\lambda(T)$ and $\lambda(X)$ be the corresponding partitions. Then for any index $i$ we have
\begin{equation}\label{=RS}
|\lambda(T)^t_i-\lambda(X)^t_i|\leq 1
\end{equation}
\end{prop}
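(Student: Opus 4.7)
The plan is to prove Proposition~\ref{prop:RS} as a linear-algebra statement. I would first make the defining condition $X \in A + T + \fr^{\bot}$ explicit. Identifying $\gl_{n+k}^*$ with $\gl_{n+k}$ via the trace form, the subspace $\fr^{\bot}$ consists of matrices vanishing in the upper-left $n \times n$ block (corresponding to $\fs$) and in the positions dual to $\fn$, namely the bottom-left $(k-1)\times(n+1)$ rectangle and the entries strictly below the subdiagonal of the bottom-right $(k-1)\times(k-1)$ block. Together with the fact that $A$ has $1$'s precisely at positions $(n+j+1,n+j)$ for $j=1,\dots,k-1$, this gives $X$ a rigid block form: $T$ in the upper-left corner, fixed $1$'s at those subdiagonal positions, forced zeros in the other ``strict lower-block'' positions, and arbitrary entries elsewhere.

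The structural heart of the argument is a flag on $V = F^{n+k}$ preserved by $X$ up to a shift. Set $V_j := W \oplus \mathrm{span}(e_{n+1},\dots,e_{n+j})$ for $j = 0, \dots, k$, so $V_0 = W = F^n$ and $V_k = V$. The block form of $X$ immediately gives $X V_j \subset V_{j+1}$ for $j < k$, and shows that the induced maps $V_j/V_{j-1} \to V_{j+1}/V_j$ are the canonical isomorphisms for $j = 1,\dots,k-1$ (coming from the subdiagonal $1$'s), while $V_0 \to V_1/V_0$ is the linear functional $v^T \in W^*$ obtained from the $(n+1)$-th row of $X$ restricted to the first $n$ columns. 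An inductive computation then yields the key identity: for $w \in W$, setting $m(w) := \min\{i \ge 0 : v^T T^i w \ne 0\}$ (with $m(w) = \infty$ if $v^T T^i w = 0$ for all $i$), one has $X^j w = T^j w$ for $j \le m(w)$, and for $m(w) < j \le m(w) + k$ the vector $X^j w$ has a nonzero ``top'' component in $V_{j-m(w)}/V_{j-m(w)-1}$ and is therefore nonzero.

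Set $d_i := \dim \ker X^i - \dim \ker T^i$. Since a routine rearrangement gives $\mu^t_i - \lambda^t_i = d_{i-1} - d_i$, the desired inequality is equivalent to $|d_i - d_{i-1}| \le 1$ for all $i \ge 1$. I would prove this by establishing the two one-sided inequalities $\lambda^t_i \le \mu^t_i + 1$ and $\mu^t_i \le \lambda^t_i + 1$ separately, each via the construction of a linear map between the graded pieces $\ker X^i/\ker X^{i-1}$ and $\ker T^i/\ker T^{i-1}$ whose kernel is at most one-dimensional. In one direction, one projects along the flag complement onto $W$ and uses the identity of the previous paragraph to land in the appropriate kernel quotient of $T$; in the other direction, one lifts representatives of $\ker T^i$ back to $V$ by adding correction terms in $\mathrm{span}(e_{n+1},\dots,e_{n+k})$ and solves the vanishing condition $X^i v = 0$ iteratively using the flag isomorphisms.

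The main obstacle is obtaining the sharp one-dimensional kernel bound in the comparison-map step. Iterating the classical codimension-one interlacing for principal submatrices of nilpotent matrices only gives $|\mu^t_i - \lambda^t_i| \le k$; the improvement to $\le 1$ is a direct consequence of the regularity of the character $\chi$: since $\chi$ is generic on $\fn$, the associated nilpotent $A$ is a single Jordan block of size $k$, and the induced flag has one-dimensional graded pieces linked by isomorphisms. This rank-one structure is exactly what forces the discrepancy at each step $i$ to be concentrated in the single dimension $V_1/V_0$ carrying the functional $v^T$, yielding the bound $\le 1$ rather than the naive $\le k$.
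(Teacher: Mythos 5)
Your setup (the explicit block form of $A+T+\fr^{\bot}$, the flag $V_j$ with one-dimensional steps, the reduction to $|d_i-d_{i-1}|\le 1$) is fine, but the heart of the argument --- the comparison maps between $\ker X^i/\ker X^{i-1}$ and $\ker T^i/\ker T^{i-1}$ with kernel of dimension at most one --- is asserted rather than constructed, and the one construction you do indicate fails. Projecting $\ker X^i$ onto $W$ along $\Span(e_{n+1},\dots,e_{n+k})$ does not land in $\ker T^i$: the entries of $X$ in rows $1,\dots,n$ of columns $n+1,\dots,n+k$ are arbitrary elements of $\fr^{\bot}$, so the flag part feeds back into $W$. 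Already for $k=1$, $n=2$: take $T=E_{12}$ and $X=E_{12}+bE_{13}$ with $b\neq 0$; then $X\in T+\fr^{\bot}$ is nilpotent, $u=-be_2+e_3\in\ker X$, but its $W$-projection $-be_2$ is not in $\ker T$. Your ``key identity'' cannot repair this, since it only controls $X^jw$ for $w\in W$ (and only up to $j\le m(w)+k$) and says nothing about elements with a nonzero component along $e_{n+1},\dots,e_{n+k}$. The lifting direction has the mirror problem: a correction $z$ with $X^i(w+z)=0$ exists only for $w$ in some subspace of $\ker T^i$, and to show that this subspace has codimension at most one \emph{and} that the induced map into $\ker X^i/\ker X^{i-1}$ has kernel essentially $\ker T^{i-1}$ you must pass from $w+z\in\ker X^{i-1}$ to $w\in\ker T^{i-1}$ --- which is exactly the missing comparison in the other direction. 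The genericity of $\chi$ (the rank-one bottleneck $V_1/V_0$) is indeed the moral reason the bound is $1$ rather than $k$, but invoking it does not produce the maps; as it stands the central step is a genuine gap.

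For contrast, the paper avoids this by first conjugating $X$ to a normal form $D=T+A+B+C$ in which all the feedback entries are confined to the single row $n+1$ (the matrix $B$) and the single column $n+k$ (the matrix $C$). Then the $D$-cyclic subspace $W_1=\Span\{D^je_{n+1}\}$ and the $T$-cyclic subspace $V_1=\Span\{T^jv\}$ (with $v$ the vector of first $n$ entries of the last column) satisfy $W_1=V_1\oplus\Span(e_{n+1},\dots,e_{n+k})$, the induced operators on the common quotient $F^{n+k}/W_1\cong F^n/V_1$ coincide, and Lemma \ref{lem:quot} (quotienting a nilpotent operator by a cyclic subspace changes each column $\lambda^t_i$ of the partition by $0$ or $1$) applied once to $D$ and once to $T$ gives both one-sided bounds simultaneously. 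If you want to rescue a direct kernel-comparison argument, you will in effect have to prove such a cyclic-quotient comparison anyway; the conjugation to the normal form is what makes that structure visible, and it is absent from your outline.
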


For the proof we will need the following lemma.

\begin{lemma}[Alisa Qui]\label{lem:quot}
Let $V$ be a vector space, and $Y$ be a nilpotent operator on $V$. Let $v\in V$, and  let $V_1:=\Span\{Y^i v\}_{i\geq 0}$. Let $V':=V/V_1$ and let $Y'$ be the operator on $V'$ given by $Y$. Let $\nu$ and $\nu'$ be the partitions corresponding to $Y$ and $Y'$. Then for any $i\geq 1$, we have $$\nu^t_i\in \{(\nu')^t_i,(\nu')^t_i+1\}.$$
\end{lemma}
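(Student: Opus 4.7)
The plan is to reformulate the claim as a statement about a single sequence of dimensions and then prove two easy bounds.

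Set $a_i := \dim \ker Y^i|_V$ and $b_i := \dim \ker (Y')^i|_{V'}$, so that $\nu^t_i = a_i - a_{i-1}$ and $(\nu')^t_i = b_i - b_{i-1}$. Let $k := \dim V_1$, which equals the smallest integer with $Y^k v = 0$; note that $Y$ acts on $V_1$ as a single Jordan block of size $k$, so $\dim(\ker Y \cap V_1) = 1$. The first step is to compare $a_i$ and $b_i$ directly. Since $w \in Y^{-i}(V_1)$ means exactly that $[w]\in \ker (Y')^i$, we have $b_i = \dim Y^{-i}(V_1) - k$. Applying rank-nullity to the surjection $Y^i : Y^{-i}(V_1)\twoheadrightarrow V_1 \cap Y^i V$ with kernel $\ker Y^i$ gives $\dim Y^{-i}(V_1) = a_i + c_i$, where $c_i := \dim(V_1 \cap Y^i V)$. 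Hence
\[
a_i - b_i = k - c_i, \qquad \text{so}\qquad \nu^t_i - (\nu')^t_i = c_{i-1} - c_i.
\]

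The lemma therefore reduces to showing $0 \leq c_{i-1} - c_i \leq 1$ for every $i \geq 1$. The inequality $c_i \leq c_{i-1}$ is immediate from $Y^i V \subseteq Y^{i-1} V$. For the upper bound I will apply rank-nullity to the map
\[
Y\colon V_1 \cap Y^{i-1} V \;\longrightarrow\; V_1 \cap Y^i V,
\]
which is well-defined because $V_1$ is $Y$-stable. Its image lands in $V_1 \cap Y^i V$, so the image has dimension at most $c_i$, and its kernel is contained in $\ker Y \cap V_1$, which is $1$-dimensional by the Jordan-block structure on $V_1$. Therefore $c_{i-1} \leq 1 + c_i$, giving $c_{i-1} - c_i \in \{0,1\}$.

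Combining these yields $\nu^t_i - (\nu')^t_i \in \{0,1\}$, which is the claim. The only nontrivial step is spotting that the quantity $c_i = \dim(V_1 \cap Y^i V)$ interpolates between $a_i$ and $b_i$, after which everything else is a routine dimension count; I expect no real obstacle once the sequence $c_i$ is identified, so this should be written as a short direct proof rather than via a longer induction.
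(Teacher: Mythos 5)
Your proof is correct, and it is essentially the paper's argument in dual bookkeeping: the paper tracks ranks together with the minimal exponent $a_i$ for which $Y^{a_i}v\in \Im Y^i$, while you track nullities together with $c_i=\dim(V_1\cap Y^iV)$, and these are the same invariant since $c_i=\dim V_1-a_i$. In both cases the heart of the matter is the "step by at most one" property of this quantity, which you justify by rank--nullity for $Y\colon V_1\cap Y^{i-1}V\to V_1\cap Y^iV$ with kernel inside the one-dimensional $\ker Y\cap V_1$ — the same mechanism underlying the paper's inequality $a_i\le a_{i+1}\le a_i+1$.
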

\begin{proof}
Let $p$ be the monic polynomial of minimal degree such that $p(Y)v=0$. Then $p$ divides the minimal polynomial of $Y$, and thus $p$ is a monomial.
Thus all the non-zero vectors in the set  $\{Y^i v\}_{i\geq 0}$ are linear independent.

For every $i\geq 0$ let $a_i$ denote the minimal index $j$ such that $Y^jv\in \Im Y^i$. Then we have
\begin{equation}
\rk (Y')^i=\rk Y^i-\dim V_1+a_i \quad \text{and} \quad a_i\geq a_{i+1}\geq a_i + 1\,\,.
\end{equation}
We also have
\begin{equation}
\nu^t_i=\rk Y^{i-1}-\rk Y^i\quad \text{and} \quad(\nu')^t_i=\rk (Y')^{i-1}-\rk (Y')^i\,\,.
\end{equation}
From these equations we obtain
\begin{equation}
\nu^t_i-(\nu')^t_i=\rk Y^{i-1}-\rk (Y')^{i-1}-(\rk Y^i-\rk (Y')^i)=a_{i-1}-a_i\in\{0,1\}
\end{equation}
\end{proof}

\begin{proof}[Proof of Proposition \ref{prop:RS}]
It is easy to see that $T$ is conjugate to a matrix of the form $D=X+A+B+C$, where $A=\sum_{i=n+1}^{n+k-1}E_{i+1,i}$ as above, $B_{ij}=0$ unless $i=n+1,$ and $j\leq n$, and $C_{ij}=0$ unless $j=n+k$. From $\Tr(X)=\Tr(T)=0$ we have $C_{n+k,n+k}=0$. For any $i\leq n+k$, let $e_{i}$ denote the $i$-th vector in the standard basis. Let $V$ be the span of $e_{1},\dots,e_n$,  let $v\in V$ be the vector given by $v_i=C_{i,n+k}$ and $V_1:=\Span\{X^iv\, \vert i=1\dots n\}$. Let $W:=\Span\{e_1,\dots, e_{n+k}\}$, and $w:=e_{n+1}$. Then $W_1=\Span\{D^iw\}=V_1\oplus \Span\{e_{n+1},\dots, e_{n+k}\}$. Let $D'$ and $X'$ denote the operators given by $D$ and $X$
on $W':=W/W_1$ and $V':=V/V_1$. Then $X'=D'$ under the natural
isomorphism $W'\cong V'$. Let $\nu'$ denote the partition corresponding to $X'$. Then, by Lemma \ref{lem:quot} applied to $V$ and to $W$, for any index $i$ we have  $\lambda^t_i(T),\lambda^t_i(X)\in \{(\nu')^t, (\nu')^t+1\}$. Thus $|\lambda(T)^t_i-\lambda(X)^t_i|\leq 1$.
\end{proof}


For representations of Arthur type, \cite[Conjecture 5.1]{GGP} formulates conjectural natural and sufficient conditions for the existence of Rankin-Selberg models, in terms of the Arthur parameters of $\pi$ and $\tau$.

\begin{remark}
In the same way one proves the analogous theorem for $\SL_{n+k}(F)$.
\end{remark}



\subsection{Bessel models}\label{subsec:Bessel}
Let $V$ be a non-degenerate quadratic or Hermitian vector space over $F$, and let $G$ be the group of symmetries of $V$ ({\it i.e.} $G=\mathrm{O}(V)$ or $G=\mathrm{SO}(V)$ or  $G=\mathrm{U}(V)$ or $G=\mathrm{SU}(V)$). Let $W\subset V$ be an isotropic subspace\Dima{. Let $k:=\dim W+1$.} Choose a full flag $F_1\subset \dots \subset F_{k-1}=W$ in $W$, and let $P\subset G$ be its stabilizer, and $N$ be the unipotent radical of $P$. Let $U\subset W^{\bot}\subset V$ be a maximal non-degenerate subspace, and let $L$ be the group of symmetries of $V$ . Then $L$ is a Levi subgroup of $P$. Choose an anisotropic line $X\subset U$, and let $S$ be the stabilizer of $X$ in $L$.  To define a character $\phi$ of $N$, let $F_{k}:=W\oplus X$, project $\Dima{N}$ on $\times_{i=1}^{k-1}\Hom(F_{i+1}/F_i,F_{i}/F_{i-1})$, and let $\phi$ be a product of non-trivial \Dima{unitary} characters of the coordinates. \Dima{Note that for $k=1$, $N$ is the trivial group.} Let $R:=SN$, and extend $\chi$ to the character of $R$ that is trivial on $S$.

Since $\Delta S$ is a spherical subgroup of $L\times S$ by \cite{KM}, the orthogonal and unitary cases of Theorem \ref{thm:strongPart}  follow from Corollary \ref{cor:TJ} and the following proposition, that is proven similarly to Proposition \ref{prop:RS}.
\begin{prop}\label{prop:Bes}
Let $T\in \cN(\fs^*)$ and $X\in \chi+T+\fr^{\bot}\subset \cN(\g^*)$, and let $\lambda(T)$ and $\lambda(X)$ be the corresponding partitions. Then for any index $i$ we have
\begin{equation}
|\lambda(T)^t_i-\lambda(X)^t_i|\leq 1
\end{equation}
\end{prop}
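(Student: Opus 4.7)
The plan is to adapt the proof of Proposition \ref{prop:RS} to the orthogonal and unitary settings. I use the trace form on $\gl(V)$ to identify $\fg$ with $\fg^*$; then $\chi \in \fg^*$ corresponds to a nilpotent element $A \in \fg$, and $T \in \fs$ becomes a nilpotent antisymmetric (or anti-Hermitian) operator on $V$ supported on $V_0$. To avoid clashing with the element $X \in \fg^*$ in the statement, I denote the anisotropic line by $X_0$ and write $V = F_{k-1} \oplus V_0 \oplus X_0 \oplus F_{k-1}^*$, where $F_{k-1}^*$ is an isotropic polarization of $F_{k-1}$ under the form. The genericity of $\phi$ on each of the irreducible subquotients of $N/[N,N]$ coming from consecutive flag quotients (including the one factoring through $X_0$), together with antisymmetry, forces $A$ to be a regular nilpotent in the copy of $\mathfrak{so}(2k-1)$ (resp.\ $\mathfrak{u}(2k-1)$) sitting on $F_{k-1} \oplus X_0 \oplus F_{k-1}^*$, acting there as a single Jordan block of size $2k-1$ and by zero on $V_0$.

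Next, I would show that after conjugating $X \in \chi + T + \fr^\bot$ by a suitable element of $R = SN$, one may bring $X$ to a canonical matrix form $X = \widetilde T + A + B + C$ analogous to the matrix $D$ from the proof of Proposition \ref{prop:RS}: $\widetilde T$ is $T$ extended by zero, while $B$ and $C$ are ``low-rank'' perturbations constrained by the decomposition $\fr^\bot = (\fl \cap \fs^\bot_\fl) \oplus \fn$ and by the requirement $X \in \fg$. Taking $w := e_1^* \in F_{k-1}^*$ (the top of the $A$-Jordan block) as the distinguished cyclic vector, iterating $X$ on $w$ produces, via the $A$-part, the vectors $e_2^*, \ldots, e_{k-1}^*, x, \pm e_{k-1}, \ldots, \pm e_1$ spanning $F_{k-1} \oplus X_0 \oplus F_{k-1}^*$, while the $\widetilde T$-part vanishes on this subspace and the $B, C$ parts contribute a single vector $v \in V_0$ at iteration $2k-1$. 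Hence the $X$-cyclic subspace $W_1 := \Span\{X^i w\}$ has the form $W_1 = V_1 \oplus (F_{k-1} \oplus X_0 \oplus F_{k-1}^*)$, where $V_1 \subset V_0$ is the $T$-cyclic subspace generated by $v$.

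The remainder is identical in spirit to the end of the proof of Proposition \ref{prop:RS}: the quotients $V/W_1$ and $V_0/V_1$ are naturally isomorphic, and $X$ descends to $T$. Two applications of Lemma \ref{lem:quot}, once to $(V, X, W_1)$ and once to $(V_0, T, V_1)$, yield $\lambda(X)^t_i, \lambda(T)^t_i \in \{(\nu')^t_i, (\nu')^t_i + 1\}$ for every index $i$, where $\nu'$ is the common quotient partition, hence $|\lambda(X)^t_i - \lambda(T)^t_i| \leq 1$. The main obstacle is verifying the clean block decomposition of $W_1$: the antisymmetry of $\fg$-elements forces off-block perturbations to appear in mirrored pairs (a $B$-contribution in one corner is accompanied by a mirror contribution in the opposite corner), and one must check that these mirrored contributions do not expand $W_1 \cap V_0$ beyond $\Span\{T^i v\}$. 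This is a direct linear-algebra verification, carried out separately in the orthogonal, split-orthogonal, and unitary cases.
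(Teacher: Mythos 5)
Your outline is indeed the adaptation the paper has in mind (the paper proves Proposition \ref{prop:Bes} only by reference to Proposition \ref{prop:RS}), but the step you postpone as ``a direct linear-algebra verification'' is not a verification --- it is the heart of the proof, and as you have framed it the argument does not yet close. Write $V=F_{k-1}\oplus X_0\oplus V_0\oplus F_{k-1}^{*}$ (your notation), with $T$ acting on $V_0$ and the chain $A$ supported on $F_{k-1}\oplus X_0\oplus F_{k-1}^{*}$. Under the trace-form identification one has $\fr^{\bot}=\{Y\in\fp : Y|_{V_0\to V_0}=0\}$, and such $Y$ may send $x_0$ and \emph{every} vector of $F_{k-1}^{*}$ into $V_0$: these entries lie partly in the Levi block on $X_0\oplus V_0$ and partly above it inside $\fp$, and they are independent parameters of $\fr^{\bot}$ whose antisymmetric mirrors are the harmless maps $V_0\to F_{k-1}\oplus X_0$. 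Consequently, for a general $X\in\chi+T+\fr^{\bot}$ the cyclic subspace $W_1$ generated by the generator of the $A$-chain meets $V_0$ in a $T$-submodule that needs up to $k$ generators, not the cyclic span of one vector $v$; since Lemma \ref{lem:quot} only treats quotients by the cyclic subspace of a \emph{single} vector, two applications of it would then give a bound of order $k$ rather than $1$. In other words, ``the $B,C$ parts contribute a single vector $v\in V_0$ at iteration $2k-1$'' is not a property of $X$ one can check (and it is not forced by antisymmetry); it must be arranged, so your ``main obstacle'' about mirrored pairs mis-locates the real difficulty.

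What is actually needed --- and what ``proven similarly to Proposition \ref{prop:RS}'' silently includes, just as the $\GL$ case needs the ``easy to see'' conjugation putting all of the outgoing entries into a single column --- is the normalization itself: show that conjugation by a suitable element of $N$ (or $R=SN$) kills the components of $X$ mapping $x_0$ and $f_j$, $j\geq 2$, into $V_0$, leaving only the image of the last chain vector. This is where genericity of $\phi$ enters: for $Z\in\fn$ whose only nontrivial block is $\zeta : F_{k-1}^{*}\to V_0$ (together with its mirror), the first-order effect of $\mathrm{Ad}(\exp Z)$ on these components is a triangular system whose diagonal coefficients are the nonzero structure constants of the chain, so it is solvable, and higher-order terms are handled by iteration. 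Such conjugations are legitimate because $\chi$ is the differential of a character of $N$ stabilized by $S$, so $\chi$ kills $[\fn,\fn]$ and $[\fs,\fn]$, whence $\mathrm{Ad}^{*}(N)$ preserves the coset $\chi+T+\fr^{\bot}$ and in particular leaves $T$, hence $\lambda(T)$, untouched --- a point your appeal to conjugation by $R$ also needs to record. Once this normalization is established, your computation of $W_1=(F_{k-1}\oplus X_0\oplus F_{k-1}^{*})\oplus\Span\{T^{i}v\}$, the identification $V/W_1\cong V_0/\Span\{T^{i}v\}$ intertwining $X$ with $T$, and the two applications of Lemma \ref{lem:quot} do close the argument, in the orthogonal and unitary cases alike.
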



\begin{remark}
The group $\mathrm{O}(V)$ is not Zariski connected, and thus Corollary \ref{cor:CW} is not directly applicable to its representations.  However, the restriction of any such representation $\pi$ to $\mathrm{SO}(V)$ is composed of at most two irreducible components, intertwined by the component group $\bZ/2\bZ$. Thus, the two components either have the same annihilator variety, or they have annihilator varieties corresponding to two nilpotent orbits described by the same  partition. Also, the invariant functional has non-zero restriction to at least one of the two components. Thus, the special orthogonal case of Theorem \ref{thm:strongPart} implies the  orthogonal case.
\end{remark}
}

\Dima{
\subsection{Archimedean analogs of Bernstein-Zelevinsky derivatives}\label{subsec:Der}

In \cite{AGS}, two Archimedean analogues of the Bernstein-Zelevinsky derivatives (\cite{BZ}) are defined. One functor, $B^k$, is a more naive analogue, and preserves admissibility. Another one, $E^k$, does not in general preserve admissibility, but is exact. We refer the reader to \cite{AGS} for the precise definitions. Corollary \ref{cor:TJ}, Lemma \ref{lem:quot} and Theorem \ref{thm:strongPart} give the following description of orbits associated to irreducible quotients of derivatives.

\begin{prop}
Let $\pi\in \Irr(\GL_{n+k}(F))$, let $\tau\in \Irr(\GL_n(F))$ be an irreducible quotient of $B^k(\pi)$ and $\rho \in \Irr(\GL_n(F))$ be an irreducible quotient of $E^k(\pi)$. Then for any index $i$ we have
$$0\leq \lambda_i^t(\pi) - \lambda_i^t(\tau)\leq 1\quad \text{and}\quad -1\leq \lambda^t_i(\pi) - \lambda_i^t(\rho)\leq 1$$
\end{prop}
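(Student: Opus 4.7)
The plan is to convert each assumption---$\tau$ being an irreducible quotient of $B^k(\pi)$, and $\rho$ being an irreducible quotient of $E^k(\pi)$---into a nonvanishing of a Rankin-Selberg type Hom space, and then invoke the orbit-theoretic machinery of \S\ref{subsec:RS}. From the definitions of $B^k$ and $E^k$ in \cite{AGS}, together with the Casselman-Wallach equivalence (Theorem \ref{thm:CW}), each such irreducible quotient yields a nonzero continuous $\fr$-equivariant map: $\tau$ produces $\pi \to \tau \otimes \phi$ and $\rho$ produces $\pi \to \rho \otimes \phi$, where $\fr = \gl_n(F) \oplus \fn$ and $\phi$ is the generic character of $N$ used to define the derivative. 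Theorem \ref{thm:strongPart} applied with $S = \GL_n(F)$ then gives $|\lambda^t_i(\pi) - \lambda^t_i(\tau)| \leq 1$ and $|\lambda^t_i(\pi) - \lambda^t_i(\rho)| \leq 1$, which settles the $E^k$ case completely.

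For $B^k$, it remains to upgrade the two-sided inequality to the one-sided bound $\lambda^t_i(\pi) \geq \lambda^t_i(\tau)$. Here my plan is to refine the argument behind Proposition \ref{prop:RS}. Recall that there, an orbit representative $X \in \cO(\pi)$ is conjugated to the form $T + A + B + C$, and Lemma \ref{lem:quot} is invoked twice: once on $W = F^{n+k}$ with cyclic vector $w = e_{n+1}$, producing a cyclic subspace $W_1$, and once on $V = F^n$ with cyclic vector $v$ coming from the block $C$, producing $V_1$; both $\lambda^t_i(\pi)$ and $\lambda^t_i(\tau)$ are then shown to lie in $\{(\nu')^t_i, (\nu')^t_i + 1\}$ for a common partition $\nu'$. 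I would show that the more rigid structure of $B^k$ (as compared to $E^k$) allows one to choose a representative with the block $C$ vanishing, so that $V_1 = 0$ and the second application of Lemma \ref{lem:quot} is trivial. Equivalently, $\cO(\pi)$ is the orbit induced from $(\cO(\tau), A)$, so that $\lambda(\pi) = \lambda(\tau) \cup (k)$ as multisets of parts, which immediately yields $\lambda^t_i(\pi) - \lambda^t_i(\tau) \in \{0, 1\}$.

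The main obstacle is extracting this ``$C = 0$'' (equivalently, induced-orbit) condition from the definition of $B^k$. I expect it to rest on the Hausdorff/smoothness feature that distinguishes $B^k$ from $E^k$, combined with a Frobenius-reciprocity identification: an irreducible quotient of $B^k(\pi)$ corresponds, via reciprocity, to a map from $\pi$ into a smaller, more restrictive induced representation than its $E^k$ counterpart, and on the orbit side this smaller target precisely realizes $\cO(\pi)$ as the induced orbit from $(\cO(\tau), A)$. Once this rigidity is established, Corollary \ref{cor:TJ} together with a single application of Lemma \ref{lem:quot} completes the argument.
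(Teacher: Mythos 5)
Your handling of $E^k$ is fine and is exactly what the paper intends: an irreducible quotient $\rho$ of $E^k(\pi)$ gives a nonzero continuous element of $\Hom_{SN}(\pi|_{SN},\rho\otimes\phi)$ for the Rankin--Selberg datum of \S\ref{subsec:RS}, and Theorem \ref{thm:strongPart} yields the two-sided bound. The gap is in the $B^k$ half, and it is precisely the step you flag as ``the main obstacle''. The feature distinguishing $B^k$ from $E^k$ is not a Hausdorff/smoothness property: it is that $B^k$ contains an additional \emph{untwisted} coinvariants step (the analogue of the Bernstein--Zelevinsky functor $\Psi^-$) with respect to the abelian column group between the $\GL_n$-block and the first $\GL_1$-block. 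Consequently an irreducible quotient $\tau$ of $B^k(\pi)$ is equivariant for the \emph{larger} algebra $\fr'=\gl_n\oplus\fn'$, where $\fn'$ is the nilradical of the $(n,1^k)$-parabolic, with the same character (trivial on the extra column); Corollary \ref{cor:TJ} must be applied to this larger pair, not to the pair $\fr=\gl_n\oplus\fn$ of \S\ref{subsec:RS}. The extra orthogonality coming from the column $\{E_{i,n+1}\}_{i\le n}$ is what kills one of the two blocks in the normal form of Proposition \ref{prop:RS}: with the conventions of \S\ref{subsec:RS} as written it is the row block $B$ (not $C$; with the opposite mirabolic convention it would be $C$). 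Once that block is gone the representative is block-triangular, $T\in\cO(\tau)$ sits on an invariant subspace (or on the quotient by the cyclic span of $e_{n+1},\dots,e_{n+k}$, in the other convention), the complementary piece is a single Jordan block of size $k$, and one application of Lemma \ref{lem:quot} -- possibly to the transposed operator -- gives $0\le\lambda^t_i(\pi)-\lambda^t_i(\tau)\le 1$. Since you only conjecture a mechanism for this rigidity (Hausdorffness, a ``smaller induced representation'') and never identify the larger equivariance, the one-sided inequality is not actually established in your proposal.

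Separately, your sentence ``equivalently, $\cO(\pi)$ is the orbit induced from $(\cO(\tau),A)$, so that $\lambda(\pi)=\lambda(\tau)\cup(k)$'' is false, and fortunately unnecessary. Corollary \ref{cor:TJ} only forces $\cO(\pi)$ to meet a certain affine subspace; it does not determine $\cO(\pi)$ from $\cO(\tau)$. For instance, for $k=1$ and $\pi$ an irreducible principal series of $\GL_2(\R)$, the quotients of $B^1(\pi)$ are characters of $\GL_1(\R)$, so $\lambda(\pi)=(2)$ while $\lambda(\tau)\cup(1)=(1,1)$. Moreover, induction of orbits from the Levi $\gl_n\times\gl_1^k$ adds partitions part-wise (compare \eqref{=TO}), giving $\lambda(\tau)+(1^k)$, whereas $\lambda(\tau)\cup(k)$ is the minimal (saturated) orbit meeting $\cO(\tau)+\fn'$; neither equality can be asserted for $\cO(\pi)$. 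The correct route is the one above: the forced vanishing of a block plus a single use of Lemma \ref{lem:quot}.
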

}

\subsection{Klyachko models} \label{subsec:Klya}

Let $G:=\GL_{2n+k}(F)$ with $k\geq 0$. For $k=0$ we let $H:=\Sp_{2n}\subset G$ and let $\phi$ be the trivial character of $H$. In this case, the nilpotent matrices in $\mathfrak{sp}_{2n}^\bot$ have very even partitions. This follows from \cite{Oht}, and can also be checked by direct computation.

Assume now that $k>0$ we let $P=LN\subset G$ be the standard block upper-triangular parabolic subgroups with blocks $2n+1,1,\dots,1$. Let $S$ be the group of matrices of the form $\begin{pmatrix}g & h \\
0 & 1 \\
\end{pmatrix}$ with $g\in\Sp_{2n}(F)$, embedded in the first block of $L$.
As in \S \ref{subsec:RS} we identify $\fg$ with $\fg^*$ using the trace form pairing and let
$\phi$ be the unitary character of $N$ with differential $\chi=d\phi$  given by the matrix $A=\sum_{i=2n+1}^{2n+k-1}E_{i+1,i}$. Then we have
$$\chi+{\fh}^{\bot}=A+\left \{\begin{pmatrix}B & C \\
0 & D \\
\end{pmatrix} \in \gl_{2n+k}\, \vert \, B\bot \mathfrak{sp}_{2n} \text{ and }D \text{ is upper triangular }\right\}.$$
Let $X=\begin{pmatrix}B & C \\
0 & E \\
\end{pmatrix}\in \chi+{\fh}^{\bot}$ be nilpotent. Then $E$ is a regular nilpotent $k$ by $k$ matrix, and $\lambda(B)^t$ is very even. By Lemma \ref{lem:quot},
this implies that $\lambda(X)^t$ has exactly $k$ odd parts.

Summarizing, we obtain that for any $k\geq 0$, Corollary \ref{cor:CW} implies the following one.
\begin{cor}\label{cor:Klya}
Let $\pi\in \Irr(G)$ with $\Hom_{H}(\pi|_{H}, \phi)\neq 0$. Then $\lambda(\pi)^t$ has exactly $k$ odd parts.
\end{cor}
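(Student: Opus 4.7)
My plan is to apply \Cref{cor:CW} to the Klyachko subgroup and then extract the combinatorial conclusion from \Cref{lem:quot}, along the lines indicated in the paragraphs preceding the corollary.

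For $k=0$ one applies the untwisted version of \Cref{cor:CW} to the symmetric (hence spherical) pair $(\GL_{2n}(F),\Sp_{2n}(F))$. The hypothesis implies $\cO(\pi)\cap\mathfrak{sp}_{2n}^{\bot}\neq\emptyset$, and by the Kostant--Sekiguchi classification of Ohta \cite{Oht}, the nilpotent orbits meeting $\mathfrak{sp}_{2n}^{\bot}$ are parameterized by partitions of $2n$ all of whose parts have even multiplicity; equivalently, each $\lambda(\pi)^t_i$ is even, i.e.\ $\lambda(\pi)^t$ has $0=k$ odd parts.

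For $k\geq 1$ I would apply the twisted version of \Cref{cor:CW} to $H=SN$, a classical spherical subgroup of $\GL_{2n+k}(F)$, with the one-dimensional module $\phi$. This produces a nilpotent matrix $X\in\cO(\pi)\cap(\chi+\fh^{\bot})$. As computed in the paragraph just before the corollary, after a change of basis in the last $k$ coordinates we may take
\[
X=\begin{pmatrix}B & C\\ 0 & E\end{pmatrix},
\]
with $B\in\mathfrak{sp}_{2n}^{\bot}$ nilpotent, $E$ the regular nilpotent $k\times k$ subdiagonal matrix and $C$ arbitrary; by Ohta once more, each $\lambda(B)^t_i$ is even.

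The combinatorial crux is the two-sided estimate $\lambda(X)^t_i-\lambda(B)^t_i\in\{0,1\}$. The lower bound $\lambda(X)^t_i\geq\lambda(B)^t_i$ I would derive from the identity $\lambda(T)^t_i=\dim(\ker T\cap\Im T^{i-1})$: since $V^{(1)}:=F^{2n}$ is $X$-stable with $X|_{V^{(1)}}=B$, the inclusion $\ker B\cap\Im B^{i-1}\subset\ker X\cap\Im X^{i-1}$ is immediate. The upper bound $\lambda(X)^t_i\leq\lambda(B)^t_i+1$ I would obtain by imitating \Cref{prop:RS}: apply \Cref{lem:quot} first to $X$ on $V$ with the vector $v=e_{2n+1}$, so that the cyclic subspace $V_1=\Span\{X^iv\}_{i\geq 0}$ contains the last $k$ coordinates together with some $B$-cyclic subspace of $V^{(1)}$, and then apply \Cref{lem:quot} to $B$ on $V^{(1)}$ with the corresponding cyclic generator; the two quotient operators are naturally identified, and comparing the two outputs of the lemma yields the desired bound.

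Granted both bounds, parity closes the argument: since each $\lambda(B)^t_i$ is even, $\lambda(X)^t_i$ is odd precisely when the difference equals $1$, and $\sum_i(\lambda(X)^t_i-\lambda(B)^t_i)=(2n+k)-2n=k$ forces exactly $k$ such indices. Hence $\lambda(\pi)^t=\lambda(X)^t$ has exactly $k$ odd parts. The step I expect to require the most care is the upper bound $\lambda(X)^t_i\leq\lambda(B)^t_i+1$ in the presence of the coupling block $C$; this is essentially the same combinatorial issue dealt with in \Cref{prop:RS}, so I expect the same iteration of \Cref{lem:quot} to go through with only notational changes.
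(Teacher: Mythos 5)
Your proof is correct and follows essentially the same route as the paper: apply Corollary~\ref{cor:CW} (twisted for $k\geq 1$, untwisted via Ohta's classification for $k=0$) to the Klyachko pair, describe $\chi+\fh^{\bot}$, and compare $\lambda(X)$ with $\lambda(B)$ through Lemma~\ref{lem:quot} exactly as in Proposition~\ref{prop:RS}. Your only addition is the one-sided bound $\lambda(X)^t_i\geq\lambda(B)^t_i$ obtained from the $X$-invariance of $F^{2n}$ via $\lambda^t_i=\dim(\ker\cap\Im^{i-1})$, which is genuinely needed to upgrade the two-sided estimate of Lemma~\ref{lem:quot} to the count of \emph{exactly} $k$ odd parts — a detail the paper leaves implicit in its one-line conclusion.
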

For unitarizable $\pi$,  this corollary and the converse statement follow from \cite{AOS,GOSS}.

\subsection{Shalika models}\label{subsec:Sha}

Let $G:=\GL_{2n}(F)$, and
$P\subset G$ be the standard block upper-triangular parabolic subgroups with blocks of size $n$. Let $S:=\GL_{n}(F)$, embedded diagonally in $L=\GL_{n}(F)\times \GL_{n}(F)\subset P$. We identify the nilpotent radical $\fn$ of $\fp$ with $\gl_n(F)$ (sitting in the upper-right corner of $\fg$), and let
$\phi:N\to F$ such that $\chi=d\phi$ is given by trace. Let $H=SN$, and extend $\phi$ to a character of $H$. Then we have
$$\chi+\fh^{\bot}=\left \{\begin{pmatrix}B & C \\
\id & -B \\
\end{pmatrix} \in \gl_{2n}\right\}$$
Thus any $X\in \chi+\fh^{\bot}$ is conjugate to $Y=\begin{pmatrix}0 & C \\
\id & 0 \\
\end{pmatrix}$, and is nilpotent if and only if $C$ is nilpotent.
Further, for any $i>0$ we have $\rk X^{2i+1}-\rk X^{2i+2}=\rk C^{i}-\rk C^{i+1}=\rk X^{2i}-\rk X^{2i+1}$. Thus $\lambda(X)^t_{2i+1}=\lambda(X)^t_{2i+2}=\lambda(C)^t_{i+1}$.
That is, $\lambda^t$ has even multiplicities, or equivalently $\lambda$ is very even. Thus, Corollary \ref{cor:CW} implies the following one.
\begin{cor}
Let $\pi\in \Irr(G)$ with $\Hom_{H}(\pi|_{H}, \phi)\neq 0$. Then $\lambda^t(\pi)$ has even multiplicities.
\end{cor}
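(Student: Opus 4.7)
The plan is to invoke the twisted branching statement Corollary \ref{cor:CW}: if $\pi$ admits a nonzero continuous $\fh$-equivariant map into the one-dimensional $H$-module $\sigma=\C_{\phi}$, then $\cO(\pi)$ must meet $\chi+\fh^{\bot}$. Once this is known, the problem is purely combinatorial: classify nilpotent elements of $\gl_{2n}(F)$ lying in the explicit affine subspace $\chi+\fh^{\bot}$ and compute their Jordan partitions.

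To apply Corollary \ref{cor:CW}, I first need to check that the setup of the twisted version is satisfied. The parabolic $P=LN$ has $L=\GL_n(F)\times \GL_n(F)$, and $S=\Delta\GL_n(F)\subset L$ is a symmetric (hence spherical) subgroup. Extending $\chi=d\phi$ by zero on $\fl$ and on $\overline{\fn}$ produces an element of $\fg^{*}$ that vanishes on $\fs$, and by construction $\fn$ acts on $\C_\phi$ via $\chi|_\fn$. Thus the hypotheses of Corollary \ref{cor:CW} are met, and we conclude $\cO(\pi)\cap(\chi+\fh^{\bot})\neq\emptyset$.

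Next I would compute $\chi+\fh^{\bot}$ explicitly; the trace pairing identifies it with the set of block matrices $X=\begin{pmatrix}B & C\\ \id & -B\end{pmatrix}$ as indicated in the excerpt. To put such an $X$ in a convenient form I would conjugate by the unipotent matrix $g=\begin{pmatrix}\id & -B\\ 0 & \id\end{pmatrix}$; a direct calculation gives
\[
gXg^{-1}=\begin{pmatrix}0 & C+B^{2}\\ \id & 0\end{pmatrix}=:Y.
\]
Hence every $X\in\chi+\fh^{\bot}$ is $\GL_{2n}$-conjugate to some $Y=\begin{pmatrix}0 & C'\\ \id & 0\end{pmatrix}$, and $X$ is nilpotent iff $Y$ is nilpotent iff $C'$ is nilpotent (since $Y^{2}=\begin{pmatrix}C' & 0\\ 0 & C'\end{pmatrix}$).

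The final step is to read off the transpose partition from the ranks of powers of $Y$. From $Y^{2k}=\begin{pmatrix}(C')^{k} & 0\\ 0 & (C')^{k}\end{pmatrix}$ and $Y^{2k+1}=\begin{pmatrix}0 & (C')^{k+1}\\ (C')^{k} & 0\end{pmatrix}$, a one-line computation shows
\[
\rk Y^{2k}-\rk Y^{2k+1}=\rk Y^{2k+1}-\rk Y^{2k+2}=\rk (C')^{k}-\rk (C')^{k+1},
\]
so $\lambda(Y)^{t}_{2k+1}=\lambda(Y)^{t}_{2k+2}=\lambda(C')^{t}_{k+1}$. Since $\cO(\pi)$ contains such an element $X$ (conjugate to $Y$) of maximal dimension, the partition $\lambda(\pi)^{t}$ equals $\lambda(Y)^{t}$ and therefore pairs up at every index, which is exactly the statement that all multiplicities in $\lambda(\pi)^{t}$ are even. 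I do not foresee any genuine obstacle; the only mild issue is choosing the conjugating matrix that simultaneously clears the diagonal blocks without disturbing the lower-left identity, but the unipotent $g$ above does the job.
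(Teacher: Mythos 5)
Your proposal is correct and follows essentially the same route as the paper: invoke the twisted part of Corollary \ref{cor:CW} for the Whittaker-induced pair $\fh=\Delta\gl_n\oplus\fn$ with $\chi=\mathrm{tr}$, identify $\chi+\fh^{\bot}$ with matrices $\begin{pmatrix}B & C\\ \id & -B\end{pmatrix}$, conjugate to $\begin{pmatrix}0 & C'\\ \id & 0\end{pmatrix}$, and read off from the ranks of powers that consecutive parts of $\lambda^t$ pair up. Your only additions are the explicit unipotent conjugator (with $C'=C+B^2$), which the paper leaves implicit, and the harmless slip of calling the intersection point ``of maximal dimension''; neither changes the argument.
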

Curiously, this condition is dual to the condition on $\pi$ to be distinguished under the symplectic subgroup, which requires $\lambda^t(\pi)$ to be very even.

\Dima{
\subsection{Ginzburg-Rallis models} Let $G:=\GL_6(F)$ and let $L\cong\GL_2(F)\times \GL_2(F)\times \GL_2(F)$ be a standard Levi subgroup. Let $(H,\chi)$ be a Whittaker induction of the spherical pair $\Delta\GL_2(F)\subset L$.
Then Corollary \ref{cor:CW} implies that for any $\pi \in \Irr(G)_{\fh,\chi}$, $\lambda^t(\pi)\in \{1^6,21^4,2^3\}$.}

\section{Non-homogeneous spherical varieties}\label{sec:X}
Let $\bf X$ be a spherical smooth $\bfG$-variety defined over $\R$, and let $\mu_{\bf X}: T^*\bf X \to \fg^*$ denote the moment map defined in the introduction.
Let $X$ be a union of connected components of the manifold of real points of $\bf X$. Let $\cE$ be an algebraic bundle over $X$, and let $\Sc(X,\cE)$ denote the space Schwartz sections of $\cE$ (see \S \ref{subsec:Sc} below for the definition).
\begin{thm}\label{thm:X}
If $\pi\in \Irr(G)$ is a quotient of  $\Sc(X,\cE)$ then $\cO(\pi)\subset \Im(\mu_{\bf X})$.\end{thm}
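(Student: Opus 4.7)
The plan is to reduce Theorem \ref{thm:X} to Corollary \ref{cor:CW} in two stages: a transitive case handled by Frobenius reciprocity, followed by orbit-induction on $\bf X$. The hypothesis that $\pi$ is a quotient of $\Sc(X,\cE)$ is equivalent to $\Hom_G\bigl(\Sc(X,\cE),\pi\bigr)\neq 0$, or dually to the non-vanishing of the coinvariants $\H_0\bigl(\fg,\Sc(X,\cE)\hot\widetilde\pi\bigr)$. Combining the Casselman embedding theorem for $\widetilde\pi$ with \cite[Theorem C]{AGKL} shows that these coinvariants are Hausdorff and finite-dimensional, so any non-zero continuous $G$-equivariant map into the irreducible Casselman-Wallach module $\pi$ has closed image and hence (by irreducibility) is surjective.

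First I would settle the transitive case, where $\bf G$ acts transitively on $\bf X$. Then $X$ is a finite disjoint union of real orbits $G/H_i$, and
\begin{equation*}
\Sc(X,\cE)=\bigoplus_i\Sc(G/H_i,\cE_i),
\end{equation*}
where $\cE_i$ is an equivariant bundle whose fiber $\sigma_i$ is a finite-dimensional representation of $H_i$. A non-zero $G$-map $\Sc(X,\cE)\to\pi$ must be non-zero on some summand, and Frobenius reciprocity for Schwartz induction converts this into the non-vanishing of $\H_0(\fh_i,\sigma_i'\otimes\widetilde\pi)$ for a finite-dimensional $\fh_i$-module $\sigma_i'$ obtained from $\sigma_i$ by the appropriate modulus twist. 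Applying Corollary \ref{cor:CW} to $\widetilde\pi$ yields $\cO(\widetilde\pi)\cap\fh_i^{\bot}\neq\emptyset$; since $\cO(\pi)=\cO(\widetilde\pi)$, it is $\bf G$-invariant, and $\bf G\cdot\fh_i^{\bot}=\Im\mu_{G/H_i}$, we conclude $\cO(\pi)\subset\Im\mu_{\bf X}$.

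For the general case I would induct on the (finite) number of $\bf G$-orbits on $\bf X$. Pick a $\bf G$-orbit $\bf Y$ open in $\bf X$, and set $Y:={\bf Y}(\R)\cap X$ and $Z:=X\setminus Y$. The Nash-manifold Schwartz-function formalism provides a short exact sequence
\begin{equation*}
0\to\Sc(Y,\cE|_Y)\to\Sc(X,\cE)\to\Sc(Z,\cE|_Z)\to 0.
\end{equation*}
If the surjection $\Sc(X,\cE)\onto\pi$ annihilates $\Sc(Y,\cE|_Y)$, it factors through $\Sc(Z,\cE|_Z)$ and the inductive hypothesis gives $\cO(\pi)\subset\Im\mu_{\bf X\setminus\bf Y}\subset\Im\mu_{\bf X}$. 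Otherwise the restriction $\Sc(Y,\cE|_Y)\to\pi$ is non-zero, and by the preceding paragraph surjects onto $\pi$, so the transitive case applied to $\bf Y$ yields $\cO(\pi)\subset\Im\mu_{\bf Y}\subset\Im\mu_{\bf X}$.

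The main technical obstacles are, first, pinning down the precise form of Frobenius reciprocity for Schwartz sections over the (non-compact) homogeneous spaces $G/H_i$, with the correct book-keeping of modulus characters; and second, upgrading the restricted map $\Sc(Y,\cE|_Y)\to\pi$ from merely having dense image to being genuinely surjective. Both rest squarely on the Hausdorffness and finite-dimensionality of $\H_0\bigl(\fg,\Sc(X,\cE)\hot\widetilde\pi\bigr)$ extracted at the outset from the Casselman embedding together with \cite[Theorem C]{AGKL}.
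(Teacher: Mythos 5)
Your transitive case is essentially the paper's base case and is fine, but the induction step has a genuine gap: the claimed short exact sequence
\begin{equation*}
0\to\Sc(Y,\cE|_Y)\to\Sc(X,\cE)\to\Sc(Z,\cE|_Z)\to 0
\end{equation*}
is false. By Theorem \ref{pOpenSet}, the image of extension by zero from $Y$ is the space of Schwartz sections that vanish on $Z$ \emph{together with all derivatives}, so the quotient $\Sc(X,\cE)/\Sc(Y,\cE|_Y)$ records all normal derivatives along $Z$ (already for $X=\R$, $Z=\{0\}$ it is a space of formal Taylor series, not $\C=\Sc(Z)$). Hence in your first case a map that kills $\Sc(Y,\cE|_Y)$ does \emph{not} factor through $\Sc(Z,\cE|_Z)$, and the inductive hypothesis cannot be invoked. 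This is exactly the point the paper's proof is built around: it passes to the dual side, where the hypothesis gives a nonzero element of $\Sc^*(X,\cE,\widetilde{\pi})^G$; if its restriction to the open part vanishes it is supported on the closed piece, and then the filtration of $\Sc^*_Z(X,\cE,\widetilde{\pi})$ from Theorem \ref{thm:Filt} and Corollary \ref{cor:Filt}, with successive quotients $\Sc^*(Z,\cE\otimes\Sym^n(CN_Z^X),\widetilde{\pi})$, produces a nonzero invariant distribution for one of these conormally twisted bundles on a single orbit, to which the transitive case (Theorem \ref{thm:Frob} plus Corollary \ref{cor:CW}) applies. Your open-orbit/closed-complement decomposition is the mirror image of the paper's (which removes a closed orbit and inducts on the open complement), and either choice can work, but the gluing step genuinely requires the conormal filtration and the extra twists $\Sym^n(CN_Z^X)$, not the naive restriction sequence.

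Two secondary points. First, the transitive case only needs a \emph{nonzero} continuous $G$-map, not a surjection: Frobenius reciprocity and Corollary \ref{cor:CW} are applied to the nonvanishing of coinvariants, so the detour through the Casselman embedding and \cite[Theorem C]{AGKL} to upgrade dense image to surjectivity is unnecessary for this theorem (in the paper that input is reserved for Theorem \ref{thm:AltX}), and in any case "finite-dimensional Hausdorff coinvariants" does not by itself yield that the image of a map into $\pi$ is closed. Second, once you work with invariant distributions as above, the statement you actually induct on is the nonvanishing of $\Sc^*(X,\cE,\widetilde{\pi})^G$ rather than the quotient property itself, which is what makes the restriction-to-open-set step (surjectivity of $\Sc^*(X,\cE)\to\Sc^*(U,\cE)$) harmless.
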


We prove this theorem by induction on the number of $\bf G$-orbits on $\bf X$. The base case of a single orbit follows from Corollary \ref{cor:CW} by  Frobenius reciprocity for Schwartz induction (Theorem \ref{thm:Frob} below). The induction step will follow from the theory of Schwartz functions on algebraic manifolds. We will now give the necessary preliminaries on nuclear \Fre$\,$ spaces, Schwartz functions on real algebraic manifolds, and then prove the theorem in \S \ref{subsec:PfX}.

In \S \ref{subsec:PfAltX} we derive Theorem \ref{thm:AltX} from Theorem \ref{thm:X}, the Casselman embedding theorem and \cite[Theorem C]{AGKL}.

\begin{remark}
In the homogeneous case $\bf X=G/H$, Corollary \ref{cor:CW} and Remark \ref{rem:Lag}\eqref{it:LagEq} imply a stronger statement. Namely, under the conditions of Theorem \ref{thm:X} or of Theorem \ref{thm:AltX}, $\bf G$ acts on the preimage $\mu_{\bf X}^{-1}(\cO(\pi))$ with an open orbit. We do not know whether this holds for general $\bf X$.
\end{remark}

\subsection{\FreSp s, their duals and tensor products}
All the topological vector spaces considered in this paper will be either nuclear \Fre{\,}  or dual nuclear \Fre. For a nuclear \FreSp   $\,V$, $V^*$ will denote the strong dual, and for a \Fre$\,$ or dual \FreSp $\,W$, $V\hot W$ will denote the completed projective tensor product and $L_b(V,W)$ will denote the space of bounded linear operators from $V$ to $W$ (see \cite[\S 32]{Tre}). The projective topology on $V\hot W$ is generated by seminorms which are largest cross-norms corresponding to pairs of generating semi-norms on $V$ and $W$, see \cite[\S 43]{Tre}. In particular, if $V$ and $W$ are nuclear \FreSp s, then so is $V\hot W$. The projective tensor product of nuclear spaces is naturally isomorphic to the injective one, see \cite[Theorem 50.1]{Tre}. We will need the next proposition, which follows from \cite[Proposition 50.5 and (50.19)]{Tre}.
\begin{prop}\label{prop:TensorDual}
Let $V$ and $W$ be nuclear \FreSp s. Then we have natural isomorphisms
$$(V\hot W)^*\cong V^*\hot W^*\cong L_b(V,W^{*}).$$
\end{prop}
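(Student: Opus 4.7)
The plan is to assemble both isomorphisms from standard facts about tensor products of nuclear locally convex spaces, as developed in Tréves, rather than to reprove them from scratch. First, by the universal property of the projective tensor product, continuous linear functionals on $V\hot W$ are in natural bijection with jointly continuous bilinear forms on $V\times W$, and this bijection is a topological isomorphism when the space of bilinear forms is given the topology of uniform convergence on products of bounded sets. Next, the natural pairing $V^*\times W^*\to B(V,W)$ given by $(\xi,\eta)\mapsto\big((v,w)\mapsto \xi(v)\eta(w)\big)$ extends by linearity and continuity to a map $V^*\hot W^*\to B(V,W)$. Under the nuclearity assumption on $V$ and $W$, this map is a topological isomorphism; this is the algebraic form of Schwartz's kernel theorem, and is precisely the content of \cite[Proposition 50.5]{Tre}. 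Combining these two steps yields $(V\hot W)^*\cong V^*\hot W^*$.

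For the second isomorphism $V^*\hot W^*\cong L_b(V,W^*)$, I would use the canonical map $L_b(V,W^*)\to B(V,W)$ sending $T\mapsto\big((v,w)\mapsto (Tv)(w)\big)$. This is a topological isomorphism when $L_b(V,W^*)$ carries the topology of bounded convergence: surjectivity amounts to the fact that every continuous bilinear form on $V\times W$ induces a continuous linear map $V\to W^*$, and the topological match uses that for nuclear Fréchet $V$, bounded sets in $V$ correspond to equicontinuous sets in $V^*$, so the strong operator topology on $L_b(V,W^*)$ matches the topology on $B(V,W)$ induced by the projective tensor topology on $V^*\hot W^*$. This is equation (50.19) of \cite{Tre}.

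The main delicate point I anticipate is keeping track of topologies rather than just algebraic bijections: every step of the above reduces to a well-known universal property on the algebraic level, and the content of the proposition lies in the topological identifications. This is exactly where nuclearity is used — it guarantees that the projective and injective (equicontinuous) tensor products coincide, that strong duals of nuclear Fréchet spaces are dual nuclear Fréchet, and that $V^*\hot W^*$ is again nuclear and well-behaved. Since both ingredients are explicitly stated in \cite[Chapter 50]{Tre}, the proof reduces to citing \cite[Proposition 50.5 and (50.19)]{Tre} and verifying that the composite of the two displayed natural maps is the canonical one, which is immediate from tracing the definitions on elementary tensors.
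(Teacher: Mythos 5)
Your proposal is correct and follows essentially the same route as the paper, which simply derives the proposition from \cite[Proposition 50.5 and (50.19)]{Tre}; you cite the same two facts and merely spell out the canonical maps identifying $(V\hot W)^*$ with jointly continuous bilinear forms and $L_b(V,W^*)$ with the same space. The only nitpick is terminological: the topology on $L_b(V,W^*)$ is that of bounded convergence (as your first description says), not the strong operator topology, but this does not affect the argument.
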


We will also use the following lemma.
\begin{lem}[{\cite[Lemma A.3]{CHM}}]\label{lem:flat}
Let $W$ be a nuclear \FreSpr and let
$$0\to V_1\to V_2\to V_3\to 0$$
be a short exact sequence of  nuclear \FreSp s. Then the sequence
$$0\to V_1\hot W\to V_2 \hot W \to V_3\hot W\to 0$$
is also a short exact sequence.
\end{lem}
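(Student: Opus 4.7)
The plan is to split the assertion into three parts: surjectivity of $V_2 \hot W \to V_3 \hot W$, injectivity of $V_1 \hot W \to V_2 \hot W$, and exactness at the middle term. Since the result is standard and borrowed from \cite{CHM}, I would reproduce the argument in these three steps rather than try to find a new proof.

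First, I would dispose of surjectivity using a general fact about the projective tensor product that does not require nuclearity. The open mapping theorem for Fr\'echet spaces implies that $V_2 \to V_3$ is a topological quotient map, and tensoring a quotient map of Fr\'echet spaces with any Fr\'echet space $W$ yields a quotient map on completed projective tensor products (this is part of the general theory of $\hot$; see for instance \cite[Proposition 43.9]{Tre}). In particular $V_2 \hot W \to V_3 \hot W$ is surjective.

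Second, and this is where I expect the main obstacle to lie, I would handle injectivity of $V_1 \hot W \to V_2 \hot W$. The projective tensor product is generally not left-exact, so nuclearity of $W$ (or equivalently, by our hypothesis, nuclearity of everything in sight) must be used essentially. The clean way is to invoke \cite[Theorem 50.1]{Tre}, which asserts that for nuclear Fr\'echet spaces the projective and injective tensor products coincide: $V_i \hot W = V_i \hot_\varepsilon W$. The injective tensor product $\hot_\varepsilon$ preserves closed embeddings (one checks this by realizing $V_1 \hot_\varepsilon W$ as a subspace of $L_b(V_1^*, W)$ and extending functionals via Hahn--Banach on the dual side). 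Hence $V_1 \hot W \into V_2 \hot W$.

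Third, exactness at the middle term: by surjectivity and injectivity just established, it suffices to show that any element $\xi \in V_2 \hot W$ whose image in $V_3 \hot W$ vanishes lies in the image of $V_1 \hot W$. I would argue by duality, using Proposition \ref{prop:TensorDual}, which identifies $(V_i \hot W)^*$ with $L_b(V_i, W^*)$. The image of $V_1 \hot W$ in $V_2 \hot W$ is closed (being itself a nuclear Fr\'echet space mapping injectively by Step 2, hence a topological embedding by the open mapping theorem applied to its image), so by Hahn--Banach it equals the intersection of kernels of continuous functionals that vanish on it. Such a functional corresponds to a continuous operator $V_2 \to W^*$ killing $V_1$, which factors through $V_3$, and therefore extends to a functional on $V_3 \hot W$ that by assumption annihilates the image of $\xi$. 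Thus all such functionals vanish on $\xi$, proving the desired inclusion. The serious content in the whole argument is concentrated in Step 2; Steps 1 and 3 are formal once one knows the behaviour of $\hot$ on quotients and has the duality of Proposition \ref{prop:TensorDual} available.
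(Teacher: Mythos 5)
The paper does not actually prove this lemma: it is quoted verbatim from \cite[Lemma A.3]{CHM}, so there is no internal argument to compare against. Your reconstruction is the standard one and is essentially the argument underlying the cited result: surjectivity of $V_2\hot W\to V_3\hot W$ is a general property of the completed projective tensor product of a quotient map of \Fre{} spaces, injectivity of $V_1\hot W\to V_2\hot W$ is where nuclearity enters (via the identification $\hot=\hot_\varepsilon$ and the fact that the injective tensor product, and then completion, preserves topological embeddings), and exactness in the middle is a duality argument using $(V\hot W)^*\cong L_b(V,W^*)$. The one place where your write-up is off is the parenthetical justification in Step 3 that the image of $V_1\hot W$ is closed: invoking the open mapping theorem for the bijection onto the image is circular, since that theorem requires the image to already be a \FreSpr (i.e.\ closed) in the induced topology. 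The closedness is true, but it should be extracted from your Step 2 itself: the $\varepsilon$-tensor argument gives not just injectivity but a topological embedding $V_1\hot W\into V_2\hot W$, and a complete subspace of a Hausdorff topological vector space is closed, so the image is closed because $V_1\hot W$ is complete. With that justification substituted, your three steps give a correct proof; the rest of Step 3 (a functional killing the image corresponds to an operator $V_2\to W^*$ vanishing on $V_1$, which factors continuously through $V_3$ since $V_2\to V_3$ is open, and then kills $\xi$ because $\xi\mapsto 0$ in $V_3\hot W$) is sound.
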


\begin{prop}[{\cite[Lemma A.4]{CHM}}]\label{prop:TenLim}
Let $V=\lim\limits _\to V_i$ be a Hausdorff inductive limit of \Dima{a sequence  of} dual nuclear  \FreSp s, and $W$ be a dual nuclear \FreSp . Then we have a natural isomorphism $$V\hot W  \cong \lim_\to (V_i\hot W).$$
\end{prop}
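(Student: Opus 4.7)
The plan is to construct the natural continuous linear map
$$\Phi: \lim_\to (V_i \hot W) \longrightarrow V \hot W$$
and then verify that it is a topological isomorphism, primarily by dualizing and invoking known commutation properties of $\hot$ with limits of nuclear Fr\'echet spaces. For the construction, the structure maps $\iota_i : V_i \to V$ of the inductive system, combined with the identity on $W$, induce continuous linear maps $\iota_i \hot \id_W : V_i \hot W \to V \hot W$ by functoriality of the completed projective tensor product (cf.\ \cite[\S 43]{Tre}). These are compatible with the connecting maps of the inductive system, so the universal property of the inductive limit produces $\Phi$.

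That $\Phi$ has dense image is elementary: any element of the algebraic tensor product $V \otimes W$ is a finite sum $\sum_k v_k \otimes w_k$, and choosing $i$ larger than all the indices for which $v_k \in V_{i_k}$ realizes this tensor as coming from $V_i \otimes W$. Since $V \otimes W$ is dense in $V \hot W$, the image of $\Phi$ is dense.

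To upgrade density to a topological isomorphism, I would dualize. Each $V_i$ and $W$ is dual nuclear Fr\'echet, so by Proposition \ref{prop:TensorDual} we have $(V_i \hot W)^* \cong V_i^* \hot W^*$ with $V_i^*, W^*$ nuclear Fr\'echet. The strong dual of the inductive limit is $\lim_\leftarrow (V_i^* \hot W^*)$, while the strong dual of $V \hot W$ is $V^* \hot W^* \cong (\lim_\leftarrow V_i^*) \hot W^*$, using that the strong dual of a Hausdorff inductive limit of DNF spaces is the projective limit of the strong duals. Invoking the fact that the completed projective tensor product with a fixed nuclear Fr\'echet space commutes with countable projective limits of nuclear Fr\'echet spaces, one identifies $(\lim_\leftarrow V_i^*) \hot W^* \cong \lim_\leftarrow (V_i^* \hot W^*)$, and checks that the transpose of $\Phi$ realizes precisely this identification. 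Reflexivity of DNF and nuclear Fr\'echet spaces then forces $\Phi$ itself to be a topological isomorphism.

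The main obstacle will be justifying the commutation of $\hot$ with the countable projective limit on the nuclear Fr\'echet side, together with the correct identification of the strong dual topology on $V = \lim_\to V_i$ as a Hausdorff inductive limit of DNF spaces. An alternative route, which might sidestep some of these technicalities, is to work directly with seminorms: the seminorms generating the topology on $\lim_\to V_i$ each factor through some $V_i$, and Grothendieck's description of the largest cross-seminorms defining $\hot$ (cf.\ \cite[\S 43]{Tre}) should then allow one to match the two tensor-product topologies step by step, verifying continuity of $\Phi^{-1}$ by hand.
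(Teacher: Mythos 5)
Note first that the paper itself offers no proof of this proposition: it is quoted from \cite[Lemma A.4]{CHM}, so your argument has to stand on its own. Your overall strategy (dualize to the nuclear Fr\'echet side, use commutation of $\hot$ with countable projective limits, then reflexivity) is the natural one, and the projective-limit commutation you invoke is indeed standard and citable. But the two steps you defer are exactly where the content of the lemma lies, so as written there are genuine gaps. (i) Identifying the strong dual of $V=\lim_\to V_i$ with $\lim_\leftarrow V_i^*$ \emph{topologically} is equivalent to a regularity property of the inductive limit (every bounded subset of $V$ is contained and bounded in some $V_i$); for general countable inductive limits this fails, and here it amounts to the theorem that a Hausdorff countable inductive limit of dual nuclear Fr\'echet spaces is again dual nuclear Fr\'echet, with dual the projective limit of the duals --- a statement that must be proved or cited (it is part of the same appendix of \cite{CHM}), not assumed. (ii) More seriously, to pass from ``the transpose of $\Phi$ is an isomorphism'' back to ``$\Phi$ is an isomorphism'' you need the source $\lim_\to(V_i\hot W)$ to be Hausdorff, complete and reflexive (e.g.\ again dual nuclear Fr\'echet). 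Nothing in your sketch addresses this; Hausdorffness of that colimit is not a hypothesis but implicitly part of the conclusion, and it is essentially equivalent to injectivity of $\Phi$, so one cannot quietly take it for granted. Without it, your duality argument only identifies $V\hot W$ with the bidual (equivalently, the completion of the Hausdorff quotient) of the colimit, which is strictly weaker; dense image of $\Phi$ together with an isomorphism of duals does not by itself give surjectivity or openness.

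Your fallback ``seminorm matching'' route does not repair this, and it starts from an incorrect premise: continuous seminorms on $\lim_\to V_i$ do not factor through any single $V_i$ (the structure maps go the wrong way); the inductive limit topology is the finest locally convex topology making all $V_i\to V$ continuous, and comparing it, after tensoring with $W$, against the projective tensor topology is precisely Grothendieck's probl\`eme des topologies. Indeed, for general (non-nuclear) countable inductive limits the asserted commutation of $\hot$ with $\lim_\to$ is false, so any hands-on comparison of seminorms must use nuclearity and the (DF)-property in an essential way --- that is, it must reprove the closure properties of the dual nuclear Fr\'echet category that you skipped. To complete your argument along the lines you propose: establish (or cite) that countable Hausdorff inductive limits of dual nuclear Fr\'echet spaces are dual nuclear Fr\'echet with strong dual $\lim_\leftarrow V_i^*$, verify that $\lim_\to(V_i\hot W)$ is Hausdorff and again of this type, and only then apply Proposition \ref{prop:TensorDual}, the projective-limit commutation, and reflexivity.
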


\subsection{Schwartz functions and tempered distributions on real algebraic manifolds}\label{subsec:Sc}
Let ${\bf X}$ be an  algebraic manifold ({\it i.e.} smooth algebraic variety) defined over $\R$ and $X:={\bf X}(\R)$. If $\bf X$ is affine then the \FreSpr  $\Sc(X)$ of
Schwartz functions on $X$ consists of smooth complex valued functions that decay, together with all their derivatives,
faster than any polynomial. This is a nuclear \FreSp, with the topology given by the system of semi-norms $|\phi|_{d}:=\max_{x\in X}|df|$, where $d$ runs through  all differential operators on $X$ with polynomial coefficients.

For a Zariski open affine subset ${\bf U}\subset {\bf X}$, the extension by zero of a Schwartz function on $U={\bf U}(\R)$ is a Schwartz function on $X$. This enables to define the Schwartz space on any algebraic manifold ${\bf X}$, as the sum of the Schwartz spaces of the open affine pieces, extended by zero to functions on $X$.
For the precise definition of this notion see e.g. \cite{AGSc}. Elements of the dual space $\Sc^*(X)$ are called tempered distributions. The spaces $\Sc^*(U)$ for all Zariski open ${\bf U}\subset {\bf X}$ form a sheaf.

In a similar way one can define the space $\Sc(X,V)$ of $V$-valued Schwartz functions for any  \FreSpr $V$.
Namely, for an affine $X$ we demand that $q(d\phi(x))$ is  bounded for any differential operator  $d$ on $X$ and any seminorm $q$ on $V$.
It is easy to see that 
$\Sc(X,V)\cong \Sc(X)\hot V$,  (cf. \cite[Theorem 51.6]{Tre}).
We  define the  tempered distributions $\Sc^*(X,V)$ to be the continuous linear dual space.
Note that by Proposition \ref{prop:TensorDual} we have
\begin{equation}\label{=dist}
\Sc^*(X,V)\cong \Sc^*(X)\hot V^*\cong L_b(\Sc(X),V^*)
\end{equation}

If a group $G$ acts on $X$ and on $V$ then we consider the diagonal action on $\Sc(X,V)$ and the dual action on $\Sc^*(X,V)$. We denote the space of invariants in $\Sc^*(X,V)$ by $\Sc^*(X,V)^G$. We call the elements of this space \emph{equivariant distributions}.

For an algebraic bundle $\cE$ over $X$, Schwartz sections are defined as finite sums of extensions by zero from open sets on which the bundle trivializes. See \cite[\S 5]{AGSc} for more details. For a nuclear \FreSpr $V$ we denote  $\Sc(X,\cE,V):=\Sc(X,\cE)\hot V$ and $\Sc^*(X,\cE,V):=\Sc(X,\cE,V)^*$.

We will use the following version of the Schwartz kernel theorem.
\begin{prop}[{\cite[Corollary 2.6.3]{AG_RhamShap}}]\label{prop:prod}
For any real algebraic manifold $Y$, and algebraic bundles $\cE$ over $X$ and $\cE'$ over $Y$, we have
$$\Sc(X \times Y,\cE\boxtimes\cE')\cong\Sc(X,\cE) \ctp \Sc(Y,\cE').$$
\end{prop}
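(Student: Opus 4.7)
The plan is to reduce the statement to the classical Schwartz kernel theorem on Euclidean spaces, and then propagate it through the sheaf-theoretic definition of $\Sc$ via exact sequences and flatness of $\hot$.

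First, I would reduce to trivial line bundles. Since $\Sc(X,\cE)$ and $\Sc(Y,\cE')$ are each defined as finite sums of extensions by zero from Zariski opens on which the bundles trivialize, and since the external tensor product $\cE \boxtimes \cE'$ trivializes on products of such opens, additivity together with Proposition \ref{prop:TenLim} (Hausdorff inductive limits commuting with $\hot$) lets me assume $\cE$ and $\cE'$ are trivial. Applying the sheaf property and Proposition \ref{prop:TenLim} once more, I can further reduce to the case where both $\bf X$ and $\bf Y$ are affine.

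Next, I would embed $\bf X \hookrightarrow \R^n$ and $\bf Y \hookrightarrow \R^m$ as closed real algebraic subvarieties. Standard theory (as in \cite{AGSc}) gives short exact sequences of nuclear Fr\'echet spaces
\begin{equation*}
0 \to \Sc_{X^c}(\R^n) \to \Sc(\R^n) \to \Sc(X) \to 0, \qquad 0 \to \Sc_{Y^c}(\R^m) \to \Sc(\R^m) \to \Sc(Y) \to 0,
\end{equation*}
where $\Sc_{Z^c}(\R^k)$ denotes Schwartz functions vanishing to infinite order on $Z$. Invoking the classical Schwartz kernel theorem $\Sc(\R^{n+m}) \cong \Sc(\R^n) \hot \Sc(\R^m)$ and applying Lemma \ref{lem:flat} to tensor these two sequences (first against $\Sc(\R^m)$, then the result against the $Y$ sequence), I obtain a natural identification of $\Sc(X) \hot \Sc(Y)$ with the quotient of $\Sc(\R^{n+m})$ by the closed subspace $\Sc_{X^c}(\R^n) \hot \Sc(\R^m) + \Sc(\R^n) \hot \Sc_{Y^c}(\R^m)$.

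Finally, since $X \times Y \subset \R^{n+m}$ is itself a closed algebraic subvariety, $\Sc(X \times Y)$ fits into its own short exact sequence with kernel $\Sc_{(X\times Y)^c}(\R^{n+m})$; comparing this with the quotient description above reduces the theorem to showing that
\begin{equation*}
\Sc_{(X\times Y)^c}(\R^{n+m}) = \Sc_{X^c}(\R^n) \hot \Sc(\R^m) + \Sc(\R^n) \hot \Sc_{Y^c}(\R^m)
\end{equation*}
as topological subspaces of $\Sc(\R^{n+m})$. The inclusion $\supseteq$ is immediate, while the reverse inclusion is the main obstacle: one must show that every Schwartz function on $\R^{n+m}$ vanishing to infinite order along $X \times Y$ can be decomposed into a sum of two Schwartz functions vanishing to infinite order along $X \times \R^m$ and along $\R^n \times Y$ respectively. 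I would attack this via a semi-algebraic partition-of-unity argument adapted to the Schwartz class, separating a neighborhood of $X \times \R^m$ from a neighborhood of $\R^n \times Y$ outside $X \times Y$, and estimating the resulting pieces using that any function vanishing to infinite order on a closed semi-algebraic set lies in the Schwartz ideal; nuclearity of the factors is what makes the resulting topological identification automatic.
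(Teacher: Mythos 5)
The paper itself gives no proof of this proposition: it is imported verbatim from \cite[Corollary 2.6.3]{AG_RhamShap}, so the comparison can only be with what your argument would need to stand on its own. Your overall skeleton is reasonable. Two remarks on the reduction steps: Proposition \ref{prop:TenLim} concerns countable inductive limits of \emph{dual} nuclear \Fre{} spaces and is not the right tool for the finite ``sum of extensions by zero'' presentation of $\Sc(X,\cE)$; what one actually needs there is right-exactness of $\hot$ together with tempered partitions of unity subordinate to the cover (additivity gives surjectivity of the natural map $\Sc(X,\cE)\hot\Sc(Y,\cE')\to\Sc(X\times Y,\cE\boxtimes\cE')$, but injectivity requires multiplying back by a partition of unity on each factor, or a \v{C}ech-type resolution). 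Granting the affine, trivial-bundle case, your two-step use of Lemma \ref{lem:flat} does correctly identify $\Sc(X)\hot\Sc(Y)$ with the quotient of $\Sc(\R^{n+m})$ by the (automatically closed) subspace $\Sc_{X^c}(\R^n)\hot\Sc(\R^m)+\Sc(\R^n)\hot\Sc_{Y^c}(\R^m)$, provided one also quotes the surjectivity of restriction to a closed Nash submanifold and the identification of its kernel with the flat ideal.

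The genuine gap is the final identity $\Sc_{(X\times Y)^c}(\R^{n+m})=\Sc_{X^c}(\R^n)\hot\Sc(\R^m)+\Sc(\R^n)\hot\Sc_{Y^c}(\R^m)$: this is the entire analytic content of the statement in your approach, and your sketch does not prove it. In particular, there is no ``separating a neighborhood of $X\times\R^m$ from a neighborhood of $\R^n\times Y$ outside $X\times Y$'': the sets $(X\times\R^m)\setminus(X\times Y)$ and $(\R^n\times Y)\setminus(X\times Y)$ both accumulate on $X\times Y$, so any cutoff $\rho$ that is flat $\equiv 0$ on the first and flat $\equiv 1$ on the second necessarily has derivatives blowing up along $X\times Y$ and must be controlled at infinity as well. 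To conclude that $\rho f$ and $(1-\rho)f$ are Schwartz and flat on the respective sets one needs a quantitative multiplier lemma: a semialgebraic $\rho$ whose derivatives grow at most like a power of the inverse distance to $X\times Y$, uniformly at infinity (\L{}ojasiewicz--H\"ormander type inequalities for semialgebraic sets), together with the statement that such multipliers preserve the ideal of Schwartz functions flat on $X\times Y$. None of this is formulated in your proposal, and it is exactly where the work lies; an alternative would be a Borel--Whitney extension with parameters carried out inside the Schwartz class, which is no easier. Note also that the cited source avoids this difficulty altogether by never using closed embeddings: one covers $\bf X$ and $\bf Y$ by finitely many open charts on which the statement is the classical kernel theorem for $\Sc(\R^n)\hot\Sc(\R^m)$, and glues using tempered partitions of unity and the exactness properties of $\hot$ — a route that requires only the tools already quoted in this paper.
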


Let ${\bf U}\subset {\bf X}$ be a Zariski open subset, write $U:={\bf U}\cap X$ and let $Z$ denote the complement to $U$ in $X$.

%

\begin{thm}[{\cite[ Theorem 5.4.3]{AGSc}}] \label{pOpenSet}
We have
$$\Sc(U,\cE) \cong \{\phi \in \Sc(X,E)| \quad \phi \text{ is 0 on } Z \text{ with all derivatives} \}.$$
In particular, extension by zero defines a closed imbedding $\Sc(U,\cE) \into \Sc(X,\cE)$.
\end{thm}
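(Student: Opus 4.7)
The plan is to reduce the statement to the case when $\bf X$ is affine, and then to handle the two mutually inverse maps (restriction from $X$ and extension by zero from $U$) using Łojasiewicz-type inequalities for real algebraic sets.

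First, by the very definition of $\Sc(X,\cE)$ on an algebraic manifold as a sum of extensions by zero from an affine open cover of $\bf X$, it suffices to prove the statement when $\bf X$ is affine. In this case one may further trivialize $\cE$ on a finite cover to reduce to the case $\cE = \mathbf{1}$, so the statement becomes one about scalar Schwartz functions. We have an inclusion of the right-hand side into $\Sc(X)$ as a closed subspace (it is the intersection of kernels of the continuous seminorms $\phi \mapsto \sup_{z\in Z}|d\phi(z)|$ over all polynomial-coefficient differential operators $d$), so proving that extension by zero identifies $\Sc(U)$ with this subspace will in particular give the ``closed imbedding'' conclusion.

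For the direction from $\Sc(U)$ to the right-hand side: given $\phi \in \Sc(U)$, I would show first that the zero extension $\widetilde\phi$ is smooth on $X$ and vanishes to all orders on $Z$. The point is that on the affine $\bf U$ the ring of regular functions contains $1/f$ for any polynomial $f$ on $X$ vanishing on $Z\cap X$, so for each polynomial-coefficient differential operator $d$ on $X$ and each $N$, the function $f^{-N}(d\phi)$ is Schwartz on $U$ and hence bounded; by the Łojasiewicz inequality $|f(x)| \geq c\cdot \mathrm{dist}(x,Z)^{\alpha}$ (for some positive constants on a suitable neighborhood), this gives $|d\phi(x)| \le C\cdot \mathrm{dist}(x,Z)^{N\alpha}$, which forces $\widetilde\phi$ to be $C^\infty$ on $X$ with all derivatives vanishing on $Z$, and simultaneously Schwartz on $X$.

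For the reverse direction: given $\phi \in \Sc(X)$ vanishing to all orders on $Z$, we must show $\phi|_U \in \Sc(U)$. Any polynomial-coefficient differential operator on $U$ has the form $f^{-N} D$ for some polynomial $f$ on $X$ vanishing on $Z$ and some polynomial-coefficient operator $D$ on $X$. The required boundedness of $f^{-N}(D\phi)$ on $U$ is then an estimate of the form $|D\phi(x)| \le C\cdot|f(x)|^{N}$ near $Z$, which again reduces to a Łojasiewicz inequality together with the hypothesis that $\phi$ (and hence $D\phi$, applied to a slightly larger differential operator) vanishes faster than any power of $\mathrm{dist}(\cdot, Z)$ near $Z$.

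The main obstacle is the quantitative control near $Z$: one needs to know that Łojasiewicz inequalities are uniform enough in neighborhoods of $Z$ (inside the real algebraic set $X$) to convert the qualitative statement ``$\phi$ vanishes to all orders on $Z$'' into the quantitative bounds needed to handle the rational regular functions on $\bf U$. This is the technical heart of the argument, and is exactly where one invokes the general form of the Łojasiewicz inequality for semi-algebraic sets.
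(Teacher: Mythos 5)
Note first that the paper does not prove Theorem \ref{pOpenSet} at all: it is quoted from \cite[Theorem 5.4.3]{AGSc}, so your proposal can only be measured against that standard argument, whose general shape (reduce to affine charts, use negative powers of a defining polynomial as regular functions on $\bf U$, control them by Łojasiewicz-type estimates) your outline indeed resembles. However, as written there are several genuine gaps. (1) The claim that on affine $\bf U$ the regular functions contain $1/f$ for \emph{any} polynomial $f$ vanishing on $Z\cap X$ is false: the (complex) zero set of such an $f$ may meet $\bf U$; moreover a Zariski-open subset of an affine variety need not be affine. One must reduce to principal open sets ${\bf X}_f$ with $\{f=0\}={\bf X}\setminus{\bf U}$ and then glue, and the gluing requires a tempered/Schwartz partition of unity subordinate to the cover — this is one of the main technical tools of \cite{AGSc} and is absent from your reduction "it suffices to prove the statement when $\bf X$ is affine". (2) In the extension-by-zero direction your Łojasiewicz inequality points the wrong way: from $|d\phi|\le C|f|^N$ you need an \emph{upper} bound $|f(x)|\le C\,\mathrm{dist}(x,Z)$ (the trivial Lipschitz estimate on bounded sets), not the lower bound $|f|\ge c\,\mathrm{dist}(\cdot,Z)^{\alpha}$; the step is easily repaired, but the logic as stated does not yield the claimed conclusion.

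(3) In the restriction direction the hard point is not, as you say, uniformity of Łojasiewicz "in neighborhoods of $Z$", but uniformity at infinity: $U$ is noncompact, and to bound the seminorms $\sup_U |f|^{-N}|D\phi|$ you need a global tempered inequality of Hörmander type, $|f(x)|\ge c\,\mathrm{dist}(x,Z)^{\alpha}(1+|x|)^{-\beta}$ (equivalently, that $1/f$ is a tempered function on $U$); a Łojasiewicz inequality valid only on a neighborhood of $Z$ gives no control where $|f|$ is small far out on $U$. (4) You also treat as automatic the implication "$\phi$ Schwartz on $X$ and flat on $Z$ $\Rightarrow$ $|D\phi(x)|\le C_N\,\mathrm{dist}(x,Z)^N$ with Schwartz-type decay"; this needs a Taylor/Whitney-type argument, and on a general affine $X\subset\R^n$ the segment from $x$ to its nearest point of $Z$ leaves $X$, so one should first reduce to the ambient $\R^n$ (using the nontrivial fact that $\Sc(X)$ consists of restrictions of Schwartz functions on $\R^n$ for $X$ closed in $\R^n$, also from \cite{AGSc}) or argue differently. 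Finally, to get the closed embedding you should also record that both the extension and restriction maps are continuous (then the closed graph theorem for Fréchet spaces finishes); your identification of the right-hand side as an intersection of kernels of continuous seminorms is fine for closedness of the target subspace.
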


\begin{cor}
The restriction map $\Sc^*(X,\cE)\to \Sc^*(U,\cE)$ is onto.
\end{cor}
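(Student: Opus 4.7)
The plan is to deduce this corollary directly from the preceding Theorem \ref{pOpenSet} by a Hahn--Banach argument. The restriction map in question is precisely the transpose of the extension-by-zero map $\Sc(U,\cE)\hookrightarrow \Sc(X,\cE)$: given $\xi\in\Sc^*(X,\cE)$, its restriction acts on $\phi\in\Sc(U,\cE)$ by $\xi(\tilde\phi)$, where $\tilde\phi\in\Sc(X,\cE)$ is the extension of $\phi$ by zero. So the question reduces to whether every continuous linear functional on $\Sc(U,\cE)$ arises as the restriction of a continuous linear functional on $\Sc(X,\cE)$.

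First I would record, using Theorem \ref{pOpenSet}, that extension by zero realizes $\Sc(U,\cE)$ as a \emph{closed} topological subspace of $\Sc(X,\cE)$, with subspace topology equal to its intrinsic Schwartz topology. Since both spaces are Fr\'echet (in particular, locally convex Hausdorff), the Hahn--Banach theorem applies: any $\eta\in \Sc^*(U,\cE)$, viewed via extension by zero as a continuous linear functional on the closed subspace $\Sc(U,\cE)\subset \Sc(X,\cE)$, extends to a continuous linear functional $\xi\in\Sc^*(X,\cE)$. By construction $\xi$ restricts to $\eta$, which is exactly the desired surjectivity.

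I do not expect a genuine obstacle here; the only subtlety is bookkeeping about the bundle $\cE$. This can be handled by choosing a finite Zariski affine cover of $\bf X$ on which $\cE$ trivializes, reducing the statement to Schwartz functions (rather than sections), and invoking Theorem \ref{pOpenSet} in that setting. Alternatively one can observe that Theorem \ref{pOpenSet} is stated for bundles as well in \cite{AGSc}, so the Hahn--Banach step can be applied directly to $\Sc(U,\cE)\hookrightarrow\Sc(X,\cE)$ with no intermediate reduction. Either way, the corollary follows in a few lines.
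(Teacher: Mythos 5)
Your proof is correct and is exactly the argument the paper intends: Theorem \ref{pOpenSet} identifies $\Sc(U,\cE)$, via extension by zero, with a closed topological subspace of the nuclear \Frer space $\Sc(X,\cE)$ (so the induced topology agrees with the intrinsic one), and Hahn--Banach then extends any $\eta\in\Sc^*(U,\cE)$ to $\Sc(X,\cE)$, giving surjectivity of the restriction map. The paper states this as an immediate corollary without further proof, so there is nothing to add.
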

Denote the kernel of this map by $\Sc^*_Z(X,\cE)$, and the kernel of the corresponding map $\Sc^*(X,\cE,V)\onto \Sc^*(U,\cE,V)$ by $\Sc^*_Z(X,\cE,V)$.

\begin{notn}
Denote by $F_n(X,Z,\cE)$ the space of Schwartz functions on $X$ that vanish on $Z$ with first $n$ derivatives. Denote by $F^n_Z(X,\cE)$ the orthogonal complement to $F_n(X,Z,\cE)$ in $\Sc^*_Z(X,\cE)$.
If $Z$ is smooth, denote by $N_Z^X$ the normal bundle to $Z$ in $X$ and by $\Sym^n(N_Z^X)$ the $n$-th symmetric power of this bundle.
\end{notn}

\begin{thm}[{\cite[Corollary 5.5.4]{AGSc}}]\label{thm:Filt}$\,$
\begin{enumerate}[(i)]
\item For every $n$, there is a natural isomorphism $$F^{n}_Z(X,\cE)/F^{n-1}_Z(X,\cE)\cong \Sc^*(Z,\cE\otimes\Sym^n(CN_Z^X))\,\,.$$
\item $\Sc^*_Z(X,\cE)=\bigcup_{n\geq 0}(F^n_Z(X,\cE))$.
\end{enumerate}
\end{thm}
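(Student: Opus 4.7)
\textbf{Plan for the proof of \Cref{thm:Filt}.} My plan is to reduce the statement to a tractable local model via the Schwartz kernel theorem, prove it there using the classical description of tempered distributions supported at a point, and then reassemble globally by exploiting the sheaf-like behavior of $\Sc^*_Z$. First I would cover ${\bf X}$ by Zariski open affine subsets ${\bf U}_\alpha$ on each of which $\cE$ trivializes and $Z \cap U_\alpha$ is straightened by algebraic coordinates into a linear subspace. By \Cref{pOpenSet}, extension by zero along Zariski opens is a closed embedding of Schwartz spaces, so $\Sc^*_Z$ is well-behaved under restriction and the claim reduces to its local version. Applying \Cref{prop:prod} in the local model $X = \R^k \times \R^l$, $Z = \R^k \times \{0\}$, with $\cE$ trivial, one obtains
\begin{equation*}
\Sc^*_Z(X) \;\cong\; \Sc^*(\R^k) \hot \Sc^*_{\{0\}}(\R^l).
\end{equation*}

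Next I would invoke the classical fact that a tempered distribution on $\R^l$ supported at the origin is a finite linear combination of partial derivatives of the $\delta$-function. This endows $\Sc^*_{\{0\}}(\R^l)$ with a natural increasing filtration by order, whose $n$-th graded piece is canonically $\Sym^n$ of the normal space to $\{0\}$ in $\R^l$. Tensoring with $\Sc^*(\R^k)$ and using \Cref{prop:TenLim} to commute the tensor product with the colimit produces both the exhaustion asserted in part (ii) and the graded-piece description of part (i) in the local model.

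To globalize I would construct the isomorphism of (i) in a chart-free manner. If $T \in F^n_Z(X,\cE)$, then the vanishing of $T$ on $F_n(X,Z,\cE)$ means it factors through $\Sc(X,\cE)/F_n(X,Z,\cE)$, which by a Borel-type lemma in the Schwartz category is canonically isomorphic to $\Sc\bigl(Z,\, \cE\otimes \bigoplus_{j< n} \Sym^j(CN_Z^X)\bigr)$; descending further to the quotient $F^n_Z/F^{n-1}_Z$ extracts the top degree-$n$ summand and produces a map into $\Sc^*(Z, \cE\otimes \Sym^n(CN_Z^X))$. Injectivity is built into the definition of the filtration, and surjectivity follows from the local computation together with a partition-of-unity patching argument, once one verifies that a tempered section on $Z$ extends to a tempered distribution on $X$ of the prescribed type via pullback along a tubular projection in the straightened chart.

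The main obstacle will be the finite-order assertion (ii). A priori a distribution in $\Sc^*_Z(X,\cE)$ could have unbounded order along $Z$, and the fact that every such distribution lies in some $F^n_Z$ uses essentially the global control built into the algebraic Schwartz space, wherein tempered distributions have uniformly finite order. The delicate point is that this uniform bound must be preserved under the straightening charts and the gluing procedure, which is not automatic because ${\bf X}$ need not be compact. Handling this, together with checking that the locally constructed graded isomorphisms agree on overlaps so that the intrinsic statement involving $\Sym^n(CN_Z^X)$ is unambiguous, is where most of the technical effort will concentrate.
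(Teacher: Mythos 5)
You should note first that the paper does not prove this statement at all: Theorem \ref{thm:Filt} is imported verbatim from \cite[Corollary 5.5.4]{AGSc}, so there is no internal proof to compare against. Your plan is, in outline, the same as the argument in that reference: localize to charts where $Z$ becomes a linear subspace $\R^k\times\{0\}\subset\R^k\times\R^l$, use the kernel theorem (\Cref{prop:prod}) and the classical structure theorem for tempered distributions supported on a subspace (finite sums $\sum_\alpha u_\alpha\otimes\partial^\alpha\delta$), identify the transverse-derivative degree with $\Sym^n$ of the normal directions, and glue using the cosheaf/extension-by-zero properties of Schwartz spaces (\Cref{pOpenSet}) together with a Schwartz-category jet-restriction (Borel-type) lemma. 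So as a reconstruction of the cited proof the route is the right one.

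Two remarks on where your assessment of the difficulty is slightly off. The point you single out as the main obstacle in (ii) --- a priori unbounded order along $Z$ --- is in fact not the delicate step: $\Sc(X,\cE)$ is a Fr\'echet space, so any continuous functional is bounded by a single seminorm involving finitely many (algebraic) derivatives, and hence every tempered distribution has globally finite order; no compactness of ${\bf X}$ is needed. The genuine content hiding there is different, namely that a distribution of order $N$ supported in $Z$ annihilates all Schwartz sections vanishing on $Z$ together with their first $N$ derivatives --- this uses that $Z$ is a closed submanifold (it fails for general closed sets) and requires an approximation argument in the Schwartz topology, which your plan does not address explicitly. Likewise, for part (i) your ``Borel-type lemma'' (surjectivity of the $n$-jet restriction $\Sc(X,\cE)\to\Sc(Z,\cdot)$ with kernel exactly $F_n(X,Z,\cE)$) is the real work horse and must be proved with tempered bounds, not just locally; and in identifying the graded piece you should be careful whether the bundle appearing is built from the normal or the conormal bundle (the transverse derivatives $\partial^\alpha\delta$ of order exactly $n$ transform as $\Sym^n$ of the normal directions when paired against $n$-jets, which live in symmetric powers of the conormal bundle), a bookkeeping point on which even the quoted statement and the paper's own notation ($N_Z^X$ versus $CN_Z^X$) are not consistent. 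With those points filled in, your outline matches the proof in \cite{AGSc}.
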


Using Proposition \ref{prop:TenLim} we obtain the following corollary.

\begin{cor}\label{cor:Filt}
$\Sc^*_Z(X,\cE,V)=\bigcup_{n\geq 0} F^n_Z(X,\cE)\hot V^*$
\end{cor}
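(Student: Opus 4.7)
The plan is to reduce the corollary to Theorem \ref{thm:Filt}(ii) by showing that tensoring with $V^*$ commutes with the filtered union on the right-hand side, and then to identify the resulting space with the left-hand side. There are two nontrivial ingredients: (a) a sheaf-theoretic identification of $\Sc^*_Z(X,\cE,V)$ as $\Sc^*_Z(X,\cE)\hot V^*$, and (b) the commutation of $\hot V^*$ with the countable inductive limit $\bigcup_n F^n_Z(X,\cE)$.

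For step (a), I would start from the closed embedding $\Sc(U,\cE)\into \Sc(X,\cE)$ given by \Cref{pOpenSet}, and form the short exact sequence of nuclear \FreSp s
\begin{equation*}
0\to \Sc(U,\cE) \to \Sc(X,\cE) \to \Sc(X,\cE)/\Sc(U,\cE)\to 0.
\end{equation*}
Applying $\hot V$ and invoking \Cref{lem:flat} keeps the sequence exact. Since all three spaces involved are nuclear \Fre, dualizing and using the generalization of \eqref{=dist} to bundles yields a short exact sequence
\begin{equation*}
0\to (\Sc(X,\cE)/\Sc(U,\cE))^{*}\hot V^{*} \to \Sc^*(X,\cE,V)\to \Sc^*(U,\cE,V)\to 0,
\end{equation*}
so the kernel $\Sc^*_Z(X,\cE,V)$ is canonically isomorphic to $\Sc^*_Z(X,\cE)\hot V^*$.

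For step (b), I would verify that each $F^n_Z(X,\cE)$ is dual nuclear \Fre. This follows by induction on $n$ from Theorem \ref{thm:Filt}(i): $F^0_Z(X,\cE)\cong \Sc^*(Z,\cE)$ is dual nuclear \Fre\ since $\Sc(Z,\cE)$ is nuclear \Fre, and each successive quotient $F^n_Z(X,\cE)/F^{n-1}_Z(X,\cE)\cong \Sc^*(Z,\cE\otimes \Sym^n(CN_Z^X))$ has the same property, so dualizing the corresponding Fr\'echet extension preserves the class. By Theorem \ref{thm:Filt}(ii), $\Sc^*_Z(X,\cE)=\bigcup_{n\geq 0}F^n_Z(X,\cE)$ is a Hausdorff countable inductive limit of dual nuclear \Fre{\,}spaces, so \Cref{prop:TenLim} applies and gives
\begin{equation*}
\Sc^*_Z(X,\cE)\hot V^{*}\;\cong\;\bigcup_{n\geq 0}\bigl(F^n_Z(X,\cE)\hot V^{*}\bigr).
\end{equation*}
Combining this with step (a) delivers the claimed equality.

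The main obstacle is step (a), specifically checking that the quotient $\Sc(X,\cE)/\Sc(U,\cE)$ is genuinely a nuclear \FreSpr (so that its strong dual behaves well and \Cref{lem:flat} is applicable with the right placement of the nuclear hypothesis), and that dualizing the resulting short exact sequence remains exact. Both are standard for nuclear \Fre{\,}spaces with closed image, but they require care since exactness of duality and of $\hot$ is delicate outside this class. Step (b) is essentially a direct application of \Cref{prop:TenLim} once the dual nuclear \Fre{\,}property of the filtration pieces is established by induction from \Cref{thm:Filt}(i).
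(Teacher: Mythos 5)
Your proposal is correct and follows essentially the same route as the paper, which deduces the corollary directly from \Cref{prop:TenLim} together with \Cref{thm:Filt}, the identification $\Sc^*(X,\cE,V)\cong\Sc^*(X,\cE)\hot V^*$, and the exactness supplied by \Cref{pOpenSet} and \Cref{lem:flat}; your steps (a) and (b) simply spell out the details the paper leaves implicit. The only remark worth adding is that the dual nuclear \Fre{\,}property of each $F^n_Z(X,\cE)$ can be seen directly from the identification $F^n_Z(X,\cE)\cong\bigl(\Sc(X,\cE)/F_n(X,Z,\cE)\bigr)^*$, which avoids the inductive extension argument.
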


\begin{theorem}[{\cite[Lemma 2.8]{GGS2}}]\label{thm:Frob}
Suppose that the action of $G$ on $X$ is transitive,  fix $x\in X$ and let $G_x$ be its stabilizer in $G$. Let $\Delta_{G_x}$ denote the modular function of $G_x$. Then for any $\pi\in \cM_{\infty}(G)$ we have
$\H_0(G,\Sc(X,\cE)\hot \pi)\cong \H_0(G_x,\cE_x\otimes \pi\otimes \Delta_{G_x})$.
\end{theorem}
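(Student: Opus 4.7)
The plan is to realize $\Sc(X,\cE)$ as a Schwartz induction from $G_x$ to $G$, apply a projection formula to turn the diagonal action on $\Sc(X,\cE)\hot\pi$ into a translation action on a Schwartz-induced bundle, and then compute $G$-coinvariants by integration against the Haar measure.

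First, I would use transitivity to identify $X\cong G/G_x$ and $\cE\cong G\times_{G_x}\cE_x$, and describe $\Sc(X,\cE)$ as a suitable space of $G_x$-equivariant Schwartz sections on $G$, i.e.\ roughly as $(\Sc(G)\hot\cE_x)^{G_x}$, with $G_x$ acting by right translation on $\Sc(G)$ and by the isotropy representation on $\cE_x$. Next, using the Schwartz projection formula in the spirit of Proposition \ref{prop:prod} (applied to the graph of the action map $G\to X$), I would produce a $G$-equivariant isomorphism
$$\Sc(X,\cE)\hot\pi\cong \Sc(X,\widetilde{\cE\otimes\pi}),$$
where the right-hand side is the space of Schwartz sections of the $G$-equivariant bundle with fiber $\cE_x\otimes\pi$ on which $G_x$ acts by the isotropy action tensored with $\pi|_{G_x}$. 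Under this identification the diagonal $G$-action on the left becomes the translation action on the right.

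Second, I would compute $\H_0(G,-)$ by integration. The Haar-integration functional $\int_G\colon\Sc(G)\to\CC$ is invariant for left translation and so realizes the $G$-coinvariants of $\Sc(G)$; tensoring with $\cE_x\otimes\pi$ and taking $G_x$-invariants on the source gives a $G_x$-equivariant quotient map from $\Sc(X,\widetilde{\cE\otimes\pi})$ to $\cE_x\otimes\pi$. Because right translation by $G_x$ on $\Sc(G)$ rescales the Haar measure by $\Delta_{G_x}$, the induced action on this target differs from the original isotropy action by the character $\Delta_{G_x}$; passing to $G_x$-coinvariants therefore yields $\H_0(G_x,\cE_x\otimes\pi\otimes\Delta_{G_x})$, which is the required identification.

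The main obstacle is the topological bookkeeping in the nuclear Fr\'echet / dual Fr\'echet category: one must check that the relevant tensor-product identifications are exact (using Lemma \ref{lem:flat} and Proposition \ref{prop:TenLim}), that the integration map is continuous on the projective tensor product, and crucially that $\H_0(G,\Sc(X,\cE)\hot\pi)$ is Hausdorff so that the resulting map is a genuine isomorphism rather than one up to closure. A smaller but persistent technicality is to fix the normalization of the isotropy action relative to the density convention for Schwartz sections so that exactly one factor of $\Delta_{G_x}$ appears; this is the step where a careful change-of-variables computation for right versus left Haar measure on $G$ pins down the precise character.
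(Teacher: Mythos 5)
The paper itself gives no proof of this statement: it is imported verbatim from \cite[Lemma 2.8]{GGS2}, so there is nothing internal to compare with. Your plan is in fact the standard route used in that reference (and in the Casselman--du~Cloux theory of Schwartz induction): untwist the diagonal action by identifying the trivial $G$-equivariant bundle with fibre $\pi$ with the bundle induced from $\pi|_{G_x}$, so that $\Sc(X,\cE)\hot\pi$ becomes the space of Schwartz sections of the induced bundle $\cF$ with fibre $W:=\cE_x\otimes\pi$ and translation action (this step uses the moderate growth of $\pi$ to stay inside Schwartz sections), and then compute the $G$-coinvariants of a Schwartz induction. So the architecture is sound; but the two steps where the actual content sits are stated incorrectly.

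First, $\Sc(X,\cE)$ is not ``roughly $(\Sc(G)\hot\cE_x)^{G_x}$'': for noncompact $G_x$ a right-$G_x$-equivariant function on $G$ does not decay along the $G_x$-cosets, so the invariants do not model Schwartz sections, and your integration map is not defined on such a space anyway. The correct statement is dual: the averaging map $\phi\otimes w\mapsto\bigl(g\mapsto\int_{G_x}\phi(gh)\,h\cdot w\,d_{l}h\bigr)$ is a surjection $\Sc(G)\hot W\onto\Sc(X,\cF)$ which factors through the $G_x$-coinvariants of $\Sc(G)\hot W\otimes\Delta_{G_x}$ (right translation on the first factor), and its surjectivity together with the identification of its kernel is the nontrivial analytic input. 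Second, your mechanism for producing $\Delta_{G_x}$ fails: $G$ is reductive, hence unimodular, so right translation by $G_x$ does \emph{not} rescale the Haar measure of $G$, and the residual $G_x$-action on $\H_0(G,\Sc(G))\cong\C$ (Haar integration) is trivial. The modular character comes instead from the left-versus-right Haar discrepancy on $G_x$ itself inside the averaging map, $d_{l}(hh_0)=\Delta_{G_x}(h_0)\,d_{l}h$, which is what forces the twist in the equivariance of that surjection; commuting the two coinvariant functors then gives $\H_0(G_x,\cE_x\otimes\pi\otimes\Delta_{G_x})$. Note also that, as sketched, your argument only produces a surjection $\H_0(G_x,\cE_x\otimes\pi\otimes\Delta_{G_x})\onto\H_0(G,\Sc(X,\cE)\hot\pi)$; the injectivity (equivalently, the identification of the kernel of the averaging map, plus the fact that $\ker\int_G$ is spanned by translates minus originals) is exactly the part that needs the Schwartz-induction machinery of \cite{GGS2}, and is not a formal consequence of Proposition~\ref{prop:prod}, Lemma~\ref{lem:flat} and Proposition~\ref{prop:TenLim}.
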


\subsection{Proof of Theorem \ref{thm:X}} \label{subsec:PfX}


The proof is by induction on the number $k$ of $\bfG$ orbits in $\bf X$. The base case is $k=1$, and in this case $X$ is a disconnected union of finitely many orbits of $G$. Thus the base follows from Theorem \ref{thm:Frob} and Corollary \ref{cor:CW}.

For the induction step, let $\bf Z$ be a closed $\bf G$-orbit in $\bf X$, and let $Z:= {\bf Z} \cap X$. Let $U:=X\setminus Z$.
Let $\tau:=\widetilde{\pi}$. If we have a map $\Sc(X,\cE)\onto \pi$ then
$\Sc^*(X,\cE,\tau)^G\neq 0$. Consider the exact sequence
$$0\to \Sc^*_Z(X,\cE,\tau)^G \to \Sc^*(X,\cE,\tau)^G\to \Sc^*(U,\cE,\tau)^G \, \,.$$
If $\Sc^*(U,\cE,\tau)^G\neq 0$  then the theorem follows from the induction hypothesis. Otherwise, we have $\Sc^*_Z(X,\cE,\tau)^G\neq 0$. By Theorem \ref{thm:Filt} and Corollary \ref{cor:Filt}, $\Sc^*_Z(X,\cE,\tau)$ has an exhaustive $G$-invariant filtration $F^n_Z(X,\cE)\hot \tau$ with successive quotients isomorphic to $\Sc^*(Z,\cE\otimes\Sym^n(CN_Z^X),\tau)$. Thus $$\Sc^*(Z,\cE\otimes\Sym^n(CN_Z^X),\tau)^G\neq 0$$
for some $n\geq 0$. The theorem follows now from the base of the induction.
\proofend

\subsection{Proof of Theorem \ref{thm:AltX}} \label{subsec:PfAltX}
For the proof we will need the following theorem.

\begin{thm}\label{thm:fin}
The space $\H_0(\fg,\Sc(X,\cE)\hot \pi)$ is Hausdorff and finite-dimensional.
\end{thm}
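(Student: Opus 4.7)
The plan is to dualize, apply the Casselman embedding theorem to reduce to a minimal principal series, and then invoke \cite[Theorem C]{AGKL}.

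First I would pass to the dual picture. Once Hausdorffness is established, Proposition \ref{prop:TensorDual} yields a natural isomorphism
\[
\H_0(\fg,\Sc(X,\cE)\hot \pi)^{*}\cong \Hom_{G}\!\left(\Sc(X,\cE),\widetilde{\pi}\right),
\]
so finite-dimensionality reduces to bounding the dimension of the space of continuous $G$-equivariant maps on the right. I would then invoke the smooth Casselman embedding theorem to embed $\widetilde{\pi}$ as a closed $G$-subrepresentation of a smooth minimal principal series $I:=\Ind_{P}^{G}(\tau)$, with $P$ a minimal parabolic subgroup of $G$ and $\tau$ a finite-dimensional representation of $P$. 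Smooth Frobenius reciprocity would translate the problem into one about $P$-equivariant tempered sections over $X$ of a finite-dimensional bundle built from $\cE$ and $\tau$.

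Next, I would exploit sphericity. Since $\bf X$ is $\bfG$-spherical, a minimal parabolic $P$ of $G$ has only finitely many orbits on $X$. I would filter $\Sc^{*}(X,\cE)$ by a $P$-stable stratification along $P$-orbit closures, and within each stratum by the normal-bundle order using Theorem \ref{thm:Filt} and Corollary \ref{cor:Filt}. Each associated graded piece would become a space of $P$-equivariant distributions on a single $P$-orbit with values in a finite-dimensional representation, whose dimension is controlled by Theorem \ref{thm:Frob} applied orbit by orbit. Finitely many orbits and finite-dimensional fibres would then give a finite-dimensional total space.

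The main obstacle will be the Hausdorffness of $\H_0(\fg,\Sc(X,\cE)\hot \pi)$, since the dual-space count above only controls its maximal Hausdorff quotient. This is precisely where the full force of \cite[Theorem C]{AGKL} will be used: that result is formulated so as to guarantee that the image of $\fg$ on Schwartz tensor products of this type is closed, so that the coinvariant quotient is automatically Hausdorff and the dimension count above gives its genuine dimension. A secondary technical point will be checking that each stage of the orbit-and-jet filtration is compatible with taking $\fg$-coinvariants; this should follow from the exactness properties of nuclear Fr\'echet tensor products recorded in Lemma \ref{lem:flat}, combined with Proposition \ref{prop:TenLim} to pass to the direct-limit filtration of Corollary \ref{cor:Filt}.
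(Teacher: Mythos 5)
Your plan has a genuine gap exactly where Hausdorffness (and hence the legitimacy of the dual count) is supposed to come from \cite[Theorem C]{AGKL}. That theorem concerns Lie algebra coinvariants of Schwartz sections on a manifold with finitely many orbits, twisted by a character or a finite-dimensional datum; it is not a statement about $\Sc(X,\cE)\hot \pi$ for an arbitrary Casselman--Wallach representation $\pi$, and ``Schwartz tensor products of this type'' are not within its scope. The entire difficulty of the theorem is to convert $\Sc(X,\cE)\hot\pi$ into an object that the cited theorem does cover. The paper does this by using the Casselman embedding theorem in its dual (quotient) form: $\pi$ is a quotient of the Schwartz induction $\H_0(\fp,\Sc(G,\cE')\otimes\chi)$ from a minimal parabolic $\mathbf{P}$, and the kernel theorem (Proposition \ref{prop:prod}) then identifies $\H_0(\fg,\Sc(X,\cE)\hot \H_0(\fp,\Sc(G,\cE')\otimes\chi))$ with $\H_0(\fg\times\fp,\Sc(X\times G,\cE\boxtimes\cE')\otimes\chi)$. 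Sphericity gives finitely many $\bfG\times\mathbf{P}$-orbits on $\mathbf{X}\times\bfG$, so \cite[Theorem C]{AGKL} applies verbatim to this space, and the statement for $\pi$ follows because a quotient of a finite-dimensional Hausdorff space is again finite-dimensional and Hausdorff. In your proposal the Casselman embedding is used only on the dual side, so this reduction is never performed and the Hausdorffness claim is unsupported.

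The dual-side dimension count also has a hole. After stratifying $\Sc^*(X,\cE)$ along the finitely many $P$-orbits, the transversal-jet filtration of Theorem \ref{thm:Filt} and Corollary \ref{cor:Filt} has infinitely many levels $n\geq 0$, and $\Sc^*_Z=\bigcup_n F^n_Z$ is an increasing union; finite-dimensionality of each graded piece (which, incidentally, is a statement about $P$-equivariant distributions on a single $P$-orbit rather than an instance of Theorem \ref{thm:Frob}, which computes $G$-coinvariants for a transitive action) does not bound the dimension of the union. One must show that only finitely many jet degrees support equivariant functionals, and that is precisely the hard analytic input of \cite{AGKL} (cf.\ also \cite{KS}); it cannot be extracted orbit by orbit from the tools you quote. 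So both halves of your argument lean on \cite[Theorem C]{AGKL} in a form it does not have, whereas the paper's proof rearranges the data, via the quotient presentation of $\pi$ and Proposition \ref{prop:prod}, so that the theorem applies directly and yields Hausdorffness and finiteness simultaneously.
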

\begin{proof}
By the Casselman embedding theorem (cf. \cite[Theorem 8.21]{CaM}),
$\pi$ is a quotient of the nuclear \FreSpr$\H_0(\fp,\Sc(G,\cE')\otimes \chi)$, where $\fp$ is the Lie algebra of a minimal parabolic subgroup $\bf P \subset G$ is a minimal defined over $\R$, $\chi$ is a character of $\fp$ that is trivial on its nilpotent radical, and $\cE'$ is an algebraic bundle on $G$.
By Proposition \ref{prop:prod} we have a natural isomorphism
$$H_0(\fg,\Sc(X,\cE)\hot\H_0(\fp,\Sc(G,\cE')\otimes \chi))\cong \H_0(\fg\times \fp,\Sc(X\times G,\cE\boxtimes \cE')\otimes \chi),$$
where $G$ acts on itself by left multiplications, and on $X\times G$ diagonally, and $P$ acts on $G$\ by right multiplications, thus commuting with the action of $G$.
 Since $\bf X$ is spherical, $\bf P$ has finitely many orbits on $\bf X$, and thus $\bf G \times P$ has finitely many orbits on $\bf X\times G$. Thus, \cite[Theorem C]{AGKL} implies that $\H_0(\fg\times \fp,\Sc(X\times G,\cE\times \cE')\otimes \chi)$ is Hausdorff and finite-dimensional.
Therefore, so is $\H_0(\fg,\Sc(X,\cE)\hot \pi)$.
\end{proof}


\begin{proof}[Proof of Theorem \ref{thm:AltX}]
Let $\fin := \H_0(\fg,\Sc(X,\cE)\hot \pi)$. It is a representation of $G$, on which the connected component of the unit element acts trivially.
By Theorem \ref{thm:fin}, $\fin$ is Hausdorff and finite-dimensional.
The projection map $\Sc(X,\cE)\hot \pi\onto \sigma$ defines a non-zero map
$\xi:\Sc(X,\cE)\otimes \sigma^*\to \pi^*$. Since $\Sc(X,\cE)\otimes \sigma$ is a smooth representation, and $\pi$ is irreducible, the image of $\xi$ is the contragredient representation $\widetilde{\pi}$. Since $\Sc(X,\cE)\otimes \sigma\cong \Sc(X,\cE\otimes \sigma)$, and $\cO(\pi)=\cO(\widetilde{\pi})$, the theorem follows from Theorem \ref{thm:X}.
\end{proof}

\DimaC{
\section{Beyond spherical subgroups}\label{sec:beyond}
In this section we let $\fr\subset \fg$ be any, not necessary spherical, subalgebra.

We start with the following generalization of \Cref{thm:main}.


\begin{theorem}\label{thm:Gen}
Let $\sigma\in \cM_{f.d.}(\fr)$ let $V\in \Irr(\fg)_{\fr,\sigma}$. Suppose that for any $\bfG$-orbit $\cO\subset \AnV (V)$ we have
\begin{equation}\label{=GenCon}
2\dim \cO\cap \fr^{\bot} \leq \dim \cO
\end{equation}
 Then
\begin{equation}
2\dim \cO(V)\cap \fr^{\bot} = \dim \cO(V)
\end{equation}
\end{theorem}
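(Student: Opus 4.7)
The plan is to follow the template of the proof of Theorem \ref{thm:main} (as given in Section \ref{sec:Pfmain}), simply replacing the use of Theorem \ref{thm:Geo} (the sphericity-based bound of Li) by the hypothesis \eqref{=GenCon}. Proposition \ref{prop:GenMain} already provides the lower bound without any sphericity assumption, so the only role played by sphericity in the original argument was to ensure the matching upper bound on $\dim \cO \cap \fr^\bot$ for every orbit $\cO$ in the annihilator variety; this is exactly what \eqref{=GenCon} directly assumes.

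More concretely, I would proceed as follows. Set $\cO := \cO(V)$, so by Theorem \ref{thm:Irr} we have $\AnV(V) = \overline{\cO}$. Decompose the boundary into finitely many nilpotent orbits, writing
\begin{equation*}
\overline{\cO} = \cO \cup \bigcup_{i=1}^n \cO_i,
\end{equation*}
where each $\cO_i$ is a proper sub-orbit so that $\dim \cO_i < \dim \cO$. By Proposition \ref{prop:GenMain} applied to $\fr$,
\begin{equation*}
2\dim \bigl(\AnV(V) \cap \fr^\bot\bigr) \geq \dim \AnV(V) = \dim \cO.
\end{equation*}
The hypothesis \eqref{=GenCon} applied to each $\cO_i$ gives $2\dim (\cO_i \cap \fr^\bot) \leq \dim \cO_i < \dim \cO$, so none of the boundary pieces can contribute an irreducible component of $\AnV(V)\cap \fr^\bot$ of dimension $\geq \dim \cO/2$. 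Therefore the top-dimensional part of the intersection must already live in $\cO$, which forces
\begin{equation*}
2\dim \bigl(\cO \cap \fr^\bot\bigr) \geq \dim \cO.
\end{equation*}
Combining this with the reverse inequality supplied by \eqref{=GenCon} applied to $\cO$ itself yields the desired equality.

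The argument is essentially bookkeeping once Proposition \ref{prop:GenMain} is in hand, so there is no real obstacle; the only subtlety to be careful about is to state the finite stratification of $\overline{\cO}$ correctly (using the finiteness of the number of nilpotent coadjoint orbits) so that the dimension comparison on the boundary pieces is justified. In particular, no version of Gabber's coisotropy theorem or any Kazhdan-filtration input is needed beyond what is already used in the proof of Proposition \ref{prop:GenMain}.
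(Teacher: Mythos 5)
Your proposal is correct and matches the paper's own argument: the paper proves Theorem \ref{thm:Gen} by running the proof of Theorem \ref{thm:main} verbatim, with the lower bound supplied by Proposition \ref{prop:GenMain} and the role of Theorem \ref{thm:Geo} replaced by the hypothesis \eqref{=GenCon} applied both to the boundary orbits and to $\cO(V)$ itself. No further input (in particular no Kazhdan filtration or Gabber-type result) is needed, exactly as you say.
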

The proof of \Cref{thm:main} adapts verbatim to \Cref{thm:Gen} as follows. 
\begin{proof}[Proof of \Cref{thm:Gen}]
By \Cref{prop:GenMain} we have $2\dim \overline{\cO(V)}\cap \fr^{\bot} \geq \dim \cO(V)$. By \eqref{=GenCon} we have
$$2\dim \cO(V)\cap \fr^{\bot} \leq \dim \cO(V) \text{ and }2\dim (\overline{\cO(V)}\setminus \cO(V))\cap \fr^{\bot} < \dim \cO(V).$$ Thus $2\dim \cO(V)\cap \fr^{\bot} = \dim \cO(V)$.
\end{proof}
\DimaE{As in Remark \ref{rem:Irr}, one can ease the assumption that $V$ is irreducible, and require only that there exists an $\fr$-equivariant map $\xi:V\to \fin$ that   does not vanish on any non-zero $\fg$-submodule of $V$.   }

Similarly one obtains a generalization of \Cref{thm:maintwist}. To formulate it we will need some notation. Let $h\in \fg$ be a semi-simple element such that its adjoint action $\ad(h)$ has integer eigenvalues and preserves $\fr$. Let $\chi\in \fg^*$ such that  $\ad^*(h)(\chi)=2\chi$.  For any $i\in \bZ$ let $\fr(i)$ denote the $i$-th eigenspace of $ad(h)$ on $\fr$, and let $\fn:=\bigoplus_{i< 0}\fr(i)$. \DimaF{Suppose also that for every $i<-2$ we have $\mathfrak{r}(i)=\mathfrak{g}(i)$.}

\begin{thm}\label{thm:GenGenTwist}
 Let $\sigma\in \cM_{f.d.}(\fr)$ and suppose that $\sigma(X)=\chi(X)\mathrm{Id}$ for any $X\in \fn$.
 \DimaE{Let $V$ be a $\fg$-module. Suppose that there exists an $\fr$-equivariant map $\xi:V\to \fin$ that does not vanish on any non-zero $\fg$-submodule of $V$. Suppose also that} for any $\bfG$-orbit $\cO\subset \AnV (V)$ we have
\begin{equation}\label{=GenTCon}
2\dim \cO\cap (\fr^{\bot}+\chi) \leq \dim \cO
\end{equation}
 Then \DimaE{for some orbit $\cO_{\max}\subset \AnV(V)$ of dimension $\dim \AnV(V)$ we have}
\begin{equation}\label{=GenT}
2\dim \cO_{\max}\cap (\fr^{\bot}+\chi) = \dim \cO_{\max}
\end{equation}
\end{thm}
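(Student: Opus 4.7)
The proof will mirror that of \Cref{thm:Gen}, but use the Kazhdan filtration and Gabber's theorem (\Cref{thm:TwiGab}) in place of their untwisted counterparts so as to encode the translate by $\chi$. The first step is to reduce the statement to the twisted lower bound
\[
2\dim\bigl(\AnV(V)\cap(\chi+\fr^{\bot})\bigr)\geq \dim\AnV(V).
\]
Given this inequality, the decomposition $\AnV(V)=\bigcup_i\overline{\cO_i}$ combined with \eqref{=GenTCon} forces every orbit of strictly smaller dimension to contribute at most $(\dim\AnV(V)-1)/2$ to the left-hand side, so some top-dimensional orbit $\cO_{\max}$ must realize equality in \eqref{=GenT}; this is the same bookkeeping as in the deduction of \Cref{thm:Gen} from \Cref{prop:GenMain}.

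To prove the twisted lower bound, I will form the auxiliary $\fg$-module $M:=\cU(\fg)\cdot\xi\subset\Hom_{\C}(V,\sigma)$, where $\cU(\fg)$ acts on $\Hom_{\C}(V,\sigma)$ through its action on the $V$-argument. The hypothesis that $\xi$ does not vanish on any nonzero $\fg$-submodule of $V$ makes the natural $\sigma$-valued pairing $V\times M\to\sigma$ non-degenerate on both sides, so $\Ann_{\cU(\fg)}(M)=\Ann_{\cU(\fg)}(V)$ and $\AnV(M)=\AnV(V)$. By \Cref{cor:Kaz} and \Cref{thm:TwiGab}, $\dim\AnV(V)\leq 2\dim\AsV^{\Kaz}(M)$, so it suffices to establish the containment
\[
\AsV^{\Kaz}(M)\ \subset\ \chi+\fr^{\bot}.
\]

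This containment will be proved by adapting \Cref{lem:TwiAs} to a filtration $M_k:=F_k^{\Kaz}(\cU(\fg))\cdot W$ built from a carefully chosen finite-dimensional generating subspace $W\subset M$. The auxiliary hypothesis $\fr(i)=\fg(i)$ for $i<-2$ keeps \Cref{lem:non0} applicable, so $M_{-1}=0$. The core computation in the proof of \Cref{lem:TwiAs} then extends to each graded piece $\fr(i)$: given $\fr$-stability of $W$ together with the identity $(X-\chi(X))W=0$ for $X\in\fn$, the inclusion $(X-\chi(X))M_k\subset M_{k+i}$ delivers the required degree drop on $\gr^{\Kaz}(M)$. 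The positive-weight pieces $\fr(i)$, $i>0$, which are absent from \Cref{thm:maintwist}, introduce no new difficulty: their Kazhdan symbols lie in strictly positive degree $i+2$, so the inclusion $XM_k\subset M_{k+i}$ coming from $\fr$-stability of $W$ alone suffices to annihilate the corresponding graded piece.

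The principal technical obstacle is the construction of $W$. The natural candidate $W_0:=\cU(\fr)\cdot\xi$ is finite-dimensional (it embeds in $\End_{\C}(\sigma)\cdot\xi$ through the action of $\fr$ on $\sigma$) and $\fr$-stable, but on $W_0$ the algebra $\fn$ acts only by $\chi\cdot\id+N$ with $N$ nilpotent, so the exact identity $(X-\chi(X))W=0$ fails on the nose. To repair this I will apply Engel's theorem to the shifted nilpotent action $X\mapsto X-\chi(X)$ of $\fn$ on $W_0$, producing a finite $\fg$-invariant filtration of $M$ whose successive subquotients are cyclic and generated by vectors on which $\fn$ acts exactly via $\chi$; applying the adapted \Cref{lem:TwiAs} to each subquotient, together with the standard equality $\AsV^{\Kaz}(M)=\bigcup_i\AsV^{\Kaz}(M^{(i)}/M^{(i+1)})$ coming from iterated short exact sequences, yields the desired containment. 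Verifying that this $\fg$-invariant filtration can be chosen to satisfy all the required properties, in spite of the lack of a stabilizer hypothesis on the positive-weight pieces $\fr(i)$, $i\geq 0$, is expected to be the main technical step.
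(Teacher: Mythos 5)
Your overall route is the paper's: form $M=\cU(\fg)\cdot\xi$, use the non-vanishing hypothesis on $\xi$ to get $\Ann(M)=\Ann(V)$, invoke \Cref{cor:Kaz} and \Cref{thm:TwiGab}, establish $\Asv^{\Kaz}(M)\subset\chi+\fr^{\bot}$, and finish with the bookkeeping against \eqref{=GenTCon}; the paper does exactly this, quoting \Cref{lem:GenTwiAs} (whose treatment of the pieces $\fr(i)$, $i\geq 0$, is just as you describe, and whose non-emptiness claim rests on \Cref{lem:non0}). The gap is in the one step you defer. The obstacle you describe -- that on $W_0=\cU(\fr)\cdot\xi$ the algebra $\fn$ acts only by $\chi\cdot\id+N$ with $N$ nilpotent -- is not there, because you have not used the hypothesis $\sigma(X)=\chi(X)\mathrm{Id}$ for $X\in\fn$. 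Since the $\fg$-action on $\Hom_{\C}(V,\fin)$ is through the argument, every element of $W_0$ is of the form $A\circ\xi$ with $A\in\End_{\C}(\fin)$ (indeed $A\in\sigma(\cU(\fr))$ up to the antipode), and for $X\in\fn$ the action on such an element only sees $\xi(Xv)=\sigma(X)\xi(v)=\chi(X)\xi(v)$; hence $X$ acts on all of $W_0$ by the exact scalar $\chi(X)$ (up to the overall sign convention for the action on the argument, which the paper suppresses as well). So \Cref{lem:GenTwiAs} applies to $M$ with $W:=W_0$, no Engel filtration is needed, and the ``nilpotent part'' could only come from the action of $\fn$ on the target, which the scalar-action hypothesis forbids.

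Moreover, if the obstacle were real, your proposed repair would not obviously work: an Engel flag for the shifted $\fn$-action is only $\fn$-stable, and since $\fr$ is allowed to contain positive $\ad(h)$-weight components, $\fn$ need not be an ideal of $\fr$; hence the subquotient generators you produce need not span $\fr$-invariant subspaces, which is precisely what the key lemma requires. The asserted equality $\Asv^{\Kaz}(M)=\bigcup_i\Asv^{\Kaz}(M^{(i)}/M^{(i+1)})$ for the Kazhdan filtration would also need justification (good-filtration arguments, plus the non-vanishing issue that the paper handles through \Cref{lem:non0}). Since you yourself flag this construction as the main technical step and leave it unverified, the proposal as written is incomplete; but the fix is simply to use the hypothesis $\sigma(X)=\chi(X)\mathrm{Id}$ as above, after which your argument coincides with the paper's proof.
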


For the proof we will need the following generalization of \Cref{lem:TwiAs}.
\begin{lemma}\label{lem:GenTwiAs}
Let $V$ be a $\fg$-module generated by a finite-dimensional $\fr$-invariant vector subspace $W$ on which $\fn$ acts via $\chi|_{\fn}$. Then $\Asv^{\Kaz}(V)$ \DimaF{is a non-empty subset of} $\chi+\fr^{\bot}$.
\end{lemma}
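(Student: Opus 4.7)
The plan is to adapt the proof of Lemma \ref{lem:TwiAs} by using the Kazhdan filtration $V_k := F_k^{\Kaz}(\cU(\fg))W$ on $V$ and verifying that the hypotheses on $\fr$, namely $\ad(h)$-stability together with $\fr(i)=\fg(i)$ for $i<-2$, suffice to push the argument through.

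First I will establish non-emptiness of $\Asv^{\Kaz}(V)$ by showing $V_{-1}=0$. The inclusion $\fr(i)=\fg(i)$ for $i<-2$ gives $\fn':=\bigoplus_{i<-2}\fg(i)\subset \fn$. The condition $\ad^*(h)(\chi)=2\chi$ forces $\chi(X)=0$ for every $X\in\fg(i)$ with $i\neq -2$, so $\chi|_{\fn'}=0$. Since $\fn$ acts on $W$ via $\chi$, we obtain $\fn' W=0$, and Lemma \ref{lem:non0} then yields
$$V_{-1}=F_{-1}^{\Kaz}(\cU(\fg))W\subset \cU(\fg)\fn' W=0,$$
so $\gr^{\Kaz}_0(V)\supset W\neq 0$ and $\Asv^{\Kaz}(V)$ is non-empty.

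For the inclusion $\Asv^{\Kaz}(V)\subset \chi+\fr^\bot$ I will prove that for every $X\in \fr(i)$ the function $X-\chi(X)$ on $\fg^*$ vanishes on $\Asv^{\Kaz}(V)$; since $\chi+\fr^\bot$ is the common zero locus of these functions, this suffices. Writing
$$(X-\chi(X))V_k\subset [X,F_k^{\Kaz}(\cU(\fg))]W + F_k^{\Kaz}(\cU(\fg))(X-\chi(X))W,$$
the bracket estimate $[X,F_k^{\Kaz}(\cU(\fg))]\subset F_{k+i}^{\Kaz}(\cU(\fg))$ (valid since $X$ has $\ad(h)$-weight $i$ and PBW degree $1$, and commutators drop PBW degree by one) bounds the first term by $V_{k+i}$. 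For the second term I split into two sub-cases: if $X\in\fn$, i.e. $i<0$, then the prescribed action of $\fn$ on $W$ forces $(X-\chi(X))W=0$; if $i\geq 0$, then $\chi(X)=0$ by the weight condition and $XW\subset W\subset V_0$ by $\fr$-invariance of $W$, so the second term lies in $V_k$. In either case $(X-\chi(X))V_k\subset V_{k+i}$, which implies that the symbol of $X-\chi(X)$ in Kazhdan degree $\max(i+2,0)$ annihilates $\gr^{\Kaz}(V)$; in the boundary case $i=-2$ the estimate becomes $(X-\chi(X))V_k\subset V_{k-2}\subset V_{k-1}$, matching the Kazhdan degree $0$ of $X-\chi(X)$.

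The only point demanding care is the uniform treatment of the two sub-cases $i<0$ and $i\geq 0$: in the former the needed cancellation comes from $\fn$ acting on $W$ via $\chi$, while in the latter it comes from $\fr$-stability of $W$ combined with the vanishing of $\chi$ on non-negative weight spaces. Once this is handled, the inclusion $\Asv^{\Kaz}(V)\subset \chi+\fr^\bot$ follows immediately.
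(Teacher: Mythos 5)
Your proposal is essentially the paper's own proof: the same filtration $V_k=F^{\Kaz}_k(\cU(\fg))W$, the same appeal to Lemma \ref{lem:non0} for non-emptiness (with $\fr(i)=\fg(i)$ for $i<-2$ and $\chi\in\fg^*(2)$ giving $\fn'W=0$, a point you spell out more explicitly than the paper does), and the same commutator estimate $(X-\chi(X))V_k\subset V_{k+i}$ split into the cases $i<0$ and $i\geq 0$. The one slip is the phrase ``Kazhdan degree $\max(i+2,0)$'': the Kazhdan filtration is indexed by all of $\bZ$, and the relevant symbol degree is $i+2$, including when it is negative. For $X\in\fr(i)$ with $i<-2$ --- a case the generalized hypotheses genuinely allow, e.g.\ in the degenerate Whittaker application where $h$ need not be even --- the degree-$0$ symbol of $X\in F^{\Kaz}_{i+2}\subset F^{\Kaz}_{-1}$ is zero, so your statement is vacuous there and does not yield $\lambda(X)=0$ on $\Asv^{\Kaz}(V)$; however, your own estimate $XV_k\subset V_{k+i}\subset V_{k+i+1}$ shows that the degree-$(i+2)$ symbol, which is $X$ itself, annihilates $\gr^{\Kaz}(V)$, so the intended conclusion follows once the degree is corrected.
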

\begin{proof}
Define $V_k:=F_k^{\Kaz}(\cU)W$ for any $k\in \bZ$. 
\DimaF{By Lemma \ref{lem:non0}, $V_{-1}=0$ and thus $\Asv^{\Kaz}(V)$ is not empty.}
Since $\ad(h)$ preserves $\fr$, we have $\fr=\bigoplus_{i}\fr(i)$.
For any $X\in \fr(i)$ with $i<0$ we have
$$(X-\chi(X))V_k=(X-\chi(X))F_k^{\Kaz}(\cU)W\subset [X,F_k^{\Kaz}(\cU)]W\subset F_{k+i}^{\Kaz}(\cU)W=V_{k+i}$$
For any $X\in \fr(i)$ with $i\geq0$ we have
$$(X-\chi(X))V_k=XV_k= XF_k^{\Kaz}(\cU)W\subset [X,F_k^{\Kaz}(\cU)]W+F_k^{\Kaz}(\cU)W\subset F_{k+i}^{\Kaz}(\cU)W=V_{k+i}$$
Since $X\in F_{i+2}^{\Kaz}(\cU)$, and $\chi(X)=0$ unless $i=-2$, we obtain that $X-\chi(X)$ acts by zero on $\gr^{\Kaz}(V)$. Thus $\Asv^{\Kaz}(V)\subset \chi+\fr^{\bot}$.
\end{proof}

\begin{proof}[Proof of \Cref{thm:GenGenTwist}]
Let $M\subset \Hom_{\C}(V,\sigma^*)$ be the $\fg$-module generated by $\xi$. \DimaE{Then we have $\Ann(M)=\Ann(V)$. Indeed, the inclusion $\Ann(V)\subset \Ann(M)$ is obvious, and for the other direction we note that $\Ann(M)V\subset V$ is a submodule on which $\xi$ vanishes, and thus $\Ann(M)V=0$ by our condition on $\xi$. Thus $\Anv(M)=\Anv(V)$.
By Corollary \ref{cor:Kaz} and Theorem \ref{thm:TwiGab}, we have
\begin{equation}\label{=GenTBer}
\dim \Anv(M)\leq 2\dim \Asv^{\Kaz}(M)
\end{equation}
On the other hand, by Lemma \ref{lem:GenTwiAs} we have
\begin{equation}\label{=GenTInt}
\Asv^{\Kaz}(M)\subset \Anv(M) \cap (\chi+\fr^{\bot})=\Anv(V)\cap (\chi+\fr^{\bot}).
\end{equation}
By the condition of the theorem, for every nilpotent orbit $\cO\subset \AnV(V)$ we have
\begin{equation}\label{=GenTIntDim}
2\dim (\cO\cap (\chi+\fr^{\bot}))\leq \dim \cO.
\end{equation}

From (\ref{=GenTBer},\ref{=GenTInt}, \ref{=GenTIntDim}) we obtain \eqref{=GenT}.}
\end{proof}

From Theorems \ref{thm:Gen} and \ref{thm:GenGenTwist} we obtain the following generalization of  \Cref{cor:CW}.

\begin{cor}\label{cor:genCW}
Let $\fin\in \cM_{f.d.}(\fr)$  and  let $\pi\in \Irr(G)$. Assume that  there exists a  continuous non-zero $\fr$-equivariant linear map $\pi \to \fin$. Then

\begin{enumerate}[(i)]
\item If
for any $\bfG$-orbit $\cO\subset \AnV (\pi)$ we have
\begin{equation}\label{=GenCon2}
2\dim \cO\cap \fr^{\bot} \leq \dim \cO
\end{equation}
Then  $2\dim\cO(\pi)\cap \fr^\bot=\dim\cO(\pi)$.

\item Assume further that there exists a semi-simple element $h\in \fg$ such that $\ad(h)$ preserves $\fr$ and has integer eigenvalues, and a functional $\chi\in \fg^*(2)$  such that
$\fn:=\bigoplus_{i< 0}\fr(i)$ acts on $\fin$ via $\chi|_\fn$.
If
for any $\bfG$-orbit $\cO\subset \AnV (\pi)$ we have
\begin{equation}\label{=GenCon3}
2\dim \cO\cap (\chi+\fr^{\bot}) \leq \dim \cO
\end{equation}

Then  $$2\dim \left(\cO(\pi)\cap (\chi+\fr^{\bot})\right)=\dim \cO(\pi).$$
 \end{enumerate}
\end{cor}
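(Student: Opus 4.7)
The plan is to adapt the proof of Corollary \ref{cor:CW} from Section \ref{subsec:PfB}, replacing Theorems \ref{thm:main} and \ref{thm:maintwist} by their generalized versions \ref{thm:Gen} and \ref{thm:GenGenTwist}. The key observation is that the orbit-dimension hypotheses in the generalized theorems are phrased purely in terms of $\AnV(\pi)$, so they transfer automatically to any auxiliary $\fg$-module whose annihilator variety coincides with $\AnV(\pi)$.

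Concretely, given the non-zero continuous $\fr$-equivariant map $\xi \colon \pi \to \sigma$, Theorem \ref{thm:CW} implies that the Harish-Chandra module $V := HC(\pi)$ is irreducible, and $\xi|_V \neq 0$ by density of $V$ in $\pi$. Lemma \ref{lem:IrrHC} then produces an irreducible $\fg$-submodule $M \subset V$ and a finite set $S \subset K$ with $V = \sum_{k \in S} kM$ and $\AnV(M) = \AnV(V) = \AnV(\pi)$. I then pick $k \in S$ with $\xi|_{kM} \neq 0$ and identify $kM$ with the twisted module $M^k$ via $km \mapsto m$, which is a $\fg$-module isomorphism. A short calculation, using that the $\fg$-action on $M^k$ sends $X \in \fg$ to the original action of $\Ad(k^{-1})X$, shows that the composition $\eta \colon M^k \to \sigma$ defined by $\eta(m) := \xi(km)$ is $\fr$-equivariant. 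Hence $M^k$ is an irreducible $\fg$-module carrying a non-zero $\fr$-equivariant map into $\sigma$, with $\cO(M^k) = \cO(\pi)$.

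For part (i), I would then apply Theorem \ref{thm:Gen} to $M^k$: its hypothesis $2\dim \cO \cap \fr^{\bot} \leq \dim \cO$ for all $\bfG$-orbits $\cO \subset \AnV(M^k) = \AnV(\pi)$ holds by assumption, so its conclusion is precisely $2\dim \cO(\pi) \cap \fr^{\bot} = \dim \cO(\pi)$. For part (ii), the construction of $\eta$ is unchanged; since $\fn$ acts on $\sigma$ via $\chi|_{\fn}$ by hypothesis and since $M^k$ is irreducible (making the ``non-vanishing on any non-zero $\fg$-submodule'' condition of Theorem \ref{thm:GenGenTwist} automatic), that theorem applies to $M^k$ and yields an orbit $\cO_{\max} \subset \AnV(M^k)$ of maximal dimension, necessarily $\cO(\pi)$ itself, satisfying $2\dim(\cO(\pi) \cap (\chi + \fr^{\bot})) = \dim \cO(\pi)$.

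There is no substantive obstacle beyond Theorems \ref{thm:Gen} and \ref{thm:GenGenTwist} themselves. The only delicate point to verify is that the passage from $V$ to the twisted irreducible submodule $M^k$ preserves the distinction data. Since $\fr$, $\fn$, $\chi$, and the target $\sigma$ are not altered in this passage, everything reduces to the direct computation of $\fr$-equivariance of $\eta$ sketched above, together with the trivial observation that $\AnV(M^k) = \AnV(\pi)$ transports the dimensional hypotheses from $\pi$ to $M^k$ without modification.
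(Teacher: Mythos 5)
Your proposal is correct and is essentially the paper's own argument: the paper states that the deduction of Corollary \ref{cor:genCW} follows the proof of Corollary \ref{cor:CW} verbatim, i.e.\ pass to $HC(\pi)$ via Theorem \ref{thm:CW}, use Lemma \ref{lem:IrrHC} to find an irreducible twisted submodule $M^k$ with the same annihilator variety on which the $\fr$-equivariant map is non-zero, and then apply Theorems \ref{thm:Gen} and \ref{thm:GenGenTwist} in place of Theorems \ref{thm:main} and \ref{thm:maintwist}. Your extra verification of the $\fr$-equivariance of $\eta$ and the identification of $\cO_{\max}$ with $\cO(\pi)$ only spells out details the paper leaves implicit.
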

The deduction of this corollary follows the proof of \Cref{cor:CW} verbatim.

In the following two subsections we apply parts (i) and (ii)\ of this corollary respectively.

\subsection{Howe correspondence in type II}

Let $V$ and $W$ be real vector spaces, and denote $U:= \Hom_{\R}(V,W)$.
 The natural actions of $\GL(V)$ and $\GL(W)$ on $U$\ and $U^*$ provide an embedding
$$\iota:\GL(V)\times \GL(W)\into \Sp(U\oplus U^*)$$
Let $\widetilde{Sp}(V,W)$ denote the metaplectic double cover of $\Sp(U\oplus U^*)$.  Since $\GL(V)\times \GL(W)$ preserves the Lagrangian subspace $U\subset U\oplus U^*$, this double cover splits over the image of $\iota$ and thus we have a natural embedding of $\GL(V)\times \GL(W)$ into $\widetilde{Sp}(V,W)$, that we will also denote by $\iota$ by abuse of notation.
Let $\varpi$ denote the Weil representation of $\widetilde{Sp}(V,W)$.

Let $\mu$ denote the moment map $\mu:T^*U\cong U\oplus U^*\to \gl(V)^*\times \gl(W)^*$ corresponding to the action of $\GL(V)\times \GL(W)$ on $U$.
To obtain an explicit formula for $\mu$ one can identify $\gl(V)\cong \gl(V)^*$ and $\gl(W)\cong \gl(W)^*$\ using the trace form, and further $U^*\cong \Hom_{\R}(W,V)$. Then we have
\begin{equation}
\mu(X,Y):= (YX,XY)
\end{equation}

We will now derive from Corollary \ref{cor:genCW}  the following theorem.

\begin{thm}\label{thm:theta}
Let $\pi \in \Irr(\GL(V))$ and $\tau\in \Irr(\GL(W))$\ such that $\pi\boxtimes \tau$ is an irreducible quotient of $\varpi|_{\GL(V)\times \GL(W)}$.
Then $\cO(\pi)\times \cO(\tau)$ lies in the image of the moment map $\mu$.
\end{thm}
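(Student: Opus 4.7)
The plan is to apply a non-spherical extension of Theorem \ref{thm:AltX}, obtained by replacing the invocation of Corollary \ref{cor:CW} in the proofs of Theorems \ref{thm:X} and \ref{thm:AltX} by Corollary \ref{cor:genCW}(i). To put the hypothesis in the required form, I would first observe that the restriction of the Weil representation to $\GL(V)\times \GL(W)$ is isomorphic, in the Schr\"odinger model attached to the Lagrangian $U \subset U\oplus U^*$, to $\Sc(X, \cE)$ for an equivariant line bundle $\cE$ over $\bfX := U$, where $X = U(\R)$. Concretely, the splitting of the metaplectic cocycle on $\iota(\GL(V)\times\GL(W))$ makes the action the geometric pullback action twisted by a product of characters of the form $|\det|^{\dim W/2}$ and $|\det|^{-\dim V/2}$, which is absorbed into $\cE$ without altering either the annihilator variety or the moment map. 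The hypothesis thus becomes: $\pi\boxtimes\tau$ is an irreducible quotient of $\Sc(X,\cE)$ as a $\bfG := \GL(V)\times\GL(W)$-representation.

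Now one would run the induction on $\bfG$-orbits of $\bfX$ used in the proof of Theorem \ref{thm:X}, replacing Corollary \ref{cor:CW} by Corollary \ref{cor:genCW}(i); the Casselman-embedding and finiteness step of Theorem \ref{thm:AltX} is formally independent of sphericity. In each base case one fixes a $\bfG$-orbit $\bfZ\subset \bfX$ and a point $x\in \bfZ$, and Frobenius reciprocity (Theorem \ref{thm:Frob}) converts the distinction hypothesis into one on $\pi\boxtimes\tau$ with respect to the isotropy subalgebra $\fr_x = \Lie(\bfG_x)$. Since $\bfX = \Hom(V,W)$ is not a spherical $\bfG$-variety in general, $\fr_x$ need not be spherical in $\fg := \gl(V)\oplus\gl(W)$; but Corollary \ref{cor:genCW}(i) still applies provided the dimension inequality $2\dim(\cO' \cap \fr_x^\perp) \leq \dim \cO'$ holds for every $\bfG$-orbit $\cO'\subset \AnV(\pi\boxtimes\tau)$. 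Its conclusion is that $\cO(\pi)\times\cO(\tau) = \cO(\pi\boxtimes\tau)$ meets $\fr_x^\perp$; since $\cO(\pi\boxtimes\tau)$ is $\bfG$-invariant and $\bfG\cdot \fr_x^\perp = \Im(\mu_\bfZ)\subset \Im(\mu)$, this gives the desired inclusion $\cO(\pi)\times\cO(\tau)\subset \Im(\mu)$.

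The technical heart of the argument, and what I expect to be the principal obstacle, is the verification of the dimension inequality for every pair of nilpotent orbits and every $\bfG$-orbit stratum appearing in the induction. This is the content of the geometric lemma of Appendix \ref{app:Ido}: for every pair of nilpotent orbits $\cO_1\subset \gl(V)^*$ and $\cO_2\subset \gl(W)^*$ and every $\bfG$-orbit stratum $\bfZ\subset \bfX$, one has
$$2\dim\bigl((\cO_1\times\cO_2)\cap \Im(\mu_\bfZ)\bigr) \leq \dim\cO_1 + \dim\cO_2.$$
One proves this by a direct analysis of the moment map $\mu(X,Y) = (-YX, XY)$, exploiting the classical relationship between the Jordan types of $YX$ and $XY$ together with the rank bounds $\rk X, \rk Y \leq \min(\dim V, \dim W)$ imposed by the dimensions of $V$ and $W$. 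Once the lemma is granted, the non-spherical version of Theorem \ref{thm:AltX} delivers the conclusion $\cO(\pi)\times\cO(\tau)\subset \Im(\mu)$ of Theorem \ref{thm:theta}.
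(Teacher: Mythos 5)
Your strategy is sound and would yield the theorem, but it is genuinely different from the paper's. The paper never passes to the Schr\"odinger model: it applies Corollary \ref{cor:genCW}(i) directly to the triple product $G=\GL(V)\times\GL(W)\times\widetilde{Sp}(V,W)$ with $\fr$ the graph of $d\iota$, uses $\cO(\varpi)=\cO_{\min}$, and quotes Proposition \ref{prop:Ido}: part (i) translates non-emptiness of $(\cO_1\times\cO_2\times\cO_{\min})\cap\fr^{\bot}$ into $\cO_1\times\cO_2\subset\Im(\mu)$, and part (ii) supplies the dimension hypothesis \eqref{=GenCon2}. Your route stays on the $\GL(V)\times\GL(W)$ side, realizes $\varpi|_{\GL(V)\times\GL(W)}$ as Schwartz sections on $U=\Hom(V,W)$, and runs the rank-stratification argument of Theorem \ref{thm:X} with Corollary \ref{cor:genCW}(i) at each stratum. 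Both arguments ultimately rest on the same geometric input, Lemma \ref{lem:AppKey}: $\dim\mu^{-1}(\cO_1\times\cO_2)=\tfrac12\dim(\cO_1\times\cO_2)+\dim U$ whenever non-empty; since $\mu^{-1}(\cO_1\times\cO_2)$ fibers over each rank stratum with fibers of constant dimension, this yields exactly the stratum-wise bound $2\dim\bigl(\cO'\cap\fr_x^{\bot}\bigr)\le\dim\cO'$ that your base cases require.

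Three corrections, none fatal. First, the displayed inequality you call the technical heart is wrong as written: with $\Im(\mu_Z)$ in place of $\fr_x^{\bot}$ it fails whenever $\cO_1\times\cO_2$ is a non-zero orbit contained in $\Im(\mu_Z)$, since the intersection is then the whole orbit. The condition you actually need is the one you state a sentence earlier, $2\dim(\cO'\cap\fr_x^{\bot})\le\dim\cO'$ for the isotropy subalgebra at a point of each stratum; it follows from Lemma \ref{lem:AppKey} by the fiberwise bookkeeping above, but it is not verbatim ``the content'' of Appendix \ref{app:Ido}. Second, your premise that $\Hom(V,W)$ is not a spherical $\GL(V)\times\GL(W)$-variety is false: by the Cauchy identity the symmetric algebra of $V\otimes W^*$ is multiplicity-free, so $U$ is a spherical module (and every $\bfG$-orbit in a spherical variety is spherical). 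Hence on your route Theorem \ref{thm:X} applies verbatim and no ``beyond spherical'' generalization is needed at all; this is arguably a simpler proof than the paper's, at the cost of the metaplectic-splitting bookkeeping. Third, your claim that the finiteness step (Theorem \ref{thm:fin}) of Theorem \ref{thm:AltX} is independent of sphericity is incorrect as stated --- it uses finiteness of the minimal-parabolic orbits --- but it is also unnecessary here, since your hypothesis is a quotient statement and Theorem \ref{thm:X} suffices.
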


In order to apply Corollary \ref{cor:genCW} let $G:=\GL(V)\times \GL(W)\times \widetilde{Sp}(V,W)$ and let $R=\mathrm{Graph}(\iota)\subset G$.
Note that $\cO(\varpi)$ is the minimal nilpotent orbit $\cO_{\min}$, and $\dim \cO_{\min}=2\dim V \cdot \dim W$.

\begin{prop}[I. Karshon, see Appendix \ref{app:Ido}]\label{prop:Ido}
Let $\cO_1\subset \gl(V)$ and $\cO_2\subset \gl(W)$ be nilpotent orbits. Then
\begin{enumerate}[(i)]
\item \label{it:Ido1}$\cO_1\times \cO_2\subset \Im \mu$ if and only if $\cO_1\times \cO_2\times \cO_{\min}$ intersects $\fr^{\bot}$
\item \label{it:Ido2}If $\cO_1\times \cO_2\times \cO_{\min}$ intersects $\fr^{\bot}$ then
\begin{equation}
2 \dim (\cO_1\times \cO_2\times \cO_{\min})\cap \fr^{\bot}=\dim \cO_1\times \cO_2\times \cO_{\min}
\end{equation}
\end{enumerate}
\end{prop}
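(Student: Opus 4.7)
The plan is to realise the intersection $(\cO_1\times\cO_2\times\cO_{\min})\cap\fr^{\bot}$ as (essentially) a fibre of the moment map $\mu$, and then to read both assertions off the geometry of that fibre. First I would make $\fr^{\bot}\subset\fg^*$ explicit: since $\fr$ is the graph of $d\iota\colon\gl(V)\oplus\gl(W)\hookrightarrow\spp(U\oplus U^*)$, the points of $\fr^{\bot}$ are parametrised by $\xi\in\spp(U\oplus U^*)^*$ via $\xi\mapsto(-d\iota^*_V(\xi),-d\iota^*_W(\xi),\xi)$, so that
\[ (\cO_1\times\cO_2\times\cO_{\min})\cap\fr^{\bot}\;\cong\;\{\xi\in\cO_{\min}:-d\iota^*(\xi)\in\cO_1\times\cO_2\}. \]
I would then invoke the classical fact that the linear symplectic action of $\Sp(U\oplus U^*)$ on $T^*U=U\oplus U^*$ admits a moment map $\mu_{\Sp}:T^*U\to\spp^*$ with image $\overline{\cO_{\min}}=\cO_{\min}\cup\{0\}$, and that this map is generically two-to-one with fibres $\{\pm v\}$. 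By construction $\mu=\pm d\iota^*\circ\mu_{\Sp}$; the sign is immaterial since nilpotent orbits in $\gl$ are stable under negation. Thus, away from $\mu^{-1}(0)$, the intersection above is a generically two-to-one cover of $\mu^{-1}(\cO_1\times\cO_2)$.

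Part~\eqref{it:Ido1} is then immediate: non-emptiness of the intersection is equivalent to the existence of $(X,Y)\in T^*U$ with $\mu(X,Y)\in\cO_1\times\cO_2$, that is, to $\cO_1\times\cO_2$ meeting $\Im\mu$. Since $\Im\mu$ is $G$-invariant for $G:=\GL(V)\times\GL(W)$ and $\cO_1\times\cO_2$ is a single $G$-orbit, meeting and containment coincide. The degenerate case $\cO_1=\cO_2=\{0\}$ is dispatched by noting that non-zero pairs $(X,Y)$ with $YX=0=XY$ always exist, producing a non-zero $\xi\in\cO_{\min}$ with $d\iota^*(\xi)=0$.

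For part~\eqref{it:Ido2}, set $Z:=\mu^{-1}(\cO_1\times\cO_2)$. Since the two-to-one cover is dimension-preserving, the claim reduces to
\[ \dim Z = mn + \tfrac{1}{2}(\dim\cO_1+\dim\cO_2). \]
By $G$-equivariance of $\mu$ and transitivity of $G$ on $\cO_1\times\cO_2$, the variety $Z$ fibres over $\cO_1\times\cO_2$ with fibre $\mu^{-1}(T_1,T_2)$, so this is in turn equivalent to the fibre-dimension formula
\[ \dim\mu^{-1}(T_1,T_2)\;=\;mn-\tfrac{1}{2}(\dim\cO_1+\dim\cO_2) \]
for some (equivalently, any) $(T_1,T_2)$ in the orbit. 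Proving this fibre formula is the hard step. My plan is to put $(T_1,T_2)$ into Jordan form, parametrise a generic $(X,Y)$ in the fibre by specifying how $X$ and $Y$ intertwine the Jordan blocks of $T_1$ with those of $T_2$, and then describe the stabiliser $\{(A,B):BX=XA,\,AY=YB\}\subset G$ block by block. Using $\dim\cO_\lambda=n^2-\sum_i(\lambda^t_i)^2$, the count should simplify to the claimed value; the combinatorial bookkeeping---verifying the formula for every admissible pair of partitions and checking that the generic $G_{T_1}\times G_{T_2}$-orbit on the fibre is open---is the main obstacle, and is deferred to Appendix~\ref{app:Ido}.
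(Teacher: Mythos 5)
Your part (i) and your reduction of part (ii) are fine, and in fact they are the same computation as the paper's in different clothing: the paper checks directly that a rank-one element $(X,Y)\otimes(-Y,X)\in\cO_{\min}$ is orthogonal to $\fr$ exactly when the $\gl(V)\times\gl(W)$-components equal $(2YX,2XY)$, which is your identity $\mu=\pm\,d\iota^*\circ\mu_{\Sp}$ together with the fact that $\mu_{\Sp}$ is two-to-one onto $\cO_{\min}$; and the paper likewise reduces part (ii) to the statement (\Cref{lem:AppKey}) that $S=\{(X,Y):YX\in\cO_1,\ XY\in\cO_2\}$, when nonempty, has dimension $mn+\tfrac12(\dim\cO_1+\dim\cO_2)$.

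The genuine gap is that this last statement -- the fibre-dimension formula $\dim\mu^{-1}(T_1,T_2)=mn-\tfrac12(\dim\cO_1+\dim\cO_2)$ -- is the entire substance of the proposition, and you do not prove it: you sketch a plan (Jordan form of $(T_1,T_2)$, intertwiners, stabilizers) and defer the ``combinatorial bookkeeping'' to Appendix \ref{app:Ido}, which is circular, since that appendix is exactly the proof you were asked to produce. Moreover the route you sketch is not the easy one: a direct parametrization of all $(X,Y)$ with $YX=T_1$, $XY=T_2$ for fixed Jordan forms is awkward, and it is not evident from a block-by-block intertwiner count that the answer depends on the two partitions only through $\dim\cO_1+\dim\cO_2$. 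The paper instead stratifies $S$ by $\rk X$, normalizes $X=\left(\begin{smallmatrix}I_k&0\\0&0\end{smallmatrix}\right)$, and reduces both membership conditions to the key inductive \Cref{lem:ParDim}: for nilpotent $K$ with partition $\lambda$, the set of $A$ with $\left(\begin{smallmatrix}K&A\\0&0\end{smallmatrix}\right)\in\cO'$ is nonempty iff $\lambda_i\le\lambda'_i\le\lambda_i+1$ for all $i$, in which case it has dimension $(\dim\cO'-\dim\cO)/2$; this lemma (proved by an induction on Jordan chains, with a separate argument showing the choice of the ``$B$'' block does not affect the orbit) is what makes the count close, and nothing in your proposal supplies it or an equivalent.
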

Theorem \ref{thm:theta} follows now from \Cref{cor:genCW} and \Cref{prop:Ido}.
Related results were obtained in \cite{Prz,LokeMa,GZ,Prz2}.

\DimaE{We will say that $\cO_1$ and $\cO_2$ \emph{match} if
$\cO_1\times \cO_2\subset \Im \mu$.  This condition can be made explicit in terms of partitions in the following way.}
\begin{prop}[Appendix \ref{app:Ido}]\label{prop:match}
Orbits $\cO_1$ and $\cO_2$ match if and only if the corresponding partitions $\lambda(\cO_1)$ and $\lambda(\cO_2)$ satisfy
$|\lambda(\cO_1)_i-\lambda(\cO_2)_i|\leq 1$ for any index $i$.
\end{prop}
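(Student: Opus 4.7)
Since $\Im \mu$ is $\GL(V)\times\GL(W)$-invariant, $\cO_1$ and $\cO_2$ match if and only if there exist $X\colon V\to W$ and $Y\colon W\to V$ with $YX\in\cO_1$ and $XY\in\cO_2$. The plan is to encode such a pair into a single nilpotent operator $T\in\End(V\oplus W)$ defined by $T(v,w) := (Yw, Xv)$, so that $T^2 = A\oplus B$ with $A = YX$ and $B = XY$. Because $T$ sends $V$ into $W$ and $W$ into $V$, it is odd with respect to the $\bZ/2$-grading $V\oplus W$, and a Jordan basis of $T$ can be chosen to consist of chains $e, Te, \ldots, T^{n-1}e$ whose entries strictly alternate between $V$ and $W$.

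For the forward direction, I would invoke the standard fact that squaring a nilpotent Jordan block of size $n$ produces two Jordan blocks of sizes $\lceil n/2\rceil$ and $\lfloor n/2\rfloor$ (where size $0$ is vacuous, occurring only when $n=1$). Combined with the parity alternation above, the $V$-part of each size-$n$ Jordan block of $T$ becomes a Jordan chain of $A=T^2|_V$ of size $\lceil n/2\rceil$ or $\lfloor n/2\rfloor$, and the $W$-part becomes a Jordan chain of $B=T^2|_W$ of the complementary size. After padding the shorter of $\lambda(\cO_1),\lambda(\cO_2)$ with zeros, this yields a bijective pairing of their parts in which each matched pair differs by at most $1$. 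A short exchange argument then shows that any such pairing can be reordered to match the $i$-th largest part of $\lambda(\cO_1)$ with the $i$-th largest part of $\lambda(\cO_2)$, giving $|\lambda(\cO_1)_i-\lambda(\cO_2)_i|\leq 1$ for every $i$.

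Conversely, given partitions $\lambda,\mu$ with $|\lambda_i-\mu_i|\leq 1$, $|\lambda|=\dim V$, and $|\mu|=\dim W$, I would construct $T$ as a direct sum over $i$ of nilpotent Jordan blocks of size $\lambda_i+\mu_i$, each block's initial basis vector placed in $V$ or $W$ so that its intersection with $V$ has dimension $\lambda_i$ and its intersection with $W$ has dimension $\mu_i$; reading off the off-diagonal components of $T$ yields $X,Y$ witnessing the match. The main technical step is the parity-respecting Jordan decomposition: one must verify that, because $T$ is nilpotent and odd for the grading, the filtration by $\ker T^k$ splits as $(\ker T^k\cap V)\oplus(\ker T^k\cap W)$ in a way compatible with the choice of a Jordan basis, so that the basis can be taken homogeneous. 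Once this is established, both directions reduce to the combinatorial bookkeeping above.
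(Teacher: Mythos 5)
Your argument is correct, but it follows a genuinely different route from the paper. You package a witness $(X,Y)$ of the match as the odd nilpotent endomorphism $T=\left(\begin{smallmatrix}0 & Y\\ X & 0\end{smallmatrix}\right)$ of the $\bZ/2$-graded space $V\oplus W$, invoke a homogeneous Jordan basis (equivalently, the classification of such pairs by alternating chains/ab-diagrams), and use the fact that squaring a size-$n$ block yields blocks of sizes $\lceil n/2\rceil$ and $\lfloor n/2\rfloor$; the converse is the explicit chain construction, and the sorted-matching (exchange) step is the standard rearrangement fact, so the whole scheme is sound. The paper instead stratifies by $\rk(X)$, reduces to the normal form $X=\left(\begin{smallmatrix}I_k & 0\\ 0&0\end{smallmatrix}\right)$, and applies Lemma \ref{lem:ParDim}(\ref{it:nonEmpty}) twice to conclude that matching is equivalent to the existence of a common partition $\kappa$ with $\kappa_i\le\lambda(\cO_1)_i,\lambda(\cO_2)_i\le\kappa_i+1$, which is the stated inequality. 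What each approach buys: yours is self-contained, field-independent, and gives an explicit normal form for the pair $(X,Y)$, but the graded-Jordan existence statement you flag as the main technical step still needs the short graded-kernel argument you sketch, and your method says nothing about dimensions; the paper's route reuses Lemma \ref{lem:ParDim}, which it needs anyway for the dimension count in Proposition \ref{prop:Ido}(\ref{it:Ido2}), so the matching criterion comes essentially for free alongside the equidimensionality statement.
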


Here, we set $\lambda(\cO)_j=0$ if $j>length(\lambda(\cO))$.
This proposition implies that
if $\dim W\geq \dim V$ then the \DimaE{maximal among all orbits  that match $\cO_1$} is given by
\begin{equation}\label{=TO}
\lambda(\cO_2)_i=\begin{cases}
\lambda(\cO_1)_i+1 & 1\leq i \leq \dim W-\dim V \\
\lambda(\cO_1)_i & i>\dim W-\dim V \\
\end{cases},
\end{equation}
Furthermore, if $\dim W \geq 2\dim V$ or if the matrix rank on $\cO_1$ is at least $2\dim V - \dim W$ then
\eqref{=TO} defines the only matching orbit with matrix rank equal to $\dim V$. If $\rk(\cO_1)<2\dim V - \dim W$ then \Cref{prop:match} implies that $\cO_1$ has no matching orbit of rank $\dim V$.
\begin{rem}
Similar results with the same proof hold if $V$ and $W$ are complex vector spaces.
\end{rem}

\subsection{Degenerate Whittaker models}

In this subsection we deduce from Corollary \ref{cor:genCW}(ii) the main result of \cite{Mat}. Let $h\in \fg$ be a semi-simple element. Suppose that $\ad(h)$ has integer eigenvalues, and let $\fr:=\bigoplus_{i\leq -2}\fg(i)$. Let $\chi\in \fg^*(2)$. Note that it defines a character of $\fr$, that we will also denote by $\chi$ by abuse of notation.

\begin{thm}[{\cite[Theorem 2]{Mat}}]\label{thm:Mat}
\DimaE{Let $V$ be a $\fg$-module that has a non-zero $({\fr,\chi})$-equivariant functional.} Then $\chi \in \AnV(V)$.
\end{thm}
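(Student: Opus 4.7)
The plan is to imitate the Kazhdan-filtration machinery behind \Cref{thm:GenGenTwist}, but to drop its orbit-dimension hypothesis and to extract the stronger pointwise conclusion $\chi\in\AnV(V)$ by a $\C^{*}$-attractor argument inside $\fg^{*}$.

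First I would form the auxiliary cyclic module $M:=\cU(\fg)\xi\subseteq V^{*}$, with $\xi$ the given functional viewed as an element of $V^{*}$. Under the dual $\fg$-action the equivariance $\xi(Xv)=\chi(X)\xi(v)$ reads $X\cdot\xi=\chi'(X)\xi$ with $\chi':=-\chi\in\fg^{*}(2)$, and since the antipode of $\cU(\fg)$ is an antiautomorphism one has $\AnV(M)\subseteq\AnV(V^{*})=\AnV(V)$. Filter $M$ by $M_{k}:=F_{k}^{\Kaz}(\cU(\fg))\xi$. Because $\chi'$ vanishes on $\fn':=\bigoplus_{i<-2}\fg(i)$, equivariance gives $\fn'\xi=0$, and \Cref{lem:non0} then yields $M_{-1}\subseteq\cU(\fg)\fn'\xi=0$. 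In particular $\xi$ descends to a nonzero class in $\gr_{0}^{\Kaz}(M)$, so $\Asv^{\Kaz}(M)\neq\emptyset$.

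Next I would show $\Asv^{\Kaz}(M)\subseteq \chi'+\fr^{\bot}$ by the commutator estimate of \Cref{lem:GenTwiAs}: for $X\in\fr\cap\fg(i)$ (so $i\leq-2$ and $X\in F_{i+2}^{\Kaz}(\cU)$), $(X-\chi'(X))\xi=0$ combined with $[X,F_{k}^{\Kaz}(\cU)]\subseteq F_{k+i}^{\Kaz}(\cU)$ gives $(X-\chi'(X))M_{k}\subseteq M_{k+i}\subseteq M_{k-2}$, so the Kazhdan symbol of $X-\chi'(X)$ annihilates $\gr^{\Kaz}(M)$. Letting $X$ range over a homogeneous basis of $\fr$ and invoking \Cref{cor:Kaz} we conclude
\[
\emptyset\ne\Asv^{\Kaz}(M)\subseteq \AnV(V)\cap(\chi'+\fr^{\bot}).
\]

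Finally I would pin down the single point $\chi'$ (and hence $\chi$) by a $\C^{*}$-limit. The Kazhdan grading on $S(\fg)$ induces a $\C^{*}$-action on $\fg^{*}$ in which $\fg^{*}(k)$ sits in weight $2-k$: the point $\chi'\in\fg^{*}(2)$ is fixed, while every nonzero component of $\fr^{\bot}=\bigoplus_{k\leq 1}\fg^{*}(k)$ has strictly positive weight. Since $\Asv^{\Kaz}(M)$ is closed and $\C^{*}$-invariant, for any $\nu=\chi'+\eta\in\Asv^{\Kaz}(M)$ we obtain
\[
\chi'=\lim_{t\to 0}t\cdot\nu\in\Asv^{\Kaz}(M)\subseteq\AnV(V).
\]
Because $\exp(\pi i\,h/2)\in\bfG$ acts on $\fg^{*}(2)$ by $-1$, $\bfG$-invariance of $\AnV(V)$ promotes $\chi'=-\chi$ to $\chi$ itself. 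The one delicate point of the plan is the strict positivity of the Kazhdan weights on $\fr^{\bot}$: this is exactly what makes $\chi'$ the unique attractor of the affine slice $\chi'+\fr^{\bot}$ and replaces the orbit-dimension bound used in \Cref{thm:GenGenTwist}.
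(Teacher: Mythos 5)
Your argument is correct, and it reaches the conclusion by a genuinely different route than the paper. The paper proves \Cref{thm:Mat} by contradiction: it first passes to the quotient $V'$ of $V$ by the maximal submodule in the kernel of $\xi$ (so that the equality $\Ann(M)=\Ann(V')$ holds), then invokes \Cref{thm:GenGenTwist} — whose proof uses Gabber's bound \Cref{thm:TwiGab} — to produce an orbit in $\AnV(V')$ meeting $\chi+\fr^{\bot}$, and finally applies the conic scaling of \Cref{lem:Mat} inside that orbit's closure to land at $\chi$. You bypass both the faithfulness reduction and Gabber entirely: since only the inclusion $\Anv(M)\subseteq\AnV(V)$ is needed (not equality), the cyclic module $\cU(\fg)\xi\subseteq V^{*}$ suffices, and the core Kazhdan-filtration inputs (\Cref{lem:non0} for $\gr_{0}\neq 0$, the commutator estimate of \Cref{lem:GenTwiAs} for $\Asv^{\Kaz}(M)\subseteq\chi'+\fr^{\bot}$, and \Cref{cor:Kaz}) already place a nonempty closed subset of $\AnV(V)$ inside the slice; your contracting $\C^{*}$-argument then plays the role of \Cref{lem:Mat}, applied directly to $\Asv^{\Kaz}(M)$ (legitimate because the annihilator of the Kazhdan-graded module $\gr^{\Kaz}(M)$ is a graded ideal, so $\Asv^{\Kaz}(M)$ is stable under the one-parameter group with weight $2-k$ on $\fg^{*}(k)$ — the same group $t^{-2}\exp(th)$ the paper uses, just acting on a different invariant set). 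Your explicit bookkeeping of the sign $\chi'=-\chi$ (resolved at the end by conicity of $\AnV(V)$, or by $\exp(\pi i h/2)$) is if anything more careful than the paper's treatment. What the paper's route buys is reuse of the general \Cref{thm:GenGenTwist} and, when its dimension hypothesis holds, the stronger half-dimensionality statement; what your route buys is a shorter, more elementary proof of the pointwise statement $\chi\in\AnV(V)$, valid for an arbitrary nonzero $(\fr,\chi)$-equivariant functional with no reduction step. The only implicit steps — $\Asv^{\Kaz}(M)\subseteq\Anv^{\Kaz}(M)$, gradedness of the annihilator of a graded module, and $\AnV(V^{*})=\AnV(V)$ via the antipode — are standard and harmless.
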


For the proof we will need the following lemma.

\begin{lemma}\label{lem:Mat}
Any nilpotent orbit $\cO$ that intersects $\chi +\fr^{\bot}$ includes $\chi$ in its closure $\overline{\cO}$.
\end{lemma}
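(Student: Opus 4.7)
The plan is to use the one-parameter subgroup generated by $h$ together with the fact that nilpotent coadjoint orbits are cones. I will exploit these two actions to deform any point of $\cO \cap (\chi + \fr^{\bot})$ within $\cO$ to a curve whose limit is $\chi$.

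First I would fix some $\xi \in \cO \cap (\chi + \fr^{\bot})$ and unpack what $\fr^{\bot}$ looks like. Since $\fr = \bigoplus_{i \leq -2}\fg(i)$, dualizing gives $\fr^{\bot} = \bigoplus_{j \leq 1}\fg^*(j)$, where $\fg^*(j)$ is the $j$-eigenspace of $\ad^*(h)$. Thus I can write $\xi = \chi + \sum_{j \leq 1}\phi_j$ with $\phi_j \in \fg^*(j)$ (a finite sum since $\fg$ is finite-dimensional). The $\Ad^*$-action of $\exp(th) \in \bfG$ on each weight space is by $e^{tj}$, so
\begin{equation*}
\Ad^*(\exp(th))\xi = e^{2t}\chi + \sum_{j \leq 1}e^{tj}\phi_j,
\end{equation*}
and this curve lies entirely in $\cO$.

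Next I would invoke the cone property: every nilpotent orbit $\cO$ is invariant under the scaling action of $\bR^*_{+}$. This is the standard consequence of Jacobson-Morozov, since any nilpotent $X$ sits in an $\mathfrak{sl}_2$-triple $(X,H,Y)$ with $[H,X]=2X$, and $\Ad(\exp(sH))$ rescales $X$ by $e^{2s}$; the coadjoint version is identical. Combining this with the previous step, the rescaled curve
\begin{equation*}
e^{-2t}\Ad^*(\exp(th))\xi = \chi + \sum_{j \leq 1}e^{(j-2)t}\phi_j
\end{equation*}
still lies in $\cO$ for every $t \in \bR$. Since $j - 2 \leq -1 < 0$ for every index appearing in the sum, letting $t \to +\infty$ sends every $\phi_j$-coefficient to zero, so the curve converges to $\chi$, which therefore lies in $\overline{\cO}$.

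There is no serious obstacle: the only non-tautological input is that nilpotent orbits are $\bR^*_+$-cones, and that follows immediately from Jacobson-Morozov. The proof is essentially a weight-space bookkeeping argument combined with a rescaling trick, and it is precisely the reason the assumption $\chi \in \fg^*(2)$ (with $\fr$ concentrated in degrees $\leq -2$) is the right one: every weight appearing in $\xi - \chi$ is strictly smaller than the weight of $\chi$, so the rescaled $\bR^*_+$-action contracts $\xi$ to $\chi$.
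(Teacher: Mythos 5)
Your proof is correct and takes essentially the same route as the paper's own argument: both contract a chosen point of $\cO\cap(\chi+\fr^{\bot})$ to $\chi$ inside $\cO$ by combining the one-parameter group generated by $h$ with the fact that nilpotent orbits are stable under positive scalings (the conic property). Your weight-space bookkeeping, including the direction of the limit, is carried out cleanly and needs no changes.
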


\begin{proof}
Choose $x\in \cO\cap (\chi +\fr^{\bot})$. For any $t\in \bC$ let $x_t:=t^{-2}\exp(th)x$. Since $\cO$ is conic and $\bfG$-invariant, we have $x_t\in \cO$. Since $\chi\in \fg^*(2)$ and $\fr^{\bot}=\bigoplus_{i< 2}\fg^*(i)$, we have $\lim_{t\to 0}x_t=\chi$. Thus $\chi \in \overline{\cO}$.
\end{proof}

\begin{proof}[Proof of \Cref{thm:Mat}]
\DimaE{Let $\xi\neq 0$ be a non-zero  $(\fr,\chi)$-equivariant functional on $V$. Let $V'$ be the quotient of $V$ by the maximal submodule on which $\xi$ vanishes.

Assume by way of contradiction that  $\chi \notin \AnV(V')$. By \Cref{lem:Mat}  this implies that for any orbit $\cO'\subset \AnV(V')$, the intersection $\cO'\cap ( \chi +\fr^{\bot})$ is empty. Thus \Cref{thm:GenGenTwist} implies that some orbit $\cO\subset \AnV(V')$  intersects $\chi +\fr^{\bot}$. Applying \Cref{lem:Mat} again we obtain $\chi\in \overline{\cO}\subset \Anv(V')\subset \AnV(V)$.}
\end{proof}

}
\section{Conjectures}\label{sec:conj}
In this section let $F$ be a non-Archimedean local field. Let $\bf G$ be an algebraic reductive group defined over $F$ and let $G$ be the group of its $F$-points. Then $G$ is an locally compact totally disconnected group, and
we denote by $\Irr(G)$ the collection of smooth irreducible representations of $G$ in complex vector spaces. Smooth here means that every vector has an open stabilizer.

For any  $\pi\in\Irr(G)$, Harish-Chandra defined a complete invariant - the character  $\chi_{\pi}$ of $\pi$. It is a generalized function on $G$. We denote by $\overline{\WF(\pi)}$ the closure of the wave-front set of $\chi_{\pi}$ at the unit of $G$. This is a subset of $\fg^*(F)$. We refer the reader to \cite[\S 8]{Hor} and \cite[\S 2.8.6]{CHLR} for the definition of wave-front set of a generalized function. Let us now give an equivalent description for $p$-adic $F$.

If $F$ has characteristic zero then $\chi_{\pi}$ defines a generalized function $\zeta_{\pi}$ on a neighborhood of zero in the Lie algebra ${\mathfrak{g}(F)}$ of $G$,
by restriction to a neighborhood of $1\in G$ and applying logarithm.
By \cite{HowGL},\cite[p. 180]{HCWF}, $\zeta_{\pi}$ is  a linear combination of Fourier transforms of $G$-measures of nilpotent coadjoint orbits.
The measures  extend to $\fg^*(F)$ by \cite{RangaRao}.
Then $\overline{\WF(\pi)}$ is the union of closures of nilpotent orbits $\cO$ such that the Fourier transform of the invariant measure on $\cO$ enters the decomposition of $\zeta_{\pi}$ with a non-zero coefficient.

\begin{notn}\label{notn:WF}
Define the wave front set $\WF^{\max}(\pi)$ to be the union of all nilpotent $G$-orbits $\cO\subset \overline{\WF(\pi)}$, such that for any nilpotent orbit $\cO'$ with $\cO\subset \overline{\cO'}\subset \overline{\WF(\pi)}$ we have $\cO=\cO'$.
We consider $\WF^{\max}(\pi)$ as a subset of $\fg^*$ by embedding $\fg^*(F)$ into $\fg^*$.
\end{notn}

Let $\bf H\subset G$ be a spherical subgroup defined over $F$, and let $H$ denote its group of $F$-points. Denote by $\Irr(G)_H$ the collection of all irreducible representations of $G$ possessing non-zero $H$-invariant linear functionals.
For a  character $\phi$ of $H$ we will use the analogous notation $\Irr(G)_{(H,\phi)}$.
Let $\bfG\cdot\fh^{\bot}$ denote the image of the coadjoint action map $\bfG\times \fh^{\bot}\to \fg^*$. Note that it coincides with the image of the moment map $T^*(\bfG/\bfH) \to \fg^*$.

\begin{conj}\label{conj:main}
For any $\pi\in \Irr(G)_{H}$, we have $\WF^{\max}(\pi)\subset \bfG \cdot\fh^{\bot}$.
\end{conj}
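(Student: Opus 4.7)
My plan is to try to transplant the argument of \S\ref{sec:Pfmain} to the $p$-adic setting, with the Harish-Chandra--Howe local character expansion taking the place of the annihilator/associated variety machinery.

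The starting point is to convert $H$-distinction into an analytic statement about the character. Given nonzero $\ell \in \Hom_H(\pi,\bC)$ and $\tilde{\ell} \in \Hom_H(\tilde\pi,\bC)$, the matrix coefficient $g \mapsto \langle \pi(g)v, \tilde{\ell}\rangle$ (for $v$ with $\ell(v)\neq 0$) is a bi-$H$-invariant smooth function on $G$ whose value at $1$ is nonzero. More usefully, for any $\ell\in \Hom_H(\pi,\bC)$ and $v$ with $\ell(v)\neq 0$, the distribution $D_{\ell,v}(f) := \ell(\pi(f)v)$ on $G$ is left-$H$-invariant, right $(H,\text{trivial})$-equivariant, and nonzero. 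First I would pull this back via the exponential map to a nonzero bi-$\fh$-invariant distribution on a neighborhood of $0$ in $\fg(F)$, and then use the Howe expansion to write $\chi_\pi(\exp X) = \sum_{\cO} c_\cO(\pi)\,\widehat{\mu_\cO}(X)$ in the sense of generalized functions near $0$, where $\cO$ runs over nilpotent $G$-orbits in $\fg^*(F)$. The orbits with $c_\cO(\pi) \neq 0$ whose closures are maximal are, by definition, the $F$-points of $\WF^{\max}(\pi)$.

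The second step is to exploit the existence of an $H$-invariant functional. The plan is to pair the Howe expansion against a family of test functions concentrated, in the Fourier-dual picture, near $\fh^\perp \subset \fg^*$; this pairing will be governed by the intersections $\cO \cap \fh^\perp$ for the orbits entering the expansion. The key analytic input needed is that, after rescaling and passing to the leading term, the contribution of a nonzero $H$-invariant distribution to the symbol at a maximal orbit $\cO \subset \WF^{\max}(\pi)$ forces the dimension estimate
\[
2\dim(\cO \cap \fh^\perp) \geq \dim \cO
\]
for at least one such $\cO$. This is the $p$-adic replacement for the Gabber--Joseph inequality (Theorem \ref{thm:ineq}) combined with Lemma \ref{lem:AsH}. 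Combined with Li's theorem (Theorem \ref{thm:Geo}), which applies verbatim since it is a statement about complex coadjoint orbits and spherical subgroups, one obtains equality, and in particular $\cO \cap \fh^\perp \neq \emptyset$, which is the desired conclusion that $\cO \subset \bfG \cdot \fh^\perp$.

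I expect the main obstacle to lie precisely in establishing the $p$-adic analog of Gabber--Joseph. In the Archimedean case this is a consequence of good filtrations on $\cU(\fg)$-modules and the Bernstein inequality for the Gelfand--Kirillov dimension; no such algebraic structure is directly available for smooth representations over $F$. A plausible workaround is to replace the module-theoretic argument by an asymptotic one, using Moeglin--Waldspurger-type results on the compatibility of $\WF(\pi)$ with degenerate Whittaker models, or equivalently by passing to the completion along an appropriate cocharacter and invoking the theory of Whittaker induction (as in \S\ref{sec:twist}) on the reductive quotient. In the twisted case one would analogously replace $\fh^\perp$ by $\chi + \fh^\perp$ and use the Kazhdan filtration argument via its $W$-algebra counterpart; here the partial results of \cite{GS_X} suggest this is a feasible route but the full conjecture would require a uniform version that currently seems out of reach.
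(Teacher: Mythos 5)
The statement you are addressing is Conjecture \ref{conj:main}: the paper does not prove it, and it is stated explicitly as open. The only general evidence the paper offers is the Archimedean result (Corollary \ref{cor:CW}) together with Theorem \ref{thm:weak} (= \cite[Corollary C]{GS_X}), which gives the strictly weaker containment $\overline{\WF(\pi)}\subset \overline{G\cdot\fh^{\bot}(F)}$ in the local topology on $\fg^*(F)$ --- a closure statement that does not recover the orbit-wise assertion that every orbit in $\WF^{\max}(\pi)$ actually meets $\fh^{\bot}$. So there is no ``paper proof'' to match your argument against; the relevant question is whether your sketch closes the gap, and it does not.

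The decisive missing step is exactly the one you flag: the claim that pairing the Howe expansion against test functions concentrated near $\fh^{\bot}$ ``forces'' $2\dim(\cO\cap\fh^{\bot})\geq\dim\cO$ for some maximal orbit. This is the $p$-adic surrogate for Theorem \ref{thm:ineq} plus Lemma \ref{lem:AsH}, and no such statement is currently available; asserting it as a ``key analytic input needed'' is naming the conjecture's hard core, not proving it. There is also a structural problem earlier in your argument: the local character expansion controls the germ of $\chi_\pi$ at $1$, whereas $H$-distinction produces the relative character / bi-$H$-invariant distribution $D_{\ell,v}$, which is a different distribution, and there is no general result linking the germ or wave-front set of $D_{\ell,v}$ at the identity to the coefficients $c_{\cO}(\pi)$ in the expansion of $\chi_\pi$. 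Without such a link, your first and second steps do not connect: one could bound the wave-front set of $D_{\ell,v}$ (this is essentially what \cite{GS_X} does, via equivariant distributions on $G$-spaces) and still obtain only the closure statement of Theorem \ref{thm:weak}, not the statement about $\WF^{\max}(\pi)$. Once equality $2\dim(\cO\cap\fh^{\bot})=\dim\cO$ were somehow established, your use of Theorem \ref{thm:Geo} (which is indeed a statement about complex orbits and applies verbatim) would be fine; but as it stands the proposal is a plausible research program with its central inequality unproven, which is consistent with the statement remaining a conjecture in the paper.
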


To generalize this conjecture, let $\phi$ be a  character of $H$. Assume that there exists a decomposition $H=S\ltimes N$ such that $N$ is a parabolic nilradical. Let $\fs$ and $\fn$ denote the Lie algebras of $S$ and $N$.
Let $\chi\in \fg^*$ such that $\chi|_{\fs}=0$, and $\chi|_{\fn}$ determines $\phi|_N$.

\begin{conj}\label{conj:mainTwist}
Under the assumptions above, for every $\pi\in \Irr(G)_{(H,\phi)}$ we have $$\WF^{\max}(\pi)\subset \bfG \cdot(\chi+\fh^{\bot}).$$

Moreover, for every coadjoint $\bfG$-orbit $\cO$ that intersects $\WF^{\max}(\pi)$ we have
$$\dim \cO\cap (\chi+\fh^{\bot})=\dim \cO/2.$$
\end{conj}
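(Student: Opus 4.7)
The plan is to mimic the Archimedean strategy by replacing the module $M$ used in the proof of Theorem \ref{thm:maintwist} with a distributional object on $G$ whose wave-front set at the identity serves as a $p$-adic surrogate for the associated variety, and then to invoke the same two-sided dimension squeeze (Gabber--Joseph from below, Li/Lemma \ref{lem:dim} from above). Concretely, given $\pi \in \Irr(G)_{(H,\phi)}$ with an $(H,\phi)$-equivariant functional $\ell$ and a vector $v\in \pi$, form the Bessel distribution $B_{v,\ell}(f) := \ell(\pi(f)v)$ on $G$. It is $(H,\phi)$-equivariant under right translation and a matrix coefficient of $\pi$ under left translation, so by H\"ormander's pullback rule for wave-front sets under the $H$-action, the twisted equivariance forces the wave-front set of $B_{v,\ell}$ at the identity to lie in $\chi+\fh^{\bot}$. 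On the other hand, writing $B_{v,\ell}$ in terms of the local character expansion of $\chi_\pi$ and of the vector $v$, the leading singular behaviour near $1\in G$ is controlled by the orbits appearing in $\overline{\WF(\pi)}$, so $\WF_1(B_{v,\ell})\subset \overline{\WF(\pi)}\cap(\chi+\fh^{\bot})$.

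The second step is to produce the lower bound on the dimension. Choosing $v$ generically so that $B_{v,\ell}$ is non-zero, I would invoke a $p$-adic coisotropy statement of Gabber--Joseph type, asserting that the dimension of $\WF_1(B_{v,\ell})$ is at least half the dimension of $\overline{\WF(\pi)}$. Combined with Lemma \ref{lem:dim} (which uses only the sphericity of $\bfH$ and the existence of a contracting one-parameter subgroup for $\chi$), this yields
\begin{equation*}
\tfrac{1}{2}\dim \overline{\WF(\pi)} \,\le\, \dim \WF_1(B_{v,\ell}) \,\le\, \dim\bigl(\overline{\WF(\pi)}\cap (\chi+\fh^{\bot})\bigr) \,\le\, \tfrac{1}{2}\dim \overline{\WF(\pi)}.
\end{equation*}
As in the proof of Theorem \ref{thm:main}, the strictness of the inequality $\dim \cO' < \dim \cO$ for proper sub-orbits $\cO'\subset \overline{\cO}$ then forces $\cO$ itself, for every maximal $\cO\subset \WF^{\max}(\pi)$, to meet $\chi+\fh^{\bot}$ in a subvariety of dimension exactly $\dim\cO/2$, establishing both assertions of the conjecture.

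The first and third steps are expected to be reasonably standard: the equivariance bound on the wave-front set is a direct microlocal computation, and the dimension bookkeeping follows the Archimedean template once the ingredients are in place. The principal obstacle is the middle step, namely a usable $p$-adic analogue of the Gabber--Joseph coisotropy theorem for Bessel distributions. In the Archimedean setting this input is purely algebraic, coming from the Kazhdan filtration on $\cU(\fg)$ and the theory of $W$-algebras; over a $p$-adic field there is no canonical filtered object playing this role. The partial analogue of Theorem \ref{thm:main} established in \cite{GS_X} achieves the coisotropy only in restricted generality, using orbital integrals and Shalika germs; extending it to arbitrary spherical pairs and to the twisted setup would presumably require either a Moy--Prasad filtration substitute for the Kazhdan filtration (working with depth-zero types and then inducting through parahoric levels), or a careful germ expansion of Bessel distributions along Whittaker-type degenerations of $\chi$.

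I would therefore first attempt the untwisted Conjecture \ref{conj:main} in the depth-zero case, where Moy--Prasad theory and the Deligne--Lusztig description of the relevant characters should allow a direct reduction to a finite-field version of Theorem \ref{thm:main}; then bootstrap to positive depth via Hecke algebra isomorphisms; and finally insert $\chi$ by a limiting argument along the grading defined by the semisimple element $h\in \fg$, using the contracting torus as in the proof of Lemma \ref{lem:dim} to identify the twisted wave-front conditions with untwisted ones on a Levi.
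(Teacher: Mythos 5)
This statement is a \emph{conjecture} in the paper: no proof is given, and the only evidence offered is the Archimedean analogue (Corollary \ref{cor:CW}), the weaker closure statement of Theorem \ref{thm:weak} from \cite{GS_X}, and special cases for $\GL_n$. Your proposal does not close the gap either: the decisive ingredient, a $p$-adic lower bound of Gabber--Joseph type asserting $\dim \WF_1(B_{v,\ell})\geq \tfrac12\dim\overline{\WF(\pi)}$ for the Bessel distribution, is exactly the open problem, and you acknowledge as much when you call it the principal obstacle. In the Archimedean case this bound comes from the Kazhdan filtration on $\cU(\fg)$ and Gabber's integrability theorem applied to a genuine $\cU(\fg)$-module; over a $p$-adic field there is no known substitute, and gesturing at Moy--Prasad filtrations, depth-zero types, or Shalika-germ expansions is a research program, not an argument. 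The upper-bound step is also asserted rather than proved: the containment $\WF_1(B_{v,\ell})\subset \overline{\WF(\pi)}\cap(\chi+\fh^{\bot})$ requires both a microlocal analysis of twisted $H$-equivariance in the non-Archimedean wave-front formalism and, more seriously, a comparison between the singularities of the Bessel distribution at the identity and the local character expansion of $\chi_\pi$; the Bessel distribution is not the character, and no such comparison is supplied.

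A second, independent gap: even granting both bounds, your squeeze only shows that \emph{some} orbit of maximal dimension in $\overline{\WF(\pi)}$ meets $\chi+\fh^{\bot}$ in half-dimension. Conjecture \ref{conj:mainTwist} asserts this for \emph{every} coadjoint orbit meeting $\WF^{\max}(\pi)$, and $\WF^{\max}(\pi)$ may a priori contain several maximal orbits: as the introduction points out, the strong form applied to the diagonal pair $\Delta G\subset G\times G$ already implies uniqueness of the top orbit, i.e.\ Conjecture \ref{conj:irr} of \cite{MW}, which is itself open outside $\GL_n$. So your dimension bookkeeping, unlike in the proofs of Theorems \ref{thm:main} and \ref{thm:maintwist} where Theorem \ref{thm:Irr} guarantees a single top orbit, cannot deliver the stated conclusion without an additional idea addressing multiplicity of top orbits.
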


This conjecture would allow to extend to the non-Archimedean case the results described in \S\S \ref{sec:appl}-\ref{sec:TwistAppl}. In particular, the non-Archimedean analogue of Corollary \ref{cor:strong} applied to the case $H=G$ (which corresponds to the diagonal  pair $\Delta G \subset G\times G$) implies the following conjecture.

\begin{conj}[\cite{MW}]\label{conj:irr}
For any $\pi\in \Irr(G)$, $\WF^{\max}(\pi)$ lies in a single $\bfG$-orbit.
\end{conj}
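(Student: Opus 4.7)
The plan is to derive \Cref{conj:irr} as a direct consequence of \Cref{conj:main} (in the stated form, which requires \emph{every} maximal orbit of $\WF^{\max}(\pi)$ to meet $\fh^{\bot}$), applied to the diagonal spherical pair $\Delta \bfG \subset \bfG \times \bfG$. Given $\pi \in \Irr(G)$, form the external tensor product $\pi \boxtimes \widetilde{\pi} \in \Irr(G \times G)$. The canonical evaluation pairing $\pi \otimes \widetilde{\pi} \to \C$ provides a non-zero $\Delta G$-invariant functional, so $\pi \boxtimes \widetilde{\pi} \in \Irr(G \times G)_{\Delta G}$. Assuming \Cref{conj:main}, we would therefore obtain
\begin{equation*}
\WF^{\max}(\pi \boxtimes \widetilde{\pi}) \subset (\bfG \times \bfG) \cdot (\Delta \fg)^{\bot}.
\end{equation*}

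The remaining step is a purely geometric computation. From the Howe-Harish-Chandra local character expansion applied to a product representation, one reads off that $\WF^{\max}(\pi \boxtimes \widetilde{\pi}) = \WF^{\max}(\pi) \times \WF^{\max}(\widetilde{\pi})$, and the identity $\chi_{\widetilde{\pi}}(g) = \chi_{\pi}(g^{-1})$ together with the behavior of wave-front sets under reflection yields $\WF^{\max}(\widetilde{\pi}) = -\WF^{\max}(\pi)$. The maximal orbits in $\WF^{\max}(\pi \boxtimes \widetilde{\pi})$ are therefore precisely the products $\cO_1 \times (-\cO_2)$ with $\cO_1, \cO_2$ ranging over maximal orbits in $\overline{\WF(\pi)}$. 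Since $(\Delta \fg)^{\bot} = \{(y, -y) : y \in \fg^*\}$, the intersection $(\cO_1 \times (-\cO_2)) \cap (\Delta \fg)^{\bot}$ is non-empty if and only if $\cO_1 \cap \cO_2 \neq \emptyset$, i.e.\ $\cO_1 = \cO_2$. Demanding this for every ordered pair of maximal orbits forces any two of them to coincide, which is exactly the statement of \Cref{conj:irr}.

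The main obstacle, of course, is that \Cref{conj:main} itself is unproven in the non-Archimedean setting; its formulation and motivation are the central theme of the paper. It is essential here to use the strong formulation, in which all top orbits meet $\fh^{\bot}$, rather than the weaker form asserting merely that some top orbit meets $\fh^{\bot}$: the weaker form would yield only that for each maximal $\cO_1$ there exists \emph{some} maximal $\cO_2$ with $\cO_1 = \cO_2$, a tautology. A direct attack on \Cref{conj:irr} that bypasses \Cref{conj:main} would presumably require $p$-adic substitutes for the Gabber-Joseph inequality (\Cref{thm:ineq}) and for Joseph's irreducibility theorem (\Cref{thm:Irr})---for instance some wave-front-set analogue of the symbol calculus used in the proofs of \Cref{thm:main} and \Cref{thm:maintwist}---and producing such substitutes is the principal difficulty that has kept the conjecture of Moeglin and Waldspurger open in full generality.
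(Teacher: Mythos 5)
Your proposal matches the paper exactly: the paper does not prove Conjecture \ref{conj:irr} (it is stated as a conjecture of Moeglin--Waldspurger, known unconditionally only for $\GL_n(F)$ by \cite[\S II.2]{MW}), but, just as you do, it derives it conditionally from the strong form of Conjecture \ref{conj:main} (equivalently, the non-Archimedean analogue of Corollary \ref{cor:strong}) applied to the diagonal pair $\Delta G\subset G\times G$. Your computation of $\WF^{\max}(\pi\boxtimes\widetilde{\pi})$ and of $(\Delta\fg)^{\bot}=\{(y,-y)\}$ correctly fleshes out the paper's remark that the stronger formulation forces the top orbits to lie in a single $\bfG$-orbit; the only point to keep precise is that Conjecture \ref{conj:main} asserts containment in the saturation $\bfG\cdot\fh^{\bot}$, so the conclusion is that all maximal rational orbits lie in one geometric (stable) orbit, which is exactly the statement of the conjecture.
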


The following version of Conjecture \ref{conj:main} is proven in \cite{GS_X}.

\begin{thm}[{\cite[Corollary C]{GS_X}}]\label{thm:weak}
Suppose that $F$ has characteristic zero. Let $\phi$ be a character of $H$ that is
trivial on its unipotent radical, and let $\pi\in \Irr(G)_{(H,\phi)}$. Then
$$\overline{\WF(\pi)}\subset \overline{G\cdot\fh^{\bot}(F)}.$$
\end{thm}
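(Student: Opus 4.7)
The plan is to adapt the proof of \Cref{thm:main} to the $p$-adic setting, with wave-front sets of distributions playing the role of associated and annihilator varieties, and with Harish-Chandra's local character expansion replacing the theory of $\cU(\fg)$-modules. The key target is a distributional analogue of \Cref{lem:AsH}: a distribution on $\fg(F)$ which is ``$H$-equivariant in the right sense'' has wave-front set at $0$ contained in $\overline{G \cdot \fh^{\bot}(F)}$.

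First I would use the non-zero $(H,\phi)$-equivariant functional $\xi$ and a non-zero vector $v_0 \in \pi$ to form a matrix coefficient $g \mapsto \xi(\pi(g) v_0)$, which transforms by $\phi$ under left translation by $H$. Pairing with Bruhat--Schwartz test functions produces an $(H,\phi)$-equivariant distribution on $G$ whose behaviour near the identity is controlled by the distribution character $\chi_\pi$, and in particular whose wave-front set at $1$ contains $\WF(\pi)$. Transferring to the Lie algebra via $\exp$ (which is $H$-equivariant on a small neighbourhood of $0$), and using that $\phi$ is trivial on the unipotent radical of $H$ so that $d\phi$ vanishes there, yields an essentially $H$-invariant distribution on $\fg(F)$ near $0$ whose wave-front set at $0$ still contains $\WF(\pi)$.

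Next I would apply the Harish-Chandra--Howe local character expansion to write the pulled-back character as a finite sum $\sum_\cO c_\cO\, \widehat{\mu_\cO}$ of Fourier transforms of nilpotent orbital integrals, so that $\overline{\WF(\pi)}$ is the union of the $\overline{\cO}$ for which $c_\cO \neq 0$. The remaining step—which I expect to be the main obstacle—is to convert the $H$-equivariance of our distribution on $\fg(F)$ into the constraint forcing each such $\overline{\cO}$ into $\overline{G \cdot \fh^{\bot}(F)}$. This requires a wave-front set analysis under the adjoint action of $H$ (the analytic analogue of the filtration argument used in the proof of \Cref{lem:AsH}), combined with Fourier duality on $\fg(F)$, which interchanges the roles played in the Archimedean proof by the associated variety of the cyclic submodule and by the annihilator variety of $\pi$.
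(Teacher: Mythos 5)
This statement is not proved in the paper at all: it is quoted verbatim from \cite[Corollary C]{GS_X}, so there is no internal proof to compare with. Judged on its own terms, your outline has a genuine gap at its very first bridge. In the $p$-adic setting the generalized matrix coefficient $g\mapsto\xi(\pi(g)v_0)$ is a locally constant function on $G$: since $v_0$ is a smooth vector, the function is invariant under right translation by an open compact subgroup, and the left $(H,\phi)$-equivariance does not change this. Viewed as a distribution it therefore has \emph{empty} wave-front set, and its germ at the identity carries no information whatsoever about $\WF(\pi)$. The character $\chi_\pi$ is a trace, i.e.\ a sum of matrix coefficients over a basis, and its singularity at $1$ is not controlled by any single relative matrix coefficient; so the claimed inclusion of $\WF(\pi)$ in the wave-front set of your equivariant distribution is unjustified and, as stated, false. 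Moreover the step you yourself flag as the main obstacle (converting $H$-equivariance into the constraint $\overline{\cO}\subset\overline{G\cdot\fh^{\bot}(F)}$ via Fourier duality and an analytic analogue of Lemma \ref{lem:AsH}) is left entirely open, and that is exactly where the content lies: there is no known $p$-adic substitute for the Gabber--Joseph and Li inequalities on which the Archimedean proof of \Cref{thm:main} rests, which is why the paper only states Conjecture \ref{conj:main} in general and imports the weaker Theorem \ref{thm:weak} from \cite{GS_X}.

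For contrast, the argument of \cite{GS_X} takes a different route that bypasses both problems: the $(H,\phi)$-equivariant functional realizes $\pi$ (or its contragredient) as a quotient of the twisted Schwartz induction $\Sc(G/H,\phi)$; by Moeglin--Waldspurger \cite{MW}, every maximal orbit in $\overline{\WF(\pi)}$ supports a degenerate Whittaker quotient of $\pi$, hence of this Schwartz space; and the main theorems of \cite{GS_X} bound which nilpotent orbits can support generalized Whittaker quotients of Schwartz functions on a $G$-space, namely only those contained in the closure of the image of the moment map, which for $G/H$ is $\overline{G\cdot\fh^{\bot}(F)}$. If you wish to pursue your plan, you would need a genuinely new mechanism relating the relative functional to the character near the identity (for instance through relative character expansions rather than a single matrix coefficient), and an independent substitute for the coisotropicity/isotropicity inputs of the Archimedean proof.
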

Here, $\fh^{\bot}(F)$ denotes the rational $F$-points of $\fh^{\bot}$, and $\overline{G\cdot\fh^{\bot}(F)}$ denotes the closure in the (Hausdorff) local topology on $\fg^*(F)$.

\begin{remark}
\cite[Corollary C]{GS_X} does not require $H$ to be spherical. It also  works in the generality of non-homogeneous $G$-spaces.
\end{remark}

This theorem, together with Corollary \ref{cor:CW} is our main general evidence for Conjectures \ref{conj:main} and \ref{conj:mainTwist}.

In addition, several special cases are known to hold true for $\bf G=\GL_n$ and $p$-adic $F$.

First of all, Conjecture \ref{conj:irr} is proven for $\GL_n(F)$ in \cite[\S II.2]{MW}, which also expresses the corresponding partition, that we will denote $\lambda(\pi)$, through the Zelevinsky classification \cite{Zel}.
The Arthur type can also be expressed through the Zelevinsky classification (see {\it e.g.} \cite{OSsl2}).

Thus, the conditions on Arthur parameters in \cite[Conjecture 5.1]{GGP} imply that $|\lambda_i^t(\pi)-\lambda^t_i(\tau)|\leq 1$, as in Theorem \ref{thm:strongPart}.
The non-Archimedean case of this conjecture is proven in \cite{Chan}
(a partial result was obtained in \cite{MGur}).

\begin{remark}
In fact, the conditions in \cite[Conjecture 5.1]{GGP} further  imply that for any $i$ with $\lambda(\pi)^t_i=\lambda(\tau)^t_i$ we also have $\lambda(\pi)^t_i=\lambda(\tau)^t_i=1$. However, this additional property does not always hold for general smooth representations. Indeed, let $n:=4, \, k:=1$, $\pi:=1_3\times 1_2$ be the Bernstein-Zelevinsky product of trivial representations of $\GL_3(F)$ and $\GL_2(F)$ and $\tau:=|\det|_2^{1/2}\times 1_2\in \Irr(\GL_4(F))$. Then $\Hom_{H}(\pi|_{H},\tau)\neq 0$, but $\lambda^t(\pi)=(3,2)$ and $\lambda^t(\tau)=(2,2)$.
This does not contradict \cite{Chan}, since $\tau$ is not of Arthur type.
\end{remark}

The $p$-adic analogue of Corollary \ref{cor:Klya} is proven in
\cite{OSKlyaZel}.

\DimaE{The results of \cite{GZ}, that provide a certain analogue of \Cref{thm:theta} for type I dual pairs, are proven uniformly for all local fields of characteristic zero. Finally, the non-Archimedean analogue of Theorem \ref{thm:Mat} is proven in a much stronger form in \cite{MW}.}
\DimaC{
\appendix
\section{Proof of Propositions \ref{prop:Ido} and \ref{prop:match}\\ by Ido Karshon}\label{app:Ido}


\subsection{Proof of \Cref{prop:Ido}}
Identify  $U\oplus U^*$ with $Hom(V,W)\oplus Hom(W,V)$,
with the symplectic form given by
\[
\omega((X,Y),(X',Y'))=tr(YX')-tr(XY')
\]
Then the differential $d\iota: \gl(V)\times\gl(W)\to\mathfrak{sp}(U\oplus U^*)$ of $\iota$ at $(\mathrm{Id},\mathrm{Id})$ is
given by $(A,B)\mapsto(R_{A}+L_{B},-R_{B}-L_{A})$ where $R_A$ and $R_B$ denote the
operators of right multiplication by $A$ and $B$ respectively, and $L_A$ and $L_B$ denote the operators of left multiplication by $A$ and $B$ respectively. Recall that $\mathfrak{r}$ is  the graph of $d\iota$.
\begin{proof}[Proof of \Cref{prop:Ido}\eqref{it:Ido1}]
 Any
$T\in\mathfrak{sp}(U \oplus U^*)$  of rank $1$ has the form $x\otimes x^*$
for $x\in U\oplus U^*$, and $x^*\in (U\oplus U^*)^*$ given by symplectic pairing with $x$. Thus $\mathcal{O}_{\min}$ consists of elements of the form $(X,Y)\otimes(-Y,X)$.
The pairing of such element with $(R_{A}+L_{B},-R_{B}-L_{A})$ is
\begin{multline*}
tr\left((X,Y)\otimes(-Y,X)\circ(R_{A}+L_{B},-R_{B}-L_{A})\right)=(-Y,X)\left((R_{A}+L_{B},-R_{B}-L_{A})(X,Y)\right)=\\=
 (-Y,X)(XA+BX,-YB-AY)=-tr(YXA)-tr(BXY)-tr(YXA)-tr(BXY)=\\
 -2(tr(YXA) + tr(XYB))
\end{multline*}
Thus $(M,N,(X,Y)\otimes(-Y,X))\in \fr^{\bot}$
 if and only if $M=2YX,N=2XY$, {\it i.e.} $(M,N)=\mu(2X,Y)$.
\end{proof}

This argument also allows to reformulate \Cref{prop:Ido}\eqref{it:Ido2} as the following lemma.

\begin{lem}\label{lem:AppKey}
 Let $\mathcal{O}_{1}\subset\mathfrak{gl}\left(V\right),\mathcal{O}_{2}\subset\mathfrak{gl}\left(W\right)$
be nilpotent orbits. Let
$$S:=\left\{ \left(X,Y\right)\in Hom\left(V,W\right)\oplus Hom\left(W,V\right):YX\in\mathcal{O}_{1},XY\in\mathcal{O}_{2}\right\} .$$
If $S$ is nonempty then
\[
\dim S=\frac{1}{2}\dim(\mathcal{O}_{1}\times\Oc_{2}\times \cO_{\min})
\]
\end{lem}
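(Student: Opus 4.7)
I would first exploit the action of $G := GL(V) \times GL(W)$ on $U \oplus U^* = \Hom(V, W) \oplus \Hom(W, V)$ defined by $(g, h)(X, Y) = (hXg^{-1}, gYh^{-1})$, under which the map $\mu_S : S \to \cO_1 \times \cO_2$, $(X, Y) \mapsto (YX, XY)$, is equivariant (with $G$ acting on the target by conjugation). Since $G$ acts transitively on $\cO_1 \times \cO_2$, the orbit--fiber formula yields $\dim S = \dim \cO_1 + \dim \cO_2 + \dim F(A, B)$ for any fixed $(A, B) \in \cO_1 \times \cO_2$, where $F(A, B) := \mu_S^{-1}(A, B)$. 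Using $\dim \cO_{\min} = 2nm$ with $n := \dim V$ and $m := \dim W$, the lemma becomes equivalent to
\[
\dim F(A, B) \;=\; nm - \tfrac{1}{2}\bigl(\dim \cO_1 + \dim \cO_2\bigr).
\]

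Next I would use that $\mu : U \oplus U^* \to \gl(V)^* \times \gl(W)^*$ is the moment map for the Hamiltonian $G$-action on the symplectic vector space $U \oplus U^*$, so that at any smooth point $(X, Y)$ of the fiber one has $\dim F(A, B) = \dim \ker d\mu_{(X, Y)} = 2nm - \dim G + \dim \fg_{(X, Y)}$, where $\fg_{(X, Y)}$ denotes the infinitesimal stabilizer. The identities $XA = BX$ and $AY = YB$, forced by $YX = A$ and $XY = B$, give the inclusion $\fg_{(X, Y)} \subseteq Z_{\gl(V)}(A) \times Z_{\gl(W)}(B)$, and hence
\[
\dim F(A, B) \;=\; 2nm - (n^2 + m^2) + \dim \fg_{(X, Y)}.
\]
Matching this with the target formula, the problem reduces to showing that for a generic $(X, Y) \in F(A, B)$,
\[
\dim \fg_{(X, Y)} \;=\; n^2 + m^2 - nm - \tfrac{1}{2}\bigl(\dim \cO_1 + \dim \cO_2\bigr).
\]

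I would establish this combinatorial identity by fixing Jordan bases of $V$ and $W$ adapted to $A$ and $B$, with partitions $\lambda := \lambda(A)$ and $\mu := \lambda(B)$ satisfying $|\lambda^t_i - \mu^t_i| \leq 1$ by Flanders' theorem (as already used in Proposition~\ref{prop:match}), writing $X$ and $Y$ in block form, and then solving the infinitesimal intertwining system $DX = XC$, $CY = YD$ with $(C, D) \in Z_{\gl(V)}(A) \times Z_{\gl(W)}(B)$ blockwise. The chief obstacle is organizing this dimension count cleanly; I expect the neatest route is induction on the number of indices $i$ with $\lambda^t_i \neq \mu^t_i$, reducing the general case to the elementary situation of attaching or detaching a single Jordan block, where both sides of the formula can be verified by direct computation.
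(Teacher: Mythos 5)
Your reduction to a single fiber is fine: $\mu_S\colon S\to\cO_1\times\cO_2$, $(X,Y)\mapsto(YX,XY)$, is equivariant for $GL(V)\times GL(W)$, which acts transitively on $\cO_1\times\cO_2$, so $\dim S=\dim\cO_1+\dim\cO_2+\dim F(A,B)$ and the lemma becomes $\dim F(A,B)=nm-\tfrac12(\dim\cO_1+\dim\cO_2)$. The gap is in the next step, the asserted chain $\dim F(A,B)=\dim\ker d\mu_{(X,Y)}=2nm-\dim G+\dim\fg_{(X,Y)}$ at a ``smooth point of the fiber''. The second equality is the standard moment-map identity and is fine, but the first is not a general fact: at a smooth point of the \emph{reduced} fiber one only gets the inequality $\dim F(A,B)\le\dim\ker d\mu_{(X,Y)}$, and equality is exactly the statement that the scheme-theoretic fiber is generically reduced (equivalently that $d\mu$ has rank equal to the codimension of the fiber at generic points). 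That expected-dimension statement is essentially the content of the lemma you are trying to prove, so as written the key step is assumed rather than established. A second, related gap: your final identity $\dim\fg_{(X,Y)}=n^2+m^2-nm-\tfrac12(\dim\cO_1+\dim\cO_2)$ is claimed only for ``generic'' $(X,Y)$ in the fiber, but the stabilizer dimension genuinely jumps along the fiber (already for $A=B=0$ the point $(0,0)$ has a larger stabilizer than nearby fiber points), so you must first describe what generic points of $F(A,B)$ look like; once you have that description in Jordan-adapted coordinates, you can compute $\dim F(A,B)$ directly and the symplectic detour is no longer doing any work. (A cheap salvage exists for one direction only: the $Z_{\gl(V)}(A)\times Z_{\gl(W)}(B)$-orbit of a single well-chosen point gives $\dim F(A,B)\ge \dim Z(A)+\dim Z(B)-\dim\fg_{z_0}$; the matching upper bound is where the real work lies.)

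For comparison, the paper avoids the moment-map differential entirely: it stratifies $S$ by $\rk(X)$, uses transitivity of $GL_m\times GL_n$ on matrices of fixed rank to put $X$ in the standard form $\left(\begin{smallmatrix}I_k&0\\0&0\end{smallmatrix}\right)$, and then the condition on $Y$ decouples into two block-triangular conditions, each handled by the one-row-augmentation Lemma \ref{lem:ParDim}, whose part (ii) supplies exactly the dimension count $\tfrac12(\dim\cO'-\dim\cO)$; summing over strata gives $\dim S_k=mn+\tfrac12(\dim\cO_1+\dim\cO_2)$ for every nonempty stratum. Your planned inductive block computation of stabilizers is of roughly the same combinatorial difficulty as Lemma \ref{lem:ParDim}, so to complete your route you would need both that computation in full and a separate argument for generic reducedness of the fibers of $\mu$ (for instance via results on quiver moment maps), neither of which is supplied.
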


To prove this lemma we will need the following key lemma.
\begin{lem}\label{lem:ParDim}
Let $U,L$ be vector spaces. Suppose $\Oc$ is a nilpotent orbit in
$\mathfrak{gl}\left(U\right)$ and \DimaE{fix} $K\in\Oc$. Let $\Oc'$ be a nilpotent
orbit in $\mathfrak{gl}\left(U\oplus L\right)$ and let $T$ be the
collection of all linear maps $A:L\to U$ satisfying $\left(\begin{array}{cc}
K & A\\
O & O
\end{array}\right)\in\Oc'$. Let $\lambda=(\lambda_{1},\lambda_{2},...),\lambda'=(\lambda_{1}',\lambda_{2}',...)$
be the partitions corresponding to $\Oc,\Oc'$. Then:
\begin{enumerate}[(i)]
\item \label{it:nonEmpty} The variety $T$ is nonempty if and only if  $\lambda_{i}\le\lambda_{i}'\le\lambda_{i}+1$
for all $i$.
\item \label{it:dimT} In that case, we have $\dim T=\frac{\dim\Oc'-\dim\Oc}{2}$.
\end{enumerate}
\end{lem}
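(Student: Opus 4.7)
The plan is to analyze $M_A$ using the formula $M_A^n = \left(\begin{smallmatrix} K^n & K^{n-1}A \\ 0 & 0 \end{smallmatrix}\right)$ for $n\ge 1$, so $R'(n):=\rk M_A^n = \dim(K^nU+K^{n-1}AL)$. The nested inclusion $K^nU\subseteq K^nU + K^{n-1}AL\subseteq K^{n-1}U$ immediately yields the basic bounds $R(n)\le R'(n)\le R(n-1)$, where $R(n):=\rk K^n$.

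For part (i), necessity reduces to proving the pair of inequalities $\lambda^t_n\le{\lambda'}^t_n$ and ${\lambda'}^t_{n+1}\le\lambda^t_n$ for every $n$, which is equivalent (by a short combinatorial check on conjugate partitions) to $\lambda_i\le\lambda_i'\le\lambda_i+1$ for every $i$. Setting $d(n):=R'(n)-R(n)\ge 0$, the operator $K$ induces a surjection $\mathrm{Im}\,M_A^n/K^nU \twoheadrightarrow \mathrm{Im}\,M_A^{n+1}/K^{n+1}U$, so $d$ is non-increasing; since ${\lambda'}^t_n-\lambda^t_n = d(n-1)-d(n)$, this yields $\lambda^t_n\le{\lambda'}^t_n$. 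The second inequality follows from the formula $\mu^t_n=\dim(\mathrm{Im}\,N^{n-1}\cap\ker N)$ for a nilpotent $N$ of type $\mu$, applied to $M_A$: since $\mathrm{Im}\,M_A^n\subseteq U$, we have $\mathrm{Im}\,M_A^n\cap\ker M_A = \mathrm{Im}\,M_A^n\cap\ker K\subseteq K^{n-1}U\cap\ker K$, giving ${\lambda'}^t_{n+1}\le\lambda^t_n$. For sufficiency I would fix a Jordan basis $\{v^i_j\}$ of $K$; since $|\lambda'|-|\lambda|=\dim L$, the set $S:=\{i:\lambda'_i=\lambda_i+1\}$ has cardinality $\dim L$, and defining $A$ so that a chosen basis $\{l_s\}_{s\in S}$ of $L$ is sent to the set of chain tops $\{v^s_1\}_{s\in S}$ extends each selected Jordan chain by one while leaving the rest intact, producing Jordan type $\lambda'$.

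For part (ii), I would realize $T$ as the orbit of the parabolic subgroup $P^K:=\{g=\left(\begin{smallmatrix}\alpha&\beta\\0&\delta\end{smallmatrix}\right)\in\GL(U\oplus L):\alpha\in\GL(U)^K\}$, acting on $\Hom(L,U)$ by $(\alpha,\beta,\delta)\cdot A=\alpha A\delta^{-1}-K\beta\delta^{-1}$. Transitivity on $T_{\lambda'}$ amounts to showing that any conjugator $g_0\in\GL(U\oplus L)$ between two such $M_A,M_{A'}$ can be modified by an element of the centralizer $Z(M_{A'})$ so as to preserve $U$; this in turn reduces to the uniqueness, up to $Z(M_{A'})$, of $M_{A'}$-submodules of $(U\oplus L,M_{A'})$ of type $\lambda$ with quotient of type $(1^{\dim L})$, a fact from the theory of finite-length $\C[t]$-modules that I would verify by a direct argument. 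Granted transitivity, $\dim T=\dim P^K-\dim\mathrm{Stab}_{P^K}(M_A)$, and the infinitesimal stabilizer consists of triples $(\alpha,\beta,\delta)\in\mathfrak{gl}(U)^K\times\Hom(L,U)\times\mathfrak{gl}(L)$ satisfying $\alpha A-A\delta=K\beta$. Fibering this locus over $\{(\alpha,\delta):\alpha A-A\delta\in\Hom(L,KU)\}$ with fiber $\Hom(L,\ker K)$, and analyzing the induced map $\phi:\mathfrak{gl}(U)^K\times\mathfrak{gl}(L)\to\Hom(L,U/KU)$, $(\alpha,\delta)\mapsto\bar\alpha\bar A-\bar A\delta$, reduces the target equality $\dim T=(\dim\cO'-\dim\cO)/2$ to a linear-algebra identity relating $\dim\mathrm{Im}\,\phi$ with $\sum(\lambda^t_n)^2$ and $\sum({\lambda'}^t_n)^2$.

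The main obstacle is the dimension bookkeeping in part (ii): while the orbit-stabilizer framework cleanly isolates the problem as a computation of $\dim\mathrm{Im}\,\phi$, pinning this down in closed form requires understanding the combinatorics of how the new boxes of $\lambda'/\lambda$ sit relative to the columns of $\lambda$. I would attempt this either by induction on $\dim L$, peeling off one box at a time and applying Lemma~\ref{lem:quot}, or by the alternative route of counting $|T(\mathbb{F}_q)|$ as a polynomial in $q$ via Hall polynomials for extensions $0\to U\to V\to L\to 0$ of $\mathbb{F}_q[t]$-modules and reading off the dimension as the degree.
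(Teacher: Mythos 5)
Your part (i) is correct. The necessity argument --- reading off $\lambda_i'^t$ from $\rk M_A^n=\dim(K^nU+K^{n-1}AL)$, getting $\lambda^t_n\le\lambda_n'^t$ from the surjection $\Im M_A^n/K^nU\onto \Im M_A^{n+1}/K^{n+1}U$, and $\lambda_{n+1}'^t\le\lambda^t_n$ from $\Im M_A^n\cap\Ker M_A\subseteq K^{n-1}U\cap\Ker K$ --- is a genuinely different route from the paper, which instead argues by induction on $\dim U$, splitting off a maximal Jordan chain of $K'$ and passing to quotients; your rank computation is arguably cleaner and avoids that case analysis. The sufficiency construction (sending a basis of $L$ to the tops of the Jordan chains indexed by $\{i:\lambda'_i=\lambda_i+1\}$) is the same as the paper's.

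Part (ii), however, is a plan rather than a proof, and the piece you leave open is exactly where all the work lies. Of the two deferred steps, the transitivity of $P^K$ on $T$ is fillable: the module fact you reduce it to is true --- submodules with quotient of type $(1^{\dim L})$ are precisely those containing $\Im M_{A'}$, so they correspond to codimension-$\dim L$ subspaces $W$ of $M/\Im M_{A'}$; the type of such a submodule is determined by the numbers $\dim\bigl(W\cap \pi(\Ker M_{A'}^j)\bigr)$, and the image of the automorphism group in $\GL(M/\Im M_{A'})$ is the full stabilizer of that flag, hence transitive on each such Schubert class. The genuine gap is the dimension bookkeeping: after orbit--stabilizer, the target identity $\dim T=\tfrac{1}{2}(\dim\cO'-\dim\cO)$ becomes the closed-form statement $\dim\Im\phi=n\lambda^t_1+\tfrac{1}{2}\bigl(n^2+\sum_i(\lambda^t_i)^2-\sum_i(\lambda_i'^t)^2\bigr)$ with $n=\dim L$, and this is never proved; you only name two candidate strategies (peeling off one box using Lemma \ref{lem:quot}, or counting $T(\mathbb{F}_q)$ via Hall polynomials), neither of which is carried out, and each of which conceals nontrivial further input. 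The first is essentially the induction the paper actually performs --- fibring a dense open subset of $T$ over the analogous variety for $(\lambda_2,\lambda_3,\dots)$ and $(\lambda'_2,\lambda'_3,\dots)$ and using $\dim\cO=(\dim W)^2-\sum_i(\lambda^t_i)^2$ --- while the second additionally needs the degree formula for the Hall polynomial $g^{\lambda'}_{\lambda,(1^n)}$ and an argument that the degree of the point count computes $\dim T$. Until one of these is executed, part (ii) is not established.
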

We postpone the proof of this lemma to \S \ref{subsec:PfParDim} below.
By transposing the matrix, this lemma implies the analogous statement for block-lower-triangular matrices.
\begin{proof}[Proof of Lemma \ref{lem:AppKey}]
Denote $m=\dim V,n=\dim W$. For any $k\le\min\left(m,n\right)$ let
$$S_{k}:=\left\{ \left(X,Y\right)\in S:\text{rk}\left(X\right)=k\right\}. $$
There is an action of $GL_{m}\times GL_{n}$ on $S_{k}$ which is
transitive on the $X$ coordinate. Thus, the subspace of
$S_{k}$ consisting of pairs with $X=\left(\begin{array}{cc}
I_{k} & O\\
O & O
\end{array}\right)$ has codimension
\[
\dim\left\{ X\in Hom\left(V,W\right):\text{rk}X=k\right\} =k\left(m+n-k\right)
\]
For $\left(X,Y\right)$ in this subspace, write $Y=\left(\begin{array}{cc}
K_{k\times k} & A_{k\times(n-k)}\\
B_{(m-k)\times k} & C_{(m-k)\times(n-k)}
\end{array}\right)$. The condition for $\left(X,Y\right)\in S_{k}$ is equivalent to
$\left(\begin{array}{cc}
K & O\\
B & O
\end{array}\right)\in\Oc_{1},\left(\begin{array}{cc}
K & A\\
O & O
\end{array}\right)\in\Oc_{2}$. This condition is independent of $C$, which lies in a space of
dimension $\left(m-k\right)\left(n-k\right)$. Let
$$Z=\left\{\left(\begin{array}{cc}
K & O\\
B & O
\end{array}\right)\in\Oc_{1},\left(\begin{array}{cc}
K & A\\
O & O
\end{array}\right)\in\Oc_{2}\right\}.$$ Since $K$ must be nilpotent, we may stratify $Z$ with respect to
the nilpotent orbit $\Oc$ of $K$. Applying Lemma A.2 once to $\Oc$
and $\Oc_{1}$, and once to $\Oc$ and $\Oc_{2}$, the dimension of
each nonempty stratum is
$$(\dim\Oc_{1}-\dim \cO)/2 + (\dim \cO_2-\dim \cO)/2 + \dim \cO=(\dim \cO_1+\dim \cO_2)/2.$$
Thus, whenever $S_{k}$ is nonempty, we have
\begin{multline*}
\dim S_{k}=k\left(m+n-k\right)+\left(m-k\right)\left(n-k\right)+\frac{\dim\Oc_{1}+\dim\Oc_{2}}{2}=\\
mn+\frac{\dim\Oc_{1}+\dim\Oc_{2}}{2}=
\frac{\dim\Oc_{1}\times\Oc_{2}\times\Oc_{\min}}{2}
\end{multline*}
Since $S=\bigcup_{k=0}^n S_k$, and all non-empty strata $S_k$ have the required dimension, the lemma follows.
\end{proof}
\subsection{Proof of Proposition \ref{prop:match}}
We identify a nilpotent orbit in $\mathfrak{gl}_{n}$ with the partition
whose values are the sizes of blocks of the Jordan form of the orbit.
Let $m:=\dim V$ and $n:=\dim W$.
\begin{proof}[Proof of \Cref{prop:match}] From the considerations in the previous subsection we see that
$\Oc_{1}$ and $\Oc_{2}$ match if and only if there exist $k\le m,n$,
a nilpotent orbit $\Oc\subset\mathfrak{gl}_{k}$,
and matrices $K\in\Oc,A,B$ so that $\left(\begin{array}{cc}
K & O\\
B & O
\end{array}\right)\in\Oc_{1},\left(\begin{array}{cc}
K & A\\
O & O
\end{array}\right)\in\Oc_{2}$. By Lemma A.2, this is equivalent to the existence of a partition
$\kappa$ so that $\kappa_{i}\le\lambda(\cO_1)_{i},\lambda(\cO_2)_i\le\kappa_{i}+1$
for all $i$. This condition is equivalent to $|\lambda(\cO_1)_{i}-\lambda(\cO_2)_i|\le1$
for all $i$.\end{proof}

\subsection{Proof of Lemma \ref{lem:ParDim}}\label{subsec:PfParDim}
Let $k=\dim U,n=\dim L$. Denote $K'=\left(\begin{array}{cc}
K & A\\
0 & 0
\end{array}\right)$.
\DimaE{

\textbf{Proof of part \eqref{it:nonEmpty}.}
Suppose first that $\lambda_{i}\le\lambda_{i}'\le\lambda_{i}+1$
for all $i$. Let $i_1,\dots ,i_n$ be all the indices for which $\lambda_{i_j}'=\lambda_{i_j}+1$ (it is possible that $\lambda_i=0$).  Introduce a basis for $U$ in which $K$ is in Jordan canonical form. Fix a basis $l_1,\dots l_n$ for $L$, and define $A:L\to U$ in these bases by letting $Al_j$ be the first vector in $i_j$-th Jordan chain (of size $\lambda_{i_j}$). If $\lambda_{i_j}=0$ we set $Al_j:=0$.
Then the partition of $K'$ will be given by the $\lambda'_i$, i.e. $A\in T$ and thus $T$ is non-empty.

To the other direction, assume that $T$ is non-empty.
Let us compare the Jordan chains of $K'$ to the Jordan chains of
$K$. Since the image of $K'$ lies in $U$, its maximal Jordan chain
has either the same length as the longest chain of $K=K'|_U$, or is longer
than it by 1 vector. Equivalently, $\lambda_{1}\le\lambda_{1}'\le\lambda_{1}+1$.

If $\lambda_{1}=\lambda_{1}'$ then any maximal
Jordan chain for $K$ is also a maximal chain for $K'$. Pick one
such chain and let $U_{1}\le U$ be the space it spans. We get nilpotent
operators $\overline{K'},\overline{K}$ on $U/U_{1}\oplus L$ and
$U/U_{1}$ that satisfy the assumptions of the theorem and correspond
to the partitions $\left(\lambda_{2},\lambda_{3},...\right),\left(\lambda_{2}',\lambda_{3}',...\right)$.
The claim that $\lambda_{i}\le\lambda_{i}'\le\lambda_{i}+1$ for all $i$  follows now by induction on $\dim U$.

In the other case, {\it i.e.} $\lambda'_{1}=\lambda_{1}+1$, there exists a maximal chain for $K'$ consisting
of a vector $v\in L$ followed by a maximal chain of $K$ (that may
be empty). Now the same argument works, if in addition to replacing
$U$ by $U/U_{1}$ we replace $L$ by $L/Span\left(v\right)$. This
induction proves that if $T$ is non-empty then $\lambda_{i}\le\lambda_{i}'\le\lambda_{i}+1$.\\

\textbf{Proof of part \eqref{it:dimT}.} In this part we assume that $\lambda_{i}\le\lambda_{i}'\le\lambda_{i}+1$
and compute the dimension of $T$.
  We will use the following lemma:
\begin{lem}[{\cite[Corollary 6.1.4]{CM}}]\label{lem:OrbDim}
Let $W$ be a vector space, and $\Oc\subset \gl(W)$ be a nilpotent orbit corresponding to a partition $\lambda=\left(\lambda_{1},\lambda_{2},...\right)$.
Let $\lambda^{t}$ be the transpose partition of $\lambda$, defined
by $\lambda_{i}^{t}=\left|\left\{ j:\lambda_{j}\ge i\right\} \right|$,
or by reflecting the Young diagram of $\lambda$ along the diagonal.
Then
\begin{equation}
\dim\Oc=\left(\dim W\right)^{2}-\sum_{i}\left(\lambda_{i}^{t}\right)^{2}.
\end{equation}
\end{lem}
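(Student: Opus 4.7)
The lemma is a classical computation, and the plan is to derive it from the orbit-stabilizer theorem, reducing the problem to computing the dimension of the centralizer of a nilpotent matrix of Jordan type $\lambda$.

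First I would invoke orbit-stabilizer: fixing $X\in\Oc$, we have
\begin{equation*}
\dim\Oc=\dim\GL(W)-\dim Z_{\GL(W)}(X)=(\dim W)^{2}-\dim Z_{\GL(W)}(X).
\end{equation*}
Since $Z_{\GL(W)}(X)$ is an open subset (in the analytic topology) of the linear subspace $Z_{\gl(W)}(X)=\{A\in\gl(W):[A,X]=0\}$, the two have equal dimension, so it suffices to compute $\dim Z_{\gl(W)}(X)$.

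Next I would put $X$ in Jordan canonical form and decompose $W=\bigoplus_{j}W_{j}$ into cyclic $\mathbb{C}[X]$-modules with $\dim W_{j}=\lambda_{j}$. An element of $\gl(W)$ commutes with $X$ if and only if it is a $\mathbb{C}[X]$-module endomorphism of $W$, so
\begin{equation*}
Z_{\gl(W)}(X)=\bigoplus_{j,k}\Hom_{\mathbb{C}[X]}(W_{j},W_{k}).
\end{equation*}
For cyclic modules, evaluation at a generator of $W_{j}$ identifies $\Hom_{\mathbb{C}[X]}(W_{j},W_{k})$ with the submodule of $W_{k}$ annihilated by $X^{\lambda_{j}}$, which has dimension $\min(\lambda_{j},\lambda_{k})$. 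Hence $\dim Z_{\gl(W)}(X)=\sum_{j,k}\min(\lambda_{j},\lambda_{k})$.

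To finish, I would use the combinatorial identity $\min(\lambda_{j},\lambda_{k})=\#\{i\geq 1:i\leq\lambda_{j}\text{ and }i\leq\lambda_{k}\}$ and swap the order of summation:
\begin{equation*}
\sum_{j,k}\min(\lambda_{j},\lambda_{k})=\sum_{i}(\#\{j:\lambda_{j}\geq i\})^{2}=\sum_{i}(\lambda_{i}^{t})^{2}.
\end{equation*}
Combined with the orbit-stabilizer computation, this yields $\dim\Oc=(\dim W)^{2}-\sum_{i}(\lambda_{i}^{t})^{2}$. The one subtle step is the passage from the Lie group centralizer to the Lie algebra centralizer; the remainder is bookkeeping with Jordan blocks.
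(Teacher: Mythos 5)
Your proof is correct, and it is essentially the standard argument: the paper does not prove this lemma at all but cites it as \cite[Corollary 6.1.4]{CM}, where the computation proceeds exactly as you do, via orbit-stabilizer and the centralizer dimension formula $\dim Z_{\gl(W)}(X)=\sum_{j,k}\min(\lambda_j,\lambda_k)=\sum_i(\lambda_i^t)^2$. Your handling of the one subtle point (passing from the group centralizer to the Lie algebra centralizer via openness of the invertible elements) is also fine.
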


We now compute the dimension of $T$ by an inductive argument similar to the one used in the proof of part \eqref{it:nonEmpty}. We consider two cases: either $\lambda_{1}'=\lambda_{1}$
or $\lambda_{1}'=\lambda_{1}+1$.

Suppose first that $\lambda_{1}'=\lambda_{1}+1$; this is the difficult
case. 
Fix a decomposition $U=U_{1}\oplus U_{2}$ where $U_{1}$ corresponds
to one maximal Jordan block of $K$ and $U_{2}$ corresponds to the
rest of the blocks. Fix a non-zero vector $v\in L$ and denote $L_1:=Span\left(v\right)$.
\begin{lem}
The condition $AL_1\nsubseteq KU+U_{2}$ defines an open dense subset $S\subset T$.
\end{lem}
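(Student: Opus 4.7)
My plan is to handle the two assertions separately: openness is immediate from the definition, while density will follow from a group-action argument. For openness, the complement $T\setminus S$ is cut out by the closed linear condition $Av\in KU+U_{2}$ (the latter being a codimension-$1$ subspace of $U$, omitting the top vector of the distinguished $U_{1}$-chain), so $S$ is open in $T$.

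For density I would exploit the action of $G:=\Stab_{\GL(U)}(K)\times \GL(L)$ on $T$ given by $(g,h)\cdot A:=gAh^{-1}$. Conjugation of $K'$ by $\diag(g,h)$ yields $\left(\begin{smallmatrix}K & gAh^{-1}\\ 0 & 0\end{smallmatrix}\right)$ (since $g\in\Stab(K)$), so the Jordan type is preserved and $G\cdot A\subset T$. Both factors of $G$ are connected algebraic groups (after passing to identity components), making each orbit irreducible. If every orbit meets $S$, then $S\cap(G\cdot A)$ is a non-empty open subset of the irreducible set $G\cdot A$, hence dense there, and so $A\in\overline{S}$. This reduces density to showing that every $G$-orbit meets $S$.

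For this last step, the key observation is that the hypothesis $\lambda_{1}'=\lambda_{1}+1$ forces $AL\not\subset\ker(K^{\lambda_{1}-1})$. Indeed, $(K')^{\lambda_{1}}$ vanishes on $U$ (as $K^{\lambda_{1}}=0$) and acts on $L$ by $y\mapsto K^{\lambda_{1}-1}Ay$; the existence of a Jordan chain of length $\lambda_{1}+1$ for $K'$ demands $(K')^{\lambda_{1}}\ne 0$, giving the claim. Then I would pick $y\in L$ with $Ay\notin\ker(K^{\lambda_{1}-1})$; this means $Ay$ has a nonzero component at the top of some size-$\lambda_{1}$ Jordan block of $K$. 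Using the fact that $\Stab_{\GL(U)}(K)$ contains all permutations of the size-$\lambda_{1}$ blocks of $K$, I would select $g\in\Stab(K)$ carrying that top vector into the top of $U_{1}$, so that $gAy\notin KU+U_{2}$. Finally I choose $h\in \GL(L)$ with $h^{-1}v=y$, placing $(g,h)\cdot A$ in $S$. The main difficulty here is the identification of the correct group action; once $\Stab(K)\times \GL(L)$ is in hand, the remaining Jordan-block bookkeeping is routine.
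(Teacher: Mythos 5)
Your proof is correct, and it is in substance a rigorous version of the paper's own argument, which is only sketched there: the paper asserts openness and non-emptiness and says density ``can be seen via composing $A$ with linear automorphisms of $U$ and $L$ that are close to the identities.'' Your writeup makes precise what that sketch leaves implicit: the automorphisms of $U$ must centralize $K$, since only then does conjugation by $\diag(g,h)$ fix the upper-left block and preserve the Jordan type of $K'$, hence the set $T$; and the ``close to the identity'' perturbation is replaced by the cleaner algebraic statement that a non-empty open subset of an (irreducible) orbit of $\Stab_{\GL(U)}(K)\times\GL(L)$ is dense in that orbit. You also supply the one genuinely non-trivial input the paper omits, namely that $\lambda_1'=\lambda_1+1$ forces $(K')^{\lambda_1}\neq 0$, hence $K^{\lambda_1-1}A\neq 0$, so that after a permutation of the size-$\lambda_1$ blocks (which lies in $\Stab(K)$) and a choice of $h$ with $h^{-1}v=y$ every orbit meets $S$; this proves the non-emptiness claim at the same time. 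The only blemish is the hedge ``after passing to identity components'': it is unnecessary, since over $\C$ the centralizer of $K$ in $\GL(U)$ is the unit group of the centralizer algebra, hence connected, and $\GL(L)$ is connected; and if one really did pass to identity components one would owe a check that your block swap and your $h$ lie in them (automatic over $\C$, and over $\R$ one can replace the swap by a determinant-one rotation inside the $\GL_2$ acting on the two equal-size blocks). With the groups connected, your argument is complete and buys a Zariski-topology statement directly, whereas the paper's perturbative phrasing is more analytic in flavor and leaves both the centralizing condition and the non-emptiness unjustified.
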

\begin{proof}
The condition $AL_1\nsubseteq KU+U_{2}$ is open
and non-empty on $T$, and the fact it is dense can be seen via composing $A$ with linear automorphisms of $U$ and $L$ that are close to the identities.
\end{proof}

Let $T_2$ denote the set defined analogously to $T$, but with $U$ replaced by $U_2$, $K$ replaced by $K_2:=K|_{U_2}$, $L$ replaced by a hyperplane in $L$, and the partitions $\lambda,\lam'$ replaced by the partitions $\mu=\left(\lambda_{2},\lambda_{3},...\right),\mu'=\left(\lambda_{2}',\lambda_{3}',...\right)$. Let  $\bP^*(L)$ denote the space of hyperplanes in $L$.

\begin{lem}
There exists a dominant map $\nu:S\to U\times \bP^*(L)\times  T_2,$ with every fiber isomorphic to the space of matrices $\Mat_{n-1,\lam_1-1}(\C)$.
\end{lem}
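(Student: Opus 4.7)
The plan is to construct $\nu$ explicitly and then analyze its fibers and dominance. Given $A\in S$, let $\pi_{\mathrm{top}}\colon U \onto U_1/KU_1\cong\C$ denote the projection that kills $U_2+KU_1$; by the defining condition of $S$ we have $\pi_{\mathrm{top}}(Av)\neq 0$. Set $H_A := \ker(\pi_{\mathrm{top}}\circ A)\subset L$, which is a hyperplane with $v\notin H_A$, hence complementary to $L_1$. Let $\pi_{U_2}\colon KU_1\oplus U_2\onto U_2$ be the projection along $KU_1$, and put $A_2:=\pi_{U_2}\circ A|_{H_A}\colon H_A\to U_2$. Define
$$\nu(A) := (Av,\,H_A,\,A_2).$$

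The first thing to verify is that $A_2$ actually lies in the $T_2$ attached to the hyperplane $H_A$. For this, consider the $(\lam_1+1)$-dimensional $K'$-invariant subspace $C_A:=\Span\{v,Av,KAv,\dots,K^{\lam_1-1}Av\}$, which is a single maximal Jordan chain for $K'$ since $\pi_{\mathrm{top}}(Av)\neq 0$. A direct intersection check shows $C_A\cap(U_2\oplus H_A)=0$, yielding a direct sum decomposition $U\oplus L = C_A\oplus(U_2\oplus H_A)$. Under this identification the induced operator on $(U\oplus L)/C_A$ agrees---after a further $K'$-equivariant adjustment of the complement that absorbs the $KU_1$-components of $A|_{H_A}$ into the $U_2$-direction---with the block operator $\bigl(\begin{smallmatrix}K_2 & A_2\\ 0 & 0\end{smallmatrix}\bigr)$ on $U_2\oplus H_A$. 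Since removing a maximal Jordan chain transforms the partition $\lam'$ into $\mu'$, this shows $A_2\in T_2(H_A)$, and $\nu$ is well-defined.

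Next I would identify the fiber. Given $(u,H,B)$ in the image of $\nu$, every preimage $A\in S$ is determined by $Av=u$ together with $A|_H$. The requirements $H_A=H$ and $\pi_{U_2}\circ A|_H=B$ force $A|_H$ to lie in $B + \Hom(H,KU_1)$. Conversely, any such $A$ does map to $(u,H,B)$, and all these choices yield the same partition $\lam'$: altering $A|_H$ by $K\circ C$ for $C\colon H\to U_1$ is absorbed by conjugation with the automorphism $\bigl(\begin{smallmatrix}I_U & C\\ 0 & I_L\end{smallmatrix}\bigr)$ of $U\oplus L$ (extended by zero on $L_1$). Hence every fiber is an affine space of dimension $(n-1)(\lam_1-1)$, canonically isomorphic to $\Mat_{n-1,\lam_1-1}(\C)$ once bases of $H$ and $KU_1$ are fixed.

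Finally, dominance follows: for any $(u,H,B)$ with $\pi_{\mathrm{top}}(u)\neq 0$, $v\notin H$, and $B\in T_2(H)$, the construction of the preceding paragraph produces an $A\in S$ with $\nu(A)=(u,H,B)$, so the image contains an open subset of $U\times\bP^*(L)\times T_2$. The main technical obstacle lies in the first step---verifying that the correct $K'$-equivariant splitting of $(U\oplus L)/C_A$ realizes the induced operator as $\bigl(\begin{smallmatrix}K_2 & A_2\\ 0 & 0\end{smallmatrix}\bigr)$; this relies on the observation that $C_A\cap U$ surjects onto $U_1$ modulo $U_2$, so that any $KU_1$-valued part of $A|_{H_A}$ can be traded, modulo $C_A$, for a $U_2$-valued correction.
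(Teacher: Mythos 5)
Your construction coincides with the paper's: $H_A=\ker(\pi_{\mathrm{top}}\circ A)$ is exactly the hyperplane $L_2=A^{-1}(KU+U_2)$ used there, your $A_2$ is the $U_2$-component of $A|_{L_2}$, and your conjugation by $\bigl(\begin{smallmatrix} I & C\\ 0 & I\end{smallmatrix}\bigr)$ is precisely the paper's ``chain surgery'' argument showing that the $KU_1$-valued component of $A|_{L_2}$ does not affect the orbit of $K'$, so the fiber description is the same. Your verification that $A_2\in T_2$ (quotient by the maximal chain $C_A$ and absorb the $KU_1$-components into the $U_2$-direction) is a correct variant of the paper's reduction to $B=0$: the correction produced by the absorption is of the form $K_2\circ C$, hence removable by the same unipotent conjugation on $U_2\oplus H_A$, so this step is fine even though stated tersely.

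The one step to tighten is dominance. As phrased, the appeal to ``the construction of the preceding paragraph'' is circular: that paragraph shows all candidate preimages of a \emph{fixed} triple $(u,H,B)$ are conjugate to one another, which identifies their common orbit with $\cO'$ only when some preimage is already known to lie in $S$, i.e.\ when the triple is already in the image. To see that every triple with $\pi_{\mathrm{top}}(u)\neq 0$, $v\notin H$, $B\in T_2(H)$ is actually hit, take $\beta=0$ and check membership in $S$ directly: with $A|_H=B$ purely $U_2$-valued, the subspace $U_2\oplus H$ is $K'$-invariant and carries exactly $\bigl(\begin{smallmatrix} K_2 & B\\ 0 & 0\end{smallmatrix}\bigr)$, while $C=\Span\{v,u,Ku,\dots,K^{\lambda_1-1}u\}$ is a $K'$-invariant complement on which $K'$ is a single Jordan block of size $\lambda_1+1$ (since $\pi_{\mathrm{top}}(u)\neq 0$); hence the partition of $K'$ is $(\lambda_1+1)\cup\mu'=\lambda'$ and $A\in S$. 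This is exactly the point the paper dismisses with ``it is not difficult to see'' (and your open conditions on $(u,H)$ are the correct ones, which the paper's wording garbles), so your write-up is at the paper's level of detail here, but the justification you cite for it is not the right one and should be replaced by the short argument above.
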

\begin{proof}

Given $A\in S$, define $L_2\subset L$ to be the preimage $L_{2}=A^{-1}\left(KU+U_{2}\right)$. Since $AL_1\nsubseteq KU+U_{2}$ by the definition of $S$,
$L=L_1\oplus L_2$, and in particular $L_2$ is a hyperplane.
Let $A_{1}:L_{2}\to KU_{1}\oplus U_{2}$ be the restriction $A_1:=A|_{L_2}$. Denote the coordinates of $A_1$ by $B:L_{2}\to KU_{1}$ and $A_{2}:L_{2}\to U_{2}$. We define the map $\nu$ by $\nu(A):=(Av, L_2,A_2)$. Let us show that $A_2$ indeed lies in $T_2$ and that any fiber is isomorphic to the space of linear maps $\Hom_{\C}(L_2,KU_1)$.

Note that $A$ is uniquely defined by the quadruple
$\left(Av,L_{2},B,A_{2}\right)$. This  quadruple has to satisfy some condition for
$K'=\left(\begin{array}{cc}
K & A\\
O & O
\end{array}\right)$ to be in $\Oc'$, and the condition may only depend on $B,A_{2}$.


However, the condition does not depend on $B$. Indeed, pick $Av,L_{2},B,A_{2}$
and a collection of Jordan chains of $K'$. Consider one chain that
starts with $v_{2}+w$, for $v_{2}\in L_{2},w\in L_1\oplus U$.
If $B$ was changed to $\tilde B$, there would be a $u\in U_{1}$
so that $\tilde Bv_{2}-Bv_{2}=Ku$, and then replacing the first vector
$v_{2}+w$ by $v_{2}+w-u$ would preserve the rest of the chain. Doing
this to all chains at once shows that the choice of $B$ does not
affect the nilpotent orbit $K'$ is in.

Taking $B=0$ for convenience, we see that $A_2\in T_2$. Conversely, it is not difficult to see that for any  non-zero vector $w\in U$, any $L_2\in \bP^*(L)$ with $w\notin L_2$, and any $A_2\in T_2$, the map $A$ defined by the quadruple $(w,L_2,0,A_2)$ lies in $S$.
Thus, $\nu$ is dominant, and any fiber is isomorphic to $\Hom_{\C}(L_2,KU_1)$. Finally, we note that $\dim L_2=n-1$  and $\dim KU_1=\lam_1-1$.
\end{proof}

This lemma implies that
\begin{equation}\label{=T}
\dim T=k+\left(n-1\right)+\left(n-1\right)\left(\lambda_{1}-1\right)+\dim T_{2}
\end{equation}
Now note that $\mu_{i}^{t}=\max\left(\lambda_{i}^{t}-1,0\right)$.
For any  partition
$\kappa$, denote by $\Oc_{\kappa}$ the corresponding nilpotent orbit. By the induction hypothesis and Lemma \ref{lem:OrbDim} we see that}
\begin{multline}\label{=T2}
\dim T_{2}  =\frac{\dim\Oc_{\mu'}-\dim\Oc_{\mu}}{2}
=  \frac{\left(\dim L_{2}+\dim U_{2}\right)^{2}-\sum_{i}\left(\mu_{i}'^{t}\right)^{2}-\left(\dim U_{2}\right)^{2}+\sum_{i}\left(\mu_{i}^{t}\right)^{2}}{2}\\
=  \frac{\left(n-1+k-\lambda_{1}\right)^{2}-\left(k-\lambda_{1}\right)^{2}-\sum_{i}\left(\left(\mu_{i}'^{t}\right)^{2}-\left(\mu_{i}^{t}\right)^{2}\right)}{2}
\end{multline}
Further,
we have\begin{equation}\label{=sq}
  \sum_{i}\left(\left(\mu_{i}'^{t}\right)^{2}-\left(\mu_{i}^{t}\right)^{2}\right)
=  \sum_{i:\lambda_{i}'^{t}\ge1}\left(\lambda_{i}'^{t}-1\right)^{2}-\sum_{i:\lambda_{i}^{t}\ge1}\left(\lambda_{i}^{t}-1\right)^{2}
=  \sum_{i}\left(\lambda_{i}'^{t}\right)^{2}-2\left(n+k\right)-\sum\left(\lambda_{i}^{t}\right)^{2}+2k+1
\end{equation}
Where we use $\sum\lambda_{i}'=n+k,\sum\lambda_{i}=k$, and the last
$+1$ term appears because there is exactly one $i$ for which $\lambda_{i}^{t}=0,\lambda_{i}'^{t}\ge1$.
Combining (\ref{=T},\ref{=T2},\ref{=sq}) we obtain by a straightforward computation
\begin{equation}
\dim T=\frac{\left(n+k\right)^{2}-\sum_{i}\left(\lambda_{i}'^{t}\right)^{2}-k^{2}+\sum_{i}\left(\lambda_{i}^{t}\right)^{2}}{2}=\frac{\dim\Oc'-\dim\Oc}{2}
\end{equation}
as required.\\
\\
Let us now consider the second case, $\lambda_{1}'=\lambda_{1}$.
This case follows from a simpler version of the same argument, so
we only sketch the proof. Fix a decomposition $U=U_{1}\oplus U_{2}$
where $U_{1}$ corresponds to one maximal Jordan block of $K$ and
$U_{2}$ to all other blocks. Since the elements of $L$ cannot generate
longer chains than the maximal chains of $K$, any $A\in T$ satisfies
$AL\subset KU_{1}\oplus U_{2}$. Similarly to the argument we had
before, the map $L\to KU_{1}$ can be any map, while the map $L\to U_{2}$
should solve the smaller problem with spaces $U_{2},L$ and partitions
$\mu=\left(\lambda_{2},\lambda_{3},...\right),\mu'=\left(\lambda_{2}',\lambda_{3}',...\right)$.
If $T_{1}$ is the space of solutions to the smaller problem, we get
\begin{equation}
\dim T=\dim \Hom_{\C}\left(L,KU_{1}\right)+\dim T_{1}=n\left(\lambda_{1}-1\right)+\dim T_{1}
\end{equation}
Using the induction hypothesis for $T_{1}$, an analogous (but simpler)
computation shows that $\dim T=\frac{\dim\Oc'-\dim\Oc}{2}$, as required.
\proofend
}

\end{document}